\newtheorem{theorem}{Theorem}
\newtheorem{proposition}{Proposition}[section]
\newtheorem{corollary}{Corollary}
\newtheorem{lemma}{Lemma}[section]
\theoremstyle{definition}
\newtheorem{definition}{Definition}[section]
\newtheorem{remark}{Remark}
\theoremstyle{definition}
\newtheorem{assumption}{Assumption}
\numberwithin{equation}{section}
\DeclareMathOperator{\R}{\mathbb{R}}
\let\S\relax
\DeclareMathOperator{\S}{S}
\DeclareMathOperator{\bigO}{\mathcal{O}}
\newcommand{\la}{\left\langle}
\newcommand{\ra}{\right\rangle}
\newcommand{\cont}{\text{\textup{cont}}}
\newcommand{\RH}{\text{\textup{RH}}}
\newcommand{\ba}{\overset{\raisebox{0pt}[0pt][0pt]{\text{\raisebox{-.5ex}{\scriptsize$\leftharpoonup$}}}}}
\newcommand{\fa}{\overset{\raisebox{0pt}[0pt][0pt]{\text{\raisebox{-.5ex}{\scriptsize$\rightharpoonup$}}}}}
\def\eq#1{(\ref{#1})}
\def\({\left(\begin{array}{cccccc}}
\def\){\end{array}\right)}
\def\eq#1{(\ref{#1})}
\def\({\left(\begin{array}{cccccc}}
\def\){\end{array}\right)}
\def\bes{\begin{eqnarray}}
\def\ees{\end{eqnarray}}
\def\bel{\begin{equation}\label}
\newcommand{\beq}{\begin{equation}}
\newcommand{\eeq}{\end{equation}}
\newcommand{\bea}{\begin{eqnarray}}
\newcommand{\eea}{\end{eqnarray}}
\newcommand{\beann}{\begin{eqnarray*}}
\newcommand{\eeann}{\end{eqnarray*}}
\newcommand{\si}{\ensuremath{\sigma}}
\newcommand{\eps}{\varepsilon}
\newcommand{\Sweak}{\mathcal{S}_{\text{weak}}}
\newcommand{\Nu}{\mathcal{V}}
\newcommand{\ds}{\displaystyle}
\newcommand{\bp}{\begin{proof}}
\newcommand{\ep}{\end{proof}}
\DeclareMathOperator{\sgn}{sgn}
\newcommand{%
    
    \import{./}{.pdf_tex}
}[1]{%
    
    \import{./}{#1.pdf_tex}
}
\newcommand{\norm}[1]{\left\lVert#1\right\rVert}
\title[Hölder Stability for Conservation Laws]{Solutions to conservation laws are Hölder-stable in $L^2$ in the weak-BV setting}
\author[Geng Chen]{Geng Chen}
 \address[Geng Chen]{\newline Department of Mathematics, \newline University of Kansas, Lawrence, KS 66045, USA}
 \email{gengchen@ku.edu}
 \author[Cooper Faile]{Cooper Faile}
 \address[Cooper Faile]{\newline Department of Mathematics, \newline The University of Texas at Austin, Austin, TX 78712, USA}
 \email{jcfaile@utexas.edu}
 \author[Sam G. Krupa]{Sam G.  Krupa}
\address[Sam G. Krupa]{\newline Département de mathématiques et applications \newline École normale supérieure, Université PSL, CNRS\newline 45 rue d'Ulm - F 75230 PARIS cedex 05 \newline France}
\email{sam.krupa@ens.fr}
 \date{\today}
\subjclass[2020]{Primary 35L65; Secondary 76N15, 35L45, 35B35.}
 \keywords{compressible Euler system, isothermal gas dynamics, uniqueness, stability, Hölder, relative entropy, conservation law.}
 \thanks{\textbf{Acknowledgment.} 
 The first author is partially supported by National Science Foundation at grant DMS-2306258, and a SQuaRE at the American Institute of Mathematics. The first author thanks the helpful discussion with Alberto Bressan and colleagues in SQuaRE.
The second author is partially supported by National Science Foundation grant DMS-1840314.
The work of the third author is funded by the European Union through the project ``Quantitative Stability and Regularity of Large Data for Conservation Laws.'' Views and opinions expressed are however those of the author(s) only and do not necessarily reflect those of the European Union or European Research Executive Agency (REA). Neither the European Union nor the granting authority can be held responsible for them.}
\begin{document}

\begin{abstract}
We consider hyperbolic systems of conservation laws in one spatial dimension. For any limit of front tracking solutions $v$, and for a general weak solution $u\in L^\infty$ with no BV assumption, we prove the following Hölder-type stability estimate in $L^2$:
\begin{align*}
\norm{u(\cdot,\tau)-v(\cdot,\tau)}_{L^2} \leq K 
\sqrt{\norm{u( \cdot,0)-v( \cdot,0)}_{L^2}},
\end{align*} 
for all $\tau$ without smallness and for a universal constant $K$. Our result holds for all limits of front tracking solutions $v$ with BV bound, either for general systems with small-BV data, or for special systems (isothermal Euler, Temple-class systems) with large-BV data. Our results apply to physical systems such as isentropic Euler. The stability estimate is completely independent of the BV norm of the potentially very wild solution $u$. We use the $L^2$ theory of shock stability modulo an artificial shift of position (Vasseur [{\em Handbook of Differential Equations: Evolutionary Equations}, 4:323 -- 376, 2008])  but our stability results \emph{do not depend on an artificial shift}. Moreover, we give the first result within this framework which can show \emph{uniqueness} of some solutions with large $L^\infty$ and \emph{infinite BV} initial data. We apply these techniques to isothermal Euler.
 \end{abstract}

\maketitle

 \tableofcontents

\section{Introduction}

In this paper, we will consider $n\times n$ hyperbolic systems of conservation laws in one space dimension:
\begin{equation} 
\begin{cases}
\partial_t u + \partial_x [f(u)] = 0,\quad t > 0,\quad x \in \R \\
u(x,0)=u^0(x), \quad  x \in \R
\end{cases}
\label{system} \end{equation}
where $u = (u_1,\dots,u_n) \in \mathcal{V}\subset \R^n$ are the unknowns and $u^{0}\colon\mathbb{R}\to\mathcal{V}$ is the initial data. 
The state space $\mathcal{V}$ is assumed to be {a bounded and convex open set.}
We take $f = (f_1,\dots,f_n)$ to be $C^4$ on $\mathcal{V}$ {up to the boundary}.  
We also assume there exists a strictly convex entropy $\eta \in C^3(\bar{\mathcal{V}})$ and entropy-flux $q\in  C^3(\bar{\mathcal{V}})$ satisfying
\begin{equation}
    q' = \eta'f'
\end{equation}
within $\mathcal{V}$. 
We restrict our study to entropic solutions of equation~\eqref{system}; that is, solutions also distributionally satisfying 
\begin{equation}
    \partial_t \eta(u) + \partial_x q(u) \leq 0,\quad t > 0,\quad x \in \R \label{eq:entropy-eqn}
\end{equation}
for the fixed entropy/entropy-flux pair $(\eta,q)$. 

More precisely, we require that for all
$\phi \in C_0^{\infty}\!\bigl(\mathbb{R}\times[0,\infty)\bigr)$ with $\phi\ge 0$,

\begin{align}
\int_{0}^{\infty}\!\!\int_{-\infty}^{\infty}
\bigl[\,\phi_t(x,t)\,\eta\bigl(u(x,t)\bigr)
      +\phi_x(x,t)\,q\bigl(u(x,t)\bigr)\bigr]\,
\mathrm{d}x\,\mathrm{d}t
\;+\;
\int_{-\infty}^{\infty} 
\phi(x,0)\,\eta\!\bigl(u^{0}(x)\bigr)\,
\mathrm{d}x
\;\ge\;0,
\end{align}

where $u^{0}\colon\mathbb{R}\to\mathbb{R}^n$ is the initial data of the solution $u$.

Some of our results we state for general $n\times n$ systems, but we focus also on the $2\times 2$ case. In particular, we consider $2\times 2$ Temple-class systems as well as the isothermal gas. The isothermal gas (in Lagrangian coordinates) can be written as
\begin{equation}
\begin{cases}\label{isothermal}
   \partial_t \tau - \partial_x w = 0,  \\[6pt]
  \partial_t w + \partial_x p(\tau) = 0,
\end{cases}
\end{equation}

where $w$ is the velocity, $\rho$ is the density, $\tau = \sfrac{1}{\rho}$ is the specific volume, and $p(\tau)=\sfrac{1}{\tau}$. We remark that \eqref{isothermal} is equivalent to the Eulerian formulation of isothermal Euler for weak solutions bounded away from vacuum \cite{wagnergasdynamics}. The isothermal system is
endowed with the physical entropy, entropy-flux pair
\begin{align}\label{isothermal_eta_q}
  \eta(\tau,w) = \dfrac{w^2}{2} - \log(\tau), \qquad
  q(\tau,w) = p(\tau)w = \dfrac{w}{\tau}.
\end{align}


We will consider the following classes of solutions.

 Fix an open, convex subset $\mathcal{W}$ of $\mathcal{V}$ such that $\bar{\mathcal{W}}\subset\mathcal{V}$, fix any $d \in \mathcal{W}$, and fix a small $\epsilon>0$. Then define

\begin{align}
    \mathcal{S}_{\textrm{BV},\epsilon}^0\coloneqq\Bigg\{\text{functions } u^0\colon \mathbb{R} \to \mathcal{W} \Bigg|
   \|u^0 - d\|_{L^\infty(\mathbb{R})} \leq \epsilon
  \quad \text{and} \quad \|u^0\|_{BV(\mathbb{R})} \leq \epsilon\Bigg\}.\label{small_BV_data}
  \end{align}

  In the seminal work \cite{MR0194770}, Glimm showed that for $u^0\in\mathcal{S}_{\textrm{BV},\epsilon}^0$, there exists a solution $u$ of \eqref{system} with $u(\cdot,0) = u^0$. Along with Glimm’s method, often referred to as the Glimm scheme or random choice method, there are two additional methods which prove the small BV existence for general hyperbolic conservation laws: the front tracking scheme (see \cite{MR3443431,dafermos_big_book,MR1816648}) and the vanishing viscosity method \cite{MR2150387}.

We now define the large-BV solutions we will consider.

 Fix an open, convex subset $\mathcal{W}$ of $\mathcal{V}$ such that $\bar{\mathcal{W}}\subset\mathcal{V}$. Fix two states $u_{-\infty}$, $u_{+\infty}\in\mathcal{W}$ and $M>0$.

Then, we define 
\begin{align}
    \mathcal{S}_{\textrm{BV},M}^0\coloneqq\mathrm{cl}\Bigg(\Bigg\{\text{functions } u^0\colon \mathbb{R} \to \mathcal{W} \Bigg|
   \left\{
  \begin{aligned}
    &u^0 - u_{-\infty} \in L^1((-\infty, 0]) \\
    &u^0 - u_{+\infty} \in L^1([0,+\infty))
  \end{aligned}
  \right.
  \quad \text{and} \quad \|u^0\|_{BV(\mathbb{R})} \leq M\Bigg\}\Bigg),\label{large_BV_data}
  \end{align}
  where the closure is taken in the $L^1$ topology. 

Among limited large BV global existence results for systems of hyperbolic conservation laws, for initial data in the set $\mathcal{S}_{\textrm{BV},M}^0$, existence of large BV solutions for the Temple systems as well as the isothermal Euler system \eqref{isothermal} has been solved by Temple \cite{MR716850} and Nishida \cite{MR236526}, respectively. More related to our paper, Colombo-Risebro \cite{ColomboRisebro} proved the existence of solutions for these systems using a large-BV wave front-tracking algorithm. 

For a general class of hyperbolic systems \eqref{system}, the solutions with small-BV initial data \eqref{small_BV_data} form a semigroup $\mathcal{S}$, and $\mathcal{S}$ is $L^1$-Lipschitz continuous with constant $L$, i.e.,
  \begin{align}
  \| \mathcal{S}_{t''}u'' - \mathcal{S}_{t'}u' \|_{L^1} \leq L \cdot \left( \| u'' - u' \|_{L^1} + |t'' - t'| \right). \label{L1_estimate}
  \end{align}
Similarly, for Temple systems and the isothermal Euler \eqref{isothermal}, the class of solutions with large-BV initial data \eqref{large_BV_data}, constructed as limits of front tracking solutions by Colombo-Risebro \cite{ColomboRisebro}, also form a semigroup $\mathcal{S}$ and verify the same estimate \eqref{L1_estimate}.

We restrict our study to the solutions verifying what is often referred to in the literature as the ``Strong Trace Property.''

\begin{definition}(Strong Trace Property \cite{Leger2011})\label{strong_trace_prop}
Let $u \in L^\infty(\mathbb{R} \times [0,\infty))$. We say that $u$ verifies the \emph{strong trace property} if for any Lipschitz continuous curve $h\colon [0,\infty)\to\mathbb{R}$, there exist two bounded functions $u_-, u_+ \in L^\infty([0,\infty))$ such that, for any $T > 0$,
\[
\lim_{n \to \infty} \int_0^T \sup_{y \in (0,1/n)} |u(h(t) + y,t) - u_+(t)| \, dt
= \lim_{n \to \infty} \int_0^T \sup_{y \in (-1/n, 0)} |u(h(t) + y,t) - u_-(t)| \, dt = 0.
\]
\end{definition}

For convenience, we will use the notation $u_+(t) \coloneqq u(h(t)+,t)$, and $u_-(t) \coloneqq u(h(t)-,t)$.

We can then define the following large space of ``rough'' solutions that we consider in the paper:
\begin{multline}
\mathcal{S}_{\text{weak}} \coloneqq 
\\
\Big\{ u \in L^\infty(\mathbb{R}\times[0,T) : \mathcal{V}) \text{ weak solution to } \eqref{system}\text{ or } \eqref{isothermal}, \text{ as well as }\eqref{eq:entropy-eqn}, \text{ verifying \Cref{strong_trace_prop}} \Big\},\label{large_sol}
\end{multline}
for some $T>0$. \textbf{N.B.:} this space has no smallness condition.

In this paper, we will consider two major stability questions:

\newpage

(Problem $\epsilon$-BV)
\begin{mdframed}
\begin{align*}
    \parbox[t]{14cm}{For any system \eqref{system} with two conserved quantities verifying Assumption \ref{assum} (see \Cref{sec:prelim}), we consider the small-BV solutions with initial data \eqref{small_BV_data}, constructed as e.g. limits of front tracking solutions. Our major question is, are these solutions \emph{quantitatively} stable in the class of rough solutions \eqref{large_sol}?}
\end{align*}
\vspace{.034in}
\end{mdframed}

and

(Problem Isothermal)
\begin{mdframed}
\begin{align*}
    \parbox[t]{14cm}{For the isothermal Euler system \eqref{isothermal}, consider the large-BV solutions with initial data \eqref{large_BV_data}, constructed following Colombo-Risebro \cite{ColomboRisebro}. Our major question is, are these solutions \emph{quantitatively} stable in the class of rough solutions \eqref{large_sol}?}
\end{align*}
\vspace{.03in}
\end{mdframed}

\vspace{.1in}

We will often refer to (Problem $\epsilon$-BV) and (Problem Isothermal) as shorthand for referencing the small-BV and large-BV cases in our paper, including the small and large-BV front tracking schemes, respectively.

Assumption \ref{assum} are  verified by all systems of physical interest, including isothermal Euler \eqref{isothermal}, isentropic Euler, and the $3\times3$ ``full Euler'' system (see \cite{MR4487515,Leger2011}). We refer to \Cref{sec:prelim} below for details.

We can now state our Main Theorem.

\begin{theorem}[Main theorem -- $L^2$-Hölder stability for conservation laws]\label{main_theorem}
\hfil

 Consider (Problem $\epsilon$-BV) or (Problem Isothermal). Fix $R,T>0$. Then, consider a solution $u\in \mathcal{S}_{\text{weak}}$ to either \eqref{system} or \eqref{isothermal}. For (Problem $\epsilon$-BV), consider initial data $v^0\in\mathcal{S}_{\textrm{BV},\epsilon}^0$ for $\epsilon>0$ sufficiently small. For (Problem Isothermal), fix $M>0$ and consider initial data $v^0\in\mathcal{S}_{\textrm{BV},M}^0$. Then, for (Problem $\epsilon$-BV) consider the classical small-BV solution $v$ with initial data $v^0$ (from the Glimm scheme, front tracking method, or vanishing viscosity). For (Problem Isothermal), consider the large-BV solution $v$ with initial data $v^0$ constructed according to Colombo-Risebro \cite{ColomboRisebro}. Then, we have the following stability estimate:
\begin{align}\label{main_estimate}
     \norm{u(\cdot,\tau)-v(\cdot,\tau)}_{L^2((-R,R))} \leq K \sqrt{\norm{u( \cdot,0)-v( \cdot,0)}_{L^2((-R-s\tau,R+s\tau))}},
\end{align} 
for all $\tau\in[0,T]$, for a universal constant $K$ depending on $R$, and where $s>0$ is the speed of information (see \eqref{speed_of_info}, below).
\end{theorem}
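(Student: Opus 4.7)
The plan is to combine the $L^{2}$-with-shift relative-entropy machinery of Vasseur (and subsequent refinements) with a front-tracking approximation of $v$, converting ``stability modulo a shift'' into genuine $L^{2}$ stability at the cost of a square root. By finite propagation speed, I restrict attention to the truncated cone based on $[-R-s\tau, R+s\tau] \times \{0\}$. I approximate $v$ by a sequence $v^{\nu}$ of front-tracking (resp.\ Colombo--Risebro) approximations: each is piecewise constant in $(x,t)$ with finitely many shock, rarefaction, and non-physical fronts, has uniformly bounded $BV$ in $\nu$, and $v^{\nu} \to v$ in $L^{1}_{\mathrm{loc}}$, hence in $L^{2}_{\mathrm{loc}}$ uniformly on $[0,T]$.

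To every shock front $i$ of $v^{\nu}$ I attach a shift $h_{i}(t)$ evolving via an ODE driven by the one-sided traces of $u$ on either side of the shifted front -- Definition~\ref{strong_trace_prop} is what makes this meaningful. Let $H_{\nu}(x,t)$ be the piecewise-constant-in-$x$ function moving the $i$-th front by $h_{i}(t)$, and write $\alpha_{i}$ for the jump magnitude of the $i$-th front. The relative-entropy-with-multiple-shifts argument, together with strict convexity of $\eta$, then yields an inequality of the form
\begin{align*}
\|u(\cdot,\tau) - v^{\nu}(\cdot + H_{\nu}(\cdot,\tau),\tau)\|_{L^{2}((-R,R))}^{2} + \int_{0}^{\tau}\!\sum_{i}\alpha_{i}\,\dot h_{i}(t)^{2}\,dt \;\leq\; C\,E^{2} + o_{\nu}(1),
\end{align*}
where $E := \|u_{0}-v^{\nu}_{0}\|_{L^{2}((-R-s\tau,R+s\tau))}$, the $o_{\nu}(1)$ absorbs contributions from rarefaction fans and non-physical fronts, and $C$ depends on $R, T$ and the problem data but not on $u$ (crucially, \emph{not} on $\mathrm{TV}(u)$).

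To pass from the shifted norm to the unshifted one, I use the elementary identity, for $v^{\nu}$ piecewise constant,
\begin{align*}
\|v^{\nu}(\cdot + H_{\nu}(\cdot,\tau),\tau) - v^{\nu}(\cdot,\tau)\|_{L^{2}}^{2} \;\leq\; \sum_{i} |h_{i}(\tau)|\,\alpha_{i}^{2}.
\end{align*}
By Cauchy--Schwarz in time, $|h_{i}(\tau)| \leq \sqrt{\tau/\alpha_{i}}\,\bigl(\int_{0}^{\tau}\alpha_{i}\dot h_{i}^{2}\,dt\bigr)^{1/2} \leq C_{1}\sqrt{\tau/\alpha_{i}}\,E$, so that $\sum_{i}|h_{i}|\alpha_{i}^{2} \leq C_{1}\sqrt{\tau}\,E\sum_{i}\alpha_{i}^{3/2}$. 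The combinatorial estimate $\sum_{i}\alpha_{i}^{3/2} \leq \sqrt{\sup_{i}\alpha_{i}}\cdot \mathrm{TV}(v^{\nu})$ is uniformly bounded in $\nu$ under both (Problem $\epsilon$-BV) and (Problem Isothermal), giving $\|v^{\nu}(\cdot + H_{\nu}) - v^{\nu}\|_{L^{2}}^{2} \leq C_{2}\,E$. The triangle inequality then produces $\|u(\cdot,\tau)-v^{\nu}(\cdot,\tau)\|_{L^{2}((-R,R))} \leq C(E + \sqrt{E}) \leq C'\sqrt{E}$ for small $E$ (the only regime where the estimate is nontrivial, as both functions are bounded). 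Passing $\nu \to 0$ using $v^{\nu} \to v$ in $L^{2}_{\mathrm{loc}}$ yields \eqref{main_estimate}.

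The main obstacle is making the relative-entropy-with-multiple-shifts bookkeeping rigorous on the piecewise-constant approximation $v^{\nu}$: one must design the coupled system of shift ODEs so they remain well-defined through front interactions, extract the dissipation term $\int\sum_{i}\alpha_{i}\dot h_{i}^{2}\,dt$ with uniform constants, and quantify contributions of rarefaction fans and non-physical fronts as genuine $o_{\nu}(1)$ errors rather than unbounded ones. Most delicately, for (Problem Isothermal) one must exploit the Temple/Riemann-invariant structure of the isothermal system so that the $a$-contraction weights and the amplitude sum $\sum \alpha_{i}^{3/2}$ remain uniformly controlled \emph{without} any small-BV hypothesis, since the large-BV Colombo--Risebro setting is precisely where the standard Bressan--Glimm small-BV framework breaks down.
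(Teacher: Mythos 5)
Your overall architecture (relative entropy with shifts on a front-tracking approximation, Cauchy--Schwarz on the dissipation, triangle inequality, pass $\nu\to 0$) matches the paper's, but the crucial middle step is wrong, and it is precisely the point the paper identifies as the ``change of variables problem.''

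Your proposal hinges on the identity
\[
\norm{v^{\nu}(\cdot + H_{\nu}(\cdot,\tau),\tau) - v^{\nu}(\cdot,\tau)}_{L^{2}}^{2} \leq \sum_{i} |h_{i}(\tau)|\,\alpha_{i}^{2},
\]
which implicitly assumes the shifted front-tracking approximation $\psi$ and the unshifted one $v^{\nu}$ remain related by a spatial relabelling $H_{\nu}$ of the front positions at every fixed time $\tau$. This is only true before the first wave interaction. Once interactions occur, they happen at \emph{different times and potentially in different orders} in $\psi$ versus $v^{\nu}$ (a 1-shock that is slowed down by its shift may collide with a rarefaction later, or a pair of shocks that collide in $v^{\nu}$ at $t^{*}$ may collide in $\psi$ at a different $t^{**}$, producing intermediate states that occupy spatial regions of entirely different extent). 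After one such desynchronized interaction, no map $H_{\nu}$ relates the two piecewise-constant profiles at time $\tau$ by a mere shift of discontinuity loci, and the bound $\sum_{i}|h_{i}|\alpha_{i}^{2}$ has no meaning because there is no longer a one-to-one correspondence between fronts of $\psi$ and fronts of $v^{\nu}$. Your closing paragraph flags ``bookkeeping through front interactions'' as an obstacle, but it is not a bookkeeping issue: the identity you rely on simply fails to hold, so the argument cannot close. (There are also secondary inaccuracies compounding this: the dissipation should be in terms of $(\dot h_{i}-\sigma_{i})^{2}$, not $\dot h_{i}^{2}$, and the quantity you need is the displacement of the shifted front relative to the unshifted one, not $|h_{i}(\tau)|$ itself.)

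The paper's resolution is to abandon the direct $L^{2}$ comparison of $\psi$ and $v^{\nu}$ and instead work in the Bressan--Colombo / Colombo--Risebro weighted $L^{1}$ pseudodistance $d_{\nu}$, which is built from homotopies (pseudopolygonals) that survive wave interactions. Their Lemma~\ref{lemma:front_tracking_error_estimate} establishes
\[
\frac{d}{dt}\,d_{\nu}\big(v_{\nu}(\cdot,t),\psi(\cdot,t)\big) \leq K \sum_{\alpha}|\sigma_{\alpha}|\,|\dot h_{\alpha}(t)-\dot h_{\mathrm{true},\alpha}(t)|,
\]
with the contraction of $d_{\nu}$ under the unshifted semigroup (Proposition~\ref{control_L1_norm}) absorbing all the interaction-time discrepancies. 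Cauchy--Schwarz on this $L^{1}$-type integral plus the relative-entropy dissipation bound gives $\norm{\psi-v_{\nu}}_{L^{1}}\lesssim \sqrt{E}$, and the final $L^{2}$ estimate comes from interpolating $L^{1}$ against $L^{\infty}$. You need this $L^{1}$ detour, or an equivalent device that is stable through desynchronized interactions, before the rest of your scheme goes through.
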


Our estimates have absolutely no dependence on the BV norm of the potentially very ``wild'' solution $u\in \mathcal{S}_{\text{weak}}$. The solution $u$ may have infinite BV or large $L^\infty$ norm, even when we consider small-BV solutions $v$ and (Problem $\epsilon$-BV).

We make some remarks on our Main Theorem:
\begin{itemize}
\item From our result, one immediately recovers the \emph{qualitative stability} result \cite[Theorem 1.3]{MR4487515}. Our \Cref{main_theorem} links the Lipschitz-type stability estimates of the $L^1$ BV-based theory (e.g. \eqref{L1_estimate}) with the infinite-BV $L^2$-based theory which until now has only given qualitative stability results. 
\item For many years, $L^2$ was thought to not be the correct space to measure distances between solutions to conservation laws, due to the presence of shocks. The difficulty is illustrated by our example below, which shows tremendous growth in $L^2$ due to a tiny perturbation of a shock (see \Cref{sec:sharp}). Even recent advances in the $L^2$ stability theory cannot give \emph{quantitative} stability estimates. In this paper, we show that in fact the $L^2$ framework can give quantitative estimates, and for a very large class of solutions.
\item Notice that $\mathcal{S}_{\text{weak}}$ may include solutions that they don't necessarily arise from front tracking.
\item As a consequence of our results, we again recover the fact that the Glimm scheme solutions, front tracking solutions, and vanishing viscosity solutions all coincide (because we can choose any of these solutions as our ``rough'' solution within our estimate \eqref{main_estimate}).
\item Our techniques also apply to large-BV solutions for $2\times 2$ Temple-class systems verifying Assumption \ref{assum} and a wave separation condition which is verified e.g. by physical systems (cf. \Cref{sec:wave_non_interact}). For simplicity, in the large BV regime, we focus on isothermal Euler and make some remarks throughout about applications to Temple systems.
\end{itemize}

\subsection{Uniqueness with infinite-BV initial data}

As an application of our techniques, we have the following Corollary of our Main Theorem.
To fix notation, the function $K\colon[0,\infty) \to [0,\infty)$ is the coefficent from the stability estimate~\eqref{main_estimate} as a function of the BV norm of $v$.
Fixing $R > 0$ and a compact convex set $\mathcal{K}\subset\mathcal{V}$ we define the following class of initial data 
\begin{equation} \label{eq:def-class-X}\begin{aligned}
        X := \Big\{ v \in L^\infty((-R,R); \mathcal{K}) \hspace{.04in}\Big| & \text{ there exists} \left\{v_n\right\}_{n=1}^\infty\subset BV((-R,R);\mathcal{K}) \text{ such that } \\
        &\lim_{n\to\infty} K\left(\norm{v_n}_{BV((-R,R))}\right) 
        \norm{v-v_n}^{1/2}_{L^2((-R,R))} = 0 \Big\}.
     \end{aligned}
\end{equation}
\begin{corollary}[Uniqueness of (some) solutions with large $L^\infty$ and infinite BV]\label{cor:uniqueness} 
\hfill

Consider (Problem Isothermal). If two solutions $u_1,\ u_2 \in \Sweak$ satisfy $u_1(\cdot,0)|_{(-R,R)} = u_2(\cdot,0)|_{(-R,R)} \in X$ then they coincide a.e. in the interval $[-R+s\tau, R-s\tau]$ for all $\tau < R/s$,
where $s>0$ is the speed of information.
The set $X$ satisfies $ BV((-R,R);\mathcal{K}) \subsetneq X \subset L^\infty((-R,R);\mathcal{K})$.
In fact, there exists data in $X$ which has everywhere locally infinite BV, i.e. the BV-norm is infinite on every non-empty open set.
\end{corollary}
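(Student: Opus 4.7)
The plan is to compare both rough solutions $u_1$ and $u_2$ to a common classical BV solution $v_n$ issued from the BV approximants $v_n^0$ appearing in the definition of $X$, and then send $n\to\infty$. Fix $\tau\in[0,R/s)$ and set $R':=R-s\tau>0$. For $v^0\in X$ with defining sequence $\{v_n^0\}\subset BV((-R,R);\mathcal{K})$, I would extend each $v_n^0$ outside $(-R,R)$ by its one-sided traces so that the extension lies in the admissible initial-data class $\mathcal{S}_{\textrm{BV},M_n}^0$ of \Cref{main_theorem} (or the small-BV class $\mathcal{S}_{\textrm{BV},\epsilon}^0$ in (Problem $\epsilon$-BV), when the BV norm is small enough), and let $v_n$ denote the corresponding semigroup solution. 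Applying \Cref{main_theorem} with space-radius $R'$ in place of $R$---so that $R'+s\tau=R$ localizes the initial-data comparison to $(-R,R)$, where by hypothesis $u_1(\cdot,0)=u_2(\cdot,0)=v^0$---gives, for $i=1,2$,
\begin{align*}
\|u_i(\cdot,\tau)-v_n(\cdot,\tau)\|_{L^2((-R',R'))}\;\leq\;K(\|v_n\|_{BV})\,\|v^0-v_n^0\|^{1/2}_{L^2((-R,R))}.
\end{align*}
The triangle inequality together with the defining property of $X$ then yields
\begin{align*}
\|u_1(\cdot,\tau)-u_2(\cdot,\tau)\|_{L^2((-R',R'))}\;\leq\;2K(\|v_n\|_{BV})\,\|v^0-v_n^0\|^{1/2}_{L^2((-R,R))}\;\xrightarrow[n\to\infty]{}\;0,
\end{align*}
so $u_1=u_2$ a.e.\ on $[-R',R']$ for every $\tau<R/s$.

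For the structure of $X$: the inclusion $BV((-R,R);\mathcal{K})\subset X$ is immediate by choosing the constant sequence $v_n\equiv v$, and $X\subset L^\infty((-R,R);\mathcal{K})$ is built into the definition. The strict inclusion $BV\subsetneq X$---and in particular the existence of $v\in X$ with everywhere-locally infinite BV---is the main and most delicate point, since one must beat the growth of $K(M)$ as $M\to\infty$ (at worst exponential in $M$, tracing back to the Lipschitz constants of \cite{ColomboRisebro}) by a sufficiently fast $L^2$-approximation rate. My plan is to construct $v$ as a constant state perturbed by a countable superposition of small, disjointly supported bumps,
\begin{align*}
v\;=\;v_0+\sum_{k=1}^\infty c_k\,\mathbf{1}_{I_k},\qquad I_k=[x_k-\delta_k,\,x_k+\delta_k]\subset(-R,R),
\end{align*}
with $v_0$ in the interior of $\mathcal{K}$ and $c_k\in\mathbb{R}^n$ small enough that $v(x)\in\mathcal{K}$ everywhere. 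Arrange $\{x_k\}$ in a multi-scale dyadic fashion so that every non-empty open $U\subset(-R,R)$ contains $I_k$'s from arbitrarily fine scales with $\sum_{k:I_k\subset U}|c_k|=+\infty$, and choose $\delta_k$ decaying fast enough that the supports are pairwise disjoint. Then the truncations $v_n:=v_0+\sum_{k\leq n}c_k\,\mathbf{1}_{I_k}$ satisfy $\|v_n\|_{BV}\leq 2\sum_{k\leq n}|c_k|$ and $\|v-v_n\|_{L^2}^2\leq 2\sum_{k>n}|c_k|^2\delta_k$. Taking e.g.\ $|c_k|=1/k$ yields $\|v_n\|_{BV}\lesssim\log n$; choosing $\delta_k$ tower-exponentially small (calibrated to the asymptotics of $K$) secures $K(\|v_n\|_{BV})\,\|v-v_n\|^{1/2}_{L^2}\to 0$, so $v\in X$ while $\|v\|_{BV(U)}=+\infty$ for every open $U\subset(-R,R)$. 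The key technical obstacle is precisely this tuning of bump parameters to match the growth of $K$; once $K$ is known to grow no faster than a tower function, the construction carries through and yields the desired element of $X\setminus BV$.
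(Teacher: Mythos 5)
Your uniqueness step is essentially the paper's argument verbatim: compare $u_1$ and $u_2$ to a common classical solution $v_n$ via the triangle inequality, estimate each term by the Main Theorem at reduced radius $R' = R - s\tau$ so that the initial-data comparison localizes to $(-R,R)$ where $u_1(\cdot,0)=u_2(\cdot,0)=v^0$, and pass to the limit using the defining property of $X$. The inclusion $BV\subset X$ via the constant sequence is also the same.

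Where you diverge is in exhibiting elements of $X$ with everywhere-locally-infinite variation. The paper proceeds through an intermediate class $X_G$ (\Cref{lem:X-G-inc}), takes a single oscillating profile $W(x) = b_1 + \tfrac{b_2-b_1}{2}(\sin(\Gamma(|x|^{-1}))+1)$ with infinite variation concentrated at the origin, and then forms a convex combination $\sum a_i W(\cdot - x_i)$ over a dense $\{x_i\}$, using a tail-splitting argument to verify the result still has infinite variation in every subinterval. You instead build $v = v_0 + \sum c_k\mathbf{1}_{I_k}$ from disjoint tower-thin bumps centered at dyadic points, with $|c_k|\sim 1/k$. This is a legitimately different and arguably cleaner route: disjointness makes the BV and $L^2$ bounds on the truncations exact additive identities rather than requiring a cancellation estimate between an infinite tail and the peaks/valleys of $W$. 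The one place you defer details — tuning $\delta_k$ against the growth of $K$ — does close: by \eqref{constant_main_estimate} one has $K(M)\sim\tilde K^{\tilde K^M}$, so $K(\|v_n\|_{BV})\sim K(2\log n)\sim\exp(c\,n^{\beta})$ for some $\beta>0$, while $\|v-v_n\|_{L^2}^2 = 2\sum_{k>n}|c_k|^2\delta_k\lesssim\delta_{n+1}$; double-exponentially small $\delta_k$ already suffices, and this is compatible with disjointness since the $k$-th dyadic center sits at distance $\gtrsim 2^{-j_k}$ (with $j_k\sim\log_2 k$) from the others, which far exceeds $\delta_k$. Two small points you should still nail down: (i) scale the $c_k$ so that $v_0 + c_k\in\mathcal{K}$ for all $k$ (the paper gets this for free since $W$ interpolates between $b_1,b_2\in\mathcal{K}$); (ii) verify that, when placing the $x_k$ and shrinking $\delta_k$, one retains $\sum_{k:\,I_k\subset U}|c_k|=\infty$ for \emph{every} open $U$, not just $\sum_{k:\,x_k\in U}$ — this holds for the dyadic placement once $\delta_k\ll 2^{-j_k}$, but it is the piece of the construction that actually delivers the everywhere-locally-infinite-BV conclusion and deserves an explicit line.
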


\vspace{.1in}
An example of initial data in $X \setminus BV((-R,R); \mathcal{K})$ is given by quickly oscillating between states $b_1,\ b_2 \in \mathcal{K},$
\begin{equation} \label{eq:ex-unique}
    W(x) = b_1 + \frac{b_2 - b_1}{2} \left(\sin\left(\Gamma(|x|^{-1})\right) + 1 \right),
\end{equation}
where $ \Gamma\colon[0,\infty ) \to [0,\infty ) $ is a suitabily selected function increasing to infinity. In the proof of \Cref{cor:uniqueness} we then take a convex combination of data, similar to~\eqref{eq:ex-unique}, to exhibit a $v\in X$ for which $\norm{v}_{BV((a,b))} = \infty$ for all $(a,b) \subset (-R,R)$.

For a $2\times2$ strictly hyperbolic, genuinely nonlinear system, e.g. any system verifying our Assumption \ref{assum}, we have existence of entropy-weak solutions for initial data with sufficiently small $L^\infty$ norm, with no restriction on the BV norm which may be infinite. See the work of Glimm-Lax \cite{MR0265767}, as well as the more recent paper by Bianchini-Colombo-Monti \cite{MR2737438} and in particular Theorem 1.1 in \cite{MR2737438}.

For $2\times 2$ systems, we also have existence for general $L^\infty$ initial data by compensated compactness (see e.g. \cite{MR506997,MR584398,MR725524,MR454375,MR922139,MR1284790,MR1383202} as well as \cite[Section 17.9]{dafermos_big_book}). However, for these solutions it remains an open question as to whether or not the Strong Trace Property (\Cref{strong_trace_prop}) shall hold. For recent work on trying to find the Strong Trace Property for these solutions, see \cite{golding}. On the other hand, at least if the condition \eqref{eq:entropy-eqn} is not required, the Strong Trace Property may be lost even for the $p$-system \cite{2024arXiv240307784K}.

Remark that the Glimm-Lax solutions instantaneously enter $\text{BV}_{\text{loc}}$ and thus verify the Strong Trace Property. This implies that the Glimm-Lax solutions will \emph{always live in our set} $\mathcal{S}_{\text{weak}}$.

For additional work on solutions with large but \emph{finite} total variation, see \cite{MR1375345,MR1828320,MR1883740,MR2091511}.

\vspace{.08in}

Our uniqueness result (\Cref{cor:uniqueness}) compares to the recent and very interesting work of Bressan-Marconi-Vaidya \cite{2025arXiv250500420B}, where they show uniqueness and stability for a subset of the infinite-BV, small $L^\infty$ solutions arising from the Glimm-Lax theory and with certain enhanced decay estimates assumed to hold a priori.

In our result for isothermal Euler (\Cref{cor:uniqueness}), we can in fact give the uniqueness of certain large $L^\infty$, infinite-BV solutions. 

The initial data we can consider in \Cref{cor:uniqueness} is closely related to the sets of initial data $(\tilde P_\alpha)$ considered in \cite{2025arXiv250500420B}. In particular, see \Cref{lem:X-G-inc} below. The sets $(\tilde P_\alpha)$ contain initial data which may possess infinite total variation, but almost all of this variation is localized on finitely many open intervals whose combined length is small. The sets $(\tilde P_\alpha)$ do not contain data that is locally infinite-BV everywhere.  In  \cite[Theorem 1.2]{2025arXiv250500420B} it is shown that for initial data in this set $(\tilde P_\alpha)$, solutions exist with a fast decay rate satisfying their uniqueness theorem  \cite[Theorem 1.1]{2025arXiv250500420B}. On the other hand, our techniques do not require an a priori assumption for a fast decay rate. Some subset of the Glimm-Lax solutions have initial data in our set $X$ including solutions with locally infinite BV everywhere data (see \eqref{eq:def-class-X}) and in fact we show that these solutions are unique in $\mathcal{S}_{\text{weak}}$.

The class of infinite-BV solutions we can show uniqueness for is a function of how the growth of the universal constant $K$ from \eqref{main_estimate} depends on our control on the bounded variation (see the constant $M$ in  \eqref{large_BV_data}). The dependence of $K$ on $M$ is a result of the constants which appear in the front tracking scheme \cite{ColomboRisebro} (see e.g. \eqref{iso_constant}, \eqref{bound1_a_iso}, and \eqref{bound2_a_iso}, below). Thus, an interesting new line of research questions will be to develop front tracking schemes to optimize this constant $K$. 

\subsection{Sharpness of our estimate}\label{sec:sharp}

\begin{figure}
    \centering
    \begin{subfigure}[b]{0.45\textwidth}
        \includegraphics[width=\textwidth]{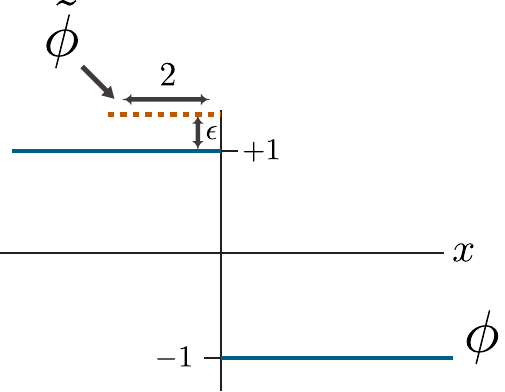}
        \caption{$t=0$}
        \label{fig:init}
    \end{subfigure}
    ~ 
    \begin{subfigure}[b]{0.45\textwidth}
        \includegraphics[width=\textwidth]{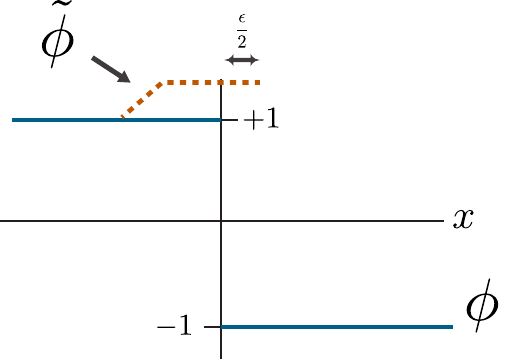}
        \caption{$t=1$}
        \label{fig:later_time}
    \end{subfigure}
    ~ 
    \caption{Two solutions $\phi$ and $\tilde\phi$ to Burgers' equation, at times $t=0$ and $t=1$. The function $\tilde\phi$ agrees with $\phi$ everywhere except where noted by the dashed line. Diagram not to scale.}\label{fig:why_holder}
\end{figure}

We now briefly show that the Hölder exponent $\frac{1}{2}$ in \Cref{main_theorem} is sharp. 

Consider Burgers', the scalar conservation law given by $u_t+(\frac{u}{2})_x=0$. Consider pure-shock initial data of the form
\begin{align}
        \phi(x,0) =
  \begin{cases}
    +1      & \quad \text{if } x<0\\
    -1  & \quad \text{if } x>0.
  \end{cases}
\end{align}
Remark that the classical solution $\phi$ with this initial data will be a steady state. On the other hand, for $\epsilon>0$ sufficiently small consider an  $\epsilon$-perturbation of this initial data given by a function $\tilde\phi(\cdot,0)$ (see \Cref{fig:init}). Due to the Rankine-Hugoniot jump condition, the perturbation induces a velocity of $\frac{\epsilon}{2}$ in the shock discontinuity. At the initial time, we have
\begin{align}
\norm{\phi(\cdot,0)-\tilde\phi(\cdot,0)}_{L^2(\mathbb{R})}=\sqrt{2}\epsilon.
\end{align}

However, at time $t=1$ (see \Cref{fig:later_time}) we have
\begin{align}
\norm{\phi(\cdot,1)-\tilde\phi(\cdot,1)}_{L^2(\mathbb{R})}\geq\sqrt{\frac{\epsilon}{2}}.
\end{align}

Thus, we cannot expect better than Hölder-$\frac{1}{2}$ stability in general.

\subsection{BV, infinite-BV, and the Strong Trace Property (\Cref{strong_trace_prop})}
In this paper, we consider both the solutions with small bounded variation (small-BV) as well as the classical solutions with large bounded variation (large-BV). We show the Hölder stability of these solutions in the class of possibly infinite-BV, large $L^\infty$ solutions we call $\mathcal{S}_{\text{weak}}$. Due to the Strong Trace Property, infinite-BV solutions which have behavior in a fashion similar to $x\mapsto \sin(1/x)$ at each fixed time $t$ are excluded from $\mathcal{S}_{\text{weak}}$. On the other hand, solutions $u$ which behave similar to $x\mapsto \sqrt{\abs{x}}\sin(1/\abs{x})$ at each fixed time $t$ will live inside $\mathcal{S}_{\text{weak}}$. The function $\sqrt{\abs{x}}\sin(1/\abs{x})$ is also infinite-BV, but it is continuous and  the oscillations are somehow much better. Solutions like this fit into our large data theory. The function $\sqrt{\abs{x}}\sin(1/\abs{x})$  has vanishing oscillations on a small scale – this contrasts with the extreme oscillations caused by convex integration for example. It is the difference between the spaces $\text{VMO}$ and $\text{BMO}$. We have counterexamples with such extreme oscillations, including the third author's recent result on bounded variation blow up \cite{2024arXiv240307784K}. We refer the reader to \Cref{fig:function_spaces}.

\begin{figure}
    \centering
        \includegraphics[width=\textwidth]{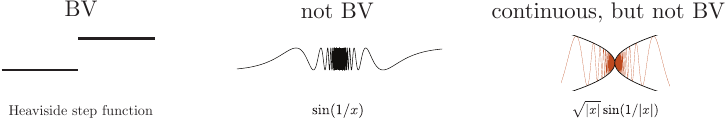}
    \caption{A solution $u$ to \eqref{system} or \eqref{isothermal} may be finite BV-norm at each fixed time, like the heaviside function. Or it may have infinite BV, and lack the Strong Trace Property, if it behaves like $\sin(1/x)$. On the other hand, it may have infinite BV while staying continuous like $\sqrt{\abs{x}}\sin(1/\abs{x})$ (and thus have the Strong Trace Property!).}\label{fig:function_spaces}
\end{figure}

\subsection{Idea of the proof and comparison with previous works}

Recently, there has been a tremendous amount of research devoted to understanding the stability of \emph{large} data solutions in $L^2$. For decades, it has been assumed that the natural space to study shock waves is $L^1$ (see e.g. \cite{MR1816648,MR1686652}). The difficulty is that in other spaces such as $L^2$, slight perturbations at a shock may cause large growth as explained in \Cref{sec:sharp} above. The way to get around this difficulty has been to actually relax the notion of stability and consider stability only \emph{up to a shift}. This approach was devised by Vasseur \cite{VASSEUR2008323}. In this methodology, we study the stability of a shock in the class of large data $L^2$ solutions by allowing the shock to move with an artificial velocity dictated by a so-called \emph{shift function} (see the early work of Leger \cite{Leger2011_original}). By introducing an artificial velocity, the new function is no longer the solution to any partial differential equation (PDE) but on the other hand the $\sqrt{\epsilon}$ error from the ``instability'' example in \Cref{sec:sharp} is prevented. 

In the front tracking scheme, piecewise-constant approximate solutions are considered. By introducing shift functions \emph{at each shock discontinuity}, the stability of front tracking solutions, and limits of front tracking solutions, can be studied. This leads to the first work, by Vasseur, the first author, and the third author, showing that limits of front tracking solutions are stable even in the class of large $L^2$ perturbations and do not experience blow up in this large class \cite{MR4487515}. This work has also lead to the recent resolution of the problem of the \emph{physical} vanishing viscosity limit going from compressible Navier-Stokes to Euler in 1-D \cite{2024arXiv240109305C}.

However, none of these $L^2$ stability results are able to explicitly \emph{quantify} the stability between the shifted solutions and the large data $L^2$ solution. Let us explain the fundamental issue. Consider an approximate front tracking solution $v$, and also an approximate front tracking solution $\psi$ \emph{with shifts}, with coinciding initial data $v(\cdot,0)=\psi(\cdot,0)$. Consider also a large data $L^2$ solution $u\in\mathcal{S}_{\text{weak}}$. By the $L^2$ stability theory described above, we know that the $L^2$ quantity $\norm{u(\cdot,t)-\psi(\cdot,t)}_{L^2}$ is stable in time in a nicely controlled way due to the use of shift functions in $\psi$. But $\psi$ is not a solution to any PDE. A quantity of interest would be $\norm{u(\cdot,t)-v(\cdot,t)}_{L^2}$, which we could study by understanding the stability of $\norm{v(\cdot,t)-\psi(\cdot,t)}_{X}$ for an appropriate normed space $X$. In this setting, $\norm{v(\cdot,t)-\psi(\cdot,t)}_{X}$ is a measure of ``how much we have to shift $\psi$'' to maintain the $\norm{u(\cdot,t)-\psi(\cdot,t)}_{L^2}$ stability in $L^2$. However, due to the shifts, the function $v$ will have wave interactions at potentially different times, and in potentially different orders (!!), than the function $\psi$. This will cause the function $v$ and $\psi$ to very quickly lose the property of being related to each other by a simple change of variables. Once this occurs, and $v$ and $\psi$ are no longer related by a change of variables, how do we measure $\norm{v(\cdot,t)-\psi(\cdot,t)}_{X}$? This is the \emph{change of variables problem} (see Section 3.4 of \cite{GiesselmannKrupa2025} as well as \cite[p.~11]{MR4487515}). 

In \cite{MR4487515}, the answer is not to solve the change of variables problem directly, but instead ``give up'' \cite[p.~11]{MR4487515} and use instead the framework for proving uniqueness (without stability) developed by the third author and Vasseur \cite{2017arXiv170905610K}. The idea is to use the good control on $\norm{u(\cdot,t)-\psi(\cdot,t)}_{L^2}$ to show that in fact $u$ must verify some condition/criteria which means it is actually unique. In the systems case, uniqueness criteria include the Tame Oscillation Condition \cite{MR1701818,MR1489317}, Bounded Variation Condition \cite{MR1757395}, and recent work shows the Liu entropy condition suffices as well \cite{MR4661213}. The work \cite{MR4487515} shows that in fact, these uniqueness results are not necessary for $2\times 2$ hyperbolic systems with a strictly convex entropy because it is shown that solutions will always verify these criteria.

In this present work, we solve the change of variables problem. We estimate precisely $\norm{v(\cdot,t)-\psi(\cdot,t)}_{X}$ and thus quantify $\norm{u(\cdot,t)-v(\cdot,t)}_{L^2}$, uniform in the limit of the front tracking approximations $v$.

Thus, with this work, we obviate the need for the earlier $L^2$ stability theories based on detecting uniqueness criteria (see \cite{MR4487515,2017arXiv170905610K,CKV2,Cheng_isothermal}). In particular, we study the stability of the large-BV front tracking solutions which are known to exist for isothermal Euler and  Temple-class systems \cite{ColomboRisebro}. In fact, in certain cases we are now able to consider infinite BV data. This is notable, because the uniqueness criteria (Tame Oscillation Condition, Bounded Variation Condition, and Liu entropy, discussed above) are classically only known to hold for BV data. The third author of the present paper recently showed a counterexample to uniqueness in the class of infinite-BV solutions which vacuously verify the Liu entropy condition \cite{2024arXiv240307784K}.

Our work has three major ingredients:
\begin{itemize}
\item \textbf{Quantified dissipation, with applications to the inviscid limit}\hspace{.1in} To control the shift, we need to understand precisely how much dissipation is caused by shifting a discontinuity. All previous constructions of shift functions (see e.g. \cite{serre_vasseur,MR4667839,MR4184662,move_entire_solution_system}) provably produce suboptimal quantities of dissipation (see the discussion after \Cref{prop:cont} and also \cite{scalar_move_entire_solution}) which would prevent our quantifying scheme from working. In this work, we produce  dissipation estimates (see \Cref{prop:small_shock_diss}), which are provably optimal as shown in the case of Burgers' equation (cf. \cite[Proposition 2.1]{MR4305935}). This new construction is quite delicate and we postpone it to \Cref{sec:proof_prop:small_shock_diss}.  We expect our \Cref{prop:small_shock_diss} to have future applications to the study of the inviscid limit in various contexts. This relates to previous work on the inviscid limit in the $L^2$ theory, e.g. the recent milestone \cite{2024arXiv240109305C}.
\item \textbf{Front tracking error estimate}\hspace{.1in} We construct new error estimates on approximate front tracking solutions when an artificial shift is introduced (see \Cref{lemma:front_tracking_error_estimate}, below). This is based on the constructions of Bressan-Colombo \cite{BressanC} and Colombo-Risebro \cite{ColomboRisebro} and holds for both small-BV front tracking constructions as well as large-BV front tracking constructions.
\item \textbf{Construction of the space-time weight}\hspace{.1in} In previous works \cite{MR4487515,CKV2,Cheng_isothermal,2024arXiv241103578C} stability of the quantity $\norm{u(\cdot,t)-\psi(\cdot,t)}_{L^2}$ is contingent on finding a suitable inhomogeneous and time-dependent weighting of the spatial domain given by a function $a(x,t)$ (see also \cite{MR3519973}). This weight is usually constructed by running a secondary front tracking scheme to build the weight itself. We give a new technique for showing the existence of such a weight $a$, which is very general and applies to both large and small-BV approximate front tracking solutions (see \Cref{prop:iso_a}, below).
\end{itemize}

\subsection{Connections to numerical analysis}\label{sec:numerical}
Our work is motivated by recent work of Jan Giesselmann and the third author which gave the first quantitative estimates in the $L^2$-type theory based on shifts \cite{GiesselmannKrupa2025}. This preliminary work \cite{GiesselmannKrupa2025} is focused on numerical a posteriori error estimates for conservation laws, using the Burgers' equation as a test case. We expect the present result (\Cref{main_theorem}) to open new directions for a large-data numerical analysis theory for 1-D systems (cf. recent work such as \cite{MR4271961} which does not apply to large-BV numerical solutions). This is especially relevant in light of work indicating that for finite difference schemes, a priori estimates on the BV norm are \emph{not} possible \cite{MR2254446}. For a fixed ``reference solution'' with data from either the sets \eqref{small_BV_data} or \eqref{large_BV_data}, playing the role of $v$ in \Cref{main_theorem}, our estimate \eqref{main_estimate} does not have any dependence on the BV-norm of the ``rough'' solution $u\in\mathcal{S}_{\text{weak}}$ which in future work we may choose to instead be a numerical solution arising from some numerical scheme. 

We remark it is an open question if the BV norm of even an \emph{exact} solution to \eqref{system} may or may not grow without bound (this is Open Problem \# 1 of Bressan \cite{MR4855159}). 

We also remark that numerical solutions can be designed to \emph{always verify the Strong Trace Property} (see \cite{GiesselmannKrupa2025,MR2404052}).



\subsection{Plan for the paper}

In \Cref{sec:prelim} we state precisely our assumptions on the system \eqref{system} for (Problem $\epsilon$-BV) and state some basic concepts. In \Cref{sec:estimates} we state the various dissipation estimates we need on shock and rarefaction waves, including the fine control we have on small shocks (\Cref{prop:small_shock_diss}). In \Cref{section:front} we introduce the front tracking algorithm technology developed by \cite{BressanC} and \cite{ColomboRisebro}, which we use extensively in our work. In \Cref{sec:weight} we give the construction of the inhomogeneous weight $a(x,t)$. Before we can do the actual construction, we first need to state various results on wave interactions. In \Cref{sec:proof_main_theorem} we prove the Main Theorem (\Cref{main_theorem}). In \Cref{sec:cor} we prove \Cref{cor:uniqueness}. Last but not least, we give the proof of our dissipation estimate on small shocks (\Cref{prop:small_shock_diss}) in \Cref{sec:proof_prop:small_shock_diss}.

\subsection{Notation}\label{sec:not}
In this paper we use the following notational conventions:
\begin{itemize}
    \item $\epsilon$ is a small constant which is used to localize our system (\eqref{system} or \eqref{isothermal}) around the point $d\in\mathcal{V}$ (see e.g. \eqref{small_BV_data}).
    \item $\nu\ll 1$ is the parameter in the front tracking approximation, which in the limit $\nu\to0$ yields exact solutions to (Problem $\epsilon$-BV) or (Problem Isothermal).
    \item A constant $K$, which may change from line to line in calculations, depends on the time interval of interest $[0,T]$, $d$, $\epsilon$, $M$ (see \eqref{large_BV_data}), the set $\mathcal{W}$ (see \eqref{small_BV_data},  \eqref{large_BV_data}), the system~\eqref{system}, and the entropy $\eta$. 

    \item The shorthand ``$a \lesssim b$'' is to be understood as ``There exists a universal constant $K$ such that $a \leq K b$.''
    \item The shorthand ``$a\sim b$'' is to be understood as ``There exists a universal constant $K$ such that $b/K \leq a \leq K b$.''
    \item The notation $a = b + \bigO(g)$ denotes $|a-b| \lesssim g$. If the function $g$ explicitly depends on $M$ or $\epsilon$, then the implicit constant $K$ will not depend on $M$ or $\epsilon$.
    \item For a function $f$, we always use $(f)_\pm$ to denote $(|f|\pm f)/2$, i.e. the positive/negative part of $f$. 
\end{itemize}

\section{Assumptions and Preliminaries}\label{sec:prelim}
\begin{assumption}\label{assum}
    Recall from the introduction that the flux function  $f=(f_1,\cdot\cdot\cdot, f_n)\in  [C^4(\bar{\mathcal{V}})]^n$.
    In addition to this, we assume the following conditions on the system:
    \begin{enumerate}[label=(\alph*)]
    \item \label{assum:hyperbolic} For any $u\in \Nu$, the matrix $f'(u)$ diagonalizable with eigenvalues verifying $\lambda_1(u)<\lambda_2(u)$ and $\lambda_{n-1}(u)<\lambda_n(u)$. 
    We denote $r_i(u)$ a unit eigenvector associated to the eigenvalue $\lambda_i(u)$ of $i = 1,\, n$.
    \vskip0.1cm
    \item  \label{assum:gnl} For any $u\in\Nu$ and $i=1,\ n$, we assume $\lambda'_i(u)\cdot r_i(u)\neq0$.
    \vskip0.1cm
    \item \label{assum:entropy-entropyflux} There exists a strictly convex function $\eta\in  C^3(\bar{\mathcal{V}})$ and a function $q\in C^3(\bar{\mathcal{V}})$ satisfying
    \begin{equation*}\label{eq:entropy}
    q'=\eta'f' \quad \mathrm{on} \quad \Nu.
    \end{equation*}
    \vskip0.1cm
    \item\label{assum:convex_left_eign} For any state $b \in \mathcal{V}$, and any left eigenvector $\ell_i$ of $f'(b)$ corresponding in $\lambda_i(b)$, $i = 1,\ n$ the function $u \mapsto \ell_i \cdot f(u)$ is either convex or concave on $\mathcal{V}$. 
    \vskip0.1cm
    \item \label{assum:bounded-char-speed} There exists $L>0$ such that $|\lambda_i(u)|\leq L$ for any $u\in \Nu$ and $i=1,n$.
    \item \label{assum:shock-curve-param}For $u_L\in \Nu$, we denote $s\mapsto  S^1_{u_L}(s)$ the $1$-shock  curve through $u_L$ defined for $s\in[0,s_{u_L})$. See \Cref{lem_hugoniot} for the definition of $s_{u_L}$ (remark that possibly $s_{u_L}=+\infty$). We choose the parametrization such that $s=|u_L-S^1_{u_L}(s)|$. Therefore, $(u_L, S^1_{u_L}(s), \sigma^1_{u_L}(s))$ is the $1$-shock with left hand  state $u_L$ and strength $s$. Similarly, we define $s\mapsto S^n_{u_R}$ to be the $n$-shock curve such that $(S^n_{u_R}(s), u_R, \sigma^n_{u_R}(s))$ is the $n$-shock with right hand state $u_R$ and strength $s$. We assume that these curves are defined globally in $\Nu$ for every $u_L\in \Nu$ and $u_R\in \Nu$.
    
    \item \label{assum:1-shock-lax-cond-greater} (for 1-shocks) If $(u_L,u_R)$ is an entropic Rankine-Hugoniot discontinuity with shock speed $\sigma$, then $\sigma>\lambda_1(u_R).$
    \item \label{assum:1-shock-lax-cond-le} (for 1-shocks) If $(u_L,u_R)$ with $u_L \in \mathcal{V}$ is an entropic Rankine-Hugoniot discontinuity with shock speed $\sigma$ verifying,
    \begin{align*}
    \sigma\leq \lambda_1(u_L),
    \end{align*}
    then $u_R$ is in the image of $S^1_{u_L}$. That is, there exists $s_{u_R}\in[0,s_{u_L})$ such that $S^1_{u_L}(s_{u_R})=u_R$ (and hence $\sigma=\sigma^1_{u_L}(s_{u_R})$).
    \item \label{assum:n-shock-lax-cond-less} (for n-shocks) If $(u_L,u_R)$ is an entropic Rankine-Hugoniot discontinuity with shock speed $\sigma$, then $\sigma<\lambda_n(u_L).$
    \item \label{assum:n-shock-lax-cond-ge} (for n-shocks) If $(u_L,u_R)$ with $u_R \in \Nu$ is an entropic Rankine-Hugoniot discontinuity with shock speed $\sigma$ verifying,
    \begin{align*}
    \sigma\geq \lambda_n(u_R),
    \end{align*}
    then $u_L$ is in the image of $S^n_{u_R}$. That is, there exists $s_{u_L}\in[0,s_{u_R})$ such that $S^n_{u_R}(s_{u_L})=u_L$ (and hence $\sigma=\sigma^n_{u_R}(s_{u_L})$).
    \item \label{assum:rel-ent-strengthens} For $u_L\in \Nu$, and  for all $s>0$,  $\ds{\frac{d}{ds}\eta(u_L | S^1_{u_L}(s))}>0$ (the shock ``strengthens" with $s$).
    Similarly, for $u_R\in \Nu$, and for all $s>0$, $\ds{\frac{d}{ds}\eta(u_R | S^n_{u_R}(s))}>0$. Moreover, for each $u_L,u_R\in \Nu$ and $s > 0$, $\frac{d}{ds}\sigma^1_{u_L}(s) < 0$ and $\frac{d}{ds}\sigma^n_{u_R}(s) > 0$.
    \end{enumerate}
\end{assumption} 

These assumptions are fairly general and apply to all systems of physical interest. They are at this point common in the $L^2$ theory (see e.g. \cite{MR4487515,MR4667839}). The first assumption ensures the strict hyperbolicity of the extremal characteristic families. The second assumption says that the extremal families are genuinely nonlinear. The third assumption is motivated by the second law of thermodynamics. The fourth assumption guarantees a contraction property for rarefaction waves measured in $L^2$ (cf. \Cref{pizza_slice_diss}), which in particular holds for the isentropic Euler system. The assumption \ref{assum:bounded-char-speed} provides a global bound on the wave speed. The assumptions \ref{assum:shock-curve-param} through \ref{assum:rel-ent-strengthens} are at this point classical assumptions in the $L^2$ theory. They are satisfied by a wide family of systems, including the $3\times 3$ ``full'' Euler system as well as the system of isentropic gas dynamics (see \cite{Leger2011}).

We note that the regularity of $f$ combined with Assumptions \ref{assum} \ref{assum:hyperbolic} and \ref{assum:gnl} implies precises asymptotics and local estimates on the extremal shock curve $S^1_u$ and $S^n_u$. 
These local improvements are needed in \Cref{sec:proof_prop:small_shock_diss} where we localize our analysis near a fixed small shock.
        
    \begin{lemma}[From \protect{\cite[p.~263-265, Theorem 8.2.1, Theorem 8.3.1, Theorem 8.4.2]{dafermos_big_book}}]\label{lem_hugoniot}
    For any fixed state $v\in \Nu$, there is an open neighborhood $U$ of $v$
    such that for each $i = 1, n$, there exist functions $s_u:U \rightarrow \R$, $\sigma_u^i(s):(U\times [0,s_u)) \rightarrow \R$ and $\S^i_u(s):(U\times [0,s_u))\rightarrow {\Nu}$ satisfying the Rankine-Hugoniot condition, for any $u\in U$ and any $0 \le s < s_u$,
    \begin{equation}
    f(\S^i_u(s)) - f(u) = \sigma_u^i(s)\left(\S^i_u(s) - u\right)
    \end{equation}
    and the Lax admissibility criterion, for $u \in U$ and $0 < s < s_u$,
    \begin{equation}
    \lambda^i(\S^i_u(s)) < \sigma_u^i(s) < \lambda^i(u).
    \end{equation}
    Furthermore, $u \mapsto s_u$ is Lipschitz, $(u,s) \mapsto \sigma^i_u(s)$ is $C^3$, and $(u,s) \mapsto \S^i_u(s)$ is $C^3$. Finally, $\sigma^i_u(s)$ satisfies the asymptotic expansion,
    \begin{equation}
    \sigma_u^i(s) = \frac{1}{2}\left(\lambda^i(u) + \lambda^i(\S^i_u(s))\right) + \bigO(s^2)
    \end{equation}
    and similarly $\S^i_u(s)$ satisfies the asymptotic expansion
    \begin{equation}
    \S^i_u(s) = u + r_i(u)s + \frac{s^2}{2}\nabla r_i(u) r_i(u) + \bigO(s^3).
    \end{equation}
    \end{lemma}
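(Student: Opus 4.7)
The statement is essentially a repackaging of the classical Lax--Dafermos shock-curve construction, so my plan is to follow the standard implicit-function-theorem argument, verify that the $C^4$ regularity of $f$ together with Assumption \ref{assum}\ref{assum:hyperbolic}--\ref{assum:gnl} yield the stated $C^3$ regularity of the parametrization, and then compute the two asymptotic expansions by Taylor expansion. I will treat the case $i = 1$; the $i = n$ case is identical.

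First I would rewrite the Rankine--Hugoniot relation using the averaged Jacobian
\begin{equation*}
A(u,w) \;=\; \int_0^1 f'\bigl(u + \tau (w-u)\bigr)\,\D\tau,
\end{equation*}
so that $f(w) - f(u) = \sigma (w-u)$ becomes $A(u,w)(w-u) = \sigma (w-u)$. In other words, $(w-u)$ must be a right eigenvector of $A(u,w)$ with eigenvalue $\sigma$. Since $A(u,u) = f'(u)$ has the simple eigenvalue $\lambda_1(u)$ with unit eigenvector $r_1(u)$ (by Assumption \ref{assum}\ref{assum:hyperbolic}), perturbation theory gives $C^3$ functions $\lambda_1(u,w)$, $r_1(u,w)$ on a neighborhood of the diagonal with $\lambda_1(u,u) = \lambda_1(u)$, $r_1(u,u) = r_1(u)$. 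Writing $w = u + s\, r_1(u,w) + $ (component in a transverse hyperplane), the requirement that $(w-u)$ be parallel to $r_1(u,w)$ reduces to $n-1$ scalar equations to which I apply the implicit function theorem, obtaining a $C^3$ curve $s \mapsto S^1_u(s)$ with $S^1_u(0) = u$ and $\sigma^1_u(s) := \lambda_1(u, S^1_u(s))$ of class $C^3$. The parameter $s_u$ is then defined as the supremum of $s$ for which the construction extends (equivalently, for which $S^1_u(s)$ remains in $\Nu_0$ and hyperbolicity/genuine nonlinearity holds along the curve); the Lipschitz dependence of $s_u$ on $u$ follows from the Lipschitz dependence of the defining inequalities on the base point.

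Next I would pin down the parametrization $s = |S^1_u(s) - u|$ and derive the asymptotic expansions. Differentiating the Rankine--Hugoniot identity at $s=0$ and using $A(u,u) = f'(u)$, one obtains $\dot S^1_u(0) = r_1(u)$, which justifies the normalization. Computing the second derivative and using $\sigma^1_u(0) = \lambda_1(u)$ yields $\ddot S^1_u(0) = \nabla r_1(u)\, r_1(u)$, which together with Taylor's theorem produces
\begin{equation*}
S^1_u(s) \;=\; u + r_1(u)\,s + \tfrac{s^2}{2}\,\nabla r_1(u)\, r_1(u) + \bigO(s^3).
\end{equation*}
For the speed, I would differentiate $A(u,S^1_u(s))\,(S^1_u(s)-u) = \sigma^1_u(s)(S^1_u(s)-u)$ twice in $s$ at $s=0$, isolate $\dot \sigma^1_u(0)$ by pairing with the left eigenvector $\ell_1(u)$, and compare with the Taylor expansion of $\frac{1}{2}\bigl(\lambda_1(u) + \lambda_1(S^1_u(s))\bigr)$; the first-order terms agree (both equal $\tfrac12 \nabla \lambda_1(u)\cdot r_1(u)$), and an explicit second-order cancellation, which is where the main bookkeeping work lies, gives the stated $\bigO(s^2)$ error.

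Finally, the Lax inequality $\lambda_1(S^1_u(s)) < \sigma^1_u(s) < \lambda_1(u)$ for $s > 0$ follows from the asymptotic expansions combined with Assumption \ref{assum}\ref{assum:gnl}: by choosing the orientation of $r_1$ so that $\nabla \lambda_1 \cdot r_1 < 0$ (the opposite sign case is handled symmetrically), the expansion gives $\sigma^1_u(s) - \lambda_1(u) \sim \tfrac12 (\nabla\lambda_1\cdot r_1)\, s < 0$ and $\sigma^1_u(s) - \lambda_1(S^1_u(s)) \sim -\tfrac12(\nabla\lambda_1\cdot r_1)\, s > 0$ for small $s$; a continuation argument along the curve, together with the compatibility of the entropic Lax condition with Assumption \ref{assum}\ref{assum:1-shock-lax-cond-greater}--\ref{assum:1-shock-lax-cond-le}, extends this to the whole interval $[0,s_u)$. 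The main obstacle in a rigorous write-up is the second-order expansion of $\sigma^1_u(s)$: isolating the quadratic term without spurious $\bigO(s)$ contributions requires a careful double differentiation of the averaged Jacobian identity and use of the eigenvector normalization; all the other pieces are routine. Since the full argument is carried out in detail in Dafermos \cite[Thms.~8.2.1, 8.3.1, 8.4.2]{dafermos_big_book}, I would simply cite those results.
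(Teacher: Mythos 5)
Your sketch correctly reproduces the classical Lax--Dafermos construction: the averaged-Jacobian reformulation $A(u,w)(w-u)=\sigma(w-u)$, an implicit-function-theorem parametrization of the $i$-shock curve, the $C^3$ regularity inherited from $f\in C^4$, the Taylor expansions of $\S^i_u$ and $\sigma^i_u$, and the Lax inequalities from genuine nonlinearity plus continuation. The paper does not prove this lemma itself but cites \cite[Thms.~8.2.1, 8.3.1, 8.4.2]{dafermos_big_book}, which is exactly the argument (and reference) you use, so your approach matches the paper's.
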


    \subsection{Method of relative entropy}\label{sec:idea}

We will use the relative entropy method first introduced by Dafermos \cite{MR546634} and DiPerna \cite{MR523630}. From the assumption of the existence of a convex entropy $\eta$, we define an associated pseudo-distance defined for any $a,b\in \Nu\times \Nu$:
\begin{align}
\eta(a|b)=\eta(a)-\eta(b)-\nabla\eta(b)(a-b).
\end{align}
The quantity  $\eta(a|b)$ is called the relative entropy of $a$ with respect to $b$, and is equivalent to $|a-b|^2$.  
We also define the relative entropy-flux: For $a,b\in\mathbb{R}^2$,
\begin{align}
q(a;b)=q(a)-q(b)-\nabla\eta(b)(f(a)-f(b)).
\end{align}
The strength of this notion, is that if $u$ is a weak solution of \eqref{system} or \eqref{isothermal} as well as \eqref{eq:entropy-eqn}, then $u$ verifies also the full family of entropy inequalities  for any constant $b\in \Nu$:
\begin{equation}\label{ineq:relative}
(\eta(u|b))_t+(q(u;b))_x\leq 0.
\end{equation}

This fact is well-known and in particular follows immediately from the proof of Theorem 5.2.1 in \cite{dafermos_big_book}.
\vskip0.3cm
For the family of Euler systems, it is well known that the relative entropy provides a contraction property for the rarefaction fan function $t,x\to b(x,t)$, even in multi-D (\cite{MR2303477,MR3357629}, also see \Cref{pizza_slice_diss} below). This is because the Euler systems verify Assumption \ref{assum} \ref{assum:convex_left_eign}. In fact, we restrict our study to systems which verify this property.
\vskip0.3cm
We state some simple but useful properties of the relative quantities.

\begin{lemma}\label{rel_facts_lemma} For any $\mathcal{W}$ open subset of $\mathcal{V}$ with $\overline{\mathcal{W}} \subset \mathcal{V}$, there exists a constant $K_1 > 0$ such that
\begin{align*}
    |q(a; b)| &\leq K_1 \eta(a|b), \quad \forall (a,b) \in \mathcal{V} \times \overline{\mathcal{W}}, \\
    |q(a; b_1) - q(a; b_2)| &\leq K_1 |b_1 - b_2|, \quad \forall (b_1, b_2) \in \overline{\mathcal{W}}^2, \quad a \in \mathcal{V}, \\
    |\eta(a|b_1) - \eta(a|b_2)| &\leq K_1 |b_1 - b_2|, \quad \forall (b_1, b_2) \in \overline{\mathcal{W}}^2, \quad a \in \mathcal{V}.
\end{align*}
\end{lemma}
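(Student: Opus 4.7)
The plan is to handle each of the three bounds separately, exploiting the compactness of $\overline{\mathcal{W}} \subset \mathcal{V}$ and the boundedness of $\mathcal{V}_0$, together with the compatibility relation $q' = \eta' f'$ assumed throughout.

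For the first bound $|q(a;b)| \leq K_1 \eta(a|b)$, I would split into a near-diagonal regime $|a-b| < \delta$ and a far regime $|a-b| \geq \delta$, for some $\delta > 0$ fixed so small that the $\delta$-neighborhood of $\overline{\mathcal{W}}$ still sits inside $\mathcal{V}$ (possible since $\overline{\mathcal{W}} \subset \mathcal{V}$ is compact and $\mathcal{V}$ is open). In the near-diagonal regime, Taylor expansion in $a$ around $b$ of both $q(a;b)$ and $\eta(a|b)$ yields the key vanishing-first-derivative identity: $\partial_a q(a;b)\big|_{a=b} = q'(b) - \nabla\eta(b)\, f'(b) = 0$, by the compatibility relation. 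Thus $|q(a;b)| \leq C|a-b|^2$ with $C$ determined by $C^2$-bounds of $q$, $\nabla\eta$, and $f$ on a compact neighborhood of $\overline{\mathcal{W}}$. Simultaneously, by strict convexity of $\eta$ on $\mathcal{V}$, $\nabla^2\eta(b) \geq cI$ uniformly for $b \in \overline{\mathcal{W}}$, yielding $\eta(a|b) \geq (c/2)|a-b|^2$. In the far regime, I would use compactness of $\mathcal{V}_0 \times \overline{\mathcal{W}} \cap \{|a-b| \geq \delta\}$: continuity of $q$ and $f$ on $\mathcal{V}_0$ and of $\nabla\eta$ on $\overline{\mathcal{W}}$ give a uniform upper bound on $|q(a;b)|$, while strict convexity (which forces $\eta(a|b) > 0$ whenever $a \neq b$) combined with continuity gives a uniform positive lower bound on $\eta(a|b)$ on this compact set. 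Combining the two regimes yields a single universal constant.

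For the second bound $|q(a;b_1) - q(a;b_2)| \leq K_1 |b_1 - b_2|$, direct algebraic expansion gives
\begin{align*}
q(a;b_1) - q(a;b_2) = \bigl[q(b_2) - q(b_1)\bigr] + \bigl[\nabla\eta(b_2) - \nabla\eta(b_1)\bigr]\, f(a) - \bigl[\nabla\eta(b_2)\, f(b_2) - \nabla\eta(b_1)\, f(b_1)\bigr].
\end{align*}
Each bracketed expression is Lipschitz in $(b_1,b_2) \in \overline{\mathcal{W}}^2$ (using $q, \eta \in C^3(\mathcal{V})$ and compactness of $\overline{\mathcal{W}}$), uniformly for $a \in \mathcal{V}$ because $\mathcal{V}$ is bounded and $f$ is continuous on $\mathcal{V}_0$. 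The third bound follows by the identical approach applied to
\begin{align*}
\eta(a|b_1) - \eta(a|b_2) = \bigl[\eta(b_2) - \eta(b_1)\bigr] + \bigl[\nabla\eta(b_2) - \nabla\eta(b_1)\bigr]\, a - \bigl[\nabla\eta(b_2)\, b_2 - \nabla\eta(b_1)\, b_1\bigr].
\end{align*}

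The only subtlety I anticipate is in the first bound, ensuring that the two regimes patch together into one universal constant independent of the specific $(a,b)$. This requires both compactness of $\mathcal{V}_0$ (so continuity delivers uniform bounds in the far regime) and uniform coercivity $\nabla^2\eta \geq cI$ on $\overline{\mathcal{W}}$ (so quadratic control in the near regime is not lost at vacuum-approaching boundaries of $\mathcal{V}_0$). Neither constitutes a real obstacle given the hypotheses, but they are the mechanisms making the proof work and are the reason the lemma is stated with $b$ restricted to the compactly contained set $\overline{\mathcal{W}}$ rather than to all of $\mathcal{V}$.
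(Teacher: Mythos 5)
The paper itself does not prove this lemma; it points to [Lemma 7.2] of \cite{MR4487515}. Your proof is the standard argument for facts of this type, and it is essentially correct. The first derivative cancellation $\partial_a q(a;b)|_{a=b} = q'(b)-\nabla\eta(b)f'(b)=0$ coming from the compatibility relation $q'=\eta'f'$, together with the identity $q(b;b)=0$, does give $|q(a;b)|\lesssim|a-b|^2$ near the diagonal, and uniform coercivity of $\nabla^2\eta$ on a compact neighborhood of $\overline{\mathcal{W}}$ controls $\eta(a|b)$ from below there; the far regime then follows by compactness, since strict convexity forces $\eta(a|b)>0$ off the diagonal. The algebraic expansions for the second and third bounds are correct, and boundedness of $f(a)$ and of $a$ over the bounded set $\mathcal{V}$, together with Lipschitz continuity of $q$, $\eta$, $\nabla\eta$, $\nabla\eta\,f$ and $\nabla\eta\,\mathrm{id}$ on $\overline{\mathcal{W}}$, closes the argument.

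One technical point worth flagging: in the far-regime compactness step for the first inequality, you write ``compactness of $\mathcal{V}_0 \times \overline{\mathcal{W}} \cap \{|a-b|\geq\delta\}$,'' but the paper only asserts that $\mathcal{V}_0$ is bounded, not closed. To make the argument airtight one should pass to $\overline{\mathcal{V}_0}$ and use that $\eta, q, f$ extend continuously (they are assumed continuous on the larger convex set $\mathcal{G}_0$), implicitly assuming $\overline{\mathcal{V}_0}\subset\mathcal{G}_0$; the same observation applies to the claim that $f(a)$ and $a$ are bounded over $a\in\mathcal{V}$. In the intended applications this is automatic (e.g.\ for isothermal Euler $\overline{\mathcal{V}_0}$ is a compact subset of the half-space $\rho\geq 0$), but it is the one place the hypotheses are being used slightly beyond what is literally stated.
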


\Cref{rel_facts_lemma} is well-known. For a proof, we refer the reader to e.g. \cite[Lemma 7.2]{MR4487515}.

\subsection{Dissipation at a shock}

The following classical lemma gives a precise expression for the entropy dissipated by a shock. 
\begin{lemma} \label{lemma:entropy-quant}
    Given an entropy entropy-flux pair $(\eta,\ q)$, any $v\in \Nu$, and any $i = 1,\dots, n$ we have the identity
    \begin{equation}\label{eq:entropy-quant}
        q(\S_u^i(s);v) - \sigma_u^i(s) \eta(\S_u^i(s)|v) = q(u;v) - \sigma_u^i(s)\eta(u|v) + \int_0^s \dot \sigma_u^i(t) \eta(u|\S_u^i(t))\,dt.
    \end{equation}
\end{lemma}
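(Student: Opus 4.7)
The plan is a direct computation unwinding the definitions of $\eta(\cdot|\cdot)$ and $q(\cdot;\cdot)$, using the Rankine--Hugoniot condition and the compatibility relation $q' = \eta' f'$. Throughout, abbreviate $S(s) := \S_u^i(s)$ and $\sigma(s):=\sigma_u^i(s)$, so that the RH relation reads $f(S(s))-f(u) = \sigma(s)(S(s)-u)$.

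First, I would expand both sides using the definitions
\[
\eta(a|b) = \eta(a) - \eta(b) - \nabla\eta(b)(a-b), \qquad q(a;b) = q(a) - q(b) - \nabla\eta(b)(f(a)-f(b)).
\]
Subtracting the right-hand side of \eqref{eq:entropy-quant} from its left-hand side, every term involving $v$ collects into the combination $-\nabla\eta(v)[(f(S(s))-f(u)) - \sigma(s)(S(s)-u)]$, which vanishes by Rankine--Hugoniot. The identity therefore reduces to
\[
q(S(s)) - q(u) - \sigma(s)\bigl[\eta(S(s)) - \eta(u)\bigr] \;=\; \int_0^s \dot\sigma(t)\,\eta(u|S(t))\,dt.
\]
Note that $v$ has disappeared entirely, which is the key simplification — only the Hugoniot locus through $u$ matters.

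Next I would verify the reduced identity by differentiation in $s$. Denote the left-hand side by $F(s)$, so $F(0)=0$. Using $q'=\eta' f'$,
\[
F'(s) = \eta'(S(s))\bigl[f'(S(s)) - \sigma(s) I\bigr]\dot S(s) - \dot\sigma(s)\bigl[\eta(S(s))-\eta(u)\bigr].
\]
Differentiating the Rankine--Hugoniot relation in $s$ yields $\bigl[f'(S(s)) - \sigma(s)I\bigr]\dot S(s) = \dot\sigma(s)(S(s)-u)$. Substituting,
\[
F'(s) = \dot\sigma(s)\bigl[\eta'(S(s))(S(s)-u) - \eta(S(s)) + \eta(u)\bigr] = \dot\sigma(s)\,\eta(u \mid S(s)),
\]
where the last equality is just the definition of the relative entropy $\eta(u|S(s))$. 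Integrating from $0$ to $s$ using $F(0)=0$ gives the desired identity.

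The main thing to be careful about is the sign management when regrouping terms in the definition of $\eta(u|S(s))$ — getting $\eta'(S(s))(S(s)-u)$ to match $-\nabla\eta(S(s))(u-S(s))$ — and the regularity needed to differentiate $s\mapsto S(s),\ \sigma(s)$, which is supplied by \Cref{lem_hugoniot} ($C^3$ regularity on the shock curve). No obstacle of substance beyond these bookkeeping points should arise.
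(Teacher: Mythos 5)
Your proof is correct and is exactly the standard argument for this identity (it is the proof in Leger--Vasseur, which the paper cites without reproducing). The key observations — that the $v$-dependence cancels via Rankine--Hugoniot after expanding the relative quantities, that the reduced function $F(s) = q(S(s)) - q(u) - \sigma(s)[\eta(S(s))-\eta(u)]$ vanishes at $s=0$, and that $F'(s) = \dot\sigma(s)\,\eta(u|S(s))$ by combining $q'=\eta'f'$ with the differentiated Rankine--Hugoniot relation $[f'(S)-\sigma I]\dot S = \dot\sigma(S-u)$ — are precisely the three steps of the established proof, and the regularity invoked from \Cref{lem_hugoniot} is what justifies the differentiation.
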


This result in fact goes back to Lax \cite{MR0393870}. One reference for a simple proof is \cite{Leger2011} (see Lemma 3 therein).

\section{Quantitative dissipation estimates}\label{sec:estimates}
Now, given a fixed entropic shock $(u_L,u_R,\sigma)$ we consider the pseudo distance
\begin{equation} \label{eq:pseudo-dist}
    E_t(u;a_1,a_2,h) := a_1\int_{-\infty}^{h(t)} \eta(u(x,t)| u_L)\,dx + a_2\int^{\infty}_{h(t)} \eta(u(x,t)| u_R)\,dx
\end{equation}
where $a_1,a_2  > 0$ are constants to be determined and $h(t)$ is any Lipschitz shift. 
By the following Lemma, we see that if $E_t$ is decreasing we have $L^2$ control on $u$ and our shifted shock $(u_L,u_R,\dot h)$. 
\begin{lemma}[\protect{\cite{VASSEUR2008323,Leger2011}}]\label{lemma:leger-square}
    For any compact set $V \subset \Nu$ we have, for all $(u,v) \in \Nu \times V$, that 
    $$ |u-v|^2 \sim \eta(u|v).$$ 
\end{lemma}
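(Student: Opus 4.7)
The idea is to treat small and large values of $|u-v|$ separately: in the near regime we use a second-order Taylor expansion of $\eta$, and in the far regime we invoke compactness together with strict positivity of the relative entropy.

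First, since $V$ is compactly contained in the open set $\mathcal{V}$, I would fix $\delta>0$ small enough that the closed $\delta$-neighborhood $V_\delta:=\{w\,:\,\mathrm{dist}(w,V)\leq\delta\}$ still lies in $\mathcal{V}$. For $(u,v)\in\mathcal{V}_0\times V$ with $|u-v|\leq\delta$ the segment $[v,u]$ is contained in $V_\delta$, and because $\eta\in C^3(\mathcal{V})$ is strictly convex, $D^2\eta$ is continuous and positive definite on the compact set $V_\delta$; let $m,M>0$ be the uniform lower and upper bounds on its eigenvalues there. Integrating the Taylor remainder,
\begin{equation*}
\eta(u|v)=\int_0^1(1-t)(u-v)^T D^2\eta\bigl(v+t(u-v)\bigr)(u-v)\,dt,
\end{equation*}
immediately gives $\tfrac{m}{2}|u-v|^2\leq\eta(u|v)\leq\tfrac{M}{2}|u-v|^2$.

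Next, for the far regime $\Omega_\delta:=\{(u,v)\in\mathcal{V}_0\times V:|u-v|\geq\delta\}$, I would note that (after passing to the closure of $\mathcal{V}_0$ if it is not already closed) this set is compact. The map $(u,v)\mapsto\eta(u|v)$ is continuous there because $\eta\in C(\mathcal{V}_0)$ and $\nabla\eta$ is bounded on the compact set $V\subset\mathcal{V}$. By strict convexity of $\eta$, $\eta(u|v)>0$ whenever $u\neq v$, so by compactness the relative entropy attains a strictly positive minimum $c_\delta$ and a finite maximum $C_\delta$ on $\Omega_\delta$. Using $\delta^2\leq|u-v|^2\leq\mathrm{diam}(\mathcal{V}_0)^2$ on $\Omega_\delta$ converts this into the two-sided bound $\tfrac{c_\delta}{\mathrm{diam}(\mathcal{V}_0)^2}|u-v|^2\leq\eta(u|v)\leq\tfrac{C_\delta}{\delta^2}|u-v|^2$. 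Combining with the near-regime bound yields $|u-v|^2\sim\eta(u|v)$ uniformly on $\mathcal{V}_0\times V$.

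The one subtlety I would flag is strict positivity of $\eta(u|v)$ when $u$ lies on the boundary of $\mathcal{V}_0$ (e.g.\ at vacuum), since strict convexity is assumed directly only on the interior $\mathcal{V}$. This is resolved by a standard convexity argument: if $\eta(u|v)=0$ for some $u\in\mathcal{V}_0\setminus\{v\}$ with $v\in\mathcal{V}$, then $\eta$ must agree with its tangent plane at $v$ all along $[v,u]$, which by convexity forces $\eta$ to be affine on that segment; but $[v,u]$ contains a neighborhood of $v$ in $\mathcal{V}$ where $\eta$ is strictly convex, a contradiction. Thus $\eta(u|v)>0$ throughout $\Omega_\delta$ and the compactness step goes through.
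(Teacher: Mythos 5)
Your proof is correct and takes the standard two-regime approach (Taylor expansion of the Hessian near the diagonal, compactness away from it) that is exactly how this equivalence is established in the cited sources \cite{VASSEUR2008323,Leger2011}; the paper itself simply quotes the result without reproving it. Two small points worth being aware of: the near-regime lower bound $m>0$ requires reading ``strictly convex'' as ``positive-definite Hessian'' on $\mathcal{V}$ (the standard convention in this literature, and indeed necessary for the statement to hold at all, cf.\ the quantity $\Gamma$ in \eqref{eq:Gamma-def}); and the passage to the closure of $\mathcal{V}_0$ in the far regime implicitly uses that $\eta$ extends continuously there, which follows from $\eta\in C(\mathcal{G}_0)$ together with $\overline{\mathcal{V}_0}\subset\mathcal{G}_0$ and holds in all the physical examples (e.g.\ $\rho\log\rho$ up to vacuum). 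Your treatment of strict positivity at boundary points of $\mathcal{V}_0$ via the affine-on-a-segment contradiction is a clean way to close the argument.
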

The goal now is to find $a_1,a_2 > 0$ such that, for any $u \in \Sweak$, there exists a Lipschitz $h$ giving this decay. 

In this paper, we consider what we term ``small shocks'' and ``large shocks.'' ``Small'' shocks $(u_L,u_R)$ will verify \eqref{small_shock_must_satisfy}, below. All other shocks will be ``large.''

\subsection{Small shocks}

We now present a major contribution of this work, which solves this problem in the case of small $\abs{u_L-u_R}$, and furthermore, produces a quantitative upper bound on this decay. 

\begin{proposition}[\protect{Existence of the shift function for small shocks with quantitative dissipation estimate}]\label{prop:small_shock_diss}
    Consider a system~\eqref{system} verifying all the Assumption \ref{assum}. Let $d\in \Nu$. 
    Then there exists $\hat \lambda,\ C_1,\ K,\ \epsilon > 0$ such that the following holds:
        Consider any $1$-shock or $n$-shock $(u_L,u_R,\sigma_{LR})$ satisfying 
        \begin{align}\label{small_shock_must_satisfy}
        |u_L - d| + |u_R - d| \leq \epsilon.
        \end{align}
        Letting $s_0$ be the arclength between $u_L$ and $u_R$ along the curve $S_{u_L}^1$ for (Problem $\epsilon$-BV) and letting $s_0=2|\ln(\rho_R/\rho_L)|$ for (Problem Isothermal), with $\rho_L$ and $\rho_R$ the densities on the left and right of the shock, respectively, we find if $a_1,a_2$ satisfy
        \begin{align}
            1 + \frac{C_1s_0}{2} &\leq \frac{a_1}{a_2} \leq 1+2C_1s_0 \quad\quad \text{if $(u_L,u_R)$ is a 1-shock} \label{control_a_one}\\
            1 - 2C_1s_0 &\leq \frac{a_1}{a_2} \leq 1-\frac{C_1s_0}{2} \quad\quad \text{if $(u_L,u_R)$ is an n-shock}, \label{control_a_two}
        \end{align}
        then for any $u \in \mathcal{S}_{\text{weak}}$, $T > 0$, $T_{start} \in [0,T]$, and $x_0 \in \R$  there exists a Lipschitz map $h\colon[T_{start},T] \to \R$ such that $h(T_{start}) = x_0$ and we have the following bound on our dissipation functional
        \begin{align} \nonumber
            &a_2\left[q(u(h(t)+,t);u_R)-\dot{h}(t) \ \eta(u(h(t)+,t)|u_R)\right]\\ \label{diss:shock}
            &\qquad\qquad  -a_1\left[q(u(h(t)-,t);u_L)-\dot{h}(t) \ \eta(u(h(t)-,t)|u_L)\right] \leq -a_2 Ks_0(\dot h(t) - \sigma_{LR})^2,
        \end{align}
        for almost all $t \in [T_{start},T]$. 
        {The function $h$ is called the \emph{shift function}.} 
        Furthermore, if $(u_L,u_R)$ is a 1-shock, then for almost all $t \in [T_{start},T]$
        \begin{equation}
            -\frac{\hat\lambda}{2} \leq \dot h(t) \leq \sup_{v \in B_{2\epsilon}(d)} \lambda_1(v). \label{control_one_shock1020}
        \end{equation}
        Likewise, if $(u_L,u_R)$ is an n-shock, then for almost all $t \in [T_{start},T]$
        \begin{equation}
            \inf_{v \in B_{2\epsilon}(d)} \lambda_n(v) \leq \dot h(t) \leq \frac{\hat\lambda}{2}. \label{control_two_shock1020}
        \end{equation}
        Furthermore, in the case of (Problem Isothermal) the above holds uniformly for $d \in \mathcal{K}$, where $\mathcal{K}\subset\Nu$ is compact.  
\end{proposition}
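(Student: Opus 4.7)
The plan is to reduce \eqref{diss:shock} to a purely algebraic pointwise inequality at the shift location, then construct the velocity $\dot h$ and verify the inequality via a careful Taylor expansion around $(u_L,u_R,\sigma_{LR})$ in which the conditions \eqref{control_a_one}--\eqref{control_a_two} on $a_1/a_2$ are exploited to manufacture the factor $s_0$.

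First, since $u\in\Sweak$ satisfies \Cref{strong_trace_prop} and \eqref{ineq:relative} holds for $u$ against any constant state, the bulk contributions to $\frac{d}{dt}E_t(u;a_1,a_2,h)$ are non-positive. Hence the assertion reduces to finding a measurable velocity function $V(v_-,v_+)$ and a Filippov solution to $\dot h(t)=V(u(h(t)-,t),u(h(t)+,t))$ such that the boundary dissipation
\[
D(v_-,v_+,\dot h):= a_2\bigl[q(v_+;u_R)-\dot h\,\eta(v_+|u_R)\bigr]-a_1\bigl[q(v_-;u_L)-\dot h\,\eta(v_-|u_L)\bigr]
\]
satisfies $D\le -a_2K s_0(\dot h-\sigma_{LR})^2$ whenever $\dot h=V(v_-,v_+)$. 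Writing $\Delta:=\dot h-\sigma_{LR}$, $A:=a_2\eta(v_+|u_R)-a_1\eta(v_-|u_L)$, and $B_0:=a_2[q(v_+;u_R)-\sigma_{LR}\eta(v_+|u_R)]-a_1[q(v_-;u_L)-\sigma_{LR}\eta(v_-|u_L)]$, the desired bound is the quadratic inequality $a_2Ks_0\Delta^2-A\Delta+B_0\le 0$. This admits a solution $\Delta$ iff $A^2\ge 4a_2Ks_0 B_0$, in which case one may take $V=\sigma_{LR}+A/(2a_2Ks_0)$ truncated to the admissible Lax window dictated by \eqref{control_one_shock1020}--\eqref{control_two_shock1020}. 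Thus the entire proof is driven by the pointwise bound $A^2\ge 4a_2Ks_0 B_0$, together with a uniform upper bound on $|V|$.

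To verify this, split into two regimes. In the \emph{far} regime (at least one of $|v_--u_L|,|v_+-u_R|$ is bounded below by a small absolute constant), apply \Cref{lemma:entropy-quant} on the 1-shock curve from $u_L$ (resp. $n$-shock curve from $u_R$) to rewrite the $L$- and $R$-contributions as integrals along the shock curve. Combined with the strict strengthening assumption \ref{assum:rel-ent-strengthens} and the Lax conditions \ref{assum:1-shock-lax-cond-greater}--\ref{assum:n-shock-lax-cond-ge}, this yields $B_0\lesssim -c$ for a constant $c>0$ while $A$ is bounded, which implies the required inequality with large margin. In the \emph{near} regime, expand $\eta(\cdot|u_L),\eta(\cdot|u_R),q(\cdot;u_L),q(\cdot;u_R)$ to second order in the displacements $\delta_{\pm}:=v_{\pm}-u_{R/L}$ using the convexity of $\eta$ and the smoothness of $f$. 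Crucially, the $O(s_0)$ mismatch between $a_1$ and $a_2$ produced by \eqref{control_a_one}--\eqref{control_a_two} is chosen so that the dominant quadratic forms in $A^2$ and in $4a_2Ks_0B_0$ cancel to leading order, leaving an $s_0$-small remainder with a favorable sign.

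The main obstacle is the intermediate regime where $|\delta_{\pm}|$ is comparable to $s_0$: here the leading contributions to both $A^2$ and $B_0$ are of order $s_0^4$, so cancellations must be tracked with care. The essential structural input is the sharp expansion of the Hugoniot curve and shock speed from \Cref{lem_hugoniot},
\[
\S^1_{u_L}(s)=u_L+s\,r_1(u_L)+\tfrac{s^2}{2}\nabla r_1(u_L)r_1(u_L)+\bigO(s^3),\qquad \sigma^1_{u_L}(s)=\tfrac12(\lambda_1(u_L)+\lambda_1(\S^1_{u_L}(s)))+\bigO(s^2),
\]
together with the genuine nonlinearity \ref{assum:gnl} and shock-strengthening \ref{assum:rel-ent-strengthens}. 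These produce a strictly positive (and uniformly bounded away from zero) cubic coefficient that supplies the $s_0$ factor in the dissipation. Once the pointwise inequality $A^2\ge 4a_2Ks_0B_0$ is in hand together with an a priori bound on $|V|$ giving \eqref{control_one_shock1020}--\eqref{control_two_shock1020}, existence of the Lipschitz shift $h$ follows from a standard Filippov-type argument using \Cref{strong_trace_prop} to assign well-defined trace values along $x=h(t)$. Uniformity with respect to $d\in\mathcal{K}$ in the isothermal case follows since all expansion constants depend continuously on the base state.
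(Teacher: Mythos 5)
There is a genuine gap, and it is structural rather than merely a missing estimate. You set up the dissipation bound as a quadratic inequality $a_2 K s_0 \Delta^2 - A\Delta + B_0 \le 0$ in $\Delta = \dot h - \sigma_{LR}$, and propose to \emph{solve} for $\dot h$ by taking $V = \sigma_{LR} + A/(2 a_2 K s_0)$ (the vertex of the parabola), verifying the discriminant condition $A^2 \ge 4 a_2 K s_0 B_0$. This treats $\dot h$ as a free parameter for each $(v_-, v_+)$. But the Filippov lemma (\Cref{lemma:filippov}) gives the crucial dichotomy that at almost every $t$ either $u_-(t) = u_+(t)$, or $(u_-(t), u_+(t), \dot h(t))$ is an entropic Rankine--Hugoniot discontinuity of $u$. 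In the second (discontinuous) case, $\dot h(t)$ is forced to equal the shock speed $\sigma_\pm$ of the pair $(u_-,u_+)$; it is not something you get to pick. Your optimal $\Delta$ will not coincide with $\sigma_\pm - \sigma_{LR}$ in general, and there is no reason the forced value of $\Delta$ lies inside the solution interval of your quadratic. The ``truncation to the Lax window'' does not repair this, because the constraint is an equality, not an interval membership. So the entire reduction ``$D \le -a_2 K s_0 \Delta^2$ iff discriminant $\ge 0$'' does not control the quantity you actually need.

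A related issue: you want $V = V(v_-, v_+)$ to depend on both one-sided traces, but the Filippov existence statement (\Cref{lemma:filippov}) is for a single-variable velocity field $V\colon\mathcal{V}_0\to\R$ applied as $\dot h = V(u(h(t),t))$; the resulting control is only $V_{\min}(t) \le \dot h(t) \le V_{\max}(t)$ where $V_{\min},V_{\max}$ are the min and max of $V(u_\pm)$. A two-argument $V$ falls outside this framework, and it is precisely because one cannot prescribe $\dot h$ pointwise that the paper's proof proceeds via the set $\Pi^*_{C,s_0}$ and the dichotomy above. The correct strategy (as the paper does) is to (i) pick a simple single-argument $V$ that is $\lambda_1(u)$ inside $\Pi^*_{C,s_0}$ and a very negative constant outside, (ii) prove the continuous bound $D_{\cont}(u) \lesssim -s_0(|u-u_L|^2+|u-u_R|^2)$ for $u\in\Pi^*_{C,s_0}$ (\Cref{prop:cont}), (iii) prove the Rankine--Hugoniot bound $D_{\RH}(u_-,u_+,\sigma_\pm) \lesssim -s_0(\cdot)$ for every 1-shock with $u_-\in\Pi^*_{C,s_0}$ (\Cref{prop:shock}), and (iv) convert the right-hand side $-s_0(|u_--u_L|^2+|u_+-u_R|^2)$ into $-s_0(\dot h-\sigma_{LR})^2$ using Lipschitz continuity of the shock-speed parameterization. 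Your ``near regime'' sketch, where you assert the quadratic leading terms ``cancel to leading order, leaving an $s_0$-small remainder with a favorable sign,'' is exactly where the real work of the paper lives: identifying the maximizer $u^*$ of $D_{\cont}$ on $\overline{\Pi_{C,s_0}}$, the third-order Taylor bound $|\nabla^3 D_{\cont}| \lesssim Cs_0$ (\Cref{lemma:taylor-bounds-d-cont}), and the careful scale separation into $Q_{C,s_0}$ and $R_{C,s_0}$ in \Cref{sec:max}. Without those ingredients the claimed cancellation is not established.
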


The proof of \Cref{prop:small_shock_diss} is saved for \Cref{sec:proof_prop:small_shock_diss}.

Integrating \eqref{ineq:relative} once with $b= u_L$ for  $x\in (-\infty, h(t))$, then integrating \eqref{ineq:relative} again with $b= u_R$ for $x\in (h(t),\infty)$, adding together the results, and using \eqref{diss:shock} together with the Strong Trace Property (\Cref{strong_trace_prop}) provides the desired contraction property for \eqref{eq:pseudo-dist} in the case of a single shock provided that,
$$
a_1/a_2 \text{ is between } 1+\frac{C_1}{2}(-1)^{i+1}s_0 \text{ and } 1+2C_1(-1)^{i+1}s_0,
$$
where $(u_L,u_R)$ is an $i$-shock with strength $s_0$. In a nutshell, we can define a function $a$ which decreases at a 1-shock and increases at a 2-shock, and the variation of $a$  can be chosen with size proportional to the size of the shock. From the estimates on $\dot{h}$ (see \eqref{control_one_shock1020}, \eqref{control_two_shock1020}), we have a finite speed of propagation. Furthermore, our control on $h$ means that a shift of a 1-shock cannot touch the shift of a 2-shock if the 1-shock started to the left of the 2-shock. This will be vitally important when we introduce shifts into the solution to Riemann problems whose solutions involve two shocks.  Both shock speeds will be dictated by  artificial velocities given by shifts. We need to guarantee that the \emph{shocks do not interact at some time after the initial time} in order to maintain the feature of  classical solutions to the Riemann problem, where two shocks emanating from a solution to a Riemann problem will never touch in the future. For more details, we refer to \Cref{sec:wave_non_interact} below.

\subsection{Large shocks}

The dissipation result we need for large shocks of the isothermal system is the following:

\begin{proposition}[\protect{Existence of the shift function for large shocks with quantitative dissipation estimate \cite[Proposition 4.1]{move_entire_solution_system}}]\label{prop:large_shock_diss}
We consider the isothermal Euler system \eqref{isothermal}. 

Fix $T,T_{\text{start}},B, \Lambda > 0$ (with $T_{\text{start}}<T$) and $x_0\in\mathbb{R}$. Consider $u\in\Sweak$, a bounded weak solution to \eqref{isothermal} with $\|u\|_{L^\infty}<B$. Then let $(u_R, u_L,\sigma_{LR})$ be a $1$-shock with $\abs{u_R}, \; \abs{u_L} <B$.

Assume that
\begin{equation}
    r > \Lambda,
\end{equation}
where $r$ satisfies $S^{1}_{u_L}(r) = u_R$.

Then, there exists a constant $a_* > 0$ such that for all $\alpha\in(0,a_*)$, and any $a\in(\alpha,a_*)$, there exists a Lipschitz continuous map $h\colon [T_{\text{start}},T] \to \mathbb{R}$ with $h(T_{\text{start}}) = x_0$ and such that for almost every $t$,
\begin{equation}\label{eq:large_shock_diss}
    a \left( q(u_+; u_R) - \dot{h}(t) \eta(u_+ | u_R) \right) - q(u_-; u_L) + \dot{h}(t) \eta(u_- | u_L)
    \leq -c \left| \sigma_{LR} - \dot{h}(t) \right|^2, 
\end{equation}
where  $u_\pm \coloneqq u(h(t) \pm, t).$ The function $h$ is called the \emph{shift function}. 

The constant $a_*$ depends on $B$ and $\Lambda$. The constant $c$ depends on $\alpha$, $B$ and $\Lambda$. 

Moreover, for each $t \in [T_{\text{start}}, T]$ either $\dot{h}(t) < \inf \lambda_1$ or  $(u_+, u_-, \dot{h})$ is a 1-shock with 
\begin{align*}
u_- \in \left\{ u \mid \eta(u \mid u_{L}) \leq a \eta(u \mid u_{R}) \right\}
\end{align*}
(possibly  $u_+ = u_-$  and $\dot{h}= \lambda_1(u_\pm)$).

We have an analogous result for 2-shocks as well. 

Moreover, let $h_1\colon[T_1,T]\to\mathbb{R}$ be a shift function corresponding to a 1-shock (constructed either according to this \Cref{prop:large_shock_diss} or \Cref{prop:small_shock_diss}, above) and let $h_2\colon[T_2,T]\to\mathbb{R}$ be a shift function corresponding to a 2-shock (again constructed either according to this \Cref{prop:large_shock_diss} or \Cref{prop:small_shock_diss}, above). Then, if at some time $T^*\in[T_1,T]\cap[T_2,T]$, we have that $h_1(T^*)\leq h_2(T^*)$, then in fact
\begin{align}\label{ordering_shifts}
h_1(t)\leq h_2(t)
\end{align}
for all $t\geq T^*$.
\end{proposition}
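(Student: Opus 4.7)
The plan is in two parts. First, the existence of the Lipschitz shift $h$ with $h(T_{\text{start}})=x_0$, the dissipation inequality \eqref{eq:large_shock_diss}, and the classification of $\dot h(t)$ as either sub-characteristic ($\dot h<\inf\lambda_1$) or completing a Lax-admissible 1-shock $(u_+,u_-,\dot h)$ with $u_-$ in the relative-entropy ball about $u_L$, follow directly from \cite[Proposition 4.1]{move_entire_solution_system}. The isothermal system satisfies all the required structural hypotheses (strict convexity of $\eta$, regularity of the 1-Hugoniot curve established in \Cref{lem_hugoniot}, and the lower bound $r>\rho$ on the shock strength), and the Strong Trace Property embedded in $\Sweak$ supplies the one-sided traces needed to define the shift ODE. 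The 2-shock statement is obtained by reversing the roles of the two extremal families.

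The genuinely new content is the ordering property \eqref{ordering_shifts}. I would argue by contradiction, looking at a hypothetical first crossing time $t_0\geq T^*$ with $h_1(t_0)=h_2(t_0)=:x^*$, beyond which $h_1>h_2$. Since both shifts are evaluated at the common point $x^*$ at time $t_0$, the one-sided traces $u_\pm=u(x^*\pm,t_0)$ are common to both shift constructions, so one can compare $\dot h_1(t_0)$ and $\dot h_2(t_0)$ directly via their respective classifications. When both velocities come from the Lax-admissible shock regime, Lax gives $\dot h_1<\lambda_1(u_-)$ and $\dot h_2>\lambda_2(u_+)$; for isothermal Euler, where $\lambda_1(\rho,v)=v-1$ and $\lambda_2(\rho,v)=v+1$, and using that the constraint sets on $u_-$ and $u_+$ inherited from the reference shocks' relative-entropy balls keep the traces bounded, this implies $\dot h_1(t_0)\leq \dot h_2(t_0)$. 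The mixed cases, where one velocity is sub-characteristic and the other completes an admissible shock, and the fully sub-characteristic case, are handled more directly via the elementary inequality $\inf\lambda_1<\inf\lambda_2$ on any bounded set of states for isothermal Euler. When either shift comes instead from \Cref{prop:small_shock_diss}, the same argument applies using \eqref{control_one_shock1020}--\eqref{control_two_shock1020}.

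The main obstacle I anticipate is promoting the a.e.\ pointwise inequality $\dot h_1(t_0)\leq \dot h_2(t_0)$ into an actual non-crossing statement for the Lipschitz difference $h_2-h_1$, since Lipschitz functions are differentiable only almost everywhere. I would handle this via a Gronwall-type argument applied to $(h_1-h_2)_+$, using the Lebesgue differentiation theorem at density points of the contact set $\{h_1=h_2\}\cap[T^*,T]$ and applying the shift-velocity classification pointwise there. This is a standard technique in the shift-function literature (cf.\ \cite{VASSEUR2008323,Leger2011}) and should conclude the proof.
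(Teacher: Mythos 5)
Your treatment of the first part --- reducing to \cite[Proposition 4.1]{move_entire_solution_system} after checking that isothermal Euler satisfies its structural hypotheses, and obtaining the $2$-shock statement by reflection --- matches what the paper itself does, which also points to \cite[Section 6.1]{Leger2011} for the hypothesis verification.

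Your sketch of the ordering property \eqref{ordering_shifts}, however, has a genuine gap in the sub-case where both $\dot h_1$ and $\dot h_2$ are in the Lax-admissible shock regime. You write that Lax gives $\dot h_1 < \lambda_1(u_-)$ and $\dot h_2 > \lambda_2(u_+)$, and you deduce $\dot h_1 \leq \dot h_2$ on the grounds that the traces are bounded; but that chain closes only if $\lambda_1(u_-) \leq \lambda_2(u_+)$, i.e.\ $v_- - v_+ \leq 2$ in the isothermal velocity variable, and boundedness of the traces gives no control whatsoever on $v_- - v_+$, which can be arbitrarily large across a strong $1$-shock. The observation you are missing is that at a crossing time $t_0$ with $h_1(t_0)=h_2(t_0)$ both shifts read the \emph{same} traces $(u_-,u_+)$, and a fixed pair $u_- \neq u_+$ has a unique Rankine--Hugoniot speed, which cannot be Lax-admissible as a $1$-shock (required by the classification of $\dot h_1$) and simultaneously as a $2$-shock (required by the classification of $\dot h_2$). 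So in this sub-case you are forced to $u_-=u_+$, whence $\dot h_1 = \lambda_1(u_\pm) < \lambda_2(u_\pm) = \dot h_2$. The paper itself points to the structure of the shift ODE \eqref{eq:shift-ode}: by construction the velocity field for a $1$-shift satisfies $V_1(u)\le \lambda_1(u)$ for every state $u$, and the mirror construction for a $2$-shift gives $V_2(u)\ge\lambda_2(u)$, so $V_1<V_2$ holds pointwise in phase space and the Filippov comparison yields the ordering with no case analysis at all. Your Gronwall/density-point technique for upgrading an a.e.\ velocity comparison into a genuine non-crossing statement is the right technical tool in either approach.
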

\begin{proof}
\Cref{prop:large_shock_diss} is a slight variation on \cite[Proposition 4.1]{move_entire_solution_system} and \cite[Proposition 5.1]{MR4184662}, which are results that apply to a general class of systems. The proof of \Cref{prop:large_shock_diss} is almost identical to the proof of \cite[Proposition 4.1]{move_entire_solution_system} in the paper \cite{move_entire_solution_system} and thus we do not reproduce the proof here. The key point is that the isothermal Euler system \eqref{isothermal} with its natural entropy and entropy-flux pair (see \eqref{isothermal_eta_q}) clearly verifies the hypotheses $\mathcal{(H)}$ on 1-shocks and $\mathcal{(H)}^*$ on $n$-shocks (with $n=2$) used in the work \cite{move_entire_solution_system}. The result \eqref{ordering_shifts} follows immediately from the proof of \cite[Proposition 5.1]{MR4184662} and the construction of the shifts for small shocks given by \eqref{eq:shift-ode}.

\end{proof}



\subsection{Approximate rarefaction waves}

We need a similar control for approximations of rarefactions discretized via the  front tracking method. 

\begin{proposition}[\cite{MR4487515}]\label{pizza_slice_diss}
There exists a constant $K>0$ such that the following is true.
For any  $\bar{u}(y)$ $v_L\leq y\leq v_R$  rarefaction wave for \eqref{system} (Problem $\epsilon$-BV) or \eqref{isothermal} (Problem Isothermal), denote 
$$
\bar\sigma=|v_L-v_R|+\sup_{v\in [v_L,v_R]} |u_L-\bar{u}(y)|, \qquad \bar{u}(v_L)=u_L, \ \bar{u}(v_R)=u_R.
$$
 Then for any $u\in \Sweak$, any $v_L\leq v\leq v_R$,  and any $t>0$ we have:
 $$
 \int_{0}^{t} \left\{q(u(tv+,t);u_R)-q(u(tv-,t);u_L)-v\left( \eta(u(tv+,t)|u_R) -\eta(u(tv-,t)|u_L) \right)
 \right\}\,dt\leq K\bar\sigma ^2 t.
 $$
\end{proposition}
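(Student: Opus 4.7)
The plan is to compare $u$ not to the endpoint states $u_L,u_R$ but to the full rarefaction profile, exploit the dissipative structure that Assumption~\ref{assum:convex_left_eign} is engineered to produce, and then convert the resulting bound into the stated one via a quadratic Taylor expansion. I introduce the exact rarefaction solution $V(x,s)$ equal to $u_L$ for $x<sv_L$, to $\bar u(x/s)$ for $sv_L\le x\le sv_R$, and to $u_R$ for $x>sv_R$. This $V$ is a Lipschitz weak solution of \eqref{system} (or \eqref{isothermal}) that saturates the entropy inequality, with $V(sv,s)=\bar u(v)$ along the ray of interest and $|\partial_x V|+|s\,\partial_s V|\lesssim\bar\sigma/s$ inside the fan. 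Note also that $V(x,0^+)=u_L$ for $x<0$ and $V(x,0^+)=u_R$ for $x>0$, since $v_L<v_R$.

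The standard relative-entropy calculus (subtract the two balance laws and use $q'=\eta'f'$) gives, in the smooth region of $V$,
$$
\partial_s\eta(u|V)+\partial_x q(u;V)\ \le\ -\eta''(V)\,\partial_x V\cdot f(u|V),
$$
where $f(u|V)=f(u)-f(V)-f'(V)(u-V)=O(|u-V|^2)$. Because $\partial_x V$ is parallel to the eigendirection $r_i(V)$ along the rarefaction, Assumption~\ref{assum:convex_left_eign} forces the right-hand side to be nonpositive. I integrate this pointwise inequality over the two regions $\Omega_\pm=\{(x,s):0<s\le t,\ x\lessgtr sv\}$ using the divergence theorem and the Strong Trace Property. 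Subtracting the $+$ and $-$ versions (the initial-time contributions involve $\eta(u^0|u_L)$ on $\{x<0\}$ and $\eta(u^0|u_R)$ on $\{x>0\}$, and the final-time contributions $\int \eta(u|V)(\cdot,t)\,dx$ are nonnegative and may be dropped from the right-hand side) yields
$$
\int_0^t\bigl\{q(u_+;\bar u(v))-q(u_-;\bar u(v))-v[\eta(u_+|\bar u(v))-\eta(u_-|\bar u(v))]\bigr\}\,ds\ \le\ \mathcal R,
$$
where $\mathcal R$ is a controlled remainder, bounded in magnitude by $O(\bar\sigma^2 t)$ on the fan region where $|\partial_x V|\lesssim\bar\sigma/s$ and $|u-V|\lesssim\bar\sigma$.

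The final step is to pass from $\bar u(v)$ to $u_R$ on the $+$ side and to $u_L$ on the $-$ side. A direct application of \Cref{rel_facts_lemma} only yields the linear bound $O(\bar\sigma\, t)$, which is too weak, so I instead use that $q(a;b),\,\eta(a|b)=O(|a-b|^2)$ and Taylor-expand to obtain
$$
|q(a;b_1)-q(a;b_2)|\ \lesssim\ |b_1-b_2|\,\bigl(|a-b_1|+|b_1-b_2|\bigr),
$$
together with its analogue for $\eta$. With $b_1\in\{u_L,u_R\}$, $b_2=\bar u(v)$, and $a\in\{u_\pm\}$, one has $|b_1-b_2|\le\bar\sigma$, while the extra factor $|a-b_1|$ is either bounded by $\bar\sigma$ directly or by $\sqrt{\eta(u|V)}$, the latter absorbed against the dissipation integral produced in Step~2 via Cauchy--Schwarz in time. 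This quadratic-gain step is the main obstacle: wringing out the second factor of $\bar\sigma$ rather than settling for the linear bound that \Cref{rel_facts_lemma} alone gives is delicate. For the front-tracking discretization, where $\bar u$ is piecewise constant with small jumps, the same calculation applies with $\partial_x V$ replaced by a sum of jump contributions, each handled through the scalar convexity form of Assumption~\ref{assum:convex_left_eign}.
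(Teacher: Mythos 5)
Your instinct to exploit the dissipative structure from Assumption~\ref{assum}\ref{assum:convex_left_eign} is correct, but the global space-time route you take has two fatal problems and the final replacement step doesn't close.

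\textbf{Sign and boundary terms in the space-time argument.} Integrating the relative-entropy inequality with reference $V$ over $\Omega_-$ and $\Omega_+$ and applying the divergence theorem, the trace along $x=sv$ enters with outward normal $(1,-v)/\sqrt{1+v^2}$ from $\Omega_-$ and $(-1,v)/\sqrt{1+v^2}$ from $\Omega_+$. When you combine the two inequalities (each an upper bound on its own flux-balance) you obtain an upper bound on
$$\int_0^t\Bigl\{q(u_-;\bar u(v))-q(u_+;\bar u(v))-v\bigl[\eta(u_-|\bar u(v))-\eta(u_+|\bar u(v))\bigr]\Bigr\}\,ds,$$
which is the \emph{negative} of the quantity you wrote, so the divergence theorem points the wrong way for the estimate you need. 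Worse, the initial-time boundary contributions $\int_{x<0}\eta(u^0|u_L)\,dx$ and $\int_{x>0}\eta(u^0|u_R)\,dx$ land on the favorable side of the inequality and cannot be discarded; they depend on the arbitrary $u\in\Sweak$ and have no relation to $\bar\sigma$ (they may even be infinite, since solutions in $\Sweak$ need not decay). Only the final-time terms $\int\eta(u(\cdot,t)|V(\cdot,t))\,dx$ may be dropped. So the inequality you derive is neither oriented nor bounded as claimed.

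\textbf{The quadratic gain.} The intermediate claim ``$|u-V|\lesssim\bar\sigma$'' is false, as $u\in\Sweak$ carries no smallness. Your linearized estimate $|q(a;b_1)-q(a;b_2)|\lesssim|b_1-b_2|(|a-b_1|+|b_1-b_2|)$ is correct, but with $|b_1-b_2|\le\bar\sigma$ and $|a-b_1|\lesssim\operatorname{diam}(\Nu_0)$ it only gives the replacement error $O(\bar\sigma\,t)$, not $O(\bar\sigma^2 t)$. The proposed absorption against ``the dissipation integral'' fails: that dissipation appears as an a priori nonpositive term on the right-hand side, not as a quantitative sink you can pair with $\sqrt{\eta(u|V)}$.

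\textbf{What actually works.} The statement is pointwise, not space-time. By the strong trace dichotomy along the Lipschitz ray $x=sv$ (see \Cref{lemma:filippov} and the discussion after \Cref{prop:small_shock_diss}), for a.e.\ $s$ either $u_+(s)=u_-(s)$ or $(u_-,u_+,v)$ is an entropic jump; in both cases
$$q(u_+;\bar u(v))-v\eta(u_+|\bar u(v))-\bigl[q(u_-;\bar u(v))-v\eta(u_-|\bar u(v))\bigr]=q(u_+)-q(u_-)-v[\eta(u_+)-\eta(u_-)]\le 0.$$
To replace $\bar u(v)$ by $u_R$ and $u_L$, differentiate $z\mapsto q(a;\bar u(z))-z\eta(a|\bar u(z))$ along the rarefaction curve parametrized by $z=\lambda_i(\bar u(z))$. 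The symmetry $\nabla^2\eta\,f'=(\nabla^2\eta\,f')^T$ and $f'r_i=\lambda_i r_i$ cancel the leading flux term, leaving $-\eta(a|\bar u(z))$ plus a term proportional to $\ell_i\cdot f(a|\bar u(z))$ which is single-signed by Assumption~\ref{assum}\ref{assum:convex_left_eign} and may be dropped. Integrating from $v$ to $v_R$ (resp.\ $v_L$ to $v$) and comparing $\int_v^{v_R}\eta(a|\bar u(z))\,dz$ with $(v_R-v)\eta(a|u_R)$ using the Lipschitz estimate of \Cref{rel_facts_lemma}, the remaining error is $\lesssim |v_R-v_L|^2\lesssim\bar\sigma^2$, which is the quadratic gain you were missing. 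Integrating in $s$ then gives $K\bar\sigma^2 t$.
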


Note we will always divide the rarefaction fan into many small pieces in the front tracking scheme. So we only need to consider the case when $\bar \sigma$ is small.

\section{The front tracking algorithm}\label{section:front}
Now we consider two cases: small-BV solution for general systems (Problem $\epsilon$-BV), and large-BV solutions for isothermal Euler (Problem Isothermal). We remark here that the large-BV theory also applies to the $2\times 2$ Temple-class systems.

Here we use the front tracking scheme in Bressan-Colombo \cite{BressanC} and Colombo-Risebro \cite{ColomboRisebro}. By allowing some errors in wave speed, i.e. considering the $\nu$-approximate solution, introduced below, we do not have to consider the non-physical shocks as in \cite{MR4487515}.

\subsection{Important note on notation} 

In this section, we use the tools developed by Bressan-Colombo \cite{BressanC} and Colombo-Risebro \cite{ColomboRisebro}. In the interest of readability, in this section we use the notation used by Bressan-Colombo \cite{BressanC} and Colombo-Risebro \cite{ColomboRisebro} including for shock and rarefaction curves (as introduced below in \Cref{sec:front_tracking_no_shift}) instead of the symbols $S^i_u$ and $R^i_u$ as used in other sections of this paper.

\subsection{Introduction to the Riemann problem}

Now we review the  $\nu$-approximate solution and the front tracking scheme.

To start, given a Riemann problem of \eqref{system} with two constant states $u_-$ and $u_+$ sufficiently close, a solution with at most three constant states, connected by either shocks or rarefaction fans, can always be found. More precisely, there exist $C^2$ curves $\sigma\mapsto T_i(\sigma)(u_-)$, $i=1,2$, parametrized  by arclength, such that
\begin{align}\label{RP_curves}
u_+=T_2(\sigma_2)\circ T_1(\sigma_1)(u_-),
\end{align}
for some $\sigma_1$ and $\sigma_2$. We define $u_0\coloneqq u_-$ and 
\begin{align}
u_1\coloneqq T_1(\sigma_1)(u_0),\\
u_2\coloneqq T_2(\sigma_2)\circ T_1(\sigma_1)(u_0).
\end{align}
For isentropic gas dynamics or Temple systems, one can solve the Riemann problem when the distance of $u_-$ and $u_+$ is large.

We use the convention that, when $\sigma_i$ is negative (positive) the states $u_{i-1}$ and $u_i$ are separated by an i-shock (i-rarefaction) wave. Further,  the strength of the i-wave is defined as $\abs{\sigma_i}$.


Next we apply some modification on the solution of the Riemann problem by asking for the $\nu$-approximate solution. This will make the front tracking scheme slightly different from its standard version. One advantage is that we can control the number of wave fronts without using the non-physical shock.

\subsection{The front tracking $\nu$-approximate solution}\label{sec:front_tracking_no_shift}

We follow \cite{BressanC} and \cite{ColomboRisebro}.

For hyperbolic conservation laws \eqref{system}, let $A(u)=Df(u)$ denote the Jacobian matrix of $f$ at $x$. Define the averaged matrix
\[
A(u,u')=\int_{0}^1 Df(\theta u+(1-\theta)u')\, d\theta.
\]
Let $\lambda_i(u,u')$, $r_i(u,u')$ and $l_i(u,u')$
be the $i$-th eigenvalue and the corresponding right and left $i$-th eigenvectors of $A(u,u')$, respectively, where we normalize these so that 
\[
l_i\cdot r_j=\left\{\begin{array}{ll}
1 & \hbox{if}\ i=j,\\
0 & \hbox{if}\ i\neq j.
\end{array}\right.
\]
In particular, if $u=u'$, then $\lambda_i(u)$, $r_i(u)$ and $l_i(u)$
are the $i$-th eigenvalue and the right and left $i$-th eigenvectors of $A(u)=A(u,u)$.


We always use the Riemann coordinates, where the rarefaction curves take the form
\beq\label{sigma_def}
\phi_1^+(v,\sigma)=(v_1+\sigma,v_2),\qquad
\phi_2^+(v,\sigma)=(v_1,v_2+\sigma),
\eeq
where $v=(v_1,v_2)$ and $v_1$ and $v_2$ are Riemann invariants in the first and second family, respectively. 

Furthermore, we have 
\begin{lemma}\label{lem_3.1}\cite{BressanC,ColomboRisebro}
For (Problem Isothermal), the $i$-shock curve through the point $v=(v_1,v_2)$ can be parameterized with respect to the Riemann coordinates as
\beq\label{si_shock}
\phi_1^-(v,\sigma)=(v_1+\sigma+\phi(\sigma),v_2+\phi(\sigma)),\qquad
\phi_2^-(v,\sigma)=(v_1+\phi(\sigma),v_2+\sigma+\phi(\sigma)),
\eeq
for a suitable smooth function $\phi$ satisfying
\[
\phi(\sigma)\leq 0,\qquad \phi'(\sigma)\geq 0,
\]
for any $\sigma<0$. 
In particular, for the Temple systems, $\phi=0$. 

For (Problem $\epsilon$-BV),
\[
\phi_1^-(v,\sigma)=(v_1+\sigma,v_2+\hat\phi_2(v,\sigma)\sigma^3),\qquad
\phi_2^-(v,\sigma)=(v_1+\hat\phi_1(v,\sigma)\sigma^3,v_2+\sigma),
\]  
for some smooth functions $\hat\phi_2$ and $\hat\phi_1$.
\end{lemma}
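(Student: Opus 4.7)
The plan is to handle the rarefaction curves and the shock curves separately, and within the shock case to treat (Problem Isothermal) and (Problem $\epsilon$-BV) by different means: explicit computation for the former, a Taylor-expansion argument for the latter.

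First, I would dispatch the rarefaction formula \eqref{sigma_def}. By definition, a $j$-Riemann invariant is a scalar function $w_j$ satisfying $\nabla w_j \cdot r_i = 0$ for $i\neq j$, so passing to the coordinates $v=(v_1,v_2)$ makes the level sets of $v_1$ tangent to $r_2$ and the level sets of $v_2$ tangent to $r_1$. The integral curves of $r_i$ in these coordinates are therefore exactly straight lines along the $v_i$-axis, and since rarefaction curves are by definition integral curves of the eigenvector fields (parameterized by $\sigma\ge 0$ to respect the Lax admissibility), this gives $\phi_i^+$ in the stated form.

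Next, for the shock curves in (Problem Isothermal), I would work directly from Rankine-Hugoniot. Using $w_1=v-\log\rho$, $w_2=v+\log\rho$ as Riemann invariants, one computes the $1$-shock and $2$-shock curves through a base state by solving the system $f(u)-f(u^-) = \sigma(u-u^-)$ explicitly (this is a two-equation algebraic system with a density-ratio parameter, and it admits a closed-form reduction). The output is a single scalar function $\phi$ such that in Riemann coordinates the shock curve takes the claimed shifted form: the jump in $w_1$ (resp.\ $w_2$) is $\sigma$ plus a correction $\phi(\sigma)$ common to both invariants, reflecting the symmetry of the isothermal Hugoniot locus about the diagonal of $w_1$ and $w_2$. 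The inequalities $\phi(\sigma)\le 0$ and $\phi'(\sigma)\ge 0$ for $\sigma<0$ then follow by differentiating the closed-form expression and using Assumption~\ref{assum}\ref{assum:rel-ent-strengthens} (monotone strengthening and the Lax condition). For Temple systems one has by definition that shock and rarefaction curves coincide, which forces $\phi\equiv 0$.

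Finally, for (Problem $\epsilon$-BV), the task reduces to the classical statement that in Riemann coordinates the $i$-shock curve agrees with the $i$-rarefaction curve (i.e.\ the $v_i$-axis) to second order at the base point, with a cubic remainder. I would start from the asymptotic expansion
\[
\S^i_u(s)=u+r_i(u)\,s+\tfrac{s^2}{2}\nabla r_i(u)\,r_i(u)+\bigO(s^3)
\]
from \Cref{lem_hugoniot}, convert to Riemann coordinates $v=(v_1,v_2)$ (the map $u\mapsto v$ is $C^3$), and observe that both the first- and second-order terms of the rarefaction and shock curves through $v$ agree because of the geometric meaning of Riemann coordinates (level sets of $v_j$ are tangent to $r_i$, $j\neq i$, to second order along the $i$-rarefaction through $v$). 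Hence the transverse components of the shock curve vanish to order $\sigma^2$ and the first nontrivial deviation occurs at order $\sigma^3$, which we absorb into smooth functions $\hat\phi_1,\hat\phi_2$ by the implicit function theorem applied to the smooth defining equations of the Hugoniot locus. The main obstacle I anticipate is the isothermal closed-form computation: one must verify both the algebraic factorization leading to a single $\phi$ and the sign/monotonicity claims on $\phi$, neither of which is a routine Taylor expansion. Once these are in place, the rest amounts to bookkeeping in Riemann coordinates.
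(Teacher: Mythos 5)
The paper does not prove Lemma~\ref{lem_3.1}; it is taken over verbatim from \cite{BressanC,ColomboRisebro}. Your outline reconstructs the proof one would find there and the overall structure is sound: rarefaction curves are coordinate axes in Riemann coordinates essentially by definition; the isothermal shock curves follow from a direct Rankine--Hugoniot computation; Temple systems have $\phi\equiv0$ because shock and rarefaction curves coincide; and the $\bigO(\sigma^3)$ transverse deviation in the general case is the classical second-order contact of $\S^i_u$ and the $i$-rarefaction, which is exactly the content of the expansion in Lemma~\ref{lem_hugoniot} after changing to Riemann coordinates and reparametrizing.

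Two points need tightening. First, the substantive structural fact for (Problem Isothermal) is that $\phi$ depends on $\sigma$ alone and \emph{not on the base state $v$} -- this is Nishida's translation invariance of the Hugoniot locus in Riemann coordinates, and it is worth naming and verifying explicitly (it follows from the specific isothermal pressure law $p=\rho$, which makes the Hugoniot relation a function of the density ratio only). Your phrase about ``symmetry about the diagonal of $w_1$ and $w_2$'' explains why $v_1 \leftrightarrow v_2$ carries the $1$-shock curve to the $2$-shock curve, but it does not explain why the correction is the \emph{same} function $\phi$ in both components of a single shock curve; that comes simply from $v_{1,2}= v \mp \log\rho$ sharing the velocity term, together with the normalization $\sigma=[v_1]-[v_2]=-2\log(\rho_r/\rho_l)$ (which is~\eqref{si_iso}). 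Second, the sign and monotonicity of $\phi$ do not come from Assumption~\ref{assum}\ref{assum:rel-ent-strengthens} -- that is a relative-entropy monotonicity, not a Riemann-coordinate inequality -- and invoking it here is a non sequitur. They follow from the closed form: with $x=\rho_r/\rho_l>1$ one has the velocity jump $-(x-1)/\sqrt{x}$ across a $1$-shock (this is precisely $[v]=-\sqrt{-[\tau][\rho]}$ from the paper's Appendix), so
\begin{equation*}
\phi(\sigma)=[v_2]=\log x-\frac{x-1}{\sqrt{x}}=2\log y - y + \frac1y,\qquad y=\sqrt{x}\ge1,
\end{equation*}
whose $y$-derivative is $-(1-1/y)^2\le0$, giving $\phi\le0$; combining with $dx/d\sigma=-x/2<0$ gives $\phi'(\sigma)\ge0$. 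Once this is written out, the appeal to Assumption~\ref{assum}\ref{assum:rel-ent-strengthens} can and should be dropped.
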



In particular, in this paper, for the isothermal gas dynamics \eqref{isothermal}, we choose a special pair of Riemann invariants
\beq\label{v_12}
v_1=w-\log \rho,\qquad v_2=w+\log \rho.
\eeq
Then by \eqref{sigma_def} and \eqref{si_shock},
\begin{equation}\label{si_iso}
\sigma=\pm 2(\log\rho_r-\log\rho_l)=\pm 2\log\frac{\rho_r}{\rho_l}, 
\end{equation}
with minus sign for $1$ wave and plus sign for $2$ wave,
where the subscripts $l$ and $r$ denote the left and right states of the shock, respectively.

We use the same front tracking algorithm as in \cite{BressanC}
and \cite{ColomboRisebro}. 
To make this paper self-contained, we briefly introduce the algorithm in \cite{ColomboRisebro} for (Problem Isothermal). For (Problem $\epsilon$-BV), the same front tracking scheme given in \cite{BressanC} will be used, where we omit the detail. {We remark that \cite{ColomboRisebro} work with isothermal Euler in the Eulerian coordinates. However, the Riemann invariants are the same in the Lagrangian viewpoint.}

First, we define the approximate solution to Riemann problems. Choose a $C^\infty$ function $\varphi\colon\mathbb{R}\to \mathbb{R}$ such that 
\begin{align}
\begin{cases}
\varphi(s)=1, &\mbox{ for all } s \leq  -2,\\
\varphi'(s)\in [-2,0] ,&\mbox{ for all } s\in [-2,-1],\\
\varphi(s)=0,&\mbox{ for all } s\geq -1,
\end{cases}
\end{align}
and, for a fixed $\nu>0$, define the interpolation between the $i$-shock and the $i$-rarefaction curve.

\beq\label{Riemann1_0}
\Phi_i^\nu(v,\sigma)=\varphi(\frac{\sigma}{\sqrt{\nu}} ) \phi_i^-(v,\sigma)+(1-\varphi(\frac{\sigma}{\sqrt{\nu}} )) \phi_i^+(v,\sigma)
\eeq
with $i=1,2$. In particular for (Problem Isothermal),
\beq\label{Riemann1}
\Phi_1^\nu(v,\sigma)=(v_1+\sigma+\varphi(\frac{\sigma}{\sqrt{\nu}} )\phi(\sigma), v_2+\varphi(\frac{\sigma}{\sqrt{\nu}} )\phi(\sigma))
\eeq
and 
\beq\label{Riemann2}
\Phi_2^\nu(v,\sigma)=(v_1+\varphi(\frac{\sigma}{\sqrt{\nu}} )\phi(\sigma), v_2+\sigma+\varphi(\frac{\sigma}{\sqrt{\nu}} )\phi(\sigma)).
\eeq
In this setting, $\sigma\mapsto\Phi_i^\nu(v,\sigma)$ coincides with the rarefaction curve $\phi_i^+$ when $\sigma\geq -\sqrt{\nu}$ and with the shock curve 
 $\phi_i^-$ when $\sigma\leq-2\sqrt{\nu}$.
 
To avoid confusion, in this section, we call jumps with $\sigma>0$  rarefaction waves, otherwise, if $\sigma<0$ then we call the jump a shock wave. This matches the notation in \cite{BressanC,ColomboRisebro}. However, this \emph{contrasts} with the notation for shock curves $s\mapsto S^i_u(s)$ used in other sections of the present paper to match \cite{MR4667839}, where the parameter for shocks is $s>0$.

Given $\nu$, $u^r$ and $u^l$ with corresponding Riemann coordinates $v^r=(v^r_1,v^r_2)$ and $v^l=(v^l_1,v^l_2)$, we find an approximate solution for the Riemann problem of \eqref{system} or \eqref{isothermal} with initial data $u(x,0)=u^l$ when $x<0$ and $u(x,0)=u^r$ when $x>0$, by finding a unique {intermediate state $v^m$ and a }pair of values $\sigma_1$ and $\sigma_2$ which verify
\beq\label{Riemann1_4}
v^m=\Phi_1^\nu(v^l,\sigma_1),\qquad
v^r=\Phi_2^\nu(v^m,\sigma_2).
\eeq


We approximate the rarefaction by a series of discontinuities, named as the $\nu$-approximate rarefaction fan.

Here we just write down the definition for the $\nu$-approximate rarefaction fan and shock wave in the first family.

Assume $\sigma_1>0$. Then let the integers $h$, $k$ be such that $h\nu\leq v_1^l<(h+1)\nu$ and 
$k\nu\leq v_1^m<(k+1)\nu$. Introducing the states $\omega_1^j=(j\nu,v_2^l)$ and $\hat\omega_1^j=((j+1/2)\nu,v_2^l)$ for $j=h,\dots, k$, we construct a $\nu$-approximate rarefaction fan
\begin{align}\label{R_RP_sol}
v^\nu(x,t)\coloneqq
\begin{cases}
v^l, &\mbox{ if } x < \lambda_1(\hat \omega_1^h)t,\\
\omega_1^j,&\mbox{ if } \lambda_1(\hat \omega_1^{j-1})t<x<\lambda_1(\hat \omega_1^j)t,\qquad j=h+1,\dots, k,\\
v^m,&\mbox{ if } \lambda_1(\hat \omega_1^k)t<x.
\end{cases}
\end{align}
On the other hand, if $\sigma_1<0$, the states $v^l$ and $v^m$ are connected by a single shock
\begin{align}\label{accurate_RP_sol}
v^\nu(x,t)\coloneqq
\begin{cases}
v^l, &\mbox{ if } x < \lambda_1^\varphi(v^l,\sigma_1)t,\\
v^m,&\mbox{ if } \lambda_1^\varphi(v^l,\sigma_1)t<x.
\end{cases}
\end{align}
The shock speed $ \lambda_1^\varphi$ is defined as 
\begin{align}\label{averaged_speed}
 \lambda_1^\varphi(v^l,\sigma_1)=\varphi(\sigma_1/\sqrt \nu)\cdot \lambda_1^s(v^l,\sigma_1)+(1-\varphi(\sigma_1/\sqrt \nu))\cdot \lambda_1^r(v^l,\sigma_1)
\end{align}
with 
\[
 \lambda_1^s(v^l,\sigma_1)= \lambda_1(v^l,\phi^{-}_1(v^l,\sigma_1)),
\]
\[
 \lambda_1^r(v^l,\sigma_1)= \sum_j \frac{\hbox{meas}([j\nu, (j+1)\nu]\cap [v_1^m,v_1^l])}{|\sigma_1|}\lambda_1(\hat\omega_1^j).
\]

Remark that here $ \lambda_1^s(v^l,\sigma_1)$ is the true shock speed, which coincides with $\lambda_1^\varphi(v^l,\sigma_1)$ when $\sigma\leq-2\sqrt{\nu}$.

The construction of the $\nu$-approximate solution for waves of the second family is similar to the first family. We refer the readers to \cite{ColomboRisebro}.

Let $\hat u(x,0)$ be a piecewise constant initial data.
Now, we construct the $\nu$-approximate solution $u^\nu$ as follows. At time $t=0$, solve the Riemann problem defined by the jumps in $\hat u(x,0)$ using the above algorithm. Let the new piecewise constant solution evolve, until the first interaction takes place. Then we still use our algorithm to solve the Riemann problem at the place where the first interaction takes place. Repeating these steps,  $u^\nu$ can be defined until any finite time $T$, since there are at most finitely many interactions in $[0,T]$ \cite{BressanC,ColomboRisebro}.

\subsection{The shifted $\nu$-approximate front tracking solution}\label{sec:shifted_front_tracking}
    In this paper we consider a shifted $\nu$-approximate front tracking solution $\psi$. 
    For this shifted approximation we still use the $\nu$-approximate curves \eqref{Riemann1} and \eqref{Riemann2}.
    When a solution to a Riemann problem gives a $\nu$-approximate rarefaction fan we produce an un-shifted rarefaction, identical to what is described by~\eqref{R_RP_sol}.
    When the Riemann problem from an interaction at $(\overline x, \overline t)$ produces a shock (without loss of generality, suppose it is a 1-shock) with states $v^l$ and $v^m = \Phi_1^\nu(v^l, \sigma)$, with $\sigma < 0$ our shifted $\nu$-approximate shock is
    \begin{equation}\label{RP_shock_shift}
        \psi(x,t)\coloneqq
            \begin{cases}
            v^l, &\mbox{ if } h_-(t) < x < h(t),\\
            v^m,&\mbox{ if } h(t) < x < h_+(t).
            \end{cases}
    \end{equation}
    The Lipschitz curves $h_-$ and $h_+$ are the closest fronts to the left/right of $h(t)$ between $\overline t $ and the time of the next interaction, while $h(t)$ is our shift function. 
    Our choice of shift $h(t)$ depends on the size $\sigma$ of the $\nu$-approximate shock $(v^l, v^m)$. 
    If $\sigma \leq -2\sqrt{\nu}$ then our $\nu$-approximate shock is a real shock, and we use either \Cref{prop:small_shock_diss} if $\sigma < \epsilon$ (where this is the $\epsilon$ defined in \Cref{prop:small_shock_diss}) or otherwise use \Cref{prop:large_shock_diss} to find a suitable shift function $h(t)$, with $(x_0, T_{start}) = (\overline x, \overline t)$. 
    If $\sigma > -2\sqrt{\nu}$ the $\nu$-approximate shock $(v^l,v^m)$ is not necessarily a real shock, so we instead apply \Cref{prop:small_shock_diss} to the shock $(v^l, \phi_1^-(v^l,\sigma))$ and use the resulting shift as our $h$ in~\eqref{RP_shock_shift}.
    In doing this we remark that for these small shocks we have the estimates
    \beq \label{eq:small-shock-error-psi}
        \begin{aligned}
            |\Phi_1^\nu(v^l,\sigma) - \phi_1^-(v^l,\sigma) | &\leq K|\sigma|^3,\\
            | \lambda_1^\varphi(v^l,\sigma)- \lambda_1^s(v^l,\sigma)| &\leq K \nu
        \end{aligned}
    \eeq
    where $\lambda_1^\varphi$ and $\lambda_1^s$ are the $\nu$-approximate interpolated shock speed~\eqref{averaged_speed} and the true Rankine-Hugoniot shock speed, respectively. 
    The first of these estimates follows easily from~\eqref{Riemann1_0} and \Cref{lem_hugoniot} while the second estimate is known from \cite[Section 7]{BressanC}. We remark that these two estimates will be needed later for estimates to go between the $\nu$-approximate shocks and true shocks in Step 1 of \Cref{sec:proof_main_theorem}.

Here, using a similar argument as in Section 4 in \cite{BressanC} for un-shifted  $\nu$-approximate solution, we can show that there are only finitely many wave fronts and wave interactions for any shifted $\nu$-approximate solution $\psi(x,t)$ in $\mathcal D$, where $\mathcal D$ is the space of functions with small or finite BV norm for (Problem $\epsilon$-BV) or (Problem Isothermal) to be defined later, respectively, for any $\nu\ll 1$. 


\vspace{.1in}

\subsubsection{Non-interaction of waves}\label{sec:wave_non_interact} Our construction of the shift for shock waves (\Cref{prop:large_shock_diss} and \Cref{prop:small_shock_diss})  guarantees that waves generated in the solution of a Riemann problem will not meet (after they are initially born) even after adding the shift.

For (Problem $\epsilon$-BV) this property follows from \eqref{control_one_shock1020} and \eqref{control_two_shock1020} and making $\epsilon$ sufficiently small.

For (Problem Isothermal), wave interactions are prevented due to \Cref{prop:large_shock_diss} and the construction given by \eqref{eq:shift-ode} (see also \cite[p.~158]{MR4184662}). In particular, for the isothermal system in Lagrangian coordinates, both shifted and un-shifted 1-waves will always have a strictly negative speed and both shifted and un-shifted 2-waves will always have a strictly positive speed.

\subsection{Control on the BV norm}
For both the un-shifted and shifted $\nu$-approximate solutions, we use the following notations on total variation and Glimm potentials. At any time $t$, we use $|\sigma_{i,\alpha}|'s$ to denote the strengths of all jumps, where the wave corresponding to $|\sigma_{i,\alpha}|$ is located at $x=x_\alpha$ and is in the $i$-th family with $i=1$ or $2$. We abuse the notion a bit here, since at the time when a rarefaction wave is formed at $x_\alpha$, there might be more than 1 wave sharing the same name $\sigma_{i,\alpha}$. But these waves split immediately.
\begin{itemize}
\item
For small-BV solution (Problem $\epsilon$-BV), we denote
\beq\label{def_v2}
V(u)=\sum_\alpha\sum_{i}|\sigma_{i,\alpha}|,\qquad
Q(u)=\sum_{\sigma_{i,\alpha},\sigma_{j,\beta}\in \mathcal{A}}|\sigma_{i,\alpha}\sigma_{j,\beta}|,
\qquad 
U=V+\kappa Q,
\eeq
where  $\mathcal{A}$ is the set of any pair of approaching waves $\alpha$ and $\beta$. Here we say a pair of waves corresponding to $\sigma_{i,\alpha}$ and $\sigma_{j,\beta}$ are approaching if either $x_{\alpha}<x_{\beta}$ and $j<i$, or if $j=i$, $\sigma_{i,\alpha}<0$ or $\sigma_{j,\beta}<0$. Here $\sigma_{i,\alpha}$ and $\sigma_{j,\beta}$ are the $\sigma$ values for waves defined in \eqref{sigma_def}, \eqref{si_shock} and \eqref{si_iso}.
\item
For large-BV solution of isothermal gas (Problem Isothermal) as well as the Temple systems, correspondingly, one can define 
\beq\label{def_v1}
V_2(u)=\sum_{\alpha}\sum_i(1-\eta\, \hbox{sign}\,\sigma_{i, \alpha})|\sigma_{i,\alpha}|,\ 
Q(u)=\sum_{\sigma_{i,\alpha},\sigma_{j,\beta}\in \mathcal{A}}|\sigma_{i,\alpha}\sigma_{j,\beta}|,
\ 
U=\kappa_2\cdot V_2+Q.
\eeq

Here $\mathcal{A}$ is the set of any pair of approaching waves $\alpha$ and $\beta$. Furthermore, $\eta$ in $(0,1)$, and $\kappa_2$ is a positive constant depending on $M$, $u_{\pm\infty}$ and the flux $f$ (cf. \eqref{large_BV_data}).
\end{itemize}
Here, without confusion, $U$ has different meanings for (Problem $\epsilon$-BV) and (Problem Isothermal).


In this paper, we only consider the un-shifted (see \Cref{sec:front_tracking_no_shift}) or shifted (see \Cref{sec:shifted_front_tracking}) $\nu$-approximate solutions in the domain of small BV solutions for (Problem $\epsilon$-BV), i.e. 
$$\mathcal D=\{u\hbox{ solves \eqref{u_piece} }|\ 0<U(u)<\epsilon\},$$ for sufficiently small $\epsilon$, {for the piecewise-constant functions defined below in \eqref{u_piece}}, or  in the domain of bounded BV solutions for (Problem Isothermal) with the same far field states $u_{-\infty}$ and $u_{\infty}$, i.e.
\beq\label{Dcal_iso}\mathcal D=\{u\hbox{ solves \eqref{u_piece} }, \lim_{x\rightarrow \pm\infty}u=u_{\pm\infty}\,|\ 0<U(u)<M\},\eeq
for any $M>0$. Compare with \eqref{small_BV_data} and \eqref{large_BV_data}, respectively. 
Without confusion, the domain $\mathcal D$ has different meaning for two different problems, respectively.

Applying the method in \cite{ColomboRisebro} and \cite{BressanC} to shifted or un-shifted {$\nu$-approximate} solution $u$,
we know that, when $u\in \mathcal D$, $\kappa$ and $\kappa_2$ are sufficiently large and $\eta$ is sufficiently small,
 $U(u)$ decays in $t$, and for any time $t$, i.e.
 there exists a constant $K$, such that,
\beq\label{UTV}
\frac{1}{K}\cdot \|u\|_{BV(\mathbb{R})} \leq U(u)\leq K\cdot \|u\|_{BV(\mathbb{R})} .
\eeq


\subsubsection{Control on BV norm of the shifted $\nu$-approximate solution $\psi$}

We have the following control on any shifted or un-shifted $\nu$-approximate solution $\psi$:
\begin{align}\label{BV_psi}
\norm{\psi(\cdot,t)}_{\text{BV}(\mathbb{R})}\leq \tilde{K} \norm{\psi(\cdot,0)}_{\text{BV}(\mathbb{R})},
\end{align} 
for $t>0$ and for a constant $\tilde{K}\geq 1$ which does not depend on $\epsilon$ or $M$.

In fact, for (Problem $\epsilon$-BV), \eqref{BV_psi} is a direct result from the time decay of $U$ since the total variation $V$ is much larger than $Q$ when $\epsilon$ is small enough. More precisely, since $\epsilon$ is always less than $1$, so $V(0)<1$ and $Q(0)\leq V^2(0)<V(0)$. Hence 
\[V(t)<U(t)\leq U(0)<V(0)+\kappa Q(0)\leq (1+\kappa)V(0).\] 

It is a little trickier to show \eqref{BV_psi} for (Problem Isothermal) than (Problem $\epsilon$-BV). Later, after estimating the error of weighted $L^1$ norm caused by adding shifts (\Cref{lemma:front_tracking_error_estimate}), we will allow some tiny shift in the wave speeds to avoid the interaction of more than two waves. See Step 4 in the Proof of \Cref{prop:iso_a}. Then for any pairwise interaction, the total variation of the solution, measured by $V=\sum{|\sigma_{i,\alpha}|}$, decreases in time, see Proposition
\ref{prop4.2_2} summarizing the results in \cite{ColomboRisebro}. Note this result still holds for the shifted $\nu$-approximate solution $\psi$, since real wave speeds are not used in the proof. If one measures the total variation of $\psi$ using another wave strength equivalent to $\sum{|\sigma_{i,\alpha}|}$, \eqref{BV_psi} holds with some $\hat K\geq 1$, using the equivalence of different BV norms.

\subsection{The contraction of weighted $L^1$ distance between two $\nu$-approximate solutions}
In this subsection, we first revisit the contracting weighted $L^1$ distance between two front tracking $\nu$-approximate solutions  $u(x,t)$ and $\bar u(x,t)$ both with Rankine-Hugoniot speed (with up to $\mathcal{O}(\nu)$ error), established in \cite{BressanC}
and \cite{ColomboRisebro}. 

Consider any piecewise constant function $u=u(x)$ of the form
\beq\label{u_piece}
u=u_{-\infty}\cdot \chi_{(-\infty,x_1)}+\sum_{\alpha=1}^{n-1}u^\alpha\cdot \chi_{(x_\alpha,x_{\alpha+1}]}+u_{+\infty}\cdot\chi_{(x_n,+\infty)}.
\eeq
Then the Riemann problem at $x_\alpha$ can be solved uniquely by the algorithm defined before in terms of a 1-wave with strength $\sigma_{1,\alpha}$ and a 2-wave with strength $\sigma_{2,\alpha}$, using \eqref{Riemann1_0} and \eqref{Riemann1_4}.

We can now define an elementary path for a homotopy between any two $\nu$-approximate solutions in $\mathcal D$.
	   \begin{figure}[tb] \centering
   		   \includegraphics[scale=.6]{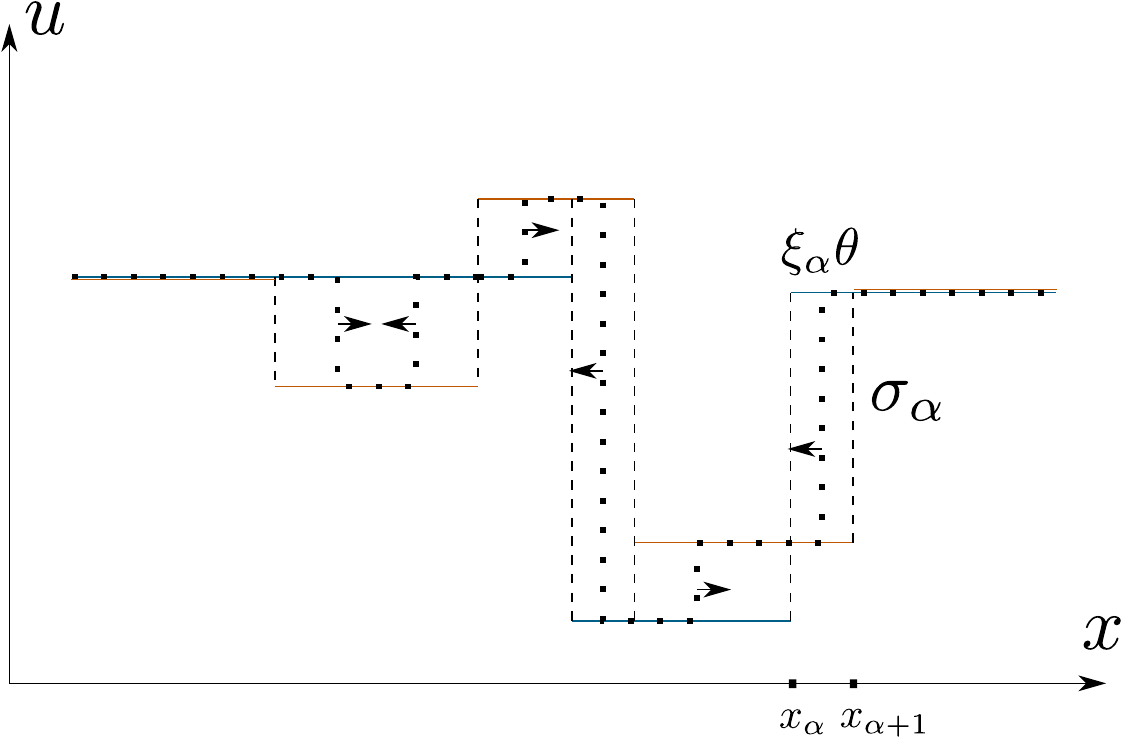}
	\caption{The homotopy between two un-shifted piecewise constant $\nu$-approximate front tracking  solutions in Definition \ref{def_homo} can always be defined. Following the arrow, the path $\gamma(\theta)$ (black dot) moves from the orange solution when $\theta=a$ to the blue solution when $\theta=b$. The $L^1$ distance between these two piecewise constant solutions equals to the total area of all rectangles. The width and hight of each rectangle are $\xi_{\alpha}\theta$ and wave strength $|\sigma_\alpha|$, respectively. In each rectangle, the homotopy are between two discontinuities with the same speed, so $\xi_{\alpha}\theta$ and $|\sigma|$ are constant functions in time, except at interactions.   \label{pic0}
	}
 \end{figure}
\begin{definition} \label{def_homo}
Let $(a,b)$ be an open interval. An \emph{elementary path} in $\mathcal D$  is a map $\gamma\colon(a,b)\to \mathcal  D$ of the form 
\beq\label{0}
\gamma(\theta)=u_{-\infty}\cdot \chi_{(-\infty,x_1^\theta]}+\sum_{\alpha=1}^{n-1}u^\alpha\cdot \chi_{(x_\alpha^\theta,x_{\alpha+1}^\theta]}+u_{+\infty}\cdot\chi_{(x_n^\theta,+\infty)},\qquad
x_\alpha^\theta=x_\alpha+\xi_{\alpha}\theta
\eeq
with $x_{\alpha}^\theta<x_{\alpha+1}^\theta$ for all $\theta\in(a,b)$
and $\alpha=1,\dots, n-1$. Here $\xi_\alpha$ and $u^\alpha$ are independent of $\theta$.  
\end{definition}

At a positive time $t>0$, due to the evolution of the states, the elementary path may become a pseudopolygonal, defined as following, including at most countably many pieces of elementary paths.

\begin{definition}\label{def3}
A map $\gamma\colon[a,b]\rightarrow L^1_{\text{loc}}$ is a \emph{pseudopolygonal} if there exist countably many disjoint open intervals $J_h=(a_h,b_h) \subset [a,b]$ such that
\begin{itemize}
\item[(i)] The restriction of $\gamma$ to each $J_h$ is an elementary path $\gamma_h$, where we still use \eqref{0} but with a subscript $h$ on $a,\ b$ and superscript $h$ on $\xi_\alpha$, $x_\alpha$ and $x^{\theta}_\alpha$, to denote $J_h$.
\item[(ii)] The difference between $[a,b]$ and $\cup_{h\geq 1} J_h$ is at most countable.
\end{itemize}

\end{definition}


Using a semi-group notation, we denote any $\nu$-approximate solution by
\[
u(\cdot,t)={\mathcal S}_t \circ u(\cdot,0)
\] 
for any time $t$, where ${\mathcal S}_t$ denotes the semi-group generated by the front tracking scheme discussed in the previous section. For convenience, we omit the superscripts showing the dependence of $u$ on $\nu$ in the rest of the paper.

Let us summarize some important results established by \cite{BressanC} and \cite{ColomboRisebro}. First, there are only finitely many discontinuities in $u$ when $t\in[0,T]$.  Secondly, starting from an initial pseudopolygonal $\gamma_0$ satisfying the Definition \ref{def_homo} connecting $u$ and $\bar u$,
the path $\gamma(\theta)={\mathcal S}_t \circ \gamma_0$ with $\theta\in[a,b]$, for any time $t>0$, is still a pseudopolygonal. This means there exist countably many open intervals $J_h$ such that $[a,b] \backslash \cup J_h$ is countable and the wave-front configuration of the solution $u^\theta$ on $[0,T]$ remains the same as $\theta$ ranges on each $J_h$. And the homotopy has $L^1$ continuity at the endpoints $\theta=a_h$ and $b_h$ of $J_h$.
We can always set our initial path $\gamma_0$ as an elementary path as shown in Figure \ref{pic0}.



Then we define the weighted distance, which is equivalent to the $L^1$ distance, as following. We still use the definition of \cite{ColomboRisebro} and \cite{BressanC}, so their main results on the time decay of weighted distance still hold.

Note, for any $t$, in each interval $\theta\in(a_h, b_h)$, there is an elementary path, the wave strengths $\sigma_{i,\alpha}$ and shifting speeds $\xi_\alpha$ which take constant values. Remark that the shifting speeds $\xi_\alpha$ are not to be confused with the shifting \emph{functions} defined in \Cref{prop:large_shock_diss} and \Cref{prop:small_shock_diss}. 

Recall (see \Cref{sec:not}) that for a function $f$, we always use $(f)_\pm$ to denote $(|f|\pm f)/2$, i.e. the positive/negative part of $f$. 

\begin{itemize}
\item For large data solution of isothermal gas (Problem Isothermal) as well as the Temple systems as in \cite{ColomboRisebro}:
\beq\label{1}
\|\gamma\|=\sum_{h\geq 1}(b_h-a_h)\cdot \Upsilon^h_{\xi}
\eeq
where, after dropping the superscripts $h$, we define
\beq\label{Upsilon_def}
\Upsilon_{\xi}=\sum_{\alpha=1}^n\sum_{i=1}^2 |\sigma_{i,\alpha}\xi_{\alpha}|\, e^{H_1 S_{i,\alpha}+H_2 R_\alpha+H_3 V_1}
\eeq
where $H_1$, $H_2$ and $H_3$ are some positive constants, and
\beq\label{3}
S_{i,\alpha}=2[\sum_{\beta=1}^n\sum_{j=1}^2(\sigma_{j,\beta})_-]-(\sigma_{i, \alpha})_-,\quad
R_{\alpha}=\sum_{\beta=1}^{\alpha-1}|\sigma_{2,\beta}|
+\sum^{n}_{\beta=\alpha+1}|\sigma_{1,\beta}|.
\eeq
\item For small BV solutions (Problem $\epsilon$-BV) as in \cite{BressanC},
\beq\label{4}
\|\gamma\|=\sum_{h\geq 1}(b_h-a_h)\cdot \Upsilon^h_{\xi}
\eeq
where, after dropping the superscripts $h$, we define
\beq\label{5}
\Upsilon_{\xi}=\sum_{\alpha=1}^n\sum_{i=1}^2 |\sigma_{i,\alpha}\xi_{\alpha}|\, R_i^\alpha,
\eeq
with
\beq\label{6}
R_i^\alpha=(2+\sgn \sigma_{i,\alpha})
\Big(1+K\sum_{(\sigma_{i, \alpha},\sigma_{j, \beta})\in \mathcal A}
 |\sigma_{j,\beta}|\Big)\, \exp\Big\{   K\sum_{(\sigma_{j, \beta},\sigma_{j', \beta'})\in \mathcal A}|\sigma_{j,\beta}\sigma_{j',\beta'}|\Big\}
\eeq
and $\mathcal A$ denotes the set of all pairs of approaching waves,
$Q$ is the wave interaction potential defined before, and $K$ is a sufficiently large constant.
\item We then define the weighted distance as
\beq\label{d_def}
d_\nu(u,\bar u)=\inf\{\|\gamma\|\hbox{ such that $\gamma\colon[a,b]\rightarrow \mathcal D $ is a pseudopolygonal joining $u$ with $\bar u$}\}.
\eeq
\end{itemize}

%

\begin{proposition}[\cite{BressanC,ColomboRisebro}]\label{control_L1_norm} 

Consider both (Problem $\epsilon$-BV) and (Problem Isothermal).
For any ${\mathcal S}_t$, which is the $\nu$-approximate semi-group from the front tracking solution, there exists positive constants $K$, $H_1$,  $H_2$ and $H_3$, such that the map $\Upsilon_{\xi}({\mathcal S}_t\circ\gamma_0)$, the weighted length $\|\gamma\|$ on each elementary path, is non-increasing in time. 

The norm defined above for the $\nu$-approximate solution is equivalent to the $L^1$ norm, i.e.
\beq\label{equiv}
K_1\cdot\|\gamma\|_{L^1}\leq \|\gamma\|\leq K_2\cdot\|\gamma\|_{L^1},
\eeq
for some positive constants $K_1$ and $K_2.$

The weighted distance $d_\nu$ is uniformly equivalent to the $L^1$ distance. Moreover, $d_\nu$ is contracting with respect to the semi-group ${\mathcal S}_t$, i.e.
\beq\label{d_contract}
d_\nu({\mathcal S}_t u_0, {\mathcal S}_t \bar u_0)\leq d_\nu(u_0,  \bar u_0),
\eeq
for any $t\geq 0$ and $\nu$-approximate initial data $u_0$, $\bar u_0$ in $\mathcal D $.
\end{proposition}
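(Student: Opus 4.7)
The proposition has three parts: monotonicity of the weighted length along elementary paths under the semigroup $\mathcal S_t$, equivalence of $\|\gamma\|$ with the $L^1$ path length, and contraction of the infimum $d_\nu$. My plan is to treat these in order, as the third follows almost formally from the first two.

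For the monotonicity of $\Upsilon_\xi$, I would first observe that along an elementary path the configuration is locally constant in time \emph{between wave interactions}: the strengths $\sigma_{i,\alpha}$, the shift rates $\xi_\alpha$, the sign patterns, the approaching relations in $\mathcal A$, and the potential $Q$ all remain frozen, so $\Upsilon_\xi$ is piecewise constant. The substantive content is therefore the behavior at interaction times. At each binary interaction (two approaching waves colliding at a single point), I would apply the Glimm-type quadratic interaction estimate: the difference between incoming and outgoing wave strengths is $\mathcal O(|\sigma_{i,\alpha}\sigma_{j,\beta}|)$, with a corresponding estimate for the shift rates of the outgoing waves. The constants $K$ (for (Problem $\epsilon$-BV)) and $H_1,H_2,H_3$ (for (Problem Isothermal)) are then chosen large enough that the strict decrease of $Q$ (which loses at least the interaction term $|\sigma_{i,\alpha}\sigma_{j,\beta}|$) produces a decrease in the exponential weights $\exp\{K\sum|\sigma\sigma'|\}$ (respectively $e^{H_1 S_{i,\alpha}+H_2 R_\alpha+H_3 V_1}$) that dominates the first-order increase in $\sum|\sigma_{i,\alpha}\xi_\alpha|$ produced by the newly created outgoing fronts. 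This is the algebraic core of the Bressan--Colombo/Colombo--Risebro weighted-distance construction, and one verifies case by case (shock--shock, shock--rarefaction, same family, different families) that the inequality holds; the shifted scheme of \Cref{sec:shifted_front_tracking} introduces no new cases because the shifts only alter wave speeds and the interaction geometry remains the same pairwise-binary structure.

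For the norm equivalence \eqref{equiv}, I would use that on the domain $\mathcal D$ the total variation and potential $Q$ are uniformly bounded (by $\epsilon$ or $M$ respectively) via \eqref{UTV}, so each weight $R_i^\alpha$ (respectively $e^{H_1 S_{i,\alpha}+H_2 R_\alpha+H_3 V_1}$) lies in a compact interval bounded away from zero and infinity. A direct computation from \Cref{def_homo} gives $\|\gamma\|_{L^1}=(b-a)\sum_\alpha|\sigma_\alpha\xi_\alpha|$ (up to the equivalence between $|\sigma_{i,\alpha}|$ and the jump $|u^\alpha-u^{\alpha-1}|$ furnished by the wave-curve parametrization), and summing over the pieces $J_h$ yields the two-sided bound against $\|\gamma\|$. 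Finally, the contraction \eqref{d_contract} is automatic: given $\delta>0$, pick a pseudopolygonal $\gamma_0$ with $\|\gamma_0\|\leq d_\nu(u_0,\bar u_0)+\delta$; then $\mathcal S_t\circ\gamma_0$ is a pseudopolygonal joining $\mathcal S_t u_0$ and $\mathcal S_t\bar u_0$ (by \Cref{def3}), and by monotonicity on each sub-interval $J_h$ together with the $L^1$-continuity of $\mathcal S_t$ at the endpoints $\partial J_h$ inherited from the semigroup, its weighted length does not exceed $\|\gamma_0\|$; passing to the infimum and $\delta\downarrow 0$ gives the contraction. The main obstacle is unquestionably the interaction estimate in the large-BV isothermal and Temple setting: unlike the small-BV case, one cannot freely Taylor-expand the shock curves and must instead exploit the global structure of $\phi$ in \Cref{lem_3.1} (in particular $\phi\leq 0$ and $\phi'\geq 0$ for $\sigma<0$), together with the judicious choice of $\eta\in(0,1)$ in \eqref{def_v1}, to close the decrease estimate uniformly for all wave strengths allowed in $\mathcal D$ as in \eqref{Dcal_iso}.
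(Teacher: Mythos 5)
Your proposal is correct and mirrors the paper's approach: both defer the monotonicity of $\Upsilon_\xi$ at interactions and the contraction \eqref{d_contract} to \cite{BressanC,ColomboRisebro}, derive \eqref{equiv} from the uniform boundedness of the exponential weights via the time decay of $U$ and $Q$ on $\mathcal D$, and deduce \eqref{d_contract} for $d_\nu$ by applying the monotonicity piecewise to a near-optimal pseudopolygonal and passing to the infimum.

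One detail you leave implicit is the claim that $d_\nu$ is equivalent to the actual $L^1$ distance $\|u-\bar u\|_{L^1}$, which is stronger than $\|\gamma\|\sim\|\gamma\|_{L^1}$. The paper handles this explicitly: for the upper bound $K_1\,d_\nu(u,\bar u)\leq\|u-\bar u\|_{L^1}$ it evaluates $\|\cdot\|$ on the particular sliding pseudopolygonal $\hat\gamma(\theta)=u\cdot\chi_{(-\infty,\theta]}+\bar u\cdot\chi_{(\theta,\infty)}$, whose $L^1$ path length is exactly $\|u-\bar u\|_{L^1}$, then applies \eqref{equiv}; the lower bound $\|u-\bar u\|_{L^1}\leq K_2\,d_\nu(u,\bar u)$ follows from the triangle inequality over the countable elementary pieces together with the $L^1$-continuity of the path in $\theta$. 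You invoke the needed continuity but only in the contraction step, not in establishing the distance equivalence. Finally, the proposition concerns only the unshifted semigroup $\mathcal S_t$, so your aside about the shifted scheme, while not wrong, is extraneous here — that case is what \Cref{lemma:front_tracking_error_estimate} is for.
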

Corresponding to \eqref{1} and \eqref{4}, and the $L^1$-continuity of approximate solutions {with} respect to $\theta$,
\beq\label{gamma_l1}
\|\gamma\|_{L^1}=\sum_{h\geq 1}(b_h-a_h)\sum_{\alpha=1}^n\sum_{i=1}^2 |\sigma^h_{i,\alpha}\xi^h_{\alpha}|.
\eeq
\begin{proof}
Using the time decay of $U(u)$, it is easy to show \eqref{equiv}. Then \eqref{d_contract} is proved in \cite{BressanC,ColomboRisebro}.
The other parts of the theorem are also proved in \cite{BressanC,ColomboRisebro}. Let's add some details to make this paper self-contained.

To show the equivalence between the weighted distance $d_\nu$ and the standard $L^1$ norm, for any time $t$,  we consider a special pseudopolygonal 
\[
\hat \gamma(\theta)=u\cdot\chi_{(-\infty,\theta]}+\bar u\cdot\chi_{(\theta,\infty)},\qquad a\leq\theta\leq b,
\] 
where we choose $a$ and $b$ such that $u=\bar u$ when $x\leq a$ or $x\geq b.$

In fact, for $\hat \gamma(\theta)$, the homotopy changes configuration once there is a jump in $u$ or $\bar u$, so totally finite many times.
When $\theta\in(a_h,b_h)$, where $a_h,b_h$ are defined in Definition \ref{def3}, 
\[(b_h-a_h)\sum_{\alpha=1}^n\sum_{i=1}^2 |\sigma^h_{i,\alpha}\xi^h_{\alpha}|=\|u-\bar u\|_{L^1(a_h,b_h)}.\] 
Then by \eqref{equiv},
\[
K_1 d_\nu(u,  \bar u)\leq K_1 \|\hat\gamma\|\leq\|\hat \gamma\|_{L^1}=\|u-\bar u\|_{L^1}.
\]

On the other  hand, for any pseudopolygonal $\gamma$ connecting $u$ and $\bar u$,
\[
\|u-\bar u\|_{L^1}\leq \| \gamma\|_{L^1}\leq K_2\cdot\|\gamma\|,
\]
where the first inequality comes from the triangle inequality on countable elementary paths and the fact that $\gamma: [a,b]\rightarrow L^1$ is a continuous path.
Taking the infimum on the right hand side, we find 
\[
\|u-\bar u\|_{L^1}\leq K_2\cdot d_\nu(u,  \bar u).
\]

\end{proof}

\begin{remark}\label{dependenceTV}
For (Problem Isothermal), $K_1\leq\min_{i,\alpha,h}\{e^{H_1 S^h_{i,\alpha}+H_2 R^h_\alpha+H_3 V^h_1}\}$, so we can choose
\[
K_1=1.
\]
On the other hand,
 $K_2\geq\max_{i,\alpha,h}\{e^{H_1 S^h_{i,\alpha}+H_2 R^h_\alpha+H_3 V^h_1}\},$
since in each $\theta\in(a_h,b_h)$, 
\[
\|\gamma\|_{L^1}=\sum_{\alpha=1}^n\sum_{i=1}^2 |\sigma_{i,\alpha}\xi_{\alpha}|,
\]
and the $L^1$ norm is continuous at endpoints $\theta=a_h$ and $b_h$, proved in \cite{BressanC, ColomboRisebro}.
 So by \eqref{3}, the time decay of $U$ and \eqref{UTV}, for (Problem Isothermal), we can find some constant $H_1$ in $\mathcal O(1)$, $H_2$ and $H_3$ in $\mathcal O(\exp(M))$, so
 \begin{align}\label{iso_constant}
 K_2=\exp(\mathcal O(M\,\exp( M))\big).
 \end{align}

Similarly, for (Problem $\epsilon$-BV), we can choose
\[
K_1=1
\]
and
\beq\label{iso_constant2}
K_2=4\exp(\mathcal O(\epsilon^2)),
\eeq
when $\epsilon$ is small enough,
and also using the decay of $Q$ proved in \cite{BressanC}.
\end{remark}



\subsection{The $L^1$ estimate for a weighted distance between non-shifted and shifted  $\nu$-approximate solutions}

Now we consider the weighted $L^1$ distance $d_\nu(u,\psi)$ defined in \eqref{d_def} between a front tracking $\nu$-approximate solution $u(x,t)$ with true Rankine-Hugoniot speed (allowing however for $\mathcal{O}(\nu)$ error due to the interpolation \eqref{averaged_speed}) and a shifted front tracking $\nu$-approximate solution $\psi(x,t)$ with shifted speed (see \Cref{sec:shifted_front_tracking}).  Both $u$ and $\psi$ take values in $\mathcal D $.

The following result is of fundamental importance to the proof of our Main Theorem (\Cref{main_theorem}).

\bigskip

\begin{lemma}[An estimate on shifted front tracking solutions]\label{lemma:front_tracking_error_estimate}

Consider (Problem $\epsilon$-BV) or (Problem Isothermal). Let $u$ be an $\nu$-approximate front tracking solution constructed as described in \Cref{sec:front_tracking_no_shift}. On the other hand, let $\psi$ be an $\nu$-approximate \emph{modified} front tracking solution where the shocks move according to shift functions, constructed as described in \Cref{sec:shifted_front_tracking}, with shocks moving with artificial speeds determined by functions $h_\alpha(t)$.
Then, in the sense of distributions
\begin{equation}\label{L1_deriv_estimate}
   \frac{d}{dt}d_\nu(u(\cdot,t),\psi(\cdot,t))\leq  K \sum_{\alpha=1}^{n} |\sigma_{\alpha}||{\dot h}_\alpha(t)- \dot{h}_{\text{true},\alpha}(t)|,
\end{equation}
where the  summation runs over all shocks in $\psi$ at time $t$, {and $K=K_2>0$ is the universal constant in \eqref{equiv} satisfying estimates \eqref{iso_constant} and \eqref{iso_constant2}.} 
The quantity ${\dot h}_\alpha$ is the wave speed for $\psi(x,t)$ (shifted speed). Moreover, $\dot{h}_{\text{true},\alpha}$ is the speed that this discontinuity in $\psi$ would be traveling if it was a true front tracking solution as constructed in \Cref{sec:front_tracking_no_shift}, i.e. the speed given by \eqref{averaged_speed}.
\end{lemma}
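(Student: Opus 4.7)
The plan is to compare $\psi$ with a hypothetical evolution $\tilde\psi$ that agrees with $\psi$ at time $t_1$ but evolves under the true (unshifted) $\nu$-approximate front tracking semigroup $\mathcal{S}_\tau$, and then exploit the contraction property already established in Proposition~\ref{control_L1_norm}. Fix a time interval $[t_1, t_2]$ containing no wave interaction for $u$ or $\psi$, and set $\tilde\psi(\cdot, t_2) := \mathcal{S}_{t_2-t_1}\psi(\cdot,t_1)$. In $\tilde\psi$ every shock $\alpha$ moves at the true interpolated speed $\dot{h}_{\text{true},\alpha}$ from~\eqref{averaged_speed}, whereas in $\psi$ it moves at the shifted speed $\dot{h}_\alpha$; rarefaction fronts in both coincide, since by \Cref{sec:shifted_front_tracking} rarefaction speeds are not shifted. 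Since $u$ is a genuine $\nu$-approximate solution, the contraction~\eqref{d_contract} applied to the pair $(u(\cdot,t_1), \psi(\cdot,t_1))$ yields
\begin{equation*}
d_\nu\bigl(u(\cdot,t_2),\ \tilde\psi(\cdot,t_2)\bigr) \ \leq\ d_\nu\bigl(u(\cdot,t_1),\ \psi(\cdot,t_1)\bigr),
\end{equation*}
so by the triangle inequality the problem reduces to estimating $d_\nu(\tilde\psi(\cdot,t_2), \psi(\cdot,t_2))$.

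The functions $\tilde\psi(\cdot, t_2)$ and $\psi(\cdot, t_2)$ consist of the same ordered sequence of constant states, with the $\alpha$-th shock displaced between the two by $\int_{t_1}^{t_2}\bigl(\dot{h}_\alpha(s) - \dot{h}_{\text{true},\alpha}(s)\bigr)\,ds$. One can therefore construct an explicit elementary pseudopolygonal $\gamma\colon [0,1]\to \mathcal D$ of the form~\eqref{0} that linearly sweeps each shock from its position in $\tilde\psi$ to its position in $\psi$, with shifting rate $\xi_\alpha = \int_{t_1}^{t_2}(\dot h_\alpha - \dot h_{\text{true},\alpha})\,ds$ at each shock $\alpha$ and $\xi_\alpha = 0$ at every rarefaction front. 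By the upper bound $\|\gamma\|\leq K_2\|\gamma\|_{L^1}$ from~\eqref{equiv} together with the explicit formula~\eqref{gamma_l1},
\begin{equation*}
d_\nu\bigl(\tilde\psi(\cdot,t_2), \psi(\cdot,t_2)\bigr)\ \leq\ \|\gamma\|\ \leq\ K_2\sum_\alpha |\sigma_\alpha|\int_{t_1}^{t_2}\bigl|\dot h_\alpha(s)-\dot h_{\text{true},\alpha}(s)\bigr|\,ds.
\end{equation*}
Combining with the previous display gives the desired distributional inequality~\eqref{L1_deriv_estimate}, with constant $K = K_2$, on every subinterval free of interactions.

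The main obstacle is the passage across wave interaction times. On any compact time interval there are only finitely many such times (as established in \cite{BressanC,ColomboRisebro}), and at each one the $L^1$-continuity of $u$ and $\psi$ with respect to time guarantees that the pseudopolygonal structure can be re-assembled on the other side of the interaction; hence the distributional inequality extracted above on each open subinterval concatenates to the full statement~\eqref{L1_deriv_estimate} on $[0,T]$. A secondary technical point is that the arguments of \Cref{sec:shifted_front_tracking} (together with Lemmas~\ref{lemma:pi_size} and~\ref{lemma:separation}) allow one to choose $\nu$ and the constant $a$ in \Cref{prop:large_shock_diss} small enough so that shifted shocks never collide with rarefaction fronts or with other shifted shocks after their birth. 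This preserves the combinatorial structure of $\psi$ and $\tilde\psi$ on each interaction-free subinterval and ensures that the explicit elementary pseudopolygonal $\gamma$ constructed above is well-defined and stays inside the admissible domain $\mathcal D$, so that the equivalence~\eqref{equiv} is actually applicable.
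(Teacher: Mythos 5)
Your construction on interaction-free intervals is essentially the same argument as the paper's Case~1: you compare $\psi$ with the unshifted propagation $\tilde\psi(\cdot,t_2)=\mathcal{S}_{t_2-t_1}\psi(\cdot,t_1)$, invoke the contraction~\eqref{d_contract} for the pair $(u(\cdot,t_1),\psi(\cdot,t_1))$, and then bound $d_\nu(\tilde\psi(\cdot,t_2),\psi(\cdot,t_2))$ via the explicit elementary path with $\xi_\alpha=\int_{t_1}^{t_2}(\dot h_\alpha-\dot h_{\text{true},\alpha})$, using the upper half of~\eqref{equiv} and~\eqref{gamma_l1}. The paper does the identical thing, phrased as concatenating the propagated pseudopolygonal with an elementary path rather than as a triangle inequality; the constant $K=K_2$ arises identically.

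The genuine gap is your treatment of interaction times. Your sentence ``the $L^1$-continuity of $u$ and $\psi$ with respect to time guarantees that the pseudopolygonal structure can be re-assembled on the other side of the interaction'' does not actually explain why $t\mapsto d_\nu(u(\cdot,t),\psi(\cdot,t))$ cannot jump \emph{upward} when $\psi$ has a wave interaction. A right-derivative bound on each interaction-free subinterval combined with mere existence of pseudopolygonals on both sides is not enough: you must rule out an upward jump at each of the finitely many interaction times $T$, i.e.\ establish $\limsup_{t\to T+}d_\nu(u(\cdot,t),\psi(\cdot,t))\leq\lim_{t\to T-}d_\nu(u(\cdot,t),\psi(\cdot,t))$. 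The paper spends its entire Case~2 on precisely this point: it defines $\bar u_{T^\pm}(\cdot,t)=\mathcal S_{t-T}\psi(\cdot,T\pm)$, observes that the backward-in-time definition of $\bar u_{T^-}$ is non-trivial for a \emph{shifted} solution, and verifies it is well-defined by showing that the only wave pairs that can meet in $\psi$ are genuinely approaching pairs (invoking the Lax entropy condition, the ordering of shifts~\eqref{ordering_shifts}, and the non-interaction of same-family rarefactions). Only then does the semigroup contraction yield~\eqref{d_con_2}. You can alternatively close the gap by a metric argument: since $d_\nu$ satisfies the triangle inequality (paths concatenate) and $d_\nu(a,b)\leq K_2\|a-b\|_{L^1}$ by~\eqref{equiv}, and since both $u$ and $\psi$ are $L^1$-Lipschitz in $t$ (bounded BV, bounded wave/shift speeds via Assumption~\ref{assum}\ref{assum:bounded-char-speed} and~\eqref{control_one_shock1020}–\eqref{control_two_shock1020}), the map $t\mapsto d_\nu(u(\cdot,t),\psi(\cdot,t))$ is Lipschitz, hence absolutely continuous, so the a.e.\ derivative bound integrates across the interaction times. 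Either argument works, but you must supply one of them; as written the step is asserted rather than proved.
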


\begin{proof}
First, we fix a time $T>0$, then use the profile $\psi(x,T)$ as our initial data to find the $\nu$-approximate solution $\bar{u}$ with
\[\bar u_T(x,t) \coloneqq \mathcal S_{t-T}\circ\psi(x,T)\] 
when $t-T\in(-\delta,\delta)$, with sufficiently small $\delta$. Recall the semi-group $\mathcal S_t$ is corresponding to the $\nu$-approximate front tracking scheme without shift.

It is clear that $\bar u_T(x,t)$ is well defined, when $t>T$. And   $\bar u_T(x,t)$  and $\psi(x,t)$ have the same configuration, when $t\in(T,T+\delta)$ with sufficiently small  $\delta$. The only difference may be in the positions of the discontinuities.

To show $\bar u_T(x,t)$ is well-defined  when $t\in(T-\delta,T)$, to then obtain the desired estimate \eqref{L1_deriv_estimate}, we need to consider two cases:

\begin{center}
\begin{varwidth}{\textwidth}
\begin{enumerate}
\item[Case 1] $\psi$ has no interaction at $t=T$,
\item[Case 2] $\psi$ has an interaction at $t=T$.
\end{enumerate}
\end{varwidth}
\end{center}

\bigskip 

   \begin{figure}[tb] \centering
   		   \includegraphics[scale=.5]{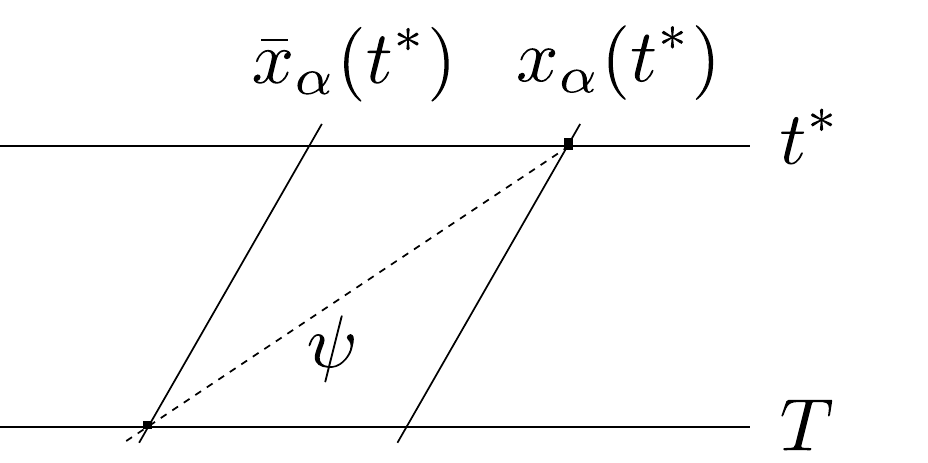}
	\caption{ \label{pic1} Two solid lines are two jumps: $\bar x_\alpha(t)$   in $\bar u_T(x,t)$ (intersects with the dotted line at $t=T$); and $ x_\alpha(t)$  in 
$\bar u_{t^*}(x,t)$ (intersects with the dotted line at $t=t^*$), with the same Rankine-Hugoniot speed. The dotted line is a shifted jump in $\psi$.    All three approximate solutions $\bar u_T(x,t)$, $\bar u_{t^*}(x,t)$ and $\psi$ share the same left and right states, along the corresponding jump $\bar x_\alpha(t)$, $x_\alpha(t)$ and the dotted line, respectively.
The distance between $x_\alpha(t^*)$ and $\bar x_\alpha(t^*)$ approaches $(t^*-T)|{\dot x}_\alpha- {\dot {\bar x}}_\alpha|$ as $t^*\rightarrow T+$.
	}
 \end{figure}
\paragraph{\bf Case 1.}
First, if there is no interaction for $\psi$ at $t=T$, i.e., all jumps are isolated, then clearly, $\bar u_T(x,t)$ is well-defined and shares the same configuration with $\psi(x,t)$, when $t\in(T-\delta,T)$ with sufficiently small $\delta$. 

Similar, for any $T<t^*<T+\delta$ with sufficiently small $\delta$, where all jumps are isolated, we can define another $\nu$-approximate solution by
\[\bar u_{t^*}(x,t)\coloneqq {\mathcal S}_{t-t^*}\circ\psi(x,t^*)\]
when $t\in(T-\delta,T+\delta)$, which shares the same configuration with $\psi(x,t)$ but with real jump speeds (up to $\nu$-error).

We use
$\gamma_{u(x,T),\psi(x,T)}$ to denote any  pseudopolygonal 
 joining $u(x,T)$ and $\psi(x,T)=\bar u_T(x,T)$.  By theories in  \cite{BressanC,ColomboRisebro} we know ${\mathcal S}_{t-T}\circ \gamma_{u(x,T),\psi(x,T)}$ is also a pseudopolygonal when $t\in[T,T+\delta)$.
 Then by Proposition \ref{control_L1_norm}, and $t^*\in(T,T+\delta)$,
\beq\label{key1}
\|\gamma_{u(x,T),\psi(x,T)}\|\geq
\|{\mathcal S}_{t^*-T}\circ\gamma_{u(x,T),\psi(x,T)}\|.
\eeq

Next we define an elementary path from $\bar u_T(x,t)$ to 
$\bar u_{t^*}(x,t)$, which both share the same configuration with $\psi(x,t)$ but with real jump speed (up to $\nu$-error), when $t\in(T-\delta,T+\delta)$, by 
\[
x^\theta_\alpha(t)={\bar x}_\alpha(t)+\xi_\alpha\theta,\qquad \theta\in[0,1],
\]
where ${\bar x}_\alpha(t)$ and $x_\alpha(t):=x^\theta_\alpha(t)|_{\theta=1}$ are any pair of corresponding {shock waves}, which are parallel, in $\bar u_T(x,t)$ and
$\bar u_{t^*}(x,t)$, respectively. {Note we do not shift rarefaction jumps.}  See Figure \ref{pic1}. We name this elementary path as $\hat \gamma(t)$. 

By \eqref{equiv}, 
\beq\label{key2}
\|\hat \gamma(t^*)\|\leq K\cdot \sum_{\alpha=1}^{n} |\sigma_{\alpha}\xi_{\alpha}| = K\cdot\sum_{\alpha=1}^{n}|\sigma_{\alpha}||x_\alpha(t^*)- {\bar x}_\alpha(t^*)|,
\eeq
{where $K=K_2$ as used in \eqref{equiv}, and where the summation runs over all shocks in $\psi$.}

At $t=t^*$, the combination of $\hat \gamma(t^*)$ and ${\mathcal S}_{t^*-T}\circ\gamma_{u(x,T),\psi(x,T)}$ gives a  pseudopolygonal joining $u(x,t^*)$ and $\bar u_{t^*}(x,t^*)=\psi(x,t^*)$. Now by \eqref{key1} and \eqref{d_def},
\beq\label{key3}
\|\gamma_{u(x,T),\psi(x,T)}\|\geq  d_\nu(u(x,t^*), \psi(x,t^*))-\|\hat \gamma(t^*)\|.
\eeq 
 Since \eqref{key3} holds for any pseudopolygonal joining $u(x,T)$ and $\psi(x,T)$,
\[
d_\nu(u(\cdot,T),\psi(\cdot,T))\geq d_\nu(u(\cdot,t^*), \psi(\cdot,t^*))-\|\hat \gamma(t^*)\|,
\]
i.e. by \eqref{key2},
\begin{eqnarray}\label{d_eqn}
&&d_\nu(u(\cdot,t^*), \psi(\cdot,t^*))-d_\nu(u(\cdot,T),\psi(\cdot,T))\nonumber\\[2mm]
&\leq&\|\hat \gamma(t^*)\|
\nonumber\\
&\leq&  K\cdot\sum_{\alpha=1}^{n} |\sigma_{\alpha}||x_\alpha-  {\bar x}_\alpha|(t^*)\nonumber\\
&=&  K\cdot \sum_{\alpha=1}^{n}\Bigg[ |\sigma_{\alpha}|\int\limits_T^{t^*}|{\dot h}_\alpha(s)- \dot{h}_{\text{true},\alpha}(s)|\,ds\Bigg].
\end{eqnarray}

From this, 
we show that
\beq\label{result}\frac{d}{dT}d_\nu(u(\cdot,T),\psi(\cdot,T))\leq  K\cdot\sum_{\alpha=1}^{n} |\sigma_{\alpha}||{\dot h}_\alpha(T)- \dot{h}_{\text{true},\alpha}(T)|,\eeq
where $\dot{     h}_\alpha$ and $\dot{h}_{\text{true},\alpha}$ are the wave speeds for $\psi(x,T)$ (shifted speed) and $\bar u_T(x,T)$ (constant Rankine-Hugoniot speed, up to error, see \eqref{averaged_speed}), respectively. See Figure \ref{pic1}. 


\bigskip

\paragraph{\bf Case 2.}

When there is an interaction between two incoming waves in $\psi(x,t)$ at $t=T$, we find an $\nu$-approximate front tracking scheme without shift
\[
\bar u_{T^+}(x,t)={\mathcal S}_{t-T}\circ\psi(x,T+)
\] 
when  $t\in(T, T+\delta)$ with some sufficiently small $\delta$ before the next interaction. Clearly, $\bar u_{T^+}(x,t)$ shares the same configuration of $\psi(x,t)$ when  $t\in(T, T+\delta)$,
with sufficiently small $\delta$.

Similarly we want to define
\[
\bar u_{T^-}(x,t)={\mathcal S}_{t-T}\circ\psi(x,T-)
\]
when  $t\in(T-\delta,T)$ with some sufficiently small $\delta$. But since $\psi(x,t)$ has shifted jumps, it is non-trivial to show that the back-in-time solution $\bar u_{T^-}(x,t)$ is well-defined when $t\in(T-\delta,T)$. Here, only two cases can happen:
\begin{itemize}
\item[Case 1] The left incoming wave is a 2-wave and the right incoming wave is a 1-wave. 
\item[Case 2] Two incoming waves are in the same family, and at least one incoming wave is a shock i.e. $\sigma<0$.
\end{itemize}
In fact, these two cases are cases from the definition of approaching waves for un-shifted $\nu$-approximate solution (see \Cref{sec:front_tracking_no_shift}), where a pair of waves in the following two cases are \emph{not approaching}: (i) the left incoming wave is a $1$-wave and the right incoming wave is a $2$-wave, (ii) two incoming waves are in the same family and both are rarefactions.

Now we consider the shifted $\nu$-approximate solution (see \Cref{sec:shifted_front_tracking}). Since we do not shift rarefaction waves and always use the rarefaction curve when $\sigma>0$ in \eqref{Riemann1_0}, case (ii) is still a non-approaching case. Case (i) is a non-approaching case, due to our construction of shift for a shock, i.e. a shifted 1-shock to the left will not interact with any 2-waves, and similarly for the symmetric case. See \eqref{ordering_shifts}.


In both Case 1 and Case 2, the Rankine-Hugoniot speed of the left incoming wave is always larger than the speed of the right incoming wave due to Lax's entropy condition. 

Hence, $\bar u_{T^-}(x,t)$ shares the same configuration of $\psi(x,t)$ when  $t\in(T-\delta,T)$, with sufficiently small $\delta$. Still using the Lax entropy condition and \eqref{ordering_shifts}, similar result holds for interactions of more than 2 waves. In fact, for any two adjacent incoming waves in the same family, at least one of them is a shock, otherwise, they won't interact. So
the Rankine-Hugoniot speed of the left wave is always larger than the speed of the right wave due to Lax's entropy condition.

Then  by Proposition \ref{control_L1_norm},
\[
d_\nu(u(x,T+\delta_1),\bar u_{T^+}(x,T+\delta_1))\leq d_\nu(u(x,T-\delta_2),\bar u_{T^-}(x,T-\delta_2))
\]
for any sufficiently small positive $\delta_1$ and $\delta_2$.
Letting  $\delta_1$ and $\delta_2$ approach zero, we know
\beq\label{d_con_2}
\limsup_{t\rightarrow T+}d_\nu(u(x,t),\psi(x,t))\leq \lim_{t\rightarrow T-}d_\nu(u(x,t), \psi(x,t)).
\eeq
where \eqref{result} guarantees the existence of one-side limit on the right hand side of the inequality. The case when more than two waves interact can be treated similarly.
\end{proof}
\begin{remark}\label{remark2}
For future use, if we change the speeds of shocks \emph{as well as}  rarefactions, the error estimate \eqref{L1_deriv_estimate} still holds with $\sigma_\alpha$ corresponding to any jump, under the condition that the change of speeds do not cause the interaction between two adjacent rarefactions in the same family, or interaction between a 1-wave on the left and a 2-wave on the right.
\end{remark}

\section{The inhomogeneous weight $a$ on the spatial domain}\label{sec:weight}
In this section, we construct the inhomogeneous weight $a(x,t)$ which is needed to ensure stability in a weighted-type $L^2$ space via \Cref{prop:small_shock_diss} and \Cref{prop:large_shock_diss}.

We define the weight function $a$ for small-BV solutions (Problem $\epsilon$-BV) and for large-BV solutions for special systems like (Problem Isothermal), for $\nu$-approximate front tracking solutions with shift.

\textbf{Important note on notation:} We continue to follow the notation used in the previous section, \Cref{section:front}.

\subsection{Wave interactions}
Now we recall some basic interaction estimates for $\nu$-approximate solutions, for both small data general systems (Problem $\epsilon$-BV) and large data special systems such as (Problem Isothermal). 

Note most interaction estimates in this subsection are provided in \cite{BressanC} and \cite{ColomboRisebro} for un-shifted $\nu$-approximate solutions,
but also hold for shifted $\nu$-approximate solutions (\Cref{sec:shifted_front_tracking}), because the results come from the comparison of incoming and outgoing wave strengths, without measuring the wave speed. So all results in this subsection hold for both shifted or un-shifted solutions.

%

A $j$-wave and an $i$-wave, with the former crossing the $t$-time line to the left of the latter, are called {\it approaching} when either $i<j$, or $i=j$ and at least one of these waves is a shock (a wave with negative $\sigma$).

From now on, we may abuse the notation by referring to the wave $\sigma_i$ by referring to its strength.



For small data problem,  we can define the weight $a$ the same as in  \cite{MR4487515}, for the  $\nu$-approximate solution. Here we will use a new way to define the weight $a$ to make it consistent with the large data problem.



\begin{proposition}\label{prop:delta}
Consider (Problem $\epsilon$-BV). Then for a solution in $\mathcal D$, there exists $\kappa>0$ such that for any $\eps$ small enough, the functionals $U(t)$ and $Q(t)$ are decreasing in time. Moreover, for any time $t$ where waves with strength $|\sigma_i|$ and $|\sigma_j|$ interact, we have
$$
\Delta U(t)\leq -\frac{\kappa}{2}|\sigma_i| |\sigma_j|.
$$

Consider now (Problem Isothermal). Then for a solution  with finite total variation  in $\mathcal D$, there exists positive  $\kappa_2$ and $\eta$ such that the functionals $U(t)$ and $Q(t)$ are decreasing in time. 
\end{proposition}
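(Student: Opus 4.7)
For (Problem $\epsilon$-BV) my plan is to follow the standard Glimm--Bressan interaction analysis, working by induction on the (finite) sequence of interaction times. The core input is the local quadratic interaction estimate: when two approaching waves of strengths $|\sigma_i|$ and $|\sigma_j|$ meet at a point, the Riemann problem at the interaction is solved by the algorithm of \Cref{section:front}, and a Taylor expansion of the curves $T_1,T_2$ in \eqref{RP_curves} shows that the outgoing wave strengths $\{\sigma'_k\}$ satisfy
\begin{align*}
\sum_k |\sigma'_k| \le |\sigma_i| + |\sigma_j| + C_1 |\sigma_i\sigma_j|,
\end{align*}
so $\Delta V \le C_1 |\sigma_i\sigma_j|$. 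For $Q$ defined in \eqref{def_v2}, the key point is that after the interaction the pair $(\sigma_i,\sigma_j)$ no longer contributes (it is approaching, hence in $\mathcal A$), while the outgoing waves $\{\sigma'_k\}$ can form new approaching pairs only with third waves that sit on one side of the interaction point, and this contribution is bounded by $C_2 V(t)\,|\sigma_i\sigma_j|$. Hence
\begin{align*}
\Delta Q \le -|\sigma_i\sigma_j| + C_2 V(t)\,|\sigma_i\sigma_j|.
\end{align*}
If $V(t) \le \epsilon$ with $\epsilon$ small so that $C_2\epsilon \le 1/2$, one gets $\Delta Q \le -\tfrac12|\sigma_i\sigma_j|$, and choosing $\kappa$ sufficiently large in $U = V + \kappa Q$ yields the desired bound $\Delta U \le -\tfrac{\kappa}{2}|\sigma_i\sigma_j|$ (after absorbing $C_1|\sigma_i\sigma_j|$ into the negative part). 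This closes the induction: $U(t^+) \le U(t^-)$, so $V(t^+) \le U(t^+) \le U(0) \lesssim \epsilon$, and therefore $V$ and $Q$ remain non-increasing for all subsequent interactions.

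For (Problem Isothermal) I would appeal to the Nishida--Colombo--Risebro analysis \cite{ColomboRisebro}, which exploits the special algebraic structure of the isothermal shock curves in the Riemann coordinates \eqref{v_12}: by \Cref{lem_3.1}, the coupling function $\phi$ is monotone with $\phi(\sigma)\le 0$ and $\phi'(\sigma)\ge 0$ for $\sigma<0$. This is exactly what makes the asymmetric functional $V_2$ in \eqref{def_v1} work: the extra weight $(1-\eta\,\mathrm{sgn}\,\sigma_{i,\alpha})$ placed on shocks (relative to rarefactions) absorbs the positive error terms in the interaction estimates that would otherwise destroy monotonicity outside the small-BV regime. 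A case-by-case analysis of the four types of interactions (two shocks, two rarefactions, shock--rarefaction, same family vs.\ opposite families), combined with a sufficiently small $\eta \in (0,1)$ and sufficiently large $\kappa_2$, gives both $\Delta V_2 \le 0$ and $\Delta Q \le 0$ at each interaction uniformly in the data domain $\mathcal D$ from \eqref{Dcal_iso}. The key structural fact that makes this work globally in BV is that $\phi(\sigma)$ is sub-linear in $|\sigma|$, so the interaction errors stay controllable regardless of the total variation.

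The main obstacle is the isothermal case, whose interaction estimates cannot be treated as a perturbation of the linearized Riemann problem and require the delicate algebraic cancellations of the Nishida coordinates. In our setting one must additionally verify that the Colombo--Risebro analysis transfers to the \emph{shifted} $\nu$-approximate solutions of \Cref{sec:shifted_front_tracking}. As remarked at the start of \Cref{sec:weight}, this transfer is immediate, because the interaction inequalities compare only the \emph{strengths} of incoming and outgoing waves at each interaction point and make no reference to wave speeds. Consequently the shifts affect neither the interaction configurations (shocks and rarefactions are created from the same Riemann algorithm \eqref{Riemann1_0}) nor the values of $V$, $V_2$, or $Q$ at an interaction, and the monotonicity of $U$ and $Q$ carries over verbatim. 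The role of this proposition is thus to package the known small- and large-BV front tracking estimates in the precise form required for the construction of the weight $a$ in the remainder of \Cref{sec:weight}.
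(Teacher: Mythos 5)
Your proposal takes exactly the approach the paper intends: Proposition \ref{prop:delta} is stated without proof in the manuscript and is implicitly deferred to the classical Glimm--Bressan interaction estimates (for (Problem $\epsilon$-BV)) in \cite{BressanC} and to the Nishida--Colombo--Risebro analysis (for (Problem Isothermal)) in \cite{ColomboRisebro}. Your reproduction of the quadratic interaction estimate, the splitting into $\Delta V$ and $\Delta Q$, the induction on interaction times, the appeal to the weighted functional $V_2$ for the isothermal system, and the observation that the estimates transfer verbatim to the shifted $\nu$-approximate solutions because they compare only wave strengths (not speeds) all match what the paper assumes and later repeats explicitly at the start of \Cref{sec:weight}.

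One arithmetic slip worth fixing: with $\Delta V \le C_1\lvert\sigma_i\sigma_j\rvert$ and your choice $C_2\epsilon\le\tfrac12$ giving $\Delta Q\le -\tfrac12\lvert\sigma_i\sigma_j\rvert$, you obtain
\begin{equation*}
\Delta U \;=\; \Delta V + \kappa\,\Delta Q \;\le\; \Bigl(C_1 - \tfrac{\kappa}{2}\Bigr)\lvert\sigma_i\sigma_j\rvert,
\end{equation*}
and no choice of $\kappa>0$ makes this $\le -\tfrac{\kappa}{2}\lvert\sigma_i\sigma_j\rvert$ (that would require $C_1\le 0$). To close, you need to keep a margin: for example, impose $C_2\epsilon\le\tfrac14$ so that $\Delta Q\le -\tfrac34\lvert\sigma_i\sigma_j\rvert$, and then pick $\kappa\ge 4C_1$, which gives $\Delta U \le \bigl(C_1-\tfrac{3\kappa}{4}\bigr)\lvert\sigma_i\sigma_j\rvert \le -\tfrac{\kappa}{2}\lvert\sigma_i\sigma_j\rvert$ as stated. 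This is a cosmetic correction; the structure of the argument is sound.
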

\vskip0.3cm

The following interaction estimate holds for  $\nu$-approximate solutions of small data general systems \cite{BressanC}. 


\begin{proposition}{\protect{\cite{BressanC}}}\label{prop4.2}
We consider $\nu$-approximate solutions  in the context of (Problem $\epsilon$-BV).
When there is an interaction at $(x,t)$ between incoming waves  from the second family with strength $\sigma''_i$'s and incoming waves from the first family with strength $\sigma'_i$'s, we denote the strength of the outgoing wave in the first family as $\sigma_1$ and in the second family as $\sigma_2$, and the following estimates hold:
 \begin{equation}\label{est1}
|\sigma_2-\sum\sigma''_i|+|\sigma_1-\sum\sigma'_i|\leq K_0|\Delta Q|,
\end{equation}
for some constant $K_0$ independent of $\kappa$,
where $\Delta Q=Q(t+)-Q(t-)$.

Furthermore, we have
\begin{itemize}
\item[(i)]
In any family, if there are two or more incoming rarefaction waves, they cannot be adjacent.
\item[(ii)]
If, in any family, all incoming waves are shocks, i.e. with negative $\sigma$, then the outgoing wave in that family is still a shock.

\item[(iii)]
If the incoming waves in the first family include both rarefaction and shock, then 
\begin{equation}\label{est2_0}
\Big(\sum|\sigma'_i|\Big)-|\sigma_1|>\min\Big\{ \sum(\sigma'_i)_-,\sum(\sigma'_i)_+\Big\}.
\end{equation}
\end{itemize}
where recall we use the notation
\[
(f)_\pm=(|f|\pm f)/2
\]
to denote the positive and negative parts of any function $f$, respectively.
\item[(iv)]
If the second family of incoming waves include both rarefaction and shock, then 
\begin{equation}\label{est2}
\Big(\sum|\sigma''_i|\Big)-|\sigma_2|>
\min\Big\{ \sum(\sigma''_i)_-,\sum(\sigma''_i)_+\Big\}.
\end{equation}

\end{proposition}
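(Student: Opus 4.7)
The plan is to reduce the claim to the classical Glimm interaction analysis applied to the Riemann problem formed at the interaction point, using the $C^2$ parametrization of the wave curves (Lemma \ref{lem_3.1}) together with the construction of $\nu$-approximate solutions (Section \ref{sec:front_tracking_no_shift}). Let $t$ be the interaction time and denote by $u_L$ and $u_R$ the leftmost and rightmost states adjacent to the interaction. Composing the wave curves of the incoming fronts $\sigma'_1,\dots,\sigma'_p$ (first family) and $\sigma''_1,\dots,\sigma''_q$ (second family) in the order they appear, we reach $u_R$ starting from $u_L$. The outgoing configuration is the $\nu$-approximate solution of the Riemann problem $(u_L,u_R)$, consisting of a single first-family wave of strength $\sigma_1$ and a second-family wave of strength $\sigma_2$ (a rarefaction may be discretized into many small fronts, but their total strength is $\sigma_i$).

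For \eqref{est1}, I would Taylor expand the composition $T_2(\sigma_2)\circ T_1(\sigma_1)(u_L)=u_R$ and compare it with the full sequence of compositions of the incoming fronts. To leading order the strengths add linearly; the second-order correction produces terms of the form $\sigma'_i\sigma''_j$, $(\sigma'_i)_-\sigma'_j$, and $(\sigma''_i)_-\sigma''_j$, i.e.\ exactly the products of approaching incoming waves. Because every pair of approaching incoming waves is removed from the interaction potential at time $t$, while the newly created outgoing waves contribute at most $\bigl(|\sigma_1|+|\sigma_2|\bigr)\,\bigl(V+\text{strengths of transmitted waves}\bigr)$, the change $|\Delta Q|$ bounds precisely this quadratic remainder, giving \eqref{est1} with a constant $K_0$ that depends on the $C^2$ norms of the wave curves on $\mathcal{W}$ but not on $\kappa$.

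Items (i) and (ii) follow from the construction. For (i), the $\nu$-rarefaction fan \eqref{R_RP_sol} splits an $i$-rarefaction into small fronts of strength $\nu$, each moving at the state-dependent speed $\lambda_i(\hat\omega_1^j)$; adjacent same-family $\nu$-rarefaction fronts have strictly different speeds by genuine nonlinearity (Assumption \ref{assum}\ref{assum:gnl}), hence they separate immediately after being created, so at any later interaction two adjacent incoming waves in the same family cannot both be rarefactions. For (ii), if every $\sigma'_i<0$, then composing $1$-shock curves $T_1$ (all on the branch $\sigma<0$) and interleaving with $2$-wave perturbations that only produce quadratic corrections, the total first-family change lies on the shock side of the outgoing $1$-wave curve up to an $O(|\Delta Q|)$ error; since the leading contribution is the sum $\sum\sigma'_i<0$, smallness of total variation forces $\sigma_1<0$. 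The same argument applies in the second family.

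For \eqref{est2_0}/\eqref{est2}, which is really the cancellation estimate, set $a=\sum(\sigma'_i)_+$ and $b=\sum(\sigma'_i)_-$. By \eqref{est1} we have $\sigma_1=(a-b)+O(|\Delta Q|)$, so
\begin{equation*}
\Bigl(\sum|\sigma'_i|\Bigr)-|\sigma_1| \;\geq\; (a+b) - |a-b| - O(|\Delta Q|) \;=\; 2\min(a,b) - O(|\Delta Q|).
\end{equation*}
Since $|\Delta Q|$ is at most of order $(a+b)\cdot(\text{opposing wave strengths})$, which is $o(1)\cdot\min(a,b)$ when $\eps$ is small, the right-hand side exceeds $\min(a,b)=\min\{\sum(\sigma'_i)_-,\sum(\sigma'_i)_+\}$. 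The $2$-family estimate is identical by symmetry. The main obstacle I anticipate is the bookkeeping for \eqref{est2_0}/\eqref{est2} in the presence of the interpolated Riemann solver \eqref{Riemann1_0}: the interpolation between shock and rarefaction curves near $\sigma\sim\sqrt{\nu}$ can blur the sign structure needed for the cancellation. This will require showing that the interpolation error is also controlled by $\Delta Q$ (equivalently, that it scales like products of approaching strengths), so that it can be absorbed into the right-hand side without spoiling the strict inequality.
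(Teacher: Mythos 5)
Your treatment of items (iii) and (iv) is essentially the argument the paper gives: deduce from \eqref{est1} that $|\sigma_1|$ is within $K_0|\Delta Q|$ of $|\sum(\sigma'_i)_+ - \sum(\sigma'_i)_-|$, then conclude via the identity $\sum|\sigma'_i| = \sum(\sigma'_i)_+ + \sum(\sigma'_i)_-$ and the smallness $K_0|\Delta Q| < \min\{\sum(\sigma'_i)_+,\sum(\sigma'_i)_-\}$. Your version is mildly streamlined (you avoid the paper's case split on the sign of $\sigma_1$ by using $|\sigma_1|\le |a-b|+O(|\Delta Q|)$ directly), and your reasoning for (i) via genuine nonlinearity coincides with the paper's one-line remark that same-family rarefaction fronts never approach.

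The genuine divergence is in \eqref{est1} and item (ii). The paper simply cites Lemma~5 of Bressan--Colombo \cite{BressanC}, whereas you propose to rederive the Glimm-type interaction estimate from scratch by Taylor-expanding the composed wave curves. That is the standard route to such an estimate, but the obstacle you flag at the end---that the interpolation $\Phi_i^\nu$ between shock and rarefaction curves near $\sigma \sim \sqrt{\nu}$ blurs the sign structure and must be shown to contribute only at the level of $|\Delta Q|$---is real and is precisely the work done in \cite{BressanC}'s Lemma~5, which is proved \emph{for the $\nu$-approximate solver}, not for the exact Riemann solver. Citing that result, as the paper does, absorbs your anticipated difficulty; rederiving it would require redoing the Bressan--Colombo analysis of the interpolated curves, not merely the classical Glimm expansion. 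In short: your cancellation argument is sound and matches the paper's, but the from-scratch derivation of \eqref{est1}/(ii) is incomplete exactly where you predicted it would be, and the cleaner move is the citation the paper makes.
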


The estimates \eqref{est2_0}, \eqref{est2} mean that when there are both incoming rarefaction and shock in any family, there will be cancellation, which causes the total variation to decay at least the total strength of waves in the weaker type (rarefaction or shock).


\begin{proof}
The estimate \eqref{est1} has been proved for small data problem in Lemma 5 in \cite{BressanC}, which directly gives (ii). The item (i) holds since rarefactions in the same family do not interact.

Now we show \eqref{est2_0} by \eqref{est1}. The proof of \eqref{est2} is entirely similar.

Without loss of generality, assume that $\sigma_1>0$, then by \eqref{est1},
\beq\label{proof_lemma}
-K_0|\Delta Q|\leq \sum\sigma'_i-\sigma_1=\sum(\sigma'_i)_+-\sum(\sigma'_i)_--|\sigma_1|\leq K_0|\Delta Q|.
\eeq
So we have
\[
(\sum|\sigma'_i|)-|\sigma_1|=(\sum|\sigma'_i|)-(\sum(\sigma'_i)_+-\sum(\sigma'_i)_-)+\sum(\sigma'_i)_+-\sum(\sigma'_i)_--|\sigma_1|
\geq 2\sum(\sigma'_i)_--K_0|\Delta Q|.
\]
Since $\sum(\sigma'_i)_->K_0|\Delta Q|$ for small data solution,
\[
(\sum|\sigma'_i|)-|\sigma_1|\geq \sum(\sigma'_i)_-.
\]
On the other hand, again by \eqref{proof_lemma},
\[
\sum(\sigma'_i)_+\geq \sum(\sigma'_i)_-+|\sigma_1|- K_0|\Delta Q|\geq \sum(\sigma'_i)_-.
\]
So we prove \eqref{est2_0}. When $\sigma_1<0$, we can prove it similarly.

The idea of this proof can be summarized as the following. When the incoming $1$-waves include both rarefaction and shock, the total variation of waves in that direction decays almost twice the total variation of incoming waves in the weaker type (1-rarefaction or 1-shock), since $Q$ is much smaller than $V$. So \eqref{est2} holds. The $2$-wave case is entirely similar.
\end{proof}

For (Problem Isothermal), we only consider pairwise interactions. We divide pairwise interactions into two classes: \emph{overtaking interactions} when two incoming waves are from the same family, and \emph{head-on interactions} when two incoming waves are from different families.

For overtaking interactions, we call the direction or family of incoming waves as the major direction or family, and call the other direction as the reflected direction or family. The outgoing wave(s) in the reflected direction is called the reflected wave(s).

\begin{itemize}
\item 
For any head-on interaction, by Lemma 3.2 in \cite{ColomboRisebro}, in each family,
there is one outgoing wave, whose corresponding $\sigma$ defined in \eqref{sigma_def} is the same as $\sigma$ for the incoming wave in the same family.
\item
For overtaking interactions of Temple system, since $\phi=0$ in Lemma \ref{lem_3.1}, there is no reflecting wave.

\item 
For overtaking shock-shock interaction of isothermal gas dynamics, the reflected wave is a rarefaction. And the strength of outgoing shock in the major family is larger than the strength of each incoming shock. The proof is given in Appendix \ref{app_A3}.
\item For overtaking shock-rarefaction interaction of isothermal gas dynamics, there is a reflected wave, but the total variation of waves decays \cite{ColomboRisebro}.
\end{itemize}

Summarizing above results and other results which can be proved using the proof of Lemma 3.1 and 3.2 in \cite{ColomboRisebro}, we have the following Proposition.

\begin{proposition}\label{prop4.2_2}\cite{ColomboRisebro}
We consider pairwise interactions of (Problem Isothermal).

When there is a pairwise interaction at $(x,t)$ between two incoming waves  with strength $\sigma'$ and $\sigma''$, we denote the strength of outgoing wave in the first family as $\sigma_1$ and  in the second family as $\sigma_2$. Then the following estimates hold, when $\eta\in(0,1)$ is sufficiently small and then $\kappa_2$ is sufficiently large.
\begin{itemize} 
\item[1.] 
For all pairwise interactions,
\[
\Delta U<-|\sigma'\sigma''|.
\]
The total variation $V=\sum_{i,\alpha}|\sigma_{i,\alpha}|$ is not increasing from before to after interaction. 
\item[2.]
When two incoming waves are from two different families ($\sigma'$ for first family, $\sigma''$ for the second family), then they will cross without changing the type and strength, i.e. $\sigma'=\sigma_1$ and $\sigma''=\sigma_2$.

\item[3.] When two shocks from the same family interact, the rarefaction wave is reflected for the isothermal gas dynamics. 
And for any fixed $\eta>0$
\[
\Delta V_2=-2\eta |\sigma_i|,
\]
and 
\beq\label{Uk0}
\Delta U\leq -|\sigma_i|
\eeq
when $\kappa_2$ is sufficiently large,
where the subscript $i$ is for the reflected direction. 

The strength of the outgoing shock in the major family is larger than the strength of each incoming shock.

\item[4.]
When a 1-shock ($\sigma'$) interacts with a 1-rarefaction ($\sigma''$), we know $\Delta V_2<0$.
If the outgoing $2$-wave is a shock, its strength is less than $\nu$, where the strength of the incoming $1$-rarefaction is always less than $\nu$ by our construction. And for any  positive constant $K_1$,
 \begin{equation}\label{est2_2}
 |\sigma_2|\leq -K_1\Delta U
\end{equation}
when $\kappa_2$ is sufficiently large. 

If the outgoing 1-wave is a shock, its strength is less than the incoming 1-shock, and for any  positive constant $K_1$,
 \begin{equation}\label{est2_2_2}
0<|\sigma'|-|\sigma_1|\leq -K_1\Delta U,
\end{equation}
when $\kappa_2$ is sufficiently large. 

If the outgoing 1-wave is a rarefaction, for any  positive constant $K_1$,
 \begin{equation}\label{est2_2_3}
|\sigma'|\leq -K_1\Delta U,
\end{equation}
when $\kappa_2$ is sufficiently large. 

\item[5.]
When a 2-shock ($\sigma''$) interacts with a 2-rarefaction ($\sigma'$), we know $\Delta V_2<0$.
If the outgoing $1$-wave is a shock, its strength is less than $\nu$, where the strength of the incoming $2$-rarefaction is always less than $\nu$ by our construction.
And if the outgoing $2$-wave is a shock, then for any  positive constant $K_1$,
\begin{equation}\label{est2_3}
|\sigma_1|\leq -K_1\Delta U,
\end{equation}
when $\kappa_2$ is sufficiently large.

If the outgoing 2-wave is a shock, its strength is less than the incoming 2-shock, and for any  positive constant $K_1$,
 \begin{equation}\label{est2_3_2}
0<|\sigma''|-|\sigma_2|\leq -K_1\Delta U,
\end{equation}
when $\kappa_2$ is sufficiently large.

If the outgoing 2-wave is a rarefaction, then for any  positive constant $K_1$,
 \begin{equation}\label{est2_3_3}
|\sigma'|\leq -K_1\Delta U,
\end{equation}
when $\kappa_2$ is sufficiently large.
\end{itemize}
\end{proposition}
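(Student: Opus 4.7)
\medskip

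\noindent\textbf{Proof plan.} My plan is to assemble the statement out of the interaction analysis already carried out in \cite{ColomboRisebro}, with the isothermal-specific shock-shock computation from Appendix \ref{app_A3} plugged in. The key bookkeeping issue will be to choose the parameters $\eta\in(0,1)$ and $\kappa_2>0$ in the right order: first $\eta$ small enough so that $V_2$ is comparable to $V$ and so that the $(1\mp\eta)$ weighting in $V_2$ produces a strict decrease at every ``reflected rarefaction'' interaction, and then $\kappa_2$ large enough (depending on $M$, hence on $\mathcal{D}$) so that $\kappa_2\,\Delta V_2$ dominates $\Delta Q$ wherever $\Delta Q$ has an uncontrolled sign. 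With those choices fixed, part 1 is a direct consequence of the classical estimate $\Delta Q\le -|\sigma'\sigma''|$ for approaching waves combined with $|\Delta V_2|\lesssim |\sigma'\sigma''|$ (both proved in \cite{ColomboRisebro} by expanding the Riemann-problem solver in Riemann coordinates). The non-increase of $V=\sum|\sigma_{i,\alpha}|$ then follows by comparing the outgoing and incoming wave strengths case-by-case.

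\medskip

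For part 2 (head-on interactions), I will argue directly in Riemann coordinates \eqref{v_12}. Consider a 2-wave with strength $\sigma''$ crossed from the left by a 1-wave with strength $\sigma'$: by \eqref{si_shock} and \eqref{sigma_def}, the intermediate and right states differ from the left state by a composition $\phi_2^\pm(\phi_1^\pm(v^l,\sigma'),\sigma'')$. Direct computation shows that solving the resulting Riemann problem forward from the post-interaction time produces exactly the composition in the opposite order, with the \emph{same} strengths $\sigma_1=\sigma'$, $\sigma_2=\sigma''$; the $\phi(\sigma)$ corrections contribute to both $v_1$ and $v_2$ in a way that makes the outgoing Riemann-coordinate jumps match the incoming ones. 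This is Lemma 3.2 of \cite{ColomboRisebro}; I will just record the identity.

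\medskip

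For part 3, I will invoke Appendix \ref{app_A3} to conclude that an overtaking shock-shock collision in isothermal Euler produces a shock in the major family and a reflected \emph{rarefaction} in the other family, and that the outgoing shock strength exceeds either incoming strength. The key quantitative step is the computation of $\Delta V_2$: writing the incoming $V_2$-contribution as $(1+\eta)(|\sigma'|+|\sigma''|)$ and the outgoing one as $(1+\eta)|\sigma_{\rm major}|+(1-\eta)|\sigma_i|$, the near-conservation of $\sigma$ in the major family (again from Lemma 3.1 of \cite{ColomboRisebro}) produces the clean identity $\Delta V_2=-2\eta|\sigma_i|$, after which \eqref{Uk0} is obtained by taking $\kappa_2$ large enough that $2\kappa_2\eta\ge 1$ and using $\Delta Q\le 0$. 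Parts 4 and 5 are of the same flavor but more delicate, because a shock-rarefaction interaction in the same family can produce outgoing waves that straddle the interpolation thresholds $\sqrt{\nu}$ and $2\sqrt{\nu}$ appearing in \eqref{Riemann1_0}; the estimates \eqref{est2_2}--\eqref{est2_3_3} follow from expanding the interaction output about the interaction state and noting that each ``defect'' term ($|\sigma_2|$, $|\sigma'|-|\sigma_1|$, etc.) has size $\mathcal{O}(|\sigma'\sigma''|)$, which is then absorbed by $-K_1\Delta U\ge K_1|\sigma'\sigma''|$ for $\kappa_2$ large.

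\medskip

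The main obstacle I foresee is not any single interaction estimate but rather verifying that a \emph{single} pair $(\eta,\kappa_2)$ can be chosen so that all of parts 1--5 hold simultaneously uniformly over the domain $\mathcal{D}$ defined in \eqref{Dcal_iso}: each bound drives $\eta$ down and $\kappa_2$ up, and the dependence of the implicit constants on $M$ (through the size of $V$, $V_2$, and the exponential weights in $\Upsilon_\xi$) must be tracked so that the choice remains admissible. Once that is done, the proposition follows by a finite case distinction on the type of the two incoming waves.
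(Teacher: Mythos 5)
Your overall plan — assembling the statement from the interaction estimates of Colombo--Risebro, with the isothermal shock-shock computation from Appendix~\ref{app_A3} plugged in, choosing $\eta$ small before $\kappa_2$ large — is the same route the paper itself takes: the paper's ``proof'' is exactly the bullet list preceding the statement (head-on crossings from \cite{ColomboRisebro} Lemma~3.2, shock-shock from Appendix~\ref{app_A3}, shock-rarefaction cancellation from \cite{ColomboRisebro}) plus an appeal to Lemmas~3.1 and 3.2 of \cite{ColomboRisebro}. Parts 2 and 3 of your sketch match this exactly.

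However, your justification of part~1 contains a genuine error. You invoke ``the classical estimate $\Delta Q\le-|\sigma'\sigma''|$'' as the driving mechanism, but that estimate is the \emph{small-BV Glimm} estimate, and it fails in the large-BV isothermal setting whenever a reflected wave is produced. For example, in a $1$-shock/$1$-shock overtaking, the reflected $2$-rarefaction $\sigma_2$ forms a \emph{new} approaching pair with every $2$-shock lying to the right of the interaction point, so $\Delta Q$ can be as large as $-|\sigma'\sigma''|+V\cdot|\sigma_2|$, which is positive when the background variation $V$ is large. The decay $\Delta U<-|\sigma'\sigma''|$ is therefore not a consequence of a sign on $\Delta Q$ alone; it comes from $\kappa_2\Delta V_2\le -2\kappa_2\eta|\sigma_2|$ dominating the new approaching pairs once $\kappa_2\gtrsim M/\eta$. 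A related imprecision appears in your sketch of parts 4 and 5: to get $|\sigma_2|\le-K_1\Delta U$ \emph{for every} $K_1>0$ (with $\kappa_2$ depending on $K_1$), one must use the bound $-\Delta U\gtrsim(\kappa_2\eta-\bigO(M))\,|\sigma_2|$ directly, so that the ratio $|\sigma_2|/(-\Delta U)$ can be driven to zero by increasing $\kappa_2$; appealing only to ``$-\Delta U\ge K_1|\sigma'\sigma''|$'' together with ``$|\sigma_2|=\bigO(|\sigma'\sigma''|)$'' yields the estimate for \emph{some} $K_1$, not an arbitrary one. Both issues are repairable within your framework, but as written those two steps would not go through.
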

\begin{remark}
For Item 3, there is no reflected wave for Temple systems. By \eqref{def_v2} and \eqref{def_v1}, $V_2=V$ when $\eta=0$. One can get the estimates on $V$ in Proposition \ref{prop4.2_2} by applying the corresponding proof in Lemma 3.1 of \cite{ColomboRisebro} to the case when $\eta=0$. 
\end{remark}

\subsection{The weight function $a$ }

Now we start to define the weight $a(x,t)$ for a $\nu$-approximate shifted front tracking solution $\psi$ constructed as in \Cref{sec:shifted_front_tracking}.
%
%


\paragraph{\bf (Problem $\epsilon$-BV)}
We provide a new construction of the weight $a(x,t)$, which is well-defined and decaying in time after any interaction of two and more than two waves.

For the small data problem, for any shock wave in $\psi$, we can always set
\beq\label{geng1}
\frac{a_r}{a_l}=e^{-\frac{3C_1}{4} |\sigma|} \hbox{ for 1 wave, and }
\frac{a_r}{a_l}=e^{\frac{3C_1}{4} |\sigma|}\hbox{ for 2 wave},
\eeq
where the subscripts $r$ and $l$ denote the values of $a$ to the right and left  of the shock, respectively. The constant $C_1$ is from \Cref{prop:small_shock_diss}. In the context of \Cref{prop:small_shock_diss}, $a_2=a_r$ and $a_1=a_\ell$.

When the wave strength is small enough,
\[
e^{\frac{3C_1}{4}|\sigma|} \approx 1+ \frac{3C_1}{4} |\sigma|,
\]
and the conditions on $a$ in Proposition \ref{prop:small_shock_diss} are satisfied.
%



The weight function is  defined as following
\beq\label{adef}
a(x,t)\coloneqq \exp\Bigg({\frac{3C_1}{4}\Big(V(t)+  \frac{3\kappa}{2}  Q(t) - \sum_{1\text{-shock in } (-\infty,x)} |\sigma_i| +\sum_{ 2\text{-shock in } (-\infty,x)} |\sigma_i| \Big)}\Bigg),
\eeq
for a sufficiently large $\kappa$.

Note that the function $a$ is piecewise-constant. It has discontinuities only along shock curves of $\psi$. On the other hand, it is constant across rarefaction curves of $\psi$.

We now state some important properties of the function $a$.

\begin{proposition}[\protect{Properties  of the weight for (Problem $\epsilon$-BV) \cite[Proposition 6.2]{MR4487515}}]
\label{prop:small_BV_a}
\hfill

For a fixed $\nu$-approximate shifted front tracking solution $\psi$ for (Problem $\epsilon$-BV) constructed as in \Cref{sec:shifted_front_tracking}, the weight function \eqref{adef} verifies the following properties.

For every time without wave interaction, and for every $x$ such that a 1-shock $\sigma_\alpha$ is located at $x = x_\alpha(t)$ in $\psi$,
\begin{equation}\label{small_a_1shock_control}
1 - 2C_1|\sigma_\alpha| \leq \frac{a( x_\alpha(t)+,t)}{a( x_\alpha(t)-,t)} \leq 1 - \frac{C_1}{2}|\sigma_\alpha|.
\end{equation}

For every time without wave interaction, and for every $x$ such that a 2-shock $\sigma_\alpha$ is located at $x = x_\alpha(t)$ in $\psi$,
\begin{equation}\label{small_a_2shockcontrol}
1 + \frac{1}{2}C_1|\sigma_\alpha| \leq \frac{a(x_\alpha(t)+,t)}{a(x_\alpha(t)-,t)} \leq 1 + 2C_1|\sigma_\alpha|.
\end{equation}

For every time $t$ with a wave interaction, and almost every $x$, the weight function decays in time:
\begin{equation}\label{small_a_decay}
a(x,t+) \leq a(x,t-),
\end{equation}
when $\kappa$ is sufficiently large. This holds except at finitely many points $(x,t)$ where interactions happen.

Finally, we have 
\beq\label{small_a_bounds}
\norm{a}_{L^\infty(\mathbb{R}\times[0,T])}\leq K,\qquad
\norm{\frac{1}{a}}_{L^\infty(\mathbb{R}\times[0,T])} \leq K,
\eeq
for a universal constant $K>0$.
\end{proposition}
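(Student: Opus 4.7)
The plan is to verify the four stated properties in turn. The jump-ratio estimates and the global $L^\infty$ bounds follow from direct computation; the time-decay at interactions requires an interaction-by-interaction bookkeeping argument and is the main obstacle.

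First I would handle the jump relations \eqref{small_a_1shock_control}--\eqref{small_a_2shockcontrol}. By the very definition of $a$, when $x$ crosses a 1-shock of strength $|\sigma_\alpha|$ in $\psi$ the argument of the exponential decreases by exactly $\frac{3C_1}{4}|\sigma_\alpha|$, and when $x$ crosses a 2-shock it increases by exactly $\frac{3C_1}{4}|\sigma_\alpha|$; the terms $V(t)$, $Q(t)$ and the other partial sum are unchanged. Thus $\frac{a(x_\alpha+,t)}{a(x_\alpha-,t)} = e^{\mp \frac{3C_1}{4}|\sigma_\alpha|}$, and for $\epsilon$ small enough a Taylor expansion $e^{\pm \frac{3C_1}{4}|\sigma_\alpha|} = 1 \pm \frac{3C_1}{4}|\sigma_\alpha| + O(|\sigma_\alpha|^2)$ places the ratio inside the intervals $[1-2C_1|\sigma_\alpha|,\,1-\tfrac{C_1}{2}|\sigma_\alpha|]$ and $[1+\tfrac{C_1}{2}|\sigma_\alpha|,\,1+2C_1|\sigma_\alpha|]$, since $\tfrac{C_1}{2} < \tfrac{3C_1}{4} < 2C_1$.

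For the $L^\infty$ bounds \eqref{small_a_bounds}, I would estimate the exponent $\Phi(x,t)$ pointwise. Each partial shock sum is non-negative and bounded by $V(t)$, so
\begin{align*}
0 \;\le\; \tfrac{3\kappa}{2}\,Q(t) \;\le\; \Phi(x,t) \;\le\; 2V(t) + \tfrac{3\kappa}{2}Q(t).
\end{align*}
Because Proposition \ref{prop:delta} gives $V(t) \le V(0) \le \epsilon$ and $Q(t) \le Q(0) \le \epsilon^{2}$, exponentiating yields $1 \leq a \leq K$ for a constant $K$ depending only on $C_1,\kappa,\epsilon$, hence both inequalities in \eqref{small_a_bounds}.

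The heart of the argument is the decay property \eqref{small_a_decay} at an interaction time $t^{*}$ with interaction point $x_0$. For $x < x_0$ the waves at $x_0$ never enter either partial sum, so $\Delta\Phi(x,\cdot) = \Delta V + \tfrac{3\kappa}{2}\Delta Q = \Delta U + \tfrac{\kappa}{2}\Delta Q$; Proposition \ref{prop:delta} gives $\Delta U \le 0$ and $\Delta Q \le 0$, and the claim follows. For $x > x_0$, I would add the change in the two partial shock sums arising from waves at $x_0$. Using the incoming/outgoing estimate \eqref{est1}, the change in each partial sum is at most $K_0|\Delta Q|$ in absolute value; when incoming waves in a family contain both a shock and a rarefaction, one further invokes the cancellation estimates \eqref{est2_0}--\eqref{est2}, together with the fact that rarefactions do not contribute to the partial sums, to again bound the change by a multiple of $|\Delta Q|$. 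Combining with $\Delta V + \tfrac{3\kappa}{2}\Delta Q \le -\tfrac{\kappa}{2}|\sigma_i||\sigma_j| - \tfrac{\kappa}{2}|\Delta Q|$, one obtains
\begin{align*}
\Delta\Phi \;\leq\; -\tfrac{\kappa}{2}|\sigma_i||\sigma_j| \;-\; \Big(\tfrac{\kappa}{2} - 4K_0\Big)|\Delta Q|,
\end{align*}
which is $\le 0$ for $\kappa$ large enough depending only on the system, and then $a(x,t^{*}+) \leq a(x,t^{*}-)$ for a.e.\ $x$. The hard part is not the algebra but the case analysis: one must verify the bookkeeping in every interaction configuration listed in Proposition \ref{prop4.2} --- a head-on $1$--$2$ interaction, two shocks of the same family, and mixed rarefaction/shock incoming waves in the same family whose outgoing wave may be either a shock or a rarefaction --- and check that \eqref{est2_0}--\eqref{est2} always provide enough cancellation to absorb the change in the partial sums.
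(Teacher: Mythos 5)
Your treatment of the jump-ratio estimates \eqref{small_a_1shock_control}--\eqref{small_a_2shockcontrol} and the $L^\infty$ bounds \eqref{small_a_bounds} matches the paper's Steps 1 and 3 and is correct. The issue is the interaction-time decay \eqref{small_a_decay}, and it is not just an unfinished case analysis: there is a concrete error in the inequality you intend to use.

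You claim that when the incoming waves of a family contain both a shock and a rarefaction, the estimates \eqref{est2_0}--\eqref{est2} allow you to ``again bound the change \emph{[in the partial shock sums]} by a multiple of $|\Delta Q|$,'' which feeds into your proposed display $\Delta\Phi \leq -\tfrac{\kappa}{2}|\sigma_i||\sigma_j| - (\tfrac{\kappa}{2} - 4K_0)|\Delta Q|$. That is false: \eqref{est2_0}--\eqref{est2} give \emph{lower} bounds on the \emph{decrease of $V$}, not upper bounds on the partial-sum change by $O(|\Delta Q|)$. In the cancellation case the partial sum for (say) the $2$-family jumps by $\big|(\sigma_2)_- - \sum(\sigma''_i)_-\big|$, and the paper's Case~2 in the proof of the proposition shows this is bounded only by
\begin{equation*}
\Big(\sum|\sigma''_i|\Big)-|\sigma_2| + 2K_0|\Delta Q|,
\end{equation*}
whose first term is the first-family/second-family \emph{variation drop}, a quantity of size $O(|\sigma''_i|)$, not $O(|\Delta Q|)$. (Concretely: a $2$-shock of strength $10^{-2}$ meeting a $2$-rarefaction of strength $5\cdot10^{-3}$ produces a partial-sum change of order $10^{-2}$ while $|\Delta Q|$ is of order $5\cdot10^{-5}$.) So your intermediate claim, and hence your displayed bound on $\Delta\Phi$, does not hold.

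What actually closes the argument — and what the paper does — is to show the change in $\int_{-\infty}^x d\mu$ is bounded above by $(\Delta V)_- + \tfrac{\kappa}{2}|\Delta Q|$, where the $(\Delta V)_-$ term comes precisely from \eqref{est2_0}--\eqref{est2}. The point is then a \emph{cancellation inside $\Phi$}: writing $\Delta V = -(\Delta V)_-$ and $\Delta Q = -|\Delta Q|$, one gets
\begin{equation*}
\Delta\Phi \leq -(\Delta V)_- - \tfrac{3\kappa}{2}|\Delta Q| + (\Delta V)_- + \tfrac{\kappa}{2}|\Delta Q| = -\kappa|\Delta Q| \le 0,
\end{equation*}
valid once $\kappa > 8K_0$ so that the $O(K_0|\Delta Q|)$ residuals are absorbed. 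The decrease in the standalone $V(t)$ term in the exponent is essential: it is what pays for the growth of the partial shock sums under shock--rarefaction cancellation, and no bound of the form you proposed can avoid invoking it.
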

\begin{proof}

\textbf{Step 1}

Under this definition \eqref{adef}, clearly \eqref{geng1} is satisfied on any isolated shock wave except at the interaction point. More precisely, we can equivalently write the weight function as
\beq\label{adef2}
a(x,t)=e^{\frac{3C_1}{4}\Big(V(t)+ \frac{3\kappa}{2} Q(t) +\int_{-\infty}^x \mu(x,t)dx \Big)}
\eeq
where we use the measure $\mu(\cdot,t)$ as a sum of Dirac measures in $x$:
\begin{eqnarray*}
&&\mu(x,t)= -\sum_{i: 1\text{-shock}} |\sigma_i| \delta_{\{x_i(t)\}}+\sum_{i: 2\text{-shock}} |\sigma_i| \delta_{\{x_i(t)\}}.
\end{eqnarray*}

This gives \eqref{small_a_1shock_control} and \eqref{small_a_2shockcontrol}.


\textbf{Step 2}

We now show \eqref{small_a_decay}.

When there is no wave interaction, $a(x,t)$ is constant in any region on the $(x,t)$-plane between two jumps.

Now we consider the time when there is a wave interaction. 

We assume there is only one interaction point $(x_0,t)$ at time $t$. Otherwise, if there are more than one, then we can apply the following proof to all of the interaction points at time $t$.

Recall, we use $(f)_\pm$ to denote the $(|f|\pm f)/2$, i.e. the positive/negative part of $f$. And for function $f(t)$, we denote $\Delta f=f(t+)-f(t-)$.

If 1-waves $\sum\sigma'_i$  and  2-waves $\sum\sigma''_i$ interact at $(x_0,t)$, then 
\begin{eqnarray}
&&\mu(t+)-\mu(t-)=\delta_{\{x_0\}}\left((\sigma_2)_--(\sigma_1)_--\sum(\sigma''_i)_- +\sum(\sigma'_i)_-\right)\nonumber\\
&&\qquad \leq \delta_{\{x_0\}}\left( |(\sigma_2)_--\sum(\sigma''_i)_-|+|(\sigma_1)_--\sum(\sigma'_i)_-|\right)\label{est3}.
\end{eqnarray}
Then we will use Proposition \ref{prop4.2} to show that
\beq\label{est3_2} 
\eqref{est3} \leq \Big((\Delta V)_-+\frac{\kappa}{2} |\Delta Q| \Big)\delta_{\{x_0\}},
\eeq
when $\kappa$ is large enough. Here $\Delta V=V(t+)-V(t-)$, similarly for $\Delta Q$.

Here when \eqref{est3_2} holds, by \eqref{adef2}, \eqref{est3}, and $V(t)+\kappa Q(t)$ and $Q(t)$ both decaying in time, we know  
$$
 \frac{a(x,t+)}{a(x,t-)}\leq 1,
\qquad
\hbox{i.e.}\qquad
 a(x,t+)-a(x,t-)\leq 0,
$$
for any $x\in \mathbb R$ except at the point $(x_0,t)$ where an interaction happens.

\bigskip

Finally, we come to prove \eqref{est3_2}. Let's first consider $|(\sigma_2)_--\sum(\sigma''_i)_-|$.
\bigskip
\\{\bf Case 1:} 
If all incoming 2-waves are the same type (shock or rarefaction) then by Proposition \ref{prop4.2}, the outgoing 2-wave does not change type. If 2-waves are all shocks, by \eqref{est1},
\[
|(\sigma_2)_--\sum(\sigma''_i)_-|=
|\sigma_2-\sum\sigma''_i|\leq K_0 |\Delta Q|.
\]
If 2-waves are all rarefactions
\[
|(\sigma_2)_--\sum(\sigma''_i)_-|=0.\]
\bigskip
\\
{\bf Case 2:} 
Otherwise, suppose there are both incoming 2-rarefaction and 2-shock waves. 

If $\sigma_2\geq 0$, i.e. the outgoing 2-wave is a rarefaction, then by \eqref{est1},
\[
 K_0|\Delta Q|\geq |\si_2-\sum\sigma''_i|=
|\si_2-\sum(\sigma''_i)_++\sum(\sigma''_i)_-|\geq \si_2-\sum(\sigma''_i)_++\sum(\sigma''_i)_-,
\]
i.e.
\[
\sum(\sigma''_i)_-
\leq
K_0|\Delta Q|-\si_2+\sum(\sigma''_i)_+
\leq K_0|\Delta Q|+\sum(\sigma''_i)_+,
\]
so
\[
\sum(\sigma''_i)_+\geq \sum(\sigma''_i)_- -K_0|\Delta Q|.
\]
And clearly
\[
\sum(\sigma''_i)_-> \sum(\sigma''_i)_- -K_0|\Delta Q|,
\]
so
\[
\min\{ \sum(\sigma''_i)_-,\sum(\sigma''_i)_+\}\geq  \sum(\sigma''_i)_--K_0|\Delta Q|.
\]
So by \eqref{est2},
\begin{eqnarray}
&&\hspace{-.4in}|(\sigma_2)_--\sum(\sigma''_i)_-|\nonumber\\
&=&|\sum(\sigma''_i)_-|\nonumber\\
&\leq&\min\{ \sum(\sigma''_i)_-,\sum(\sigma''_i)_+\}+K_0|\Delta Q|\nonumber\\
&\leq&(\sum|\sigma''_i|)-|\sigma_2|+K_0|\Delta Q|.\label{4.12}
\end{eqnarray}
\bigskip

If $\sigma_2\leq 0$,  i.e. when the outgoing 2-wave is a shock,
\[
 K_0|\Delta Q|\geq |\si_2-\sum\sigma''_i|=
|\si_2-\sum(\sigma''_i)_++\sum(\sigma''_i)_-|\geq \sum(\sigma''_i)_+-\si_2-\sum(\sigma''_i)_-.
\]
So it is clear that
\begin{align}\label{step1}-\sigma_2-\sum(\sigma''_i)_-\leq K_0|\Delta Q|-\sum(\sigma''_i)_+\leq K_0|\Delta Q|.
\end{align}
Then
\begin{eqnarray}
&&\hspace{-.4in}|(\sigma_2)_--\sum(\sigma''_i)_-|\nonumber\\
&=&|-\sigma_2-\sum(\sigma''_i)_-| \nonumber\\
&\leq&\sigma_2+\sum(\sigma''_i)_-+2K_0|\Delta Q| \nonumber\\
&=&\sum(\sigma''_i)_--|\sigma_2|+2K_0|\Delta Q|  \nonumber\\
&\leq&(\sum|\sigma''_i|)-|\sigma_2|+2K_0|\Delta Q| ,\label{4.13}
\end{eqnarray}
where we used \eqref{step1} in the first inequality.
\bigskip

For 1-wave we have a similar estimate for $|(\sigma_1)_--\sum(\sigma'_i)_-|$. For completeness, we delineate the cases.
\\{\bf Case 1':} 
If all incoming 1-waves are shocks then from \eqref{est1} we have,
\[
|(\sigma_1)_--\sum(\sigma'_i)_-|\leq K_0 |\Delta Q|.
\]
If all incoming 1-waves are rarefactions,
\[
|(\sigma_1)_--\sum(\sigma'_i)_-|=0.\]
\\
{\bf Case 2':} 
Otherwise, suppose there are both incoming 1-rarefaction and 1-shock waves. 

If $\sigma_1\geq 0$, i.e. the outgoing 1-wave is rarefaction, then we have in analogy with \eqref{4.12}
\beq|(\sigma_1)_--\sum(\sigma'_i)_-|\nonumber
\leq(\sum|\sigma'_i|)-|\sigma_1|+K_0|\Delta Q|.\label{4.12'}
\eeq

If $\sigma_1\leq 0$,  i.e. when the outgoing 1-wave is a shock, then we have in analogy with \eqref{4.13}
\begin{align*}
|(\sigma_1)_--\sum(\sigma'_i)_-|\leq(\sum|\sigma'_i|)-|\sigma_1|+2K_0|\Delta Q| .
\end{align*}

Now, choosing $\kappa>8K_0$ and recalling \eqref{def_v2}, we prove \eqref{est3_2}.

\textbf{Step 3}

With the uniform bounds of $V$ and $Q$, which can be obtained by the time decay of $U$ and $Q$ in Proposition \ref{prop:delta}, we can immediately get a pair of uniform finite $L^\infty$ bounds on $a(x,t)$ and $1/a(x,t)$. This gives \eqref{small_a_bounds}.
\end{proof}


\paragraph{\bf (Problem Isothermal)} 
Now, we address the (Problem Isothermal).

\begin{proposition}[\protect{The weight for (Problem Isothermal)}]
\label{prop:iso_a}

For a fixed $\nu$-approximate shifted front tracking solution $\psi$ for (Problem Isothermal) constructed as in \Cref{sec:shifted_front_tracking}, there exists a weight function $a(x,t)$ which verifies the same properties as the construction for the (Problem $\epsilon$-BV), i.e. \eqref{small_a_1shock_control}, \eqref{small_a_2shockcontrol}, \eqref{small_a_decay}, and \eqref{small_a_bounds}.
\end{proposition}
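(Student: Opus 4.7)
The plan is to mirror the small-BV construction \eqref{adef}, with the Glimm potentials $V,Q$ replaced by the isothermal-calibrated functionals $V_2,Q$ of \eqref{def_v1} (with $\eta$ small and $\kappa_2$ large), and with large shocks---those violating \eqref{small_shock_must_satisfy}---treated separately via \Cref{prop:large_shock_diss}. Concretely, at each time $t$ I assign to every shock of $\psi(\cdot,t)$ located at $x_\alpha(t)$ a jump ratio $r_\alpha(t) := a(x_\alpha(t)+,t)/a(x_\alpha(t)-,t)$: for a small $i$-shock of strength $|\sigma_\alpha|$ I set $r_\alpha = \exp(\mp \tfrac{3C_1}{4}|\sigma_\alpha|)$ (minus for $i=1$, plus for $i=2$), exactly as in the small-BV case; for a large $i$-shock I set $r_\alpha = \bar a$ (resp.~$1/\bar a$), where $\bar a$ is a fixed constant in the admissible interval $(\alpha,a_*)$ supplied by \Cref{prop:large_shock_diss}. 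The $L^\infty$ bound $\psi \in \mathcal D$ together with the lower density bound inherited from \eqref{large_BV_data} makes $a_*$ and the choice of $\bar a$ uniform over all large shocks. The weight is then
\[
a(x,t) := \exp\Bigl(B_1 V_2(t) + B_2 Q(t)\Bigr)\,\prod_{\alpha:\,x_\alpha(t)<x} r_\alpha(t),
\]
for sufficiently large constants $B_1,B_2>0$, to be fixed at the end.

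Verification of \eqref{small_a_1shock_control}, \eqref{small_a_2shockcontrol} at small shocks is the same Taylor-expansion argument as in \Cref{prop:small_BV_a}; at large shocks, the ratio lies in $(\alpha,a_*)$ by construction, which is what \Cref{prop:large_shock_diss} needs. The uniform bounds \eqref{small_a_bounds} follow because, by \eqref{BV_psi} and \eqref{UTV}, $V_2(t)\lesssim M$ and $Q(t)\lesssim M^2$ uniformly, while the product $\prod r_\alpha$ is bounded above and below since the number of large shocks is finite and each small-shock factor contributes at most $\exp(\pm \tfrac{3C_1}{4} V_2)$ in total. For the decay \eqref{small_a_decay} at an interaction at $(x_0,t)$ I invoke \Cref{prop4.2_2}: every pairwise interaction satisfies $\Delta U \leq -|\sigma'\sigma''|$; in the delicate overtaking shock--shock case one has in addition $\Delta V_2 = -2\eta|\sigma_{\text{refl}}|$ and $\Delta U \leq -|\sigma_{\text{refl}}|$; in the shock--rarefaction cases the estimates \eqref{est2_2}--\eqref{est2_3_3} bound every change in the wave configuration by a universal multiple of $-\Delta U$. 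Away from $x_0$ the product $\prod r_\alpha$ is unchanged, so the exponential prefactor alone yields \eqref{small_a_decay}; across $x_0$ the jump in $\log\prod r_\alpha$ is $O(-\Delta U)$, which is overwhelmed by $B_1\,\Delta V_2 + B_2\,\Delta Q$ once $B_1,B_2$ are large enough.

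The principal obstacle is the overtaking shock--shock interaction in isothermal Euler, where a reflected rarefaction is produced and the combinatorial structure of shocks on either side of $x_0$ is altered, so $\log\prod r_\alpha$ genuinely changes. The decay $B_1\,\Delta V_2 + B_2\,\Delta Q$ must strictly dominate this change, and this is made possible precisely by the sharp quantitative bound $\Delta U \leq -|\sigma_{\text{refl}}|$ supplied by \Cref{prop4.2_2} item 3, which, weighted by a sufficiently large constant, beats the $O(|\sigma_{\text{refl}}|)$ growth of the shock product. A secondary subtlety is that the classification of a wave as ``small'' or ``large'' must remain stable across interactions: this is arranged by fixing the threshold $\epsilon$ strictly between the small-shock range of \Cref{prop:small_shock_diss} and the large-shock range of \Cref{prop:large_shock_diss}, with a safety margin that survives the (finite, by \eqref{BV_psi}) total variation introduced by all interactions over $[0,T]$.
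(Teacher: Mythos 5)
Your closed-form ansatz
\[
a(x,t) = \exp\bigl(B_1 V_2(t) + B_2 Q(t)\bigr)\prod_{\alpha:\,x_\alpha(t)<x} r_\alpha(t)
\]
has a genuine gap, and it is precisely at the point you flag as ``the principal obstacle'' but then misdiagnose. The decay $B_1\,\Delta V_2 + B_2\,\Delta Q$ \emph{cannot} be made to dominate the change in $\log\prod r_\alpha$ for any finite choice of $B_1,B_2$, because the jump in the product is not $O(|\sigma_{\text{refl}}|)$ in the case that matters. Consider two small $2$-shocks of strengths $|\sigma'|\approx\epsilon$ and $|\sigma''|=\delta\ll\epsilon$ which merge into a single large $2$-shock of strength $>\epsilon$. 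The ratio assigned to the incoming shocks is $e^{\frac{3C_1}{4}(|\sigma'|+|\sigma''|)}\approx 1+\mathcal O(\epsilon)$, while the ratio assigned to the outgoing large shock is the \emph{fixed constant} $1/\bar a$, so $\log\prod r_\alpha$ jumps \emph{up} by roughly $\log(1/\bar a)-\frac{3C_1}{4}(\epsilon+\delta)$, which is bounded below by a positive constant independent of $\delta$. Meanwhile, from \Cref{prop4.2_2}, item 1 gives $\Delta U\leq -|\sigma'||\sigma''|\approx -\epsilon\delta$, and item 3 gives $\Delta U\leq -|\sigma_{\text{refl}}|$ with the reflected rarefaction strength of third order, $|\sigma_{\text{refl}}|=\mathcal O(|\sigma'|^2|\sigma''|+|\sigma'||\sigma''|^2)\approx\epsilon^2\delta$. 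Both decays vanish as $\delta\to 0$. No choice of $B_1,B_2$ (even depending on $\epsilon,M$) can make an arbitrarily small decay dominate a fixed positive jump, so \eqref{small_a_decay} fails for your formula.

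The underlying reason is that the map from shock strength to jump ratio $r_\alpha$ is discontinuous at the small/large threshold $\epsilon$ --- it jumps from $e^{\mp\frac{3C_1}{4}\epsilon}\approx 1$ to the fixed $\bar a^{\pm 1}$. A Glimm-potential-based multiplicative prefactor is inherently Lipschitz in the configuration, so it cannot absorb a discontinuous change. This is exactly why the paper's construction is \emph{iterative and history-dependent} rather than closed-form: when a small-to-large transition occurs, the paper multiplies the entire weight $a(\cdot,t_0+)$ (including the value at $-\infty$) by $a^*$, and then --- and this is the essential, irreplaceable ingredient you are missing --- bounds the \emph{number} of such transitions by $\mathcal O(M/\epsilon)$, citing Lemma 5.3 of \cite{Cheng_isothermal}. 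The finiteness of the count, not the decay of a potential at each event, is what keeps $\|1/a\|_{L^\infty}$ finite. (The paper invokes this counting argument for the analogous cases where a large shock absorbs a small one and where a large $1$-shock disappears as well.) Your proposal has no mechanism replacing this counting argument, and the decay estimate \eqref{small_a_decay} it rests on simply does not hold at these interactions.
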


Cf. \cite[Section 6]{Cheng_isothermal}, where a construction of the weight $a$ is given for isothermal Euler using a different front tracking scheme. 

\subsubsection{Proof of \Cref{prop:iso_a}}
\hfill
\vspace{.1in}

\paragraph{\bf Step 1.}

We want to pre-set the value of $a(-\infty,t)$ for all $t$, then adjust it later when needed. We set 
\beq\label{aninf}\lim_{x\rightarrow -\infty} a(x,t):=e^{\frac{3C_1}{4}U},
\eeq
where $U$ (see \eqref{def_v1}) decays in time.

At later times, we will decrease the value of $\lim_{x\rightarrow -\infty} a(x,t)$ after some interactions as needed. The detail will be given.

\bigskip

\paragraph{\bf Step 2.}
We define $a(x,0)$ from left to right on the $x$-axis, starting from the left-most constant state $\lim_{x\rightarrow -\infty} a(x,0)$. We do not change the value of $a(x,0)$ until meeting the next shock (a jump with $\sigma<0$) in the scheme.

Once there is a shock at $x$, then we define $a(x+,0)$ by $a(x-,0)$, using the following rule on the ratio $a(x+,0)/a(x-,0)$:
\begin{itemize}
\item[Rule 1:] If the strength $|\sigma|$ of shock at $x$ is larger than $\epsilon$, set the ratio $a(x+,0)/a(x-,0)$ as $1/a^*$ for 2-shock or $a^*$ for 1-shock. Here, $a^*$ is playing the role of $a$ in \eqref{eq:large_shock_diss}.
\item[Rule 2:] If the strength $|\sigma|$ of shock at $x$ is less than $\epsilon$, set the ratio $a(x+,0)/a(x-,0)$ by $e^{\frac{3C_1}{4}|\sigma|}$ for 2-shock or $e^{-\frac{3C_1}{4}|\sigma|}$  for 1-shock, which is consistent with \eqref{geng1},
\end{itemize}
where $\epsilon$ is a fixed sufficiently small number, such that if $|\sigma|\leq \epsilon$ for a $1$- or $2$-shock, then Proposition \ref{prop:small_shock_diss} holds (in particular, let $\epsilon$ be from \eqref{small_shock_must_satisfy}).

\bigskip

\paragraph{\bf Step 3.}
Once we define $a(x,0)$, we define $a(x,t)$ until the first wave interaction, by keeping $a$ as a constant in the region on the $(x,t)$-plane between any two jumps in the scheme.

\bigskip

\paragraph{\bf Step 4.}

Next, we consider wave interactions. We only have to consider the case when a pairwise interaction happens at $(x_0,t_0)$. If more than two waves interact at one point, we can slightly change the wave speeds satisfying the condition in Remark \ref{remark2}, 
to change it to pairwise interactions. By \eqref{result} and Remark \ref{remark2}, the errors of the weighted $L^1$ norm caused by such changes will go to zero as $\nu$ goes to zero if we change the speeds only slightly, since the total number of jumps and their interactions is finite and can be estimated by a function of $\nu$, see Lemma 9 in \cite{BressanC} (for any $u_{\pm \infty}$, when $\nu$ is small enough, the initial condition of Lemma 9 in \cite{BressanC}  is satisfied). 

As in \Cref{sec:estimates}, we name the shock with strength larger than $\epsilon$ as ``large'' shock, otherwise ``small'' shock.

The following construction heavily relies on Proposition \ref{prop4.2_2}. So we will not mention it every time we use it.

\smallskip

We still use Rule 1 and 2 to define $a(x+,t_0+)$ by $a(x-,t_0+)$, starting from $\lim_{x\rightarrow -\infty} a(x,t_0+)$. But to guarantee that $a(x,t_0+)\leq a(x,t_0-)$ for any $x\neq x_0$, we need to drop the values of all $a(x,t_0+)$ including $\lim_{x\rightarrow -\infty} a(x,t_0+)$ for some special type of interactions.

Recall, 
\beq\label{mono}
\frac{a_r}{a_l}<1\quad\hbox{for 1-shock, and}\quad \frac{a_r}{a_l}>1\quad \hbox{for 2-shock,}
\eeq
where the subscripts $r$ and $l$ denote the right and left states of the shock, respectively.

First, when all incoming and outgoing shock waves are small, we do not drop $a(\cdot,t_0+)$. Then we show the decay of $a$ at $t_0$.
\begin{itemize}
\item For head-on interactions, $\sigma$ does not change in each direction, so $a(x_0+,t_0+)<a(x_0+,t_0-)$. So $a$ decays in time at $t_0$ except at $x=x_0$.
\item For the overtaking shock-shock interaction, we denote two incoming small shocks as $\sigma'$ and $\sigma''$, and the outgoing waves in the first and second families as $\sigma_1$ and $\sigma_2$, respectively.

Let's first assume $\sigma'$ and $\sigma''$ are both $2$-shocks.

For Temple system, $\sigma_2=\sigma'+\sigma''$ and $\sigma_1=0$.

For isothermal Euler, a rarefaction wave is reflected, so $\sigma_2<0$ and $\sigma_1>0$.
And $\rho$ is monotonic on $x$ before and after an overtaking shock-shock interaction (see \cite{ColomboRisebro}). By \eqref{si_iso} 
we know
\[\sigma_2>-\sigma_1+\sigma_2=\sigma'+\sigma''\]
so
\[|\sigma_2|<|\sigma'|+|\sigma''|.\] By Rule 2, $a(x_0+,t_0+)<a(x_0+,t_0-)$. So $a$ decays in time at $t_0$ except at $x=x_0$.

When $\sigma'$ and $\sigma''$ are both $1$-shocks, for isothermal Euler, 
$\rho$ is monotonic on $x$ before and after an overtaking shock-shock interaction \cite{ColomboRisebro}. By \eqref{si_iso}, we know
\[-\sigma_1+\sigma_2=-\sigma'-\sigma'',\]
where $\sigma_1,\sigma'$ and $\sigma''$ are negative and $\sigma_2$ is positive. So
\[
|\sigma_1|-|\sigma'|-|\sigma''|=-|\sigma_2|.
\]
So by Rule 2 and \eqref{Uk0}, we know $a(x_0+,t_0+)<a(x_0+,t_0-)$, when $\kappa_2$ is sufficiently large. So $a$ decays in time at $t_0$ except at $x=x_0$.
\item For shock-rarefaction overtaking interactions, by Item 4 and 5 in Proposition \ref{prop4.2_2} and Rule 2, $a(x_0+,t_0+)<a(x_0+,t_0-)$, where especially \eqref{est2_2}-\eqref{est2_2_3} 
show the increasing part of $a$ coming from Rule 2 are all controlled by the decreasing part of $a$ from $\Delta U$, by choosing $\kappa_2$ large enough. So $a$ decays in time at $t_0$ except at $x=x_0$.
\end{itemize}


Then we consider other cases including incoming or outgoing large shock(s). 

\paragraph{(i).}

If the interaction is a head-on interaction, waves just cross each other without changing their strength. We do not drop $a(\cdot,t_0+)$.  The ratios of $a$ for the first (second) incoming and outgoing waves are the same.

After this interaction, $a$ keeps the same value as before the interaction, except in the region between two outgoing waves of the interaction.

\smallskip

\paragraph{(ii).}

For overtaking interaction of two shocks, the strength of outgoing wave in the major family 
is not larger than the sum of strengths of two incoming ones, but larger than the strength of each incoming shock. The reflected wave is rarefaction for the isothermal gas. There is no reflected wave for Temple system.

We first consider interactions of 2-shock and 2-shock, involving incoming or outgoing large shock(s). 
\begin{itemize}
\item If the outgoing 2-shock is large, while the incoming 2-shocks are both small, then we
divide $a(\cdot,t_0+)$ at every state by $1/a^*$. By \eqref{mono}, $a$ decays in time at $t_0$ except at $x=x_0$. There are at most finitely many (depending on $M/\epsilon$) of these kind of interactions, see  Lemma 5.3 in \cite{Cheng_isothermal} (where Cheng shows that the number of interactions in his cases 4B and 9B are finite). 

\item
Note if at least one incoming 2-shock is large, then the outgoing 2-shock is also large. In this case, we do not drop $a(\cdot,t_0+)$. By \eqref{mono}, $a(x_0+,t_0+)<a(x_0+,t_0-)$. So $a$ decays in time at $t_0$ except at $x=x_0$.
\end{itemize}

We consider interactions of 1-shock and 1-shock, where a 2-rarefaction is reflected, involving incoming or outgoing large shock(s).
\begin{itemize}
\item If the outgoing 1-shock is large and two incoming 1-shocks are both large, we
divide $a(\cdot,t_0+)$ at every state by $1/a^*$. By \eqref{mono}, $a$ decays in time at $t_0$. There are at most finitely  many ($\mathcal{O}(M\epsilon^{-2})$) of these kind of interactions, since $U$ decays at least by $\mathcal{O}(\epsilon^2)$ at each interaction by Proposition \ref{prop4.2_2}.

\item If the outgoing 1-shock is large, while one incoming 1-shock is large and the other one is small with strength $|\sigma|$, then we
divide $a(\cdot,t_0+)$ at every state by $e^{\frac{3C_1}{4} |\sigma|}$. Then $a$ decays in time at $t_0$ except at $x=x_0$. For each of such interaction,
$U$ decays at least by $\epsilon \sigma$ by Proposition \ref{prop4.2_2}. Although there might be infinitely many such kind of interactions, the total ratio drop caused by such kind of interactions is finite. 

\item Note if at least one incoming 1-shock is large, then the outgoing 1-shock is also large.

So the last case is, when the outgoing 1-shock is large, while two incoming 1-shocks are both small. In this case, we do not drop $a(\cdot,t_0+)$. By \eqref{mono}, $a(x_0+,t_0+)<a(x_0+,t_0-)$. So $a$ decays in time at $t_0$ except at $x=x_0$.
\end{itemize}

\paragraph{(iii).}
Now we consider the overtaking interaction of a shock and a rarefaction in the same family, involving incoming or outgoing large shocks. The reflected wave is either a small shock or rarefaction for the isentropic gas, by Proposition \ref{prop4.2_2} and $\nu<\epsilon$. There is no reflected wave for Temple system.

We first consider interactions of 2-shock and 2-rarefaction. We do not drop $a(\cdot,t_0+)$ in this case. Note if the outgoing 2-wave is still a shock, it will be weaker than the incoming one. So $a(x,t)$ always decays at $t_0$ by \eqref{mono}.

Finally, we consider interactions of 1-shock and 1-rarefaction.
By Item 4 in Proposition \ref{prop4.2_2}, the reflected wave cannot be a large shock, and, if the outgoing 1-wave is still a shock, it will be weaker than the incoming one. So there are only two sub-cases including large shock(s). 

\begin{itemize}
\item If the outgoing 1-wave is a rarefaction or small shock, while the incoming 1-shock is large, then we
divide $a(\cdot,t_0+)$ at every state by $1/a^*$. Then by \eqref{mono} and \eqref{est2_2}, $a(x_0+,t_0+)<a(x_0+,t_0-)$. So $a$ decays in time at $t_0$ except at $x=x_0$. 

Now we show the total number of such kind of interactions is finite.

First, we show that the total number of interactions, where the number of outgoing large shock(s) is more than the number of incoming large shock(s), is finite. 
By Proposition \ref{prop4.2_2},
\begin{itemize}
\item head-on interaction does not change the strength of waves,
\item the shock-rarefaction overtaking interaction will not change a small shock to a large shock in the main family, and will not reflect large shock,
\end{itemize}
so the only case adding to the total number of large shocks is when two small shocks in the same family interact to form a large shock, where the total number of large shocks is incremented by 1.

 Using the method of Lemma 5.3 in \cite{Cheng_isothermal} (number of interactions in 4B and 9B are finite), we can show the total number of such kind of overtaking interaction (two small shocks produce a large shock) is finite and bounded by $\mathcal O(M/\epsilon)$.

Now we come back to interactions of 1-shock and 1-rarefaction when the outgoing 1-wave is a rarefaction or small shock, while the incoming 1-shock is large. Such an interaction causes the total number of large shocks to decrease by one.
So there are at most finitely many of them,
 because the total possible increase of the number of large shocks is finite, and on the other hand the total number of large shocks initially is finite and bounded by $\mathcal O(M/\epsilon)$. 


\item 
Another case is when the incoming and outgoing 1-shocks are both large. In this case, we do not drop $a(\cdot,t_0+)$.
By \eqref{mono} and \eqref{est2_2}, $a(x_0+,t_0+)<a(x_0+,t_0-)$. So $a$ decays in time at $t_0$ except at $x=x_0$. 
\end{itemize}

Note, for isothermal Euler, the $\nu$-approximate front tracking scheme used in \cite{ColomboRisebro} is slightly different from the scheme used in \cite{Cheng_isothermal} on very small shocks. Since $\epsilon\gg\nu$, Lemma 5.3 (number of interactions in 4B and 9B are finite) in \cite{Cheng_isothermal}  still holds for the $\nu$-approximate front tracking scheme equations in Eulerian coordinates. Although we use different wave strength from \cite{Cheng_isothermal}, the method still holds for our choice of strength, using the fact that the total variation $\sum_{\alpha}|\sigma_\alpha|$ is not increasing from before to after any pairwise interaction in Proposition \ref{prop4.2_2}. We leave the details to the reader.
\bigskip

\paragraph{\bf Step 5.}
Then we repeat the process to define $a(x,t)$ until the next wave interaction, by keeping $a$ as a constant between any two jumps in the scheme. Then repeat Step 4 at the wave interaction. After finitely many steps, we can define $a(x,t)$ for any $t>0$ and $x\in\mathbb R$.

\bigskip
\paragraph{\bf Step 6: Bounds on $a$ from above and below}

Now using our construction of $a(x,t)$, we can conclude the upper and lower bound of $a(x,t)$.
More precisely, by \eqref{aninf} and
\[
1\leq e^{\frac{3C_1}{4}U}\leq e^{\frac{3C_1}{4}M},
\]
we know 
\[
\lim_{x\rightarrow-\infty}a(x,t)\leq e^{\frac{3C_1}{4}M},
\]
since we only drop $\lim_{x\rightarrow-\infty}a(x,t)$ for special interactions, not increase it.

For each time, under Rule 2, $a$ is at most multiplied by a ratio $e^{\frac{3C_1}{4}V(t)}\leq e^{\frac{3C_1}{4}V(0)}\leq e^{\mathcal O(M)}$. For each time, Rule 1 is only used at waves with strength larger than $\epsilon$, so it is at most used $V(t)/\epsilon\leq V(0)/\epsilon\leq \mathcal{O}(M)/\epsilon$ times.
Hence,
\begin{align}\label{bound1_a_iso}
\norm{a}_{L^\infty(\mathbb{R}\times[0,T])}\leq 
e^{\frac{3C_1}{4}M}\cdot e^{\mathcal O(M)}\cdot (a^*)^{-\frac{\mathcal{O}(M)}{\epsilon}}=e^{\mathcal O(M)}(a^*)^{-\frac{\mathcal{O}(M)}{\epsilon}}.
\end{align}

Now we give the lower bound of $a$. First, $\lim_{x\rightarrow-\infty}a(x,t)$ may be dropped by a ratio $1/a^*$ for $\mathcal{O}(M\epsilon^{-2})$ times with $\epsilon\ll 1$. Furthermore, there is also an additional ratio $e^{\mathcal{O}(V)}\leq e^{\mathcal{O}(M)}$. Further,
similar to before, Rule 2 may decrease $a$ by dropping it at a ratio $e^{\frac{3C_1}{4}V(t)}\leq e^{\frac{3C_1}{4}V(0)}\leq e^{\mathcal O(M)}$. 
On the other hand, Rule 1 happens at most $V(t)/\epsilon\leq V(0)/\epsilon\leq \mathcal{O}(M)/\epsilon$ times. So 
\begin{align}\label{bound2_a_iso}
\norm{\frac{1}{a}}_{L^\infty(\mathbb{R}\times[0,T])}\leq 
1\cdot (a^*)^{-\mathcal{O}(M\epsilon^{-2})}\cdot e^{\mathcal{O}(M)}\cdot e^{\mathcal O(M)}\cdot (a^*)^{-\frac{\mathcal{O}(M)}{\epsilon}}=e^{\mathcal O(M)}(a^*)^{-\mathcal{O}(M\epsilon^{-2})}.
\end{align}

This gives \eqref{small_a_bounds} for (Problem Isothermal).

\section{Proof of Main Theorem (\Cref{main_theorem})}\label{sec:proof_main_theorem}

Before we start the proof of the Main Theorem (\Cref{main_theorem}), we collect a few important facts.

\subsection{Stopping and restarting the clock}

To run the front tracking scheme, we need to stop and restart the clock every time two waves collide. In the class of low regularity solutions we are considering in this paper, the ability to stop and restart the clock at positive times $t>0$ is a bit delicate and requires the use of approximate limits (for a reference on approximate limits, see \cite[p.~55-57]{MR3409135}). We give the precise statement we need below. These estimates will be stacked together to create a telescoping sum (see \Cref{sec:diss_calcs}, below).

\begin{lemma}[A technical lemma on stopping and restarting the clock {\cite[Lemma 2.5]{2017arXiv170905610K}}]
    \label{stop_restart}
    Let $u \in \mathcal{S}_{\text{weak}}$ with initial data $u^0$. Then for all $v \in \mathcal{V}$, and for all $c, d \in \mathbb{R}$ with $c < d$, the approximate right- and left-hand limits
    \begin{equation}
        \operatorname{ap} \lim_{t \to t_0^\pm} \int_c^d \eta(u(x,t)|v) \,dx
    \end{equation}
    exist for all $t_0 \in (0, T)$ and satisfy that
    \begin{equation}
        \operatorname{ap} \lim_{t \to t_0^-} \int_c^d \eta(u(x,t)|v) \,dx \geq \operatorname{ap} \lim_{t \to t_0^+} \int_c^d \eta(u(x,t)|v) \,dx.
    \end{equation}
    Moreover, the approximate right-hand limit exists at $t_0 = 0$ and satisfies
    \begin{equation}
        \int_c^d \eta(u^0(x)|v) \,dx \geq \operatorname{ap} \lim_{t \to t_0^+} \int_c^d \eta(u(x,t)|v) \,dx.
    \end{equation}
\end{lemma}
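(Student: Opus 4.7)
\emph{Plan and setup.} The plan is to localize the relative entropy inequality \eqref{ineq:relative} on the spatial interval $[c,d]$ and to reinterpret it as a one-dimensional distributional inequality in the variable $t$. Set
\begin{equation*}
F(t) := \int_c^d \eta(u(x,t)\mid v)\, dx, \qquad F_0 := \int_c^d \eta(u^0(x)\mid v)\, dx.
\end{equation*}
I will show that $F$ coincides almost everywhere with a non-increasing function (modulo a Lipschitz correction coming from boundary flux contributions), and then read off both the existence of the approximate one-sided limits and the two claimed inequalities.

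\emph{Local distributional inequality.} Test the (initial-data form of the) entropy inequality against $\phi(x,t) = \chi^\delta_{[c,d]}(x)\, \psi(t)$, where $\psi \in C^\infty_c([0,\infty))$ is non-negative and $\chi^\delta_{[c,d]}$ is a smooth cutoff equal to $1$ on $[c+\delta, d-\delta]$ and vanishing outside $[c,d]$. Passing $\delta \to 0$, the term from $\phi_x$ concentrates near $\{x=c\}$ and $\{x=d\}$; the Strong Trace Property (\Cref{strong_trace_prop}), applied to the trivially Lipschitz curves $x \equiv c$ and $x \equiv d$, produces $L^\infty$ traces $t \mapsto q(u(c+,t); v)$ and $t \mapsto q(u(d-,t); v)$. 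The result is that for every non-negative $\psi \in C^\infty_c([0,\infty))$,
\begin{equation*}
\int_0^\infty \psi'(t)\, F(t)\, dt \;+\; \int_0^\infty \psi(t)\,\bigl[q(u(c+,t); v) - q(u(d-,t); v)\bigr]\, dt \;+\; \psi(0)\, F_0 \;\geq\; 0.
\end{equation*}

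\emph{Monotone representative.} Let $H(t) := \int_0^t \bigl[q(u(c+,s); v) - q(u(d-,s); v)\bigr]\, ds$, which is Lipschitz on $[0,T]$, and set $G := F - H$ on $(0,\infty)$, extended to $(-\infty, 0)$ by the constant value $F_0$; call this extension $\widetilde G$. Integrating by parts the term involving $H$, the previous inequality becomes
\begin{equation*}
\int_\mathbb{R} \psi'(t)\, \widetilde G(t)\, dt \;\geq\; 0 \qquad \text{for every non-negative } \psi \in C^\infty_c(\mathbb{R}),
\end{equation*}
so the distributional derivative of $\widetilde G$ is non-positive on $\mathbb{R}$; hence $\widetilde G$ admits a non-increasing representative $\bar G$, which possesses genuine one-sided limits at every point.

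\emph{Conclusion and main subtlety.} Since $F = \bar G + H$ almost everywhere on $(0, T)$, $H$ is continuous, and approximate limits depend only on the a.e.\ equivalence class while genuine one-sided limits automatically furnish approximate one-sided limits with the same value, we obtain $\operatorname{ap}\lim_{t \to t_0^\pm} F(t) = \bar G(t_0\pm) + H(t_0)$ for every $t_0 \in (0, T)$; the monotonicity of $\bar G$ then gives the first claimed inequality. For $t_0 = 0$, the identity $\widetilde G \equiv F_0$ on $(-\infty, 0)$ combined with monotonicity of $\bar G$ forces $\bar G(0+) \leq F_0$, so $\operatorname{ap}\lim_{t \to 0^+} F(t) \leq F_0$, which is the second claimed inequality. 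The only non-routine input is the existence of the boundary traces of $q(u;v)$ on the lines $\{x=c\}$ and $\{x=d\}$, supplied directly by the Strong Trace Property along these stationary (hence trivially Lipschitz) curves; the remainder is distribution-theoretic bookkeeping.
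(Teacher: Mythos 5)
Your proof is correct. Localizing the weak form of the relative entropy inequality \eqref{ineq:relative} against $\chi^{\delta}_{[c,d]}(x)\psi(t)$, extracting the one-sided boundary fluxes $q(u(c+,\cdot);v)$ and $q(u(d-,\cdot);v)$ via the Strong Trace Property along the constant (hence Lipschitz) curves $x\equiv c$ and $x\equiv d$, and then writing $F=\bar G+H$ with $\bar G$ non-increasing and $H$ Lipschitz so that genuine one-sided limits supply the approximate ones — this is exactly the argument behind the cited \cite[Lemma 2.5]{2017arXiv170905610K}, which the paper invokes without reproducing.
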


The proof of Lemma \ref{stop_restart} is the same as the proof of \cite[Lemma 2.5]{2017arXiv170905610K}. We do not reproduce the proof here.

\subsection{Proof of Main Theorem (\Cref{main_theorem})}

We can now begin the proof of \Cref{main_theorem}. We follow \cite[Section 7]{MR4487515} and \cite[Section 6]{GiesselmannKrupa2025}.

\subsubsection{Step 1: the dissipation calculations}\label{sec:diss_calcs} 
 
 For (Problem $\epsilon$-BV), consider initial data $v^0\in\mathcal{S}_{\textrm{BV},\epsilon}^0$ for $\epsilon>0$ sufficiently small. For (Problem Isothermal), fix $M>0$ and consider initial data $v^0\in\mathcal{S}_{\textrm{BV},M}^0$. Moreover, consider a fixed wild solution $u \in \mathcal{S}_{\text{weak}}$. In these calculations, we assume the total variation upper bound is $M$ as for the (Problem Isothermal) but the calculations are nearly identical in the (Problem $\epsilon$-BV) case.


Consider a sequence of front tracking approximations $v_\nu$ (see \Cref{sec:front_tracking_no_shift}) with initial data verifying $v_\nu(\cdot,0)\to v^0(\cdot)$ in $L^2$ as $\nu\to0$. For a fixed $\nu$, consider the shifted $\nu$-approximate front tracking solution $\psi$ verifying $\psi(x,0)=v_\nu(x,0)$ (see \Cref{sec:shifted_front_tracking}). For this fixed $\psi$, we consider also the associated weight function $a(x,t)$ from either \Cref{prop:small_BV_a} or \Cref{prop:iso_a}.

Consider two successive interaction times $t_j < t_{j+1}$ of the shifted front tracking solution $\psi$. Let the discontinuity curves  between the two times $t_j < t_{j+1}$ be $h_1, \dots, h_N$ for some  $N \in \mathbb{N}$ such that
\begin{equation}
    h_1(t) < \dots < h_N(t)
\end{equation}
for all $t \in (t_j, t_{j+1})$. We restrict ourselves to the cone of information. Thus, we define for $t\geq0$
\begin{equation}
    h_0(t) \coloneqq -R + s(t-\tau),
\end{equation}
\begin{equation}
    h_{N+1}(t) \coloneqq R - s(t-\tau),
\end{equation}
where $s > 0$ is the speed of information and verifies 
\begin{align}\label{speed_of_info}
|q(a; b)| \leq s \eta(a|b)
\end{align}
for all $a \in \mathcal{V}$ and $b$ in the image of $\psi$. Note that the constant $s$ exists by the first statement in \Cref{rel_facts_lemma}. We fix the value $s$ to be bigger than the magnitudes of the speeds of any of the shift functions or any of the waves used in the front tracking scheme (\Cref{sec:front_tracking_no_shift}).

Remark that there are no interactions between waves in $\psi$ and the information cone (defined by $h_0$ and $h_{N+1}$, above). For  $t \in [t_j, t_{j+1}]$, note that on 
\begin{equation}
    Q = \{(x, r) \  |\  t_j < r < t, h_i(r) < x < h_{i+1}(r)\},
\end{equation}
the function $\psi(x, r) = b$ for some constant $b$. The weight function $a(x,r)$ is also constant on this set by our construction(s). We integrate \eqref{ineq:relative} on $Q$ and use the Strong Trace Property (\Cref{strong_trace_prop}). This gives  that

\[
    \operatorname{ap} \lim_{r \to t^-} \int_{h_i(t)}^{h_{i+1}(t)} a(x,t-) \eta(u(x,r)|\psi(x,t)) \,dx
    \leq \operatorname{ap} \lim_{r \to t_j^+} \int_{h_i(t_j)}^{h_{i+1}(t_j)} a(x,t_j+) \eta(u(x,r)|\psi(x,t_j)) \,dx
\]
\[
   \hspace{3.8in} + \int_{t_j}^{t} \big( F_i^+(r) - F_{i+1}^-(r) \big) \,dr,
\]
where
\begin{align*}
    F_i^+(r) &= a(h_i(r)+,r) \big[ q(u( h_i(r)+,r); \psi( h_i(r)+,r)) - \dot h_i(r) \eta(u( h_i(r)+,r)|\psi(h_i(r)+,r)) \big], \\
    F_i^-(r) &= a(h_i(r)-,r) \big[ q(u( h_i(r)-,r); \psi(h_i(r)-,r)) - \dot h_i(r) \eta(u( h_i(r)-,r)|\psi( h_i(r)-,r)) \big].
\end{align*}

We sum over $i$, and collect the terms corresponding to $i$ into one sum, and the terms corresponding to $i+1$ into a second sum, to find that
\begin{equation}
    \operatorname{ap} \lim_{r \to t^-} \int_{-R+s(t-\tau)}^{R-s(t-\tau)} a(x,t-) \eta(u(x,r)|\psi(x,t)) \,dx.
\end{equation}

\[
    \leq \operatorname{ap} \lim_{r \to t_j^+} \int_{-R+s(t_j-\tau)}^{R-s(t_j-\tau)} a(x,t_j+) \eta(u(x,r)|\psi(t_j, x)) \,dx
\]
\[
    + \sum_{i=1}^{N} \int_{t_j}^{t} \big( F_i^+(r) - F_i^-(r) \big) \,dr,
\]

where we note that $F_0^+ \leq 0$ {and} $F_{N+1}^- \geq 0$ by the first statement of \Cref{rel_facts_lemma}, the choice of $s$, and $\dot{h}_0 = -s = -\dot{h}_{N+1}$. 

We partition the sum into two sums: one sum collects the shocks and the other sum collects the rarefaction fronts. We know from \Cref{prop:small_shock_diss} and \Cref{prop:large_shock_diss}, that for any $i$ denoting a shock front {with size $|\sigma_i| \geq 2\sqrt{\nu}$},
\begin{equation}\label{combined_diss}
    F_i^+(r) - F_i^-(r) \leq -\frac{1}{K}\abs{\psi(h_i(r)+,r)-\psi(h_i(r)-,r)}(\sigma(\psi(h_i(r)+,r),\psi(h_i(r)-,r))-\dot{h}_i(r))^2
\end{equation}
where $\sigma(\psi(h_i(r)+,r),\psi(h_i(r)-,r))$ is the true Rankine-Hugoniot speed of the shock 
\begin{align*}
(\psi(h_i(r)+,r),\psi(h_i(r)-,r)).
\end{align*}
For small shocks with size $|\sigma_i| < 2 \sqrt{\nu}$ we find
\begin{equation}\label{combined_diss_small}
    \begin{aligned}
    F_i^+(r) - F_i^-(r) \leq& -\frac{1}{K}\abs{\psi(h_i(r)+,r)-\psi(h_i(r)-,r)}(\lambda_{\alpha_i}^\varphi(\psi(h_i(r)-,r),\psi(h_i(r)+,r))-\dot{h}_i(r))^2 \\
    &\hspace{2in}+ K\nu\abs{\psi(h_i(r)+,r)-\psi(h_i(r)-,r)} ,
    \end{aligned}
\end{equation}
where $\lambda_{\alpha_i}^\varphi(\psi(h_i(r)-,r),\psi(h_i(r)+,r))$ is the interpolated speed~\eqref{averaged_speed} corresponding to the interpolated shock $(\psi(h_i(r)-,r),\psi(h_i(r)+,r))$ and the error term is from the estimates~\eqref{eq:small-shock-error-psi} and \Cref{rel_facts_lemma}.
Equations~\eqref{combined_diss} and~\eqref{combined_diss_small} will hold for almost every $t_j < r < t$ except for the very small intervals of time during which the speed of $h_i$ will be slightly perturbed to avoid multiple shock interactions (see the proof of \Cref{prop:iso_a}). Remark that \Cref{prop:large_shock_diss} will give us \eqref{combined_diss} for $K>0$ sufficiently large because we only invoke \Cref{prop:large_shock_diss} when \begin{align*}
\abs{\psi(h_i(r)+,r)-\psi(h_i(r)-,r)}
\end{align*}
is sufficiently large, otherwise we invoke \Cref{prop:small_shock_diss} (see the construction of $\psi$ in \Cref{sec:shifted_front_tracking}). Moreover, $\abs{\psi(h_i(r)+,r)-\psi(h_i(r)-,r)}$ is uniformly bounded.

Denote $\mathcal{R}(r)$ the set of $i$ corresponding to approximated rarefaction fronts at time $r$. Then for any $i \in \mathcal{R}(r)$ $a(h_i(r)+, r) = a(h_i(r)-, r)$ by construction. We have then from \Cref{pizza_slice_diss},
\begin{equation}
    \sum_{i \in \mathcal{R}(r)} \int_{t_j}^{t} \big( F_i^+(r) - F_i^-(r) \big) \,dr 
    \leq K {\nu} (t - t_j) \sum_{i \in \mathcal{R}(r)} |\sigma_i|
    \leq K {\nu} (t - t_j),
\end{equation}
where we use \eqref{BV_psi} to control the sum $\sum |\sigma_i|$. Here, we are not yet taking into account that for very small intervals of time we may have to slightly adjust the speed of the rarefaction fronts to avoid multiple shock interactions (see the proof of \Cref{prop:iso_a}). 

Collecting now all the families of waves, we receive
\begin{multline}
    \operatorname{ap} \lim_{r \to t^-} \int_{-R+s(t-\tau)}^{R-s(t-\tau)} a(x,t-) \eta(u(x,r)|\psi(x,t)) \,dx\\
     \leq \operatorname{ap} \lim_{r \to t_j^+} \int_{-R+s(t_j-\tau)}^{R-s(t_j-\tau)} a(x,t_j+) \eta(u(x,r)|\psi(x,t_j)) \,dx
    +K \nu (t - t_j) \\
    -\frac{1}{K} \int_{t_j}^{t} \sum_{i \in \mathcal{S}(r)}   \abs{\psi(h_i(r)+,r)-\psi(h_i(r)-,r)}\big({\lambda_{\alpha_i}^\varphi(\psi(h_i(r)-,r),\psi(h_i(r)+,r))}-\dot{h}_i(r)\big)^2 \,dr,
\end{multline}

where $\mathcal{S}(r)$ denotes the set of shocks at time $r$.
{We remark that the sum of the error terms from small shocks in~\eqref{combined_diss_small} are bounded by $K\nu(t-t_j)$ due to our total variation bound~\eqref{BV_psi}.}

Consider $0 < \tau < T$, and let $0 < t_1 < \dots < t_J$ be the times of wave interactions before $\tau$, where we also have $t_0 = 0$, and $t_{J+1} = \tau$. By the convexity of $\eta$, \Cref{stop_restart}, and the decay of the weight function $a(x,t)$ (see \Cref{prop:small_BV_a} or \Cref{prop:iso_a}) we get
\begin{equation}
    \int_{-R}^{R} a(x,\tau) \eta(u(x,\tau)|\psi(x,\tau)) \,dx - \int_{-R-s\tau}^{R+s\tau} a(x,0) \eta(u(x,0)|\psi(x,0)) \,dx
\end{equation}
\[
    \leq \operatorname{ap} \lim_{r \to \tau^+} \int_{-R}^{R} a(x,\tau) \eta(u(x,r)|\psi(x,\tau)) \,dx - \int_{-R-s\tau}^{R+s\tau} a(x,0) \eta(u(x,0)|\psi(x,0)) \,dx
\]
\[
    \leq \sum_{j=1}^{J+1} \left( \operatorname{ap} \lim_{r \to t_j^+} \int_{-R+s(t_j-\tau)}^{R-s(t_j-\tau)} a(x,t_j-) \eta(u(x,r)|\psi(x,t_j)) \,dx \right.
\]
\[
    \left. - \operatorname{ap} \lim_{r \to t_{j-1}^+} \int_{-R+s(t_{j-1}-\tau)}^{R-s(t_{j-1}-\tau)} a(x,t_{j-1}-) \eta(u(x,r)|\psi(x,t_{j-1})) \,dx \right)
\]
\[
    \leq \sum_{j=1}^{J+1} \left( \operatorname{ap} \lim_{r \to t_j^-} \int_{-R+s(t_j-\tau)}^{R-s(t_j-\tau)} a(x,t_j-) \eta(u(x,r)|\psi(x,t_j)) \,dx \right.
\]
\[
    \left. - \operatorname{ap} \lim_{r \to t_{j-1}^+} \int_{-R+s(t_{j-1}-\tau)}^{R-s(t_{j-1}-\tau)} a(x,t_{j-1}-) \eta(u(x,r)|\psi(x,t_{j-1})) \,dx \right)
\]

\begin{multline}
    \leq   
    -\frac{1}{K} \int_0^{\tau} \sum_{i \in \mathcal{S}(r)}   \abs{\psi(h_i(r)+,r)-\psi(h_i(r)-,r)}\big({\lambda_{\alpha_i}^\varphi(\psi(h_i(r)-,r),\psi(h_i(r)+,r))}-\dot{h}_i(r)\big)^2 \,dr
   + K \nu,
\end{multline}
where now the error term $K {\nu}$ can also account for the very small intervals of time during which the speeds of waves will be slightly perturbed to avoid multiple shock interactions (see the proof of \Cref{prop:iso_a}).

From this, we get

\begin{align}\label{l2_stab_estimate}
      \int_{-R}^{R} a(x,\tau) \eta(u(x,\tau)|\psi(x,\tau)) \,dx \leq \int_{-R-s\tau}^{R+s\tau} a(x,0) \eta(u(x,0)|\psi(x,0)) \,dx +K {\nu} 
\end{align}

as well as

\begin{multline}\label{complete_diss_estimate}
    \frac{1}{K} \int_0^{\tau} \sum_{i \in \mathcal{S}(r)}   \abs{\psi(h_i(r)+,r)-\psi(h_i(r)-,r)}\big({\lambda_{\alpha_i}^\varphi(\psi(h_i(r)-,r),\psi(h_i(r)+,r))}-\dot{h}_i(r)\big)^2 \,dr
    \\
    \leq \int_{-R-s\tau}^{R+s\tau} a(x,0) \eta(u(x,0)|\psi(x,0)) \,dx +K {\nu} .
\end{multline}

\subsubsection{Step 2: calculation of the amount of shifting}\label{sec:how_much_shift}

Recall from above that $v_\nu$ is a front tracking solution with initial data verifying $v_\nu(\cdot,0)=\psi(\cdot,0)$ except unlike $\psi$, the shocks in $v_\nu$ will be moving with speeds dictated by the true front tracking algorithm in \Cref{sec:front_tracking_no_shift}, i.e. \eqref{averaged_speed}.

Then, from \Cref{control_L1_norm},
\begin{align}
     \norm{v_\nu(\cdot,\tau)-\psi(\cdot,\tau)}_{L^1(\mathbb{R})}
     \leq K d_\nu(v_\nu(\cdot,t),\psi(\cdot,t)),
\end{align}
where remark that for (Problem Isothermal) we know from \eqref{iso_constant} that in fact at this step \begin{align}
K=\exp\big(\tilde{K}M e^M\big),
\end{align}
for some constant $\tilde{K}>0$ which does not depend on $M$ (see  \Cref{dependenceTV} and recall $M$ from \eqref{large_BV_data}).
Continuing, from $v_\nu(\cdot,0)=\psi(\cdot,0)$ and \Cref{lemma:front_tracking_error_estimate}, we have
\begin{multline}
\leq K \int_0^\tau \hspace{.4in}\sum_{\mathclap{\substack{h_i:\text{shock in $\psi$ at time $r$}}}} \left| \psi({h}_i(r)-, r) - \psi({h}_i(r)+, r) \right|
    \left| \dot{{h}}_{i} -  \lambda_{\alpha_i}^\varphi(\psi(h_i(r)-,r),\psi(h_i(r)+,r)) \right|\,dr 
    \\+[\text{error due to perturbing wave speeds to avoid multiple interactions}],
    \end{multline}
    where $\lambda_{\alpha_i}^\varphi$ is the speed at which the shock $h_i$ in $\psi$ would be traveling if the speed was determined by the interpolated speed used in the front tracking algorithm (see e.g. equation \eqref{averaged_speed}) instead of using shifts. The constant $K$ again comes from \eqref{iso_constant}.
   
    Using the Cauchy-Schwartz inequality
\begin{multline}
    \leq K \Bigg[\int_0^\tau \hspace{.4in} \sum_{
    \mathclap{\substack{h_i:\text{shock in $\psi$ at time $r$}}}} \left| \psi({h}_i(r)-, r) - \psi({h}_i(r)+, r) \right|\,dr\Bigg]^{\frac{1}{2}}\times \\
    \Bigg[\int_0^\tau \hspace{.4in} \sum_{
    \mathclap{\substack{h_i:\text{shock in $\psi$ at time $r$}}}} \left| \psi({h}_i(r)-, r) - \psi({h}_i(r)+, r) \right|
    \left| \dot{{h}}_{i} - {\lambda_{\alpha_i}^\varphi \big( \psi({h}_i(r)-, r), \psi({h}_i(r)+, r) \big) }\right|^2\,dr\Bigg]^{\frac{1}{2}}
    +\nu.
\end{multline}

From \eqref{complete_diss_estimate}, we get

\begin{align}
    &\leq K \Bigg[\int_0^\tau \hspace{.4in} \sum_{
    \mathclap{\substack{h_i:\text{shock in $\psi$ at time $r$}}}} \left| \psi({h}_i(r)-, r) - \psi({h}_i(r)+, r) \right|\,dr\Bigg]^{\frac{1}{2}}\times \\
    &\hspace{2.6in}\Bigg[K\Big[\int_{-R-s\tau}^{R+s\tau} a(x,0) \eta(u(x,0)|\psi(x,0)) \,dx +K {\nu}  \Big]\Bigg]^{\frac{1}{2}}+\nu\\
    &\leq K\sqrt{M} \Bigg[\int_{-R-s\tau}^{R+s\tau} a(x,0) \eta(u(x,0)|\psi(x,0)) \,dx +K {\nu}\Bigg]^{\frac{1}{2}}+\nu,
\end{align}
where we remark that the term $\Bigg[\int_0^\tau \sum \left| \psi({h}_i(r)-, r) - \psi({h}_i(r)+, r) \right|\,dr\Bigg]^{\frac{1}{2}}\leq \tilde{K}\sqrt{M}$ for some $\tilde{K}$ which does not depend on $M$.

\subsubsection{Step 3: putting it all together}

A simple triangle inequality gives,

\begin{multline}
    \norm{u(\cdot,\tau)-v_\nu(\cdot,\tau)}_{L^1((-R,R))}\leq \norm{u(\cdot,\tau)-\psi(\cdot,\tau)}_{L^1((-R,R))}+\norm{\psi(\cdot,\tau)-v_\nu(\cdot,\tau)}_{L^1((-R,R))}.
\end{multline}

From the nesting property of the $L^p$ spaces on compact sets (in particular $L^2\subset L^1$), we get

\begin{align}
    \norm{u(\cdot,\tau)-v_\nu(\cdot,\tau)}_{L^1((-R,R))}
    \leq \sqrt{2R}\norm{u(\cdot,\tau)-\psi(\cdot,\tau)}_{L^2((-R,R))}+\norm{\psi(\cdot,\tau)-v_\nu(\cdot,\tau)}_{L^1((-R,R))}
\end{align}

Then, from \Cref{lemma:leger-square}, \eqref{l2_stab_estimate}, and the calculation in \Cref{sec:how_much_shift}, we find

\begin{multline}
     \norm{u(\cdot,\tau)-v_\nu(\cdot,\tau)}_{L^1((-R,R))} 
     \lesssim  
      (\sqrt{2R}+K\sqrt{M}) \Bigg[\norm{u( \cdot,0)-v_\nu( \cdot,0)}_{L^2((-R-st,R+st))}^2 +K{\nu}  \Bigg]^{\frac{1}{2}}
     +\nu,
\end{multline}
where we have used also that $v_\nu(\cdot,0)=\psi(\cdot,0)$.

Recall now the baby interpolation inequality for $L^p$-norms, $\norm{f}_{L^2}\leq\sqrt{\norm{f}_{L^1}\norm{f}_{L^\infty}}$ for a function $f$. Then, taking the limit $\nu\to0$, the error terms disappear, and there exists a solution $v$ to \eqref{system} or \eqref{isothermal} such that $v(\cdot,0)=v^0(\cdot)$ and $v_\nu(\cdot,t)$ converges to $v(\cdot,t)$ in $L^2$ for all $t$. We get this \emph{for all} $t$ instead of simply almost every $t$ by \cite[Theorem 2.4]{MR1816648}. This gives,

\begin{align}
     \norm{u(\cdot,\tau)-v(\cdot,\tau)}_{L^2((-R,R))} \lesssim  
     \sqrt{\norm{u( \cdot,0)-v( \cdot,0)}_{L^2((-R-s\tau,R+s\tau))}}.
\end{align}

Finally, for the isothermal case, remark that control on the constant $K$ in \eqref{main_estimate} can come from  \eqref{iso_constant}, \eqref{bound1_a_iso}, and \eqref{bound2_a_iso}.





\section{Proof of \Cref{cor:uniqueness}}\label{sec:cor}

In this section, we consider the isothermal Euler system \eqref{isothermal}, but this argument applies to any system which satisfies \Cref{main_theorem} for large BV solutions.
Fixing $R > 0$, a phase space $\Nu$, and a compact convex set $\mathcal{K}\subset\Nu$, equation~\eqref{main_estimate} can be written as 
\begin{align} \label{eq:est-arb-BV}
     \norm{u(\cdot,\tau)-v(\cdot,\tau)}_{L^2((-R+s\tau,R-s\tau))} \leq K\left(\norm{v(\cdot, 0)}_{BV((-R,R))}\right) 
     \sqrt{\norm{u( \cdot,0)-v( \cdot,0)}_{L^2((-R,R))}}
\end{align}
for all $\tau < T = R/s$, where $K$ is an increasing function of the total variation of $v(\cdot,0)$. 
The growth rate of $K$ completely determines the class $X$ of initial data we consider (see equation~\eqref{eq:def-class-X} for a definition of $X$). 
This corollary is the observation that for suitably chosen data $v(\cdot,0)=v_0(\cdot)$, there exist approximations $v_0^{(n)}$ for which the upper bound~\eqref{eq:est-arb-BV} converges to zero, establishing uniqueness in the class of wild solutions $\Sweak$ within the cone of information $ \{(x,\tau): \tau < R/s,\ x\in(-R + s\tau, R-s\tau) \}$.

To begin, we show a collection of conditions which guarantee a function $v$ is within $X$.
These are then used in the proof of \Cref{cor:uniqueness} to exhibit infinite BV data within $X$.
Given a fixed decreasing function $G\colon [0,\infty) \to [0,\infty)$ we define the set $X_G$ to be the collection of all $v_0 \in L^\infty((-R, R);\mathcal{K})$ where for every $n$ sufficiently large there exists an open set $V^n = \cup_{i = 1}^N (a_i,b_i)\subset(-R,R) $ satisfying the following properties,
\begin{enumerate}[label=(\alph*)]
    \item \label{prop:uni-measure} $\text{meas}(V^n) \leq G(2n)$,
    \item \label{prop:TV-outside} $\norm{v}_{BV( (-R,R)\setminus V^n)} \leq n $, and 
    \item \label{prop:n-of-intervals} $N \leq n/(2\ \text{diam}(\mathcal{K}))$.
\end{enumerate}
\begin{lemma} \label{lem:X-G-inc}
We have the inclusion
$$ X_G\subset X,$$
for every $G$ which decreases sufficiently quickly to satisfy
\begin{equation} \label{eq:G-K-limit} \lim_{n\to \infty} K(n) G(2n)^{1/4} = 0.\end{equation}
\end{lemma}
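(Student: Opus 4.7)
The strategy is to approximate $v \in X_G$ by functions obtained by replacing $v$ on each connected component of $V^n$ with a single state in $\mathcal{K}$, chosen so that only one of the two boundary jumps of each component is nontrivial.

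Fix $v \in X_G$. For each sufficiently large $n$, let $V^n = \bigcup_{i=1}^N (a_i, b_i)$ be the open set provided by the definition of $X_G$. The restriction of $v$ to $(-R,R) \setminus V^n$ has finite bounded variation by \ref{prop:TV-outside}, so the one-sided trace $v(a_i^-) := \lim_{x \uparrow a_i,\, x \notin V^n} v(x)$ is well-defined and lies in $\mathcal{K}$ (compact, hence closed). I would then define
\begin{equation*}
v_n(x) := \begin{cases} v(x), & x \in (-R,R) \setminus V^n, \\ v(a_i^-), & x \in (a_i, b_i), \end{cases}
\end{equation*}
with the obvious convention at any boundary index (e.g.\ set $v_n \equiv v(b_i^+)$ on $(a_i,b_i)$ if $a_i = -R$).

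The central estimate is the BV bound on $v_n$. Since $v_n$ coincides with $v$ off $V^n$ and is constant on each component of $V^n$, the total variation decomposes into (i) the variation of $v$ on $(-R,R)\setminus V^n$, bounded by $n$ via \ref{prop:TV-outside}; (ii) a vanishing jump at each $a_i$ by our choice of constant; and (iii) a jump at each $b_i$ bounded by $\text{diam}(\mathcal{K})$. Summing the last contribution over $N$ intervals using \ref{prop:n-of-intervals} gives $\norm{v_n}_{BV((-R,R))} \leq n + N \cdot \text{diam}(\mathcal{K}) \leq \tfrac{3n}{2}$. The $L^2$ estimate follows immediately from \ref{prop:uni-measure}: since $v_n - v$ is supported in $V^n$ with $|v_n - v| \leq \text{diam}(\mathcal{K})$ pointwise, we have $\norm{v - v_n}_{L^2((-R,R))} \leq \text{diam}(\mathcal{K}) \sqrt{G(2n)}$. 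Combining these bounds and using the monotonicity of $K$,
\begin{equation*}
K(\norm{v_n}_{BV((-R,R))}) \cdot \norm{v-v_n}_{L^2((-R,R))}^{1/2} \;\leq\; K\!\left(\tfrac{3n}{2}\right) \cdot \text{diam}(\mathcal{K})^{1/2} \cdot G(2n)^{1/4},
\end{equation*}
and the right-hand side tends to zero as $n \to \infty$ by the hypothesis \eqref{eq:G-K-limit} (the harmless constant $3/2$ inside $K$ being absorbed by the freedom in choosing $G$ to decay sufficiently rapidly relative to $K$). This establishes $v \in X$.

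The main obstacle, and the only subtle point, is the choice of constant on each interval of $V^n$. An arbitrary $c_i \in \mathcal{K}$ would produce two jumps of size up to $\text{diam}(\mathcal{K})$ per interval and hence an extra BV contribution of $2N \text{diam}(\mathcal{K}) \leq n$; combined with the variation outside $V^n$ this yields $\norm{v_n}_{BV} \leq 2n$ and loses the crucial factor of two. Selecting $c_i = v(a_i^-)$ zeroes the left-endpoint jump and halves the boundary cost, which is precisely the balance that properties \ref{prop:TV-outside} and \ref{prop:n-of-intervals} were designed to enable; everything else is routine bookkeeping.
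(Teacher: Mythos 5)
Your proof is correct, and it takes a genuinely different (and slightly sharper) route from the paper's. The paper approximates $v$ by replacing it on \emph{all} of $V^n$ with the single global average $\frac{1}{|V^n|}\int_{V^n} v\,dx$; each of the $N$ intervals then contributes two boundary jumps of size up to $\text{diam}(\mathcal{K})$, giving $\norm{v_n}_{BV} \leq n + 2N\,\text{diam}(\mathcal{K}) \leq 2n$. You instead use a \emph{per-interval} constant equal to the left trace $v(a_i^-)$ (well-defined because $v$ restricted to $(-R,R)\setminus V^n$ is BV), which zeroes the left jump at each $a_i$ and halves the boundary cost, yielding $\norm{v_n}_{BV} \leq n + N\,\text{diam}(\mathcal{K}) \leq 3n/2$. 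The $L^2$ estimate and the final limit computation are identical in structure to the paper's. Your variant buys a better constant; this is a matter of taste here (both bounds feed $K$ at a linear function of $n$), but the per-interval trace construction is arguably more natural and would be the preferred one if the growth of $K$ ever made the constant matter.

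One small remark on the last step, which you flagged yourself: you end with $K(3n/2)G(2n)^{1/4}$, whereas \eqref{eq:G-K-limit} literally reads $\lim K(n)G(2n)^{1/4}=0$, and since $K$ is increasing $K(3n/2) \geq K(n)$, so this does not follow from the hypothesis as stated without a small reinterpretation. Observe, however, that the paper's own proof has exactly the same mismatch (it ends with $K(2n)G(2n)^{1/4}$ and also appeals to \eqref{eq:G-K-limit}), so this is an artifact of how the hypothesis is written, not a gap introduced by your argument. The intended reading of \eqref{eq:G-K-limit} is evidently that $K$ and $G^{1/4}$ beat each other uniformly over linear rescalings of the argument (e.g.\ $\lim K(n)G(n)^{1/4}=0$); under that reading both your $K(3n/2)$ and the paper's $K(2n)$ are fine, and your bound is the tighter of the two. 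Your parenthetical about ``freedom in choosing $G$'' is slightly misleading since $G$ is given in the hypothesis; a cleaner phrasing would simply invoke this linear-rescaling stability of the condition, which is what the paper implicitly uses anyway.
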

\begin{proof}
    Let $v \in X_G$ where $G:[0,\infty) \to [0,\infty)$ is a decreasing function satisfying equation~\eqref{eq:G-K-limit}.
    The open sets $V^n$ associated to $v$ are used to construct the approximations 
    \begin{equation}
        v_n(x) = 
        \begin{cases}
            v(x) & x \notin V^n\\
            \frac{1}{|V^n|} \int_{V^n} v\,dx & x\in V^n.
        \end{cases}
    \end{equation}
    From properties \ref{prop:TV-outside} and \ref{prop:n-of-intervals} we can bound
    \beq \label{eq:approximate-BV-bound} \norm{v_n}_{BV((-R,R))}\leq \norm{v}_{BV((-R,R) \setminus V^n)} + 2N\text{diam}(\mathcal{K}) \leq 2n \eeq
    while property \ref{prop:uni-measure} gives us the bound
    \beq \label{eq:approximate-L2-bound}\norm{v - v_n}_{L^2((-R,R))} \leq \text{diam}(\mathcal{K}) G(2n)^{1/2}.\eeq
    From the bounds~\eqref{eq:approximate-BV-bound} and~\eqref{eq:approximate-L2-bound} we compute
    \begin{equation}\label{eq:K-BV-L2-limit} \lim_{n\to\infty} K\left(\norm{v_n}_{BV((-R,R))}\right) 
        \norm{v-v_n}^{1/2}_{L^2((-R,R))} \leq 
        \lim_{n\to\infty}K(2n)G(2n)^{1/4}\text{diam}(\mathcal{K})^{1/2} = 0,
        \end{equation}
    establishing $v\in X.$
\end{proof}

We note that the construction of the $X_G$ closely mirrors that of the sets of initial data $(\tilde P_\alpha)$ considered in \cite{2025arXiv250500420B}. 
In that work, the structure of the initial data in $(\tilde P_\alpha)$ gave a solution with a quick total variation decay, satisfying the a priori assumption of the uniqueness theorem presented in that paper.
In this work, we use the above assumptions to construct a sequence in $BV((-R,R))$ which converges in $L^2$ with efficient cost in the BV-norm of our approximations. 
In particular, we make no a priori assumptions on the total variation decay of solutions with initial data $v\in X$.

\begin{proof}[Proof of \Cref{cor:uniqueness}]
\hfill

    \textit{Step 1: Uniqueness.}
    Suppose that $u_1,\, u_2 \in \mathcal{S}_{weak}$ are two solutions with $u_i(\cdot, 0)|_{(-R+sT,R-sT)} = v \in X$ for $i = 1,\ 2$.
    By the triangle inequality,
    \begin{equation}\label{eq:uni-triangle}
        \begin{aligned}
            \norm{u_1(\cdot, \tau) - u_2(\cdot,\tau)}_{L^2((-R+s\tau,R-s\tau))} \leq& \norm{u_1(\cdot, \tau) - v_n(\cdot,\tau)}_{L^2((-R+s\tau,R-s\tau))} \\&+ \norm{u_2(\cdot, \tau) - v_n(\cdot,\tau)}_{L^2((-R+s\tau,R-s\tau))}
        \end{aligned}
    \end{equation}
    for any $n$, where the $v_n$ are the approximating sequence given by our class $X$~\eqref{eq:def-class-X}.
    We can estimate either term on the right side with \Cref{main_theorem}.
    This estimate yields 
    $$ \norm{u_i(\cdot, \tau) - v_n(\cdot,\tau)}_{L^2((-R+s\tau,R-s\tau))} \leq K\left( \norm{v_n}_{BV((-R,R))} \right) \norm{v - v_n}_{L^2((-R,R))}^{1/2} $$
    where we note that by assumption the upper bound converges to zero as $n \to \infty$ by equation~\eqref{eq:G-K-limit}. 
    As a result, taking the lim inf of equation~\eqref{eq:uni-triangle} as $n\to \infty$ we arrive at 
    $$ \norm{u_1(\cdot, \tau) - u_2(\cdot,\tau)}_{L^2((-R+s\tau,R-s\tau))} = 0\quad \forall \tau \in [0,R/s],$$
    establishing uniqueness. 

    \vskip.2cm
    \textit{Step 2: $X$ is strictly larger than $BV$.}
    The inclusion $BV((-R, R);\mathcal{K}) \subset X$ is given by our construction. 
    What remains is to show is that these sets are not equal.
    To do this we consider the example~\eqref{eq:ex-unique}, 
    $$ W(x) = b_1 + \frac{b_2 - b_1}{2} \left(\left(\sin\left(\Gamma (|x|^{-1})\right)\right) + 1 \right), $$
    and show $W \in X_G$ for an unbounded increasing $\Gamma\colon[0,\infty) \to [0,\infty)$.
    To start, since $\Gamma$ is increasing the total variation will be concentrated within intervals $V^n = (-a_n,a_n)$.
    To satisfy property \ref{prop:uni-measure}, we select $a_n = G(2n)/2$ for a positive, decreasing $G$ satisfying equation~\eqref{eq:G-K-limit}. 
    Next we can bound the variation in terms of $\Gamma$,
    $$ \norm{W}_{BV( (-R,R)\setminus V^n)} \leq \frac{|b_1 - b_2|}{\pi} \Gamma(a_n^{-1}).$$
    From our bound on $a_n$ we find if $\Gamma$ satisfies 
    $$ \Gamma\left(2(G (2n))^{-1}\right) \leq \frac{\pi}{|b_1 - b_2|} n$$
    then $W$ satisfies property \ref{prop:TV-outside} establishing $W \in X$.
    Finally, we can select a $\Gamma$ that satisfies the above bound and $\Gamma(k) \to \infty$ as $k\to \infty$, establishing $\norm{W}_{BV((-R,R ))} = \infty$.

    \vskip.2cm
    \textit{Step 3: $X$ contains elements that are everywhere locally infinite BV.}
    We note that the main estimates in the proof of \Cref{lem:X-G-inc} are the bounds~\eqref{eq:approximate-BV-bound} and~\eqref{eq:approximate-L2-bound}. 
We see that these bounds also hold if we take a convex combination of data $v_i \in X_G$ and the associated approximating sequences $\{v_i^{(n)} \}_{n = 1}^\infty$ constructed in the proof of \Cref{lem:X-G-inc}. 
This can even be done in the case of a convex combination of an infinite number of $v_i$ (as long as the notion of ``$n$ sufficiently large'' in our definition of $X_G$ is \textit{uniform} among the data we are taking a convex combination of).
To see this, fix $\{v_i\}_{i=1}^\infty \in X_G$ for a positive $G$ satisfying equation~\eqref{eq:G-K-limit} and a collection of positive $a_i$ satisfying $\sum a_i = 1$.
Let $$ v = \sum_{i=1}^\infty a_i v_i,$$ which converges in $ L^\infty((-R,R); \mathcal{K})$ as $\mathcal{K}$ is convex and compact. 
We further suppose that the $v_i$ are in fact translations of the function $W$ (see \eqref{eq:ex-unique}), $v_i(x)=W(x-x_i)$ where the points $\{x_i\}$ are dense in $(-R,R)$. In particular, each $v_i$ has infinite total variation around the point $x_i$. We can  then conclude $\norm{v}_{BV((a,b))} = \infty$ for all $(a,b) \subset (-R,R)$. 
Indeed, given $x_k \in (a,b)$ we see there exists $N > 1$ such that 
\begin{equation}\label{unique:partial}
    \sum_{n>N} a_n <  \norm{W}_{L^\infty}a_k/2.
\end{equation}
We can then write 
$$ v(x) = \underbracket{\sum_{\substack{n \leq N \\ n\ne k}}a_n W(x - x_n)}_{\text{total variation } <\,\,\infty} + \underbracket{\vphantom{\sum_{\substack{n \leq N \\ n\ne k}} } \left( a_kW(x-x_k) +\sum_{n > N} a_nW(x - x_n) \right)}_{\text{total variation } =\,\,\infty},$$
where the first term has finite total variation near $x_k$ and cannot impact whether $v$ has infinite total variation close to $x_k$.
The second term has infinite total variation, as the variation between the adjacent peaks/valleys of $a_kW(x - x_k)$ are at most shrunk by the factor $a_k$, twice the bound for the sum \eqref{unique:partial} over $n>N$.
This shows the total variation of $v$ indeed blows up as we approach $x_k,$ independent of choice of $\{x_n\}$ and $\{a_n\}$.

We define the associated approximations 
$$v_n = \sum_{i=1}^\infty a_i v_i^{(n)},$$ 
where the $\{v_i^{(n)} \}_{n=1}^\infty$ satisfy~\eqref{eq:approximate-BV-bound} and~\eqref{eq:approximate-L2-bound} by construction in \Cref{lem:X-G-inc}.
We note by the triangle inequality, equation~\eqref{eq:approximate-BV-bound} and~\eqref{eq:approximate-L2-bound} we have the bounds
\begin{align*}
\norm{v_n}_{BV} &\leq \sum a_i \norm{v_i^{(n)}} = 2n, \\
\norm{v-v_n}_{L^2} &\leq \sum a_i \norm{v_i - v_i^{(n)}}_{L^2} \leq \text{diam}
(\mathcal{K}) G(2n)^{1/2}
\end{align*}
which suffice to give the limit~\eqref{eq:K-BV-L2-limit}, establishing $v \in X$.
\end{proof}


\section{Dissipation estimate for small shocks}\label{sec:proof_prop:small_shock_diss}
\subsection{Important note on notation}

In this section, we have some additional notes on notation (cf. \Cref{sec:not}), largely mirroring that of \cite{MR4667839}. Throughout this section we localize our analysis to a region $B_{2\epsilon}(d)$ so that we can guarantee separation of characteristic velocities.
More precisely, we require $\epsilon > 0$ is such that
\begin{equation}\label{eq:depsilon}
    B_{2\epsilon}(d) \subset \Nu \quad \text{ and } \quad \sup_{u \in B_{2\epsilon}(d)} \lambda_1(u) < \inf_{u \in B_{2\epsilon}(d)} \lambda_2(u).
\end{equation}
For brevity, most of the forthcoming statements will omit this fact. 
Furthermore, we will often make the statement 
\begin{quote}
    ``For all $C$ sufficiently large and $s_0$ sufficiently small [...]''
\end{quote}
as a concise form of the more precise statement
\begin{quote}
    ``For all $d \in \Nu$ and $\epsilon > 0$ satisfying~\eqref{eq:depsilon}, and $C_1$ sufficiently large there exists a $\tilde s_0 = \tilde s_0(C_1)$ such that for all $C$ satisfying $C_1/2 < C < 2C_1$ and $s_0 < \tilde s_0$ [...]''
\end{quote}

Furthermore, 
\begin{itemize}
    \item $C$ denotes the constant used to define $\tilde \eta,\tilde q, \Pi_{C,s_0}$.
    \item In this section, the constant $K$ depend will depend additionally on $C_1$ and $\tilde s_0$. These constants are understood to be independent of $C$ and $s_0$. 
    This is true for the implicit constant in the notation ``$a \lesssim b$'' as well. 
    \item For $A$ a $n$-tensor and $B$ an $m$-tensor, we denote $A\otimes B$ as the $n+m$ tensor $a_{i_1\dots i_n}b_{j_1\dots j_m}$.
    \item We use $| \cdot|$ to denote a finite-dimensional norm of vectors, matrices, and tensors. 
    \item We denote derivatives as $g'$ when $g$ is matrix or vector-valued and $\nabla f$ when $f$ is a scalar function.
\end{itemize}

\subsection{Proof outline}
The main idea of this proof is to take the relative entropy inequality,
\begin{equation}
    \partial_t \eta(u|v) + \partial_x q(u;v) \leq 0
\end{equation}
and integrate it on the left and right of our shift $h(t)$, where $v = u_L$ or $v = u_R$ depending on whether $x$ is to the left or right of $h$.
Thanks to the solution $u$ satisfying the strong trace property (\Cref{strong_trace_prop}) we can differentiate the quantity defined in equation~\eqref{eq:pseudo-dist} to arrive at the following entropy dissipation function
\begin{equation} \label{eq:diss-function}
    \frac{d}{dt} E_t(u) \leq h'(t)(a_1 \eta(u_-(t)| u_L) - a_2\eta(u_+(t)|u_R)) - (a_1 q(u_-(t); u_L) - a_2q(u_+(t);u_R)).
\end{equation}
Without loss of generality, we fix a 1-shock $(u_L,u_R,\sigma_{LR})$. 
Given $a_1,a_2 > 0$ we define the set 
\begin{equation} \label{eq:def-pi}
    \Pi := \left\{ u \in \mathcal{V}\ |\ a_1 \eta(u|u_L) - a_2 \eta(u|u_R) < 0 \right\}.
\end{equation}
This set is of great importance in our analysis, because if $u_-(t) = u_+(t) \in \partial \Pi$ we find that the $\dot h$ term in~\eqref{eq:diss-function} vanishes. 
Since this term vanishes, we may simply take $\dot h(t) = \lambda_1(u)$ in this case without changing the value of our bound in~\eqref{eq:diss-function}. 
We then have to verify that the function 
\begin{equation}\label{eq:Dcont-def}
    D_{\cont}(u) := (q(u;u_R) - \lambda_1(u) \eta(u|u_R)) -  \frac{a_1}{a_2}(q(u;u_L) - \lambda_1(u) \eta(u|u_L)) 
\end{equation}
satisfies the dissipation bound~\eqref{diss:shock} for all $u\in \partial \Pi$.
We note that for generic values of $a_1/a_2$ the dissipation $D_\cont$ cannot be shown to be non-positive.
For instance, in the case that $a_1 = a_2$ we find $\partial \Pi$ is a hyperplane of states equidistant from $u_L,u_R$. 
For $n \geq 2$ this is an unbounded set and, for generic systems, we find $D_{\cont}$ attains positive values on $\partial \Pi$ \cite{serre_vasseur}.
However, for sufficiently small values of $a_1/a_2$ we see $\Pi$ can be put into an arbitrarily small neighborhood of $u_L$, in which case negativity can be shown. 
Kang and Vasseur did exactly this and showed that, for a sufficiently small $a_1/a_2$ we obtain the dissipation~\eqref{eq:diss-function} is non-positive \cite{MR3519973} (and the third author of this paper later showed the upper bound \eqref{diss:shock} for large shocks \cite{move_entire_solution_system} by similar means), however without the control on our weights~\eqref{control_a_one} and~\eqref{control_a_two}.
Having such a control on the weights is vital to take the stability of individual shocks to the stability of systems via front tracking. 
Later Golding, Vasseur, and the third author of this manuscript were able to prove that the dissipation function~\eqref{eq:diss-function} is non-positive for weights satisfying~\eqref{control_a_one} and~\eqref{control_a_two} for purposes of front tracking \cite{MR4667839}; however, they did not establish the quantitative bound by the shift~\eqref{diss:shock}.
As seen in the statement of \Cref{prop:small_shock_diss}, our main result in this section is establishing~\eqref{diss:shock} in the regime of weights considered by Golding, Vasseur, and the third author.


To establish this Proposition, we first prove two sub-propositions: the first for the continuous case $u(h(t)+,t)=u(h(t)-,t)$ and the second in the discontinuous case $u(h(t)+,t)\ne u(h(t)-,t)$. 
In this continuous case, we bound the dissipation~\eqref{eq:Dcont-def} within the larger set 
\begin{equation}\label{eq:Pi-star-def}
    \Pi_{C,s_0}^* := \{u: \tilde \eta(u) < 0\} \cup B_r(u^*)
\end{equation}
for some $r > 0$ to be determined in Lemma \ref{lemma:D-cont-exp}.
We further define 
\begin{align}
    \tilde \eta(u) &:= (1+Cs_0) \eta(u|u_L) - \eta(u|u_R) \label{eq:tilde-eta} \\
    \tilde q(u) &:= (1+Cs_0) q(u;u_L) - q(u;u_R) \label{eq:tilde-q}
\end{align}
where $s_0 = |u_L -u_R|,$ $C$ is such that $a_1/a_2 = 1 + Cs_0$, and $u^*$ is the maximizer of $D_{\cont}$ within the original set $\Pi$; this state was shown to exist and be unique for $C$ sufficiently large and $s_0$ sufficiently small by \cite[Proposition 2.1]{MR4667839}.
With this notation our continuous dissipation function can now be concisely written as 
\begin{equation}
    D_{\cont}(u) = -\tilde q(u) + \lambda_1(u)\tilde \eta(u).
\end{equation}
With these definitions in hand we can now state our first proposition. 
\begin{proposition}\label{prop:cont}
    Consider a system~\eqref{system} and phase space $\Nu$ verifying Assumption \ref{assum}. 
    Fix a state $d \in \Nu$. 
    Then there exists a constant $K_{ball} > 0$ such that for any $C_1 > 0$ sufficiently large there exists constants $\tilde s_0,\ K > 0$ such that for any $C \in [C_1/2, 2C_1]$, any 1-shock $(u_L,u_R,\sigma_{LR})$ satisfying $|u_L - d| + |u_R - d| \leq \tilde s_0$, and $|u_L - u_R| = s_0 \leq \tilde s_0$
        and any $u \in \Pi_{C,s_0}^*$ we have the bound 
    \begin{equation} \label{eq:cont-bound}
        D_{\cont}(u) \leq -Ks_0 ( | u - u_L |^2 + | u - u_R|^2 ),
    \end{equation}
    where $\Pi_{C,s_0}^*$ is defined with $r = K_{ball}C^{-1}$ by equation~\eqref{eq:Pi-star-def}.
\end{proposition}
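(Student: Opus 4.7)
The plan is to decompose $\Pi_{C,s_0}^*$ according to its defining union $\{\tilde\eta<0\}\cup B_r(u^*)$ and to Taylor expand $D_\cont$ on each piece to extract a quadratic dissipation carrying the sharp $s_0$ prefactor.

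First I would establish the geometric fact that the sublevel set $\{\tilde\eta<0\}\cap B_{2\epsilon}(d)$ is contained in a ball of radius $O(s_0)$ around $u_L$. Writing $\tilde\eta(u)=Cs_0\,\eta(u|u_L)+[\eta(u|u_L)-\eta(u|u_R)]$ and expanding the bracket to first order in $(u_R-u_L)$ around $u_L$, the leading term is proportional to $s_0\,|u-u_L|$, so the inequality $\tilde\eta(u)<0$ forces $|u-u_L|\lesssim s_0$. On this neighborhood I would Taylor expand $D_\cont(u)=-\tilde q(u)+\lambda_1(u)\tilde\eta(u)$ around $u_L$, using the base values $\tilde\eta(u_L)=-\eta(u_L|u_R)$ and $\tilde q(u_L)=-q(u_L;u_R)$. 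The key identity is \Cref{lemma:entropy-quant} applied along the shock curve from $u_L$ to $u_R$, which yields $q(u_L;u_R)=\sigma_{LR}\eta(u_L|u_R)+O(s_0^3)$ and hence $D_\cont(u_L)=(\sigma_{LR}-\lambda_1(u_L))\eta(u_L|u_R)+O(s_0^3)$. By the asymptotic $\sigma_{LR}=\tfrac12(\lambda_1(u_L)+\lambda_1(u_R))+O(s_0^2)$ from \Cref{lem_hugoniot}, genuine nonlinearity (Assumption \ref{assum} \ref{assum:gnl}), and $\eta(u_L|u_R)\sim s_0^2$, this base value is of order $-s_0^3$ with the correct sign. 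A uniform gradient/Hessian bound for $D_\cont$, where crucially the $Cs_0\,\eta(u|u_L)$ term in $\tilde\eta$ contributes additional negative second-order pieces through the rel-ent strengthening property (Assumption \ref{assum} \ref{assum:rel-ent-strengthens}), then propagates this pointwise negativity throughout the $O(s_0)$-neighborhood to yield the desired $-Ks_0(|u-u_L|^2+|u-u_R|^2)$ bound.

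For the second piece $B_r(u^*)$ with $r=K_{ball}/C$, I would invoke the construction from \cite[Proposition~2.1]{MR4667839} of $u^*$ as the unique interior critical point of $D_\cont$ on $\Pi$, which lies near the shock curve connecting $u_L$ and $u_R$ so that $|u^*-u_L|\sim|u^*-u_R|\sim s_0$. The base estimate $D_\cont(u^*)\lesssim -s_0^3$ can be obtained by evaluating $D_\cont$ along the shock curve using \Cref{lemma:entropy-quant} together with (Assumption \ref{assum} \ref{assum:rel-ent-strengthens}). A Taylor expansion around $u^*$, using $\nabla D_\cont(u^*)=0$ together with a uniform Hessian bound, and the choice of $K_{ball}$ small enough relative to that Hessian, then transfers the inequality to the entire ball while preserving the $s_0$ prefactor on the right-hand side.

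The main obstacle will be the sharpness of the $O(s_0^2)$ cancellation in the Taylor expansion near $u_L$ and the uniform tracking of the $C$-dependence. As noted in the discussion following the proposition, $D_\cont$ is generically \emph{not} sign-definite on $\partial\Pi$ for arbitrary ratios $a_1/a_2$; the negativity is a genuine consequence of the specific weight regime $a_1/a_2=1+Cs_0$ with $C\in[C_1/2,2C_1]$ and $C_1$ large. Bookkeeping which error terms scale like $Cs_0$, $s_0$, or $s_0^2$, while producing a single constant $K$ uniform over $C$ in the window, is the delicate point that separates this quantitative bound from the qualitative (merely non-positive) estimate of \cite{MR4667839}. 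A secondary subtlety is matching the two regime estimates on the overlap $\partial B_r(u^*)\cap\{\tilde\eta<0\}$, which requires calibrating the Hessian bound at $u^*$ against the neighborhood estimate so that a single $K$ works throughout $\Pi_{C,s_0}^*$.
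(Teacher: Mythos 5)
Your handling of the ball $B_r(u^*)$ is broadly in line with the paper's Step~1 (Taylor expansion about $u^*$ using $\nabla D_{\cont}(u^*)=0$, a negative-definite Hessian estimate, and a third-order remainder bound, with $K_{ball}$ chosen small). However, the treatment of the first piece $\{\tilde\eta<0\}=\Pi_{C,s_0}$ contains a genuine error. You claim that $\tilde\eta(u)<0$ forces $|u-u_L|\lesssim s_0$, but running your own estimate to completion gives the opposite: writing $\tilde\eta(u)=Cs_0\,\eta(u|u_L)+\big[\eta(u|u_L)-\eta(u|u_R)\big]$, the bracket is $\mathcal{O}(s_0|u-u_L|+s_0^2)$ and $\eta(u|u_L)\sim|u-u_L|^2$, so $\tilde\eta<0$ gives $Cs_0|u-u_L|^2\lesssim s_0|u-u_L|+s_0^2$, i.e.\ $|u-u_L|\lesssim C^{-1}$. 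This matches the paper's Lemma~\ref{lem:gkv_eta}, which states $\mathrm{diam}(\Pi_{C,s_0})\sim C^{-1}$ --- a scale that is \emph{independent of} $s_0$ and, for $s_0\ll C^{-1}$, vastly larger than the $\mathcal{O}(s_0)$ neighborhood you posit.

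This scale mismatch breaks the Taylor-around-$u_L$ strategy. With region size $\sim C^{-1}$ and the third-derivative bound $|\nabla^3 D_{\cont}|\lesssim Cs_0$ (Lemma~\ref{lemma:taylor-bounds-d-cont}), the cubic remainder is $\lesssim Cs_0\cdot C^{-3}=s_0 C^{-2}$, exactly the magnitude of the quadratic term you are trying to extract at the boundary of $\Pi_{C,s_0}$ --- the expansion does not close without an additional ingredient. Moreover, at $u_L$ one has $\tilde\eta(u_L)=-\eta(u_L|u_R)\neq 0$ and $\nabla\tilde\eta(u_L)$ only approximately parallel to $l_1(u_L)$, so the cancellations that make $\nabla D_{\cont}(u^*)=0$ and give the clean Hessian structure at $u^*$ are absent at $u_L$, and neither a vanishing gradient nor negative-definiteness of $\nabla^2 D_{\cont}(u_L)$ is established. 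The paper avoids all of this by \emph{not} expanding around $u_L$; instead it invokes Lemma~\ref{lemma:gkv-max-char} (no interior critical points of $D_{\cont}$ in $\Pi_{C,s_0}$) so that the maximum of $D_{\cont}$ on $\Pi_{C,s_0}^*\setminus B_r(u^*)$ sits on $\partial B_r(u^*)\cap\Pi_{C,s_0}$, where the Step~1 bound $D_{\cont}\lesssim -s_0^3-s_0 r^2$ already applies, and then propagates this via a monotone comparison function $M(R)$ exploiting $\mathrm{diam}(\Pi_{C,s_0})\sim r$ and $|u^*-u_L|,|u^*-u_R|\lesssim s_0$ (Lemma~\ref{lemma:gkv-dist-u0}). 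Your proposal contains no substitute for this no-critical-point/scaling step, which is what actually controls the large region of $\Pi_{C,s_0}$ where a pointwise expansion around a fixed center cannot.
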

This proves the dissipation bound of \Cref{prop:small_shock_diss} in the case of $u_- = u_+ \in \Pi_{C,s_0}^*$. The proof of \Cref{prop:cont} is saved for \Cref{sec:cont}.
We note that there is a slight deviation between Proposition \ref{prop:cont} and the corresponding proposition proven in \cite{MR4667839}.
They showed~\eqref{eq:Dcont-def} was non-positive within the smaller set $\Pi_{C,s_0}$, while we have included the new region $B_r(u^*)$. 
The reason for this is a technical detail arising from the construction of our shift: in this continuous case, we wish to move as a generalized characteristic $\dot h(t) = u(h(t),t)$ when $u$ is within some fixed set (say $\Pi_{C,s_0}^*$) where we are able to show the bound~\eqref{eq:cont-bound}.
While we can do this within $\Pi_{C,s_0}$, there are also larger subsets of $\Nu$ where we can achieve this bound. 
If our selected region contains $\Pi$ then $\tilde \eta(u) > 0$ outside of it, hence selecting a sufficiently large $\dot h$ gives a sign to our dissipation~\eqref{eq:diss-function} for all states $u$ outside our fixed region.
If we do this for the set $\Pi$ as in \cite{MR4667839} an issue arises near $\partial \Pi$: on this set our dissipation~\eqref{eq:diss-function} is independent of our choice of $\dot h$, while our desired upper bound~\eqref{diss:shock} is not independent of $\dot{h}$ for states $u \notin \Pi$ near the boundary. 
As a result, if we select a sequence of states $u \to u^*$ (where $u^* \in \partial \Pi$ is defined in Proposition \ref{prop:gkv-cont}) that are outside of $\Pi$ we find $-\tilde q(u) + \dot h(t) \eta(u) \to D_{\cont}(u^*) \sim -s_0^3$. 
However, if we select the same shift as \cite{MR4667839} we find $|\sigma_{LR} - \dot h| = \bigO(1)$ when we encounter these states, preventing us from satisfying the bound~\eqref{diss:shock} for small shocks with this choice of shift. 
The solution to this is to prescribe that our shift move as a generalized characteristic for any state $u \in B_r(u^*)$ where $r$ is independent of $s_0$. 
By doing this, we can guarantee that for $u$ near $u^*$ we instead have $|\sigma_{LR} - \dot h| = |\sigma_{LR} - \lambda_1(u)|\sim s_0$, and for states sufficiently far from $u^*$ we have $D_\cont(u) \lesssim -s_0$, allowing us to prescribe our shift move at a high (but bounded) velocity.

Next, since we prescribe $\dot h$ to move at times as a generalized characteristic, it is possible that $h(t)$ exactly follows a shock $(u_-,u_+,\sigma_\pm)$ for a positive time. 
When this occurs, we encounter the entropy dissipation 
\begin{equation} \label{eq:Drh-def}
    D_{\RH}(u_-,u_+,\sigma_\pm) = [q(u_+;u_R) - \sigma_\pm \eta(u_+ | u_R)] - (1 + Cs_0)[q(u_-;u_L) - \sigma_\pm \eta(u_- | u_L)].
\end{equation}
We then need to prove our bound~\eqref{diss:shock} for the dissipation function~\eqref{eq:Drh-def}.
Thankfully, by choice of $h$ it suffices to consider only the 1-shock case, with $u_- \in \Pi_{C,s_0}^*$.

\begin{proposition}\label{prop:shock}
    Consider a system~\eqref{system} and phase space $\Nu$ verifying Assumption \ref{assum}. 
    Fix a state $d \in \Nu$. 
    Then there exists a constant $K_{ball} > 0$ such that for any $C_1 > 0$ sufficiently large there exists an $\tilde s_0 > 0$ and $K>0$
        such that for any $C \in [C_1/2, 2C_1]$, any 1-shock $(u_L,u_R,\sigma_{LR})$ satisfying $|u_L - d| + |u_R - d| \leq \tilde s_0$, and $|u_L - u_R| = s_0 \leq \tilde s_0$
        and any 1-shock $(u_-,u_+, \sigma_\pm)$ with $u_- \in \Pi_{C,s_0}^*$ we have the bound 
    \begin{equation}
        D_{\RH}(u_-,u_+,\sigma_\pm) \leq -Ks_0 ( | u_- - u_L |^2 + | u_+ - u_R|^2 ),
    \end{equation}
    where $\Pi_{C,s_0}^*$ is defined with $r = K_{ball}C^{-1}$ by equation~\eqref{eq:Pi-star-def}.
\end{proposition}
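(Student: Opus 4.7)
The plan is to reduce \Cref{prop:shock} to the already-proven \Cref{prop:cont} by means of the exact identity in \Cref{lemma:entropy-quant}. Parameterizing the 1-shock as $u_+ = S^1_{u_-}(s)$ and $\sigma_\pm = \sigma^1_{u_-}(s)$ with $s = |u_+ - u_-|$, applying \Cref{lemma:entropy-quant} with $v = u_R$, and combining $(\sigma_\pm - \lambda_1(u_-))\tilde\eta(u_-) = \tilde\eta(u_-)\int_0^s \dot\sigma^1_{u_-}(t)\,dt$ with the resulting integral, one obtains the decomposition
\begin{equation*}
D_{\RH}(u_-,u_+,\sigma_\pm) = D_{\cont}(u_-) + E(s), \qquad E(s) := \int_0^s \dot\sigma^1_{u_-}(t)\bigl[\tilde\eta(u_-) + \eta(u_-|S^1_{u_-}(t))\bigr]\,dt.
\end{equation*}
\Cref{prop:cont} then controls $D_{\cont}(u_-) \leq -K s_0(|u_- - u_L|^2 + |u_- - u_R|^2)$, so the entire task is to estimate the correction $E(s)$.

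By Assumption \ref{assum:rel-ent-strengthens}, $\dot\sigma^1_{u_-} < 0$ along the shock curve. Hence if $\tilde\eta(u_-) \geq 0$ (that is, $u_-$ lies in the portion of $B_r(u^*)$ outside $\Pi$), the integrand in $E(s)$ is pointwise non-positive, so $E(s) \leq 0$ and the claim follows directly. The delicate case is $u_- \in \Pi$, where $\tilde\eta(u_-) < 0$ and $E(s)$ may be positive. There, the asymptotics of \Cref{lem_hugoniot} together with Assumption \ref{assum:gnl} yield $|\dot\sigma^1_{u_-}(t)| \sim 1$ uniformly and $\eta(u_-|S^1_{u_-}(t)) = \tfrac{1}{2}\nabla^2\eta(u_-) : r_1(u_-)^{\otimes 2} t^2 + \bigO(t^3)$; optimizing over $s\geq 0$ gives $E(s) \leq K|\tilde\eta(u_-)|^{3/2}$. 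A Taylor expansion of $\tilde\eta$ at $u_L$, using $\tilde\eta(u_L) = -\eta(u_L|u_R) = \bigO(s_0^2)$ and $\nabla\tilde\eta(u_L) = \nabla\eta(u_R) - \nabla\eta(u_L) = \bigO(s_0)$, combined with the fact that $\Pi$ is contained in a bounded neighborhood of $u_L$ whose diameter shrinks as $s_0 \to 0$, produces a uniform estimate of the form $|\tilde\eta(u_-)| \lesssim s_0^2 + s_0/C$ on $\Pi$, and therefore $E(s) \lesssim (s_0^2 + s_0/C)^{3/2}$.

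Combining the two estimates with the elementary inequality $|u_- - u_L|^2 + |u_- - u_R|^2 \geq s_0^2/2$ (from the triangle inequality), we arrive at
\begin{equation*}
D_{\RH} \leq -K_1 s_0(|u_- - u_L|^2 + |u_- - u_R|^2) + K_2(s_0^2 + s_0/C)^{3/2}.
\end{equation*}
Taking $\tilde s_0$ small enough (depending on $C_1$) that $C\tilde s_0$ is bounded absorbs the positive error into half of the first term, yielding the desired estimate in terms of $|u_- - u_R|^2$. Finally, to pass from $|u_- - u_R|^2$ to $|u_+ - u_R|^2$ as required by the statement, we use $|u_+ - u_R|^2 \leq 2s^2 + 2|u_- - u_R|^2$. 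The extra $s^2$ is absorbed by the genuinely negative integral $\int_0^s \dot\sigma^1_{u_-}(t)\eta(u_-|S^1_{u_-}(t))\,dt \lesssim -s^3$ when $s \gtrsim s_0$, and is negligible compared to $|u_- - u_R|^2$ when $s \ll s_0$.

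The main obstacle is the sharp constant-tracking in the case $\tilde\eta(u_-) < 0$. At the reference shock $(u_-, u_+, \sigma_\pm) = (u_L, u_R, \sigma_{LR})$ the Rankine--Hugoniot condition forces $D_{\RH} = 0$ identically, so the bounds from \Cref{prop:cont} and the $|\tilde\eta|^{3/2}$ estimate for $E$ must cancel exactly there. Any loose constant in the estimate of $E(s)$ will cause the combined inequality to fail; the proof must show that the constant from \Cref{prop:cont} strictly dominates the one from the $E$-estimate, uniformly in $C \in [C_1/2, 2C_1]$ and in small $s_0$. This is precisely where the quantitative sharpness of \Cref{prop:cont}, and in particular the extra freedom to enlarge it to $\Pi^*_{C,s_0}$ via the ball $B_{K_{ball}/C}(u^*)$, becomes indispensable.
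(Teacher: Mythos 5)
Your starting point — the identity $D_{\RH}(u_-,u_+,\sigma_\pm) = D_{\cont}(u_-) + E(s)$ coming from \Cref{lemma:entropy-quant} — is exactly the paper's equation~\eqref{eq:RH-CONT}, and your dispatch of the case $\tilde\eta(u_-)\ge0$ (pointwise nonpositivity of the integrand) is correct. But the delicate case $\tilde\eta(u_-)<0$ is not actually resolved, and the last paragraph of your proposal essentially admits this without closing the gap.

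The central problem is that the estimate $E(s)\le K|\tilde\eta(u_-)|^{3/2}$ is a bound on $\sup_s E(s)$ and is only saturated at $s=s^*(u_-)$, the arclength of the maximal shock. For $s\neq s^*$ it is far from tight, and it throws away exactly the quadratic decay you need. Concretely, take $u_-=u_L$ so that $s^*=s_0$ and $D_{\cont}(u_L)=-E(s_0)$ with equality. Your combined bound then reads $D_{\RH}(u_L,S^1_{u_L}(s))\le -E(s_0)+K_2|\tilde\eta(u_L)|^{3/2}$, which for a generic $K_2$ is at best $\le 0$ (and can even be positive if $K_2$ is not exactly the right constant). What the Proposition requires at $u_-=u_L$ is $D_{\RH}(u_L,S^1_{u_L}(s))\lesssim -s_0(s-s_0)^2$, i.e.\ genuine quadratic decay in $|u_+-u_R|$ away from the reference shock. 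Getting this requires expanding $E(s)-E(s^*)$ to second order in $s-s^*$ using the uniform convexity of $t\mapsto\eta(u_-|S^1_{u_-}(t))$, which your $|\tilde\eta|^{3/2}$ bound discards. The same deficiency shows up for $u_-$ near $u_L$: the term $-K_1 s_0(|u_--u_L|^2+|u_--u_R|^2)$ is of the fixed order $-K_1 s_0^3$ there, while $K_2(s_0^2+s_0/C)^{3/2}$ can be of order $(s_0/C)^{3/2}\gg s_0^3$ when $Cs_0\to 0$, so the attempted absorption fails.

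The paper's proof is therefore structured very differently. Rather than bounding $E(s)$ uniformly, it first proves the estimate for the \emph{maximal} shock $(u,u^+(u),\sigma_\pm)$ (Proposition~\ref{prop:dmax}), partitioning $\Pi_{C,s_0}$ into length scales: a region $Q_{C,s_0}$ of size $\sim C^{-1}$, handled via the extra transverse negativity in $D_{\cont}$; the region with $u^+(u)\in\Pi$, where one evaluates $D_{\cont}$ at $u^+(u)$ rather than $u$; and a small region $R_{C,s_0}$ of size $\sim s_0$ near $u_L$, where the cited Proposition 6.12 gives the crucial $D_{\max}(u)\lesssim -s_0|u-u_L|^2$, plus a scaling argument on the remaining shell $R^+_{C,s_0}$ exploiting that $D_{\max}$ has no interior critical points. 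Only then is the maximal-shock bound propagated to general $s$ (Lemmas~\ref{lemma:Drh-up-to-tbar}--\ref{lemma:Drh-large-uni-bound}), keeping the second-order Taylor term in $E(s)-E(s^*)$ and using Assumption~\ref{assum}~\ref{assum:rel-ent-strengthens} for $s$ beyond the convexity neighborhood. Your proposal skips all of this structure, and the single ``constant-tracking'' difficulty you flag is in fact a web of separate regimes that each need their own argument.
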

The proof of \Cref{prop:shock} is saved to \Cref{sec:shock-proof}.

\subsection{Proof of \Cref{prop:small_shock_diss}}
In this section we show that Proposition \ref{prop:shock} implies \Cref{prop:small_shock_diss}. 
We construct the shift function $h(t)$ using the framework developed in \cite{move_entire_solution_system} and used by \cite{MR4667839} which gives control of $\dot h(t)$. 

Note that for any system~\eqref{system} satisfying Assumption \ref{assum} the change of variables $y = -x$ creates another system $u_t - [f(u)]_y = 0$ where $1$-shocks for system~\eqref{system} correspond to $n$-shocks of this new system. 
As a result, it suffices to consider only $1$-shocks in this proof. 
In particular, this change of variables establishes equations~\eqref{control_a_two} and~\eqref{control_two_shock1020} for 2-shocks from the corresponding equations~\eqref{control_a_one} and~\eqref{control_one_shock1020} for 1-shocks. We also remark that this proof holds if our system does not satisfy Assumption \ref{assum} \ref{assum:convex_left_eign}, but does satisfy all other assumptions. 

Additionally, in the case of (Problem Isothermal) the proof follows almost identically to that of (Problem $\epsilon$-BV) with the following modifications:
\begin{enumerate}
    \item In this case, we measure our shock size in terms of the quantity $2\ln \rho$. 
Since we are restricted to considering shocks within the relatively compact set $B_{2\epsilon}(d) \subset \Nu$ we have uniform continuity of $\ln \rho$. Additionally, the shock curves at a fixed state $u$ are functions of the variable $\ln \rho$ (see \Cref{lem_3.1} for details on the structure of the shock curves for the isothermal Euler system).
This establishes that distance with respect to $2\ln \rho$ is equivalent to Euclidean distance for our purposes.
As a result, it suffices to only consider the case of (Problem $\epsilon$-BV) in the forthcoming proof, where we use the Euclidean distance. 

    \item We wish to show that given a compact set $\mathcal{K} \subset \Nu$ the constants $\epsilon,\ K,\ C_1 > 0$ can be selected so that \Cref{prop:small_shock_diss} holds with any state $d \in \mathcal{K}$. 
    To establish this, we note that the function $C_0 = C_0(d)$ (the constant for which \Cref{prop:shock} holds for all $C_1 > C_0$) depends continuously on $d$, and hence is bounded above over $\mathcal{K}$.
    The proof of this is not difficult, the interested reader can consult \cite[p.~25]{2024arXiv241103578C} where this was explicitly carried out for the qualitative stability estimate. 
    From there, the extension to all of $\mathcal{K}$ follows by Lebesgue's covering lemma. 
    
    
\end{enumerate}
Finally, we remark that in the (Problem $\epsilon$-BV) case \Cref{prop:small_shock_diss} measures the distance $s_0$ as the arclength between states $u_L,\ u_R$ on the shock curve $S_{u_L}^1$. 
We note that since we are restricting ourselves to small shocks this measurement is equivalent to the Euclidean distance $|u_L - u_R|$ when $\epsilon > 0$ is suitably small, allowing us to use $s_0 = |u_L - u_R|$ in the forthcoming proof.

The proof of \Cref{prop:small_shock_diss} is divided into four steps: selecting constants, constructing our shift, proving the dissipation bound~\eqref{diss:shock}, and showing the control on the shift~\eqref{control_one_shock1020}.

\vskip0.3cm
\noindent
\textbf{Step 1: Fixing constants.} 
    For any $d\in \mathcal{V}$ there exists $\epsilon_d,\alpha_1 > 0$ such that 
    \begin{equation} \begin{aligned}\sup_{u\in B_{2\epsilon_d}(d)} \lambda_1(u) < \inf_{u\in B_{2\epsilon_d}(d)} \lambda_2(u) \end{aligned}\label{eq:sep-char-vel} \end{equation}
    due to Assumption \ref{assum} \ref{assum:hyperbolic}, we have the containment $B_{2\epsilon_d}(d) \subset \Nu$, and we satisfy Lemma \ref{lem_hugoniot} within the ball $B_{2\epsilon_d}(d)$.
    We note by the control given by Lemma \ref{lem:gkv_eta} for $C_1$ sufficiently large $\Pi_{C,s_0}$ is compactly contained within $B_{4\epsilon/3}(d)$ when both $u_L,\ u_R \in B_{\epsilon_d}(d)$. 
    We can then further institute $C_1$ is large enough to verify Proposition \ref{prop:shock} and to ensure $K_{ball}C_1^{-1} < \epsilon_d/3$, resulting in $\Pi_{C,s_0}^*$ being compactly contained within $B_{5\epsilon_d/3}(d)$ by the definition of $\Pi^*_{C,s_0}$ given by Proposition \ref{prop:shock}.
    With $C_1$ fixed, Proposition \ref{prop:shock} grants us a constant $\tilde s_0$ giving the range of shock sizes our estimate holds for. 
    We now select 
    \begin{equation} \epsilon = \min(\tilde s_0, \epsilon_d/6). \label{eq:set-epsilon-3.1}\end{equation}
    This fixes all constants of \Cref{prop:small_shock_diss} except for the dissipation coefficient $K$ and shift bound $\hat \lambda$ which we delay until Steps 3 and 4, respectively.

    For any $1$-shock $(u_L,u_R,\sigma_{LR})$ satisfying $|u_L - d| + |u_R - d| \leq \epsilon$, we define $s_0 := |u_R - u_L|$. 
    From here, we define $C>0$ to satisfy $$ \frac{a_1}{a_2} = 1 + Cs_0.$$
    We note that by~\eqref{control_a_one} $C$ satisfies $C_1 / 2 < C < 2C_1$, so this weight verifies \Cref{prop:shock}.

    \vskip0.3cm
\noindent
\textbf{Step 2: Construction of the shift.} 
Before we begin, we recall the following lemma:
\begin{lemma}[Lemma 4.1 of \cite{MR4667839}] \label{lemma:gkv_eta_control_q}
    There exists $\delta > 0$ and $C^* > 0$ such that for any $u_L,u_R \in \mathcal{V}$ satisfying $|u_L - d| + |u_R - d| \leq \tilde s_0$ we have the bounds 
    \begin{align*}
        \tilde\eta(u) &\geq \delta s_0 d(u,\partial \Pi_{C,s_0}) && \text{for any $u \in \Nu \setminus \Pi_{C,s_0}$, and}\\
        |\tilde q(u)| &\leq C^* | \tilde \eta(u)| && \text{when $u \in \Nu \setminus \Pi_{C,s_0}$ and $\tilde q(u) \leq 0$.}
    \end{align*}
\end{lemma}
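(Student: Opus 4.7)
The plan is to prove the two inequalities separately, leveraging the structural fact (from \Cref{lem:gkv_eta}) that for $C_1$ large and $s_0$ small, $\Pi_{C,s_0}$ is contained in a ball of radius $\mathcal{O}(s_0)$ around $u_L$, so $\partial\Pi_{C,s_0}$ itself sits at distance $\mathcal{O}(s_0)$ from $u_L$.

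For the first inequality, I would establish a gradient lower bound on $\tilde\eta$ near $\partial\Pi_{C,s_0}$ and integrate. Direct differentiation yields
\begin{equation*}
\nabla\tilde\eta(u) = Cs_0\,\nabla\eta(u) + \bigl(\nabla\eta(u_R) - (1+Cs_0)\nabla\eta(u_L)\bigr),
\end{equation*}
and Taylor expanding $\nabla\eta(u_R)$ around $u_L$ rewrites this as
\begin{equation*}
\nabla\tilde\eta(u) = \eta''(u_L)(u_R - u_L) + Cs_0\bigl(\nabla\eta(u) - \nabla\eta(u_L)\bigr) + \mathcal{O}(s_0^2).
\end{equation*}
By strict convexity of $\eta$ the leading term has magnitude comparable to $s_0$; for $u$ in the $\mathcal{O}(s_0)$-neighborhood of $u_L$ that contains $\partial\Pi_{C,s_0}$, the correction terms are also $\mathcal{O}(s_0^2)$, so $|\nabla\tilde\eta(u)|\gtrsim s_0$ uniformly there. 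The mean value theorem, applied along the segment from $u\in \mathcal{V}_0\setminus\Pi_{C,s_0}$ to its nearest point on $\partial\Pi_{C,s_0}$, then yields $\tilde\eta(u)\gtrsim s_0\,d(u,\partial\Pi_{C,s_0})$ locally. For $u$ outside this neighborhood, $\eta(u|u_L)$ is bounded below by a positive constant, while $|\eta(u|u_L) - \eta(u|u_R)|\lesssim s_0$ by \Cref{rel_facts_lemma}, so $\tilde\eta(u)\geq Cs_0\,\eta(u|u_L) - \mathcal{O}(s_0)\gtrsim s_0$, which dominates $s_0\,d(u,\partial\Pi_{C,s_0})$ on the compact region $B_{2\epsilon}(d)$ of interest.

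For the second inequality, the plan is a case split on the location of $u$. Away from a fixed neighborhood of $\{u_L, u_R\}$, both $\tilde\eta(u)$ and $|\tilde q(u)|$ are $\mathcal{O}(1)$ with $\tilde\eta$ uniformly bounded below (by the first inequality together with the far-region argument above), so the desired ratio bound follows by compactness. Near $u_L$ or $u_R$ the key is a leading-order cancellation coming from the Rankine-Hugoniot identity for the shock $(u_L,u_R,\sigma_{LR})$: a Taylor expansion combined with Rankine-Hugoniot lets one write $\tilde q(u) = -\sigma_{LR}\,\tilde\eta(u) + R(u)$, where $R(u)$ is a smooth remainder of order $\mathcal{O}(s_0)$ times a quantity absorbable into $\tilde\eta(u)$. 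Combined with $|q(u;v)|\leq K_1\eta(u|v)$ from \Cref{rel_facts_lemma} and the sign constraint $\tilde q(u)\leq 0$---which excludes the sectors in which $R$ could dominate the leading term and flip the sign---this yields $|\tilde q(u)|\leq C^*\,\tilde\eta(u)$ with $C^* = |\sigma_{LR}| + K_1 + \mathcal{O}(s_0)$.

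The main obstacle is the second inequality. A naive termwise triangle bound $|\tilde q(u)|\leq (1+Cs_0)K_1\eta(u|u_L) + K_1\eta(u|u_R)$ cannot be matched against $\tilde\eta(u) = (1+Cs_0)\eta(u|u_L) - \eta(u|u_R)$, since the $\eta(u|u_R)$ contributions enter with opposite signs; the proof must instead track the leading-order Rankine-Hugoniot cancellation inside $\tilde q(u) - (-\sigma_{LR})\tilde\eta(u)$ and use the hypothesis $\tilde q \leq 0$ to rule out the directions in state space where this cancellation genuinely fails.
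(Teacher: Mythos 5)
Your proof of the first inequality rests on the claim, attributed to \Cref{lem:gkv_eta}, that $\Pi_{C,s_0}$ is contained in a ball of radius $\mathcal{O}(s_0)$ around $u_L$, so that $\partial\Pi_{C,s_0}$ sits at distance $\mathcal{O}(s_0)$ from $u_L$. This is false and contradicts what \Cref{lem:gkv_eta} actually asserts: $\text{diam}(\Pi_{C,s_0}) \sim C^{-1}$, and since $C$ lies in a fixed band $[C_1/2, 2C_1]$ while $s_0 \to 0$, the boundary $\partial\Pi_{C,s_0}$ extends to distance of order $C^{-1} \gg s_0$ from $u_L$. (You may have been thinking of \Cref{lemma:gkv-dist-u0}, which says $|u_l - u_L| + |u_0 - u_L| \sim s_0$; but $u_l$ and $u_0$ are only the two special intersection points of the rarefaction and shock curves through $u_L$ with $\partial\Pi_{C,s_0}$, not the whole boundary.) This error is fatal to your argument: for $u \in \partial\Pi_{C,s_0}$ with $|u - u_L| \sim C^{-1}$, the term you discard as a ``correction'' satisfies $\bigl|Cs_0\bigl(\nabla\eta(u) - \nabla\eta(u_L)\bigr)\bigr| \sim Cs_0 \cdot C^{-1} = s_0$, which is the \emph{same} order as the leading term $\eta''(u_L)(u_R - u_L) \sim s_0$, not $\mathcal{O}(s_0^2)$. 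A priori these two $\mathcal{O}(s_0)$ vectors could cancel, and indeed $\nabla\tilde\eta$ does vanish at a critical point $u_{\min} \approx u_L - (u_R - u_L)/(Cs_0)$ located at distance $\sim C^{-1}$ from $u_L$ — the content of \Cref{lem:gkv_nu} is precisely that this critical point lies in the \emph{interior} of $\Pi_{C,s_0}$, not on $\partial\Pi_{C,s_0}$, so that $|\nabla\tilde\eta| \gtrsim s_0$ survives on the boundary. Establishing this requires a genuine argument (e.g., showing $\tilde\eta(u_{\min}) \lesssim -s_0/C$ so the boundary sits at distance $\gtrsim C^{-1}$ from $u_{\min}$), not the Taylor estimate you wrote. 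Once that gradient bound on $\partial\Pi_{C,s_0}$ is in hand, your mean-value / convexity step does carry the bound outward and the far-field case is fine — but the core step is missing.

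On the second inequality, your instinct that the sign hypothesis $\tilde q(u) \le 0$ is doing essential work is correct — indeed one can check (e.g.\ via $D_{\cont}(\bar u) = -\tilde q(\bar u)$ for $\bar u \in \partial\Pi_{C,s_0}$) that $\tilde q > 0$ on the boundary, so the hypothesis vacuously excludes the locus where $\tilde\eta = 0$ and the claimed ratio bound would otherwise fail. However, your Rankine--Hugoniot decomposition $\tilde q = -\sigma_{LR}\tilde\eta + R$ with $R$ ``absorbable into $\tilde\eta$'' is asserted rather than established, and it is not obviously true: for $u$ on $\partial\Pi_{C,s_0}$ at distance $\sim s_0$ from $u_L$, one has $\tilde\eta(u) = 0$ while the remainder you'd produce by naive Taylor expansion is of size $\sim s_0^2$, so the cancellation you need is exactly what your sketch leaves unproved. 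A more robust route is to write, for $\bar u$ the projection of $u$ onto $\partial\Pi_{C,s_0}$, $-\tilde q(u) = -\tilde q(\bar u) - \int_0^1 \nabla\tilde\eta\, f'\cdot(u-\bar u)\,dt$ and $\tilde\eta(u) = \int_0^1 \nabla\tilde\eta\cdot(u-\bar u)\,dt$, using $\nabla\tilde q = \nabla\tilde\eta\, f'$, the positivity $\tilde q(\bar u) \ge 0$, the bound $\|f'\| \le L$, and the convexity and Hessian bound $\nabla^2\tilde\eta = Cs_0\,\nabla^2\eta$ from \Cref{lem:gkv_eta} to compare the two integrals; but in any case this is a different argument than the one you sketched, and the one you sketched is incomplete.
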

This gives us control on the dissipation by making the shift sufficiently large for states $u \notin \Pi_{C,s_0}$. 

Fix a shock $(u_L,u_R,\sigma_{LR})$ and weight $a_1,a_2 > 0$ satisfying the hypotheses of \Cref{prop:small_shock_diss}. 
This then also fixes the parameters $C,s_0 > 0$ by Step 1. 
We define the set 
$$ A = \{u\in \Nu: a_1 \eta(u|u_L) > a_2 \eta(u|u_R)\text{ and }|u - u^*| > r\} = \Nu \setminus \overline{\Pi_{C,s_0}^*}.$$
This set is open within $\Nu$ and hence $-\chi_A$ is upper semicontinuous. 
We now define the function 
$$ V(u) := \begin{cases}
    \lambda_1(u) - (C^* - 2L)\chi_A(u) & u \in \Nu \\
    -C^* - L & u\notin \Nu
\end{cases} $$
where $L$ is the constant defined in Assumption \ref{assum} \ref{assum:bounded-char-speed}, we select $C^*$ to be sufficiently large to both satisfy Lemma \ref{lemma:gkv_eta_control_q}, and
\begin{equation}\label{eq:c-star-bound}
     \norm{f}_{C^2(B_{2\epsilon_d}(d))}/\Gamma   \leq C^*
\end{equation}
where
\begin{equation}\label{eq:Gamma-def}
    \begin{aligned}\Gamma = \inf_{\substack{u \in B_{2\epsilon_d}(d),\\|v| = 1 }} \nabla^2 \eta(u)v\cdot v.\end{aligned}
\end{equation} 
We now take our shift to solve the following ODE
\begin{equation} \label{eq:shift-ode} \begin{cases}\begin{aligned}
    \dot h(t) &= V(u(h(t), t)), \\
    h(T_{start}) &= x_0\end{aligned}
\end{cases} \end{equation}
where we note that the right-hand side is discontinuous. 
The existence of such a solution in the Filippov sense was shown by the following lemma, proven in \cite[Proposition 1]{Leger2011}.
\begin{lemma}[Existence of Filippov flows]\label{lemma:filippov}
    Let $V: \Nu \to \R$ be bounded and upper semi-continuous on $\Nu$ and continuous on $U$, an open full measure subset of $\Nu$. 
    Let $u \in \S_{weak}$. 
    Then, there exists a Lipshitz $h$ a solution to the ODE~\eqref{eq:shift-ode} in the Filippov sense. 
    That is, for almost all $t > T_{start}$ we have 
    \begin{equation}\label{eq:Filippov-control}\begin{aligned}
        V_{\min}(t) &\leq \dot h(t) \leq V_{\max}(t) \\
        h(T_{start}) &= x_0 \\
        \text{Lip}[h] &\leq \norm{V}_{L^\infty}
    \end{aligned}\end{equation}
    where $u_\pm(t) = u(h(t) \pm , t)$ are the traces of $u$ along our shift $h$ given by Definition \ref{strong_trace_prop}, 
    $$V_{\max}(t) = \max(V(u_+(t)), V(u_-(t))),$$
    and 
    $$V_{\min}(t) = \begin{cases}\min(V(u_+(t)), V(u_-(t))) & \text{if }u_+(t),u_-(t) \in U,\\
    -\norm{V}_{L^\infty}&\text{otherwise.} \end{cases}$$
    Furthermore, for almost every $t > T_{start}$ the shift satisfies the Rankine-Hugoniot equations
    \begin{align}
        f(u_+) - f(u_-) &= \dot h(t)(u_+ - u_-) \\
        q(u_+) - q(u_-) &\leq \dot h(t)(\eta(u_+) - \eta(u_-)) 
    \end{align}
    establishing the dichotomy that either $u_-(t) = u_+(t)$ or $(u_-(t),u_+(t),\dot h(t))$ is an entropic shock for almost all $t > 0$. 
\end{lemma}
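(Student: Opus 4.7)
The plan is to construct $h$ by a double regularization procedure, leveraging the strong trace property (\Cref{strong_trace_prop}) at the limiting step. First I would mollify $V$ in its argument to obtain a sequence of continuous functions $V^k\colon\mathcal{V}_0\to\mathbb{R}$ with $\|V^k\|_{L^\infty}\le\|V\|_{L^\infty}$, and independently mollify $u$ in the spatial variable (e.g.\ by convolution with a standard kernel in $x$ only), producing $u^n(x,t)$ continuous in $x$ for each $t$ with $\|u^n\|_{L^\infty}\le\|u\|_{L^\infty}$. For each pair $(k,n)$ the ODE
\begin{equation*}
\dot h^{k,n}(t)=V^k\bigl(u^n(h^{k,n}(t),t)\bigr),\qquad h^{k,n}(T_{start})=x_0
\end{equation*}
has a Carath\'eodory solution uniformly Lipschitz in $t$ with constant at most $\|V\|_{L^\infty}$.

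Next, by Arzel\`a--Ascoli together with a diagonal argument (sending $n\to\infty$ and then $k\to\infty$), I would extract a subsequence converging uniformly on $[T_{start},T]$ to a limit $h$ with $\mathrm{Lip}[h]\le\|V\|_{L^\infty}$ and $h(T_{start})=x_0$. This gives the easy parts of~\eqref{eq:Filippov-control}.

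The main obstacle is establishing the trace-based characterization $V_{\min}(t)\le\dot h(t)\le V_{\max}(t)$ a.e. The idea is that for almost every $t$ at which both traces $u_\pm(t):=u(h(t)\pm,t)$ lie in the continuity set $U$, the strong trace property ensures that $u(y,t)$ is close to $u_-(t)$ (resp.\ $u_+(t)$) for $y$ slightly to the left (resp.\ right) of $h(t)$, and continuity of $V$ on $U$ then forces $V(u(y,t))$ to cluster near $\{V(u_-(t)),V(u_+(t))\}$. Passing this through the mollifications, any limit point of the velocities $\dot h^{k,n}$ must lie in the interval $[V_{\min}(t),V_{\max}(t)]$; when one trace falls outside $U$, only upper semicontinuity of $V$ applies and one recovers the weaker bound $-\|V\|_{L^\infty}$. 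Making this rigorous requires some care in interchanging the limits, but is a standard Filippov-type argument adapted to solutions with strong traces (the full argument appears in \cite[Proposition~1]{Leger2011}).

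Finally, the Rankine--Hugoniot identity and the entropic shock dichotomy follow from the fact that $u$ is an entropy weak solution in $\mathcal{S}_{\text{weak}}$. Testing~\eqref{system} and~\eqref{eq:entropy-eqn} against smooth bump functions concentrated near the Lipschitz curve $h$, using the strong trace property to pass to pointwise limits, yields the pointwise relations $f(u_+)-f(u_-)=\dot h(t)(u_+-u_-)$ and $q(u_+)-q(u_-)\le\dot h(t)(\eta(u_+)-\eta(u_-))$ for a.e.\ $t$. Whenever $u_+\neq u_-$ this is precisely the statement that $(u_-,u_+,\dot h)$ is an entropic shock, completing the dichotomy.
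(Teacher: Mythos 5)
The paper does not reprove this lemma; it imports it directly from \cite[Proposition~1]{Leger2011}, so the comparison is against that cited proof rather than against an argument in the text.

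Your overall strategy (regularize, solve Carath\'eodory ODEs, extract a limit via Arzel\`a--Ascoli, then use the strong trace property to identify the limiting velocity) is the right framework, and the Rankine--Hugoniot / entropic-shock part of your sketch is fine. However, there is a genuine gap in the regularization step. You mollify $u$ in $x$ to get $u^n$, and then set $\dot h^{k,n}(t)=V^k\bigl(u^n(h^{k,n}(t),t)\bigr)$. Near a discontinuity of $u(\cdot,t)$ at $h(t)$, the spatial average $u^n(h^{k,n}(t),t)$ converges (for a.e.\ $t$, via strong traces) to a \emph{convex combination} $\theta\,u_+(t)+(1-\theta)\,u_-(t)$ with $\theta\in[0,1]$ not under your control. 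Since $V$ is nonlinear, $V\bigl(\theta u_++(1-\theta)u_-\bigr)$ is in general \emph{not} contained in $[\min(V(u_+),V(u_-)),\max(V(u_+),V(u_-))]$, so nothing forces the limiting velocity $\dot h(t)$ into $[V_{\min}(t),V_{\max}(t)]$. Your remark that ``continuity of $V$ on $U$ forces $V(u(y,t))$ to cluster near $\{V(u_\pm(t))\}$'' is true, but it describes the behavior of $V\circ u$ on either side of $h$, whereas your regularized velocity is $V$ applied to an average of $u$ — these do not agree unless $V$ is affine. This matters in the application: the $V$ used in the proof of Proposition~\ref{prop:small_shock_diss} contains a jump term $-(C^*+2L)\chi_A$, so convexity tricks cannot rescue the estimate.

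The fix, which is what the cited proof actually does, is to mollify the \emph{composite} $V\circ u$ (a bounded, measurable function on $\mathbb{R}\times[0,T]$) rather than $u$ itself: set $\dot h_n(t)=\int V\bigl(u(y,s)\bigr)\,\omega_n(h_n(t)-y,\,t-s)\,dy\,ds$ with a standard space-time mollifier $\omega_n$. Then the approximate velocity is an average of the scalar quantity $V(u)$, which \emph{does} lie between the infimum and supremum of $V(u)$ near $(h_n(t),t)$. In the limit, the strong trace property collapses those local extremes to $\{V(u_-(t)),V(u_+(t))\}$ when both traces lie in $U$, yielding $V_{\min}(t)\le\dot h(t)\le V_{\max}(t)$; and when a trace leaves $U$, upper semicontinuity alone still gives the one-sided bound $\dot h(t)\le V_{\max}(t)$ together with the crude lower bound $-\|V\|_{L^\infty}$, exactly as in \eqref{eq:Filippov-control}. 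With this modification your sketch matches the cited argument.
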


\vskip0.3cm
\noindent
\textbf{Step 3: Proof of~\eqref{diss:shock}.}
Maintaining the notation $u_\pm(t) = u(h(t) \pm , t)$ we consider the following four cases:
\begin{figure}[H]
    \center
    \begin{tabular}{lccc}
        \textbf{Case 1.} & $u_- \in \Pi_{C,s_0}^*$ & and &  $u_+ \in \Pi_{C,s_0}^*$, \\
        \textbf{Case 2.} & $u_- \in \Pi_{C,s_0}^*$ & and & $u_+ \notin \Pi_{C,s_0}^*$,\\
        \textbf{Case 3.} & $u_- \notin \Pi_{C,s_0}^*$ & and & $u_+ \in \Pi_{C,s_0}^*$,\\
        \textbf{Case 4.} & $u_- \notin \Pi_{C,s_0}^*$ & and & $u_+ \notin \Pi_{C,s_0}^*$.
    \end{tabular}\end{figure}
Since it suffices to show~\eqref{diss:shock} for almost all times we can restrict ourselves by Lemma \ref{lemma:filippov} to times where the following dichotomy holds:
\begin{equation}\label{eq:dia} u_-(t) = u_+(t) \quad\text{or}\quad (u_-(t),u_+(t),\dot h(t))\text{ is an entropic shock}.\end{equation}
Furthermore, Lemma \ref{lemma:filippov} gives us the following bounds on our shift 
\begin{equation} \label{eq:h-max}
    \dot h(t) \leq V_{\max}(t) = \max\left(\lambda_1(u_+) - (C^* + 2L)\chi_{\Nu \setminus \overline{\Pi_{C,s_0}^*}}(u_+),\ \lambda_1(u_-) - (C^* + 2L)\chi_{\Nu \setminus \overline{\Pi_{C,s_0}^*}}(u_-)\right),
\end{equation}
and for $u_+,u_- \notin \partial \Pi_{C,s_0}^*$, 
\begin{equation} \label{eq:h-min}
    \dot h(t) \geq V_{\min}(t) = \min\left(\lambda_1(u_+) - (C^* + 2L)\chi_{\Nu \setminus \overline{\Pi_{C,s_0}^*}}(u_+),\ \lambda_1(u_-) - (C^* + 2L)\chi_{\Nu \setminus \overline{\Pi_{C,s_0}^*}}(u_-)\right).
\end{equation}
\vskip0.3cm
\noindent
\textbf{Case 1. } 
    In this case we find by equation~\eqref{eq:h-max} 
    $$ \dot h(t) \leq \max(\lambda_1(u_+),\lambda_1(u_-)).$$
    In the case that $u_+ \ne u_-$ we know by the dichotomy~\eqref{eq:dia} that $(u_-,u_+,\dot h(t))$ is an entropic shock. 
    Further, by our construction in Step 1 know that within $\Pi_{C,s_0}^*$ we have separation of characteristic velocities by~\eqref{eq:sep-char-vel}.
    As a result, if $u_- \ne u_+$ we find $\dot h \geq \lambda_1(u_+)$ by Assumption \ref{assum} \ref{assum:1-shock-lax-cond-greater}. 
    Applying this to~\eqref{eq:h-max} we then see $\dot h \leq \lambda_1(u_-)$ and by Assumption \ref{assum} \ref{assum:1-shock-lax-cond-le} we see $(u_-,u_+,\dot h(t))$ must be a 1-shock. 
    In either of theses cases, we can apply Proposition \ref{prop:shock} to retrieve the bound 
    \begin{equation} \label{eq:case-1-intermediate}
        \begin{aligned}
            a_2[q(u_+;u_R) - \dot h(t) \eta(u_+|u_R)] - a_1[q(u_-;u_L) - \dot h(t) \eta(u_-|u_L)] \leq -a_2K s_0(|u_- - u_L|^2 + |u_+ - u_R|^2).
        \end{aligned}
    \end{equation}
    From here, we recall that Lemma \ref{lem_hugoniot} gives us that the map $(u,s)\mapsto \sigma_u^1(s)$ is Lipschitz on the set $S = \{(u,s):u \in \Pi_{C,s_0}^* \text{ and } s < s_u\}$,
        where we take $s_u$ to be the smallest argument such that 
        \begin{equation}\S_u^1(s_u) \in \partial B_{2\epsilon_d}(d). \label{eq:su-def} \end{equation}
    In particular, we recall $\Pi_{C,s_0}^* \subset B_{2\epsilon_d}(d)$ by our choice of parameters $C_1,\ s_0$. 
    Using this and our arclength parameterization of of shock curve given by Assumption \ref{assum} \ref{assum:shock-curve-param} we see 
    \begin{align*}
        |\dot h(t) - \sigma_{LR}| &= |\sigma_{u_-}^1(s) - \sigma_{u_L}^1(s_0)| \\
        &\lesssim |u_- - u_L| + |s - s_0| \\
        &= |u_- - u_L| + \big| |u_+-u_-| - |u_R - u_L|\big| \\
        &\leq |u_- - u_L| + |u_+ - u_R|.
    \end{align*}
    Squaring both sides of the above inequality and substituting into~\eqref{eq:case-1-intermediate} we establish~\eqref{diss:shock}.

\vskip0.3cm
\noindent
\textbf{Case 2. }
    In this case we see that $u_- \ne u_+$ and hence $(u_-,u_+, \dot h(t))$ must be a shock by~\eqref{eq:dia}. 
    By equation~\eqref{eq:h-max} we know that $$\dot h(t) \leq \max\left(\lambda(u_-), \lambda(u_+) - (C^* + 2L) \mathbbm{1}_{\Nu\setminus \overline{\Pi_{C,s_0}^*}} (u_+)\right) \leq \max( \lambda(u_-), - L) \leq \lambda(u_-).$$
    It then follows from Assumption \ref{assum} \ref{assum:1-shock-lax-cond-le} that this shock must be a 1-shock, and hence we can apply Proposition \ref{prop:shock} to again find the bound~\eqref{eq:case-1-intermediate}.
    If $u_+ = \S_{u_-}^1(s)$ for an $s < s_{u_-}$ (as defined in~\eqref{eq:su-def}) then the argument in Case 1 holds identically, establishing ~\eqref{diss:shock}.
    If this is not the case, then by the parameterization given by Assumption \ref{assum} \ref{assum:shock-curve-param}, selection of $\tilde s_0$ in Step 1,  the fact that $|u_- - u_+| \geq d(\Pi_{C,s_0}^*, \Nu\setminus B_{2\epsilon_d}(d)) \geq \epsilon_d/3$, and $|u_R - u_L| < \epsilon < \epsilon_d/6$ by equation~\eqref{eq:set-epsilon-3.1} we can establish the lower bound
    $$ |u_- - u_L| + |u_+ - u_R| \geq ||u_+-u_-|- s_0| \geq d(u_-, \Nu\setminus B_{2\epsilon_d}(d)) - | u_R - u_L | \geq \frac{1}{6}\epsilon_d.$$
    Additionally, Assumption \ref{assum} \ref{assum:bounded-char-speed}, \ref{assum:1-shock-lax-cond-greater}, and \ref{assum:1-shock-lax-cond-le} give the upper bound 
    $$ |\dot h(t) - \sigma_{LR} | \leq |\sigma_{\pm}| + |\sigma_{LR} | \leq 2L.$$
    Combining these considerations with~\eqref{eq:case-1-intermediate}, we find 
    \begin{equation}
        \begin{aligned}
            a_2[q(u_+;u_R) - \dot h(t) \eta(u_+|u_R)] - a_1[q(u_-;u_L) - \dot h(t) \eta(u_-|u_L)] &\leq -a_2K s_0(|u_- - u_L|^2 + |u_+ - u_R|^2)\\
            &\leq -\frac{a_2}{8}K\epsilon_d^2 s_0 \\
            &\leq -\frac{a_2}{16L^2}K\epsilon_d^2 s_0|\sigma_{LR} - \dot h(t)|^2
        \end{aligned}
    \end{equation}
    establishing~\eqref{diss:shock}. 

\vskip0.3cm
\noindent
\textbf{Case 3. }
    In this case $u_- \ne u_+$, so $(u_-,u_+, \dot h(t))$ must be a shock by~\eqref{eq:dia}. 
    From~\eqref{eq:h-max} we find 
    $$\dot h(t) \leq \max(\lambda_1(u_+), \lambda_1(u_-) - (C^* - 2L)) \leq \max(\lambda_1(u_+), -L) = \lambda_1(u^+).$$
    This, however, violates Assumption \ref{assum} \ref{assum:1-shock-lax-cond-greater}. Hence this case is never encountered. 

\vskip0.3cm
\noindent
\textbf{Case 4. }
    In this case we have by~\eqref{eq:h-max} that 
    \begin{equation} \dot h(t) \leq \max(\lambda_1(u_+), \lambda_1(u_-)) - (C^* + 2L) \leq - C^* - L < \inf_{u \in \Nu} \lambda_1(u) \label{eq:case-4-h-bound} \end{equation}
    where the last inequality is simply by the definition of $L$. 
    We can conclude from this that $u_- = u_+$ by the dichotomy~\eqref{eq:dia} and Assumption \ref{assum} \ref{assum:1-shock-lax-cond-greater}. 
    Let $u = u_- = u_+$ and define the new dissipation function 
    \begin{equation}\label{eq:Dcont-prime}
        \tilde D_{\cont}(u) = -\tilde q(u) - (C^* + L) \tilde \eta(u). 
    \end{equation}
    For this dissipation we can show an upper bound of order $-s_0$. 
    \begin{lemma}\label{lemma:outside-pi-bound}
        There exists a constant $M > 0$ independent of $s_0$ (but depending on the system, $d$, and $C_1$) such that for all $u \in \Nu \setminus \Pi_{C,s_0}^*$
        $$ \tilde D_{\cont}(u) < -Ms_0.$$
    \end{lemma}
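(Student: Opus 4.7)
The strategy is to split $\Nu_0\setminus\Pi_{C,s_0}^*$ into a region far from $\partial\Pi_{C,s_0}$, handled directly by Lemma~\ref{lemma:gkv_eta_control_q}, and a region near $\partial\Pi_{C,s_0}$, where one must exploit that these nearby boundary points lie outside $B_r(u^*)$ and so avoid the (mild) maximizer of $D_{\cont}$ on $\partial\Pi_{C,s_0}$.

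First I would establish the baseline pointwise bound $\tilde D_{\cont}(u)\le -L\tilde\eta(u)$ on all of $\Nu_0\setminus \Pi_{C,s_0}$ (where $\tilde\eta(u)\ge 0$). For $\tilde q(u)\le 0$, Lemma~\ref{lemma:gkv_eta_control_q} gives $-\tilde q(u)\le C^*\tilde\eta(u)$, so $\tilde D_{\cont}(u)=-\tilde q(u)-(C^*+L)\tilde\eta(u)\le -L\tilde\eta(u)$. For $\tilde q(u)>0$, the bound is immediate since $-\tilde q(u)\le 0\le C^*\tilde\eta(u)$. Then I would fix a small $r_0\in(0,r/2)$ independent of $s_0$ and split the set: on the far region $\{d(u,\partial\Pi_{C,s_0})\ge r_0\}$, Lemma~\ref{lemma:gkv_eta_control_q} yields $\tilde\eta(u)\ge \delta s_0 r_0$, whence $\tilde D_{\cont}(u)\le -L\delta r_0\,s_0=:-M_I s_0$; that region is done.

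On the near region $\{d(u,\partial\Pi_{C,s_0})<r_0\}$, I pick a projection $\hat u\in\partial\Pi_{C,s_0}$ with $|u-\hat u|<r_0$; since $u\notin B_r(u^*)$ and $r_0<r/2$, the triangle inequality forces $|\hat u-u^*|\ge r/2$. On $\partial\Pi_{C,s_0}$ we have $\tilde\eta(\hat u)=0$, so $\tilde D_{\cont}(\hat u)=-\tilde q(\hat u)=D_{\cont}(\hat u)$. The goal becomes proving a uniform bound $D_{\cont}(\hat u)\le -M_{II}s_0$ for $\hat u\in\partial\Pi_{C,s_0}\setminus B_{r/2}(u^*)$. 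To obtain this I would combine two ingredients from \cite{MR4667839}: (i) $u^*$ is the unique maximizer of $D_{\cont}$ on $\partial\Pi_{C,s_0}$, with restricted Hessian $\nabla^2_{\tan}D_{\cont}(u^*)\lesssim -s_0\,\mathrm{Id}$, giving the Taylor estimate $D_{\cont}(u)\le D_{\cont}(u^*)-c\,s_0|u-u^*|^2$ in a fixed-radius neighborhood $B_{\rho}(u^*)\cap\partial\Pi_{C,s_0}$; (ii) Proposition~\ref{prop:cont} applied at $u^*\in\overline{\Pi_{C,s_0}^*}$ and the fact that $|u^*-u_L|,|u^*-u_R|\sim s_0$ give $D_{\cont}(u^*)\lesssim -s_0^3$. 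Inside $B_{\rho}(u^*)$ with $|\hat u-u^*|\ge r/2$, these combine to $D_{\cont}(\hat u)\le -cs_0 r^2/4+\mathcal O(s_0^3)\le -(c/8)s_0 r^2$ for $s_0$ small. For $\hat u\in\partial\Pi_{C,s_0}$ outside $B_{\rho}(u^*)$, the compactness of $\partial\Pi_{C,s_0}$ (from the containment $\Pi_{C,s_0}\Subset B_{4\epsilon/3}(d)$ given by Lemma~\ref{lem:gkv_eta}), combined with the rescaling $\xi=(u-(u_L+u_R)/2)/s_0$, reduces the question to a uniform bound on a smooth limit function of $\xi$ which is strictly negative away from its rescaled maximizer; this gives $D_{\cont}(\hat u)\le -c' s_0$ uniformly.

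Finally I would transfer the bound from $\hat u$ back to $u$ using Lipschitz continuity of $\tilde q$ and $\tilde\eta$ on compact subsets of $\Nu$: $|\tilde D_{\cont}(u)-\tilde D_{\cont}(\hat u)|\le K r_0$ with $K$ independent of $s_0$. Choosing $r_0$ small (but fixed independently of $s_0$) so that $Kr_0\le M_{II}s_0/2$ for all admissible $s_0$ — which is possible because the bound on $M_{II}s_0$ tends to $0$ only through the $s_0$ factor while $r_0$ is fixed, so one instead absorbs $Kr_0$ by requiring $\tilde s_0$ small enough — preserves the estimate as $\tilde D_{\cont}(u)\le -M_{II}s_0/2$ throughout the near region. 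Setting $M=\min(M_I,M_{II}/2)$ concludes.

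The principal obstacle will be the globalization of the local Taylor estimate at $u^*$ to all of $\partial\Pi_{C,s_0}\setminus B_{r/2}(u^*)$, i.e.\ the assertion in the second paragraph for $|\hat u-u^*|\ge\rho$. The rescaling-plus-compactness sketch has to be carried out with care because $\partial\Pi_{C,s_0}$ itself depends on $s_0$; one needs to verify that the rescaled boundary $(\partial\Pi_{C,s_0}-(u_L+u_R)/2)/s_0$ converges (in Hausdorff sense, locally) to a fixed smooth hypersurface, and that the rescaled $D_{\cont}/s_0^3$ converges to a function whose unique maximizer corresponds to the rescaled $u^*$. Given the $C^4$ regularity of $f$ and the explicit formulas from \Cref{lem_hugoniot}, this convergence should follow from a direct Taylor expansion in $s_0$, but the bookkeeping is the most delicate piece.
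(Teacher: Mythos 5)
Your decomposition into a far region and a near region, together with the baseline bound $\tilde D_{\cont}(u)\le -L\tilde\eta(u)$ derived from \Cref{lemma:gkv_eta_control_q}, is sound, and the idea of projecting to $\partial\Pi_{C,s_0}$ and exploiting that $|\hat u-u^*|\ge r/2$ is a workable one. But there are two concrete problems.

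First, the Lipschitz transfer step as written does not close. You claim $|\tilde D_{\cont}(u)-\tilde D_{\cont}(\hat u)|\le K r_0$ with $K$ independent of $s_0$, and then try to absorb $Kr_0$ into $-M_{II}s_0/2$. With $r_0$ and $K$ both fixed, $Kr_0$ is a fixed positive quantity, while $M_{II}s_0/2\to 0$ as $s_0\to 0$; no choice of $\tilde s_0$ small can make the inequality $Kr_0\le M_{II}s_0/2$ hold for all admissible $s_0$. The correct observation — which you should state — is that the Lipschitz constant of $\tilde D_{\cont}$ on a compact subset of $\Nu$ is itself $\mathcal{O}(s_0)$: since $\nabla \tilde q = \nabla\tilde\eta\cdot f'$, one has $\nabla \tilde D_{\cont} = -\nabla\tilde\eta\,(f'+(C^*+L)I)$, and $|\nabla\tilde\eta(u)|\lesssim s_0(1+C|u-u_L|)\lesssim s_0$ on $B_{2\epsilon_d}(d)$ (see \Cref{lem:gkv_eta}). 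With this, the transfer gives $|\tilde D_{\cont}(u)-\tilde D_{\cont}(\hat u)|\lesssim s_0 r_0$, which is absorbed by $-M_{II}s_0$ as soon as $r_0$ is fixed small enough, with no dependence on $s_0$.

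Second, the rescaling/compactness globalization you flag as the delicate step is unnecessary: \Cref{prop:cont} already provides the boundary bound. It holds on $\Pi_{C,s_0}^*$, whose closure contains $\partial\Pi_{C,s_0}$; by continuity of $D_{\cont}$ and of $u\mapsto |u-u_L|^2+|u-u_R|^2$ the conclusion extends to $\overline{\Pi_{C,s_0}^*}\supset\partial\Pi_{C,s_0}$. Now $|u^*-u_L|+|u^*-u_R|\lesssim s_0$ (\Cref{lemma:gkv-dist-u0}) while $r$ is independent of $s_0$, so for $\hat u\in\partial\Pi_{C,s_0}$ with $|\hat u-u^*|\ge r/2$ one has $|\hat u-u_L|\ge r/2-\mathcal{O}(s_0)\ge r/4$ for $s_0$ small, and \Cref{prop:cont} directly yields $D_{\cont}(\hat u)\lesssim -s_0 r^2$. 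The convergence of rescaled boundaries and the identification of a limit maximizer are not needed.

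For contrast, the paper avoids the Lipschitz transfer entirely: expanding $\tilde D_{\cont}$ from the nearest boundary point $\overline u$ along the straight segment to $u$, the gradient term $\nabla\tilde D_{\cont}(\overline u)(u-\overline u)$ is shown non-positive because $u-\overline u$ is parallel to the outward normal $\nu(\overline u)$, $\nabla\tilde\eta(\overline u)\parallel \nu(\overline u)$ points outward, and $f'+LI$ is positive semi-definite; the second-order remainder is non-positive because $\nabla^2\tilde D_{\cont}\le 0$ on $B_{2\epsilon_d}(d)$ once $C^*$ is chosen large enough per \eqref{eq:c-star-bound}. This reduces matters directly to the boundary value $\tilde D_{\cont}(\overline u)=D_{\cont}(\overline u)$ (or to a value on $\partial B_r(u^*)$ when the segment crosses it), and then \Cref{prop:cont} finishes. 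Both routes work; the paper's monotonicity argument obviates the need to quantify the Lipschitz constant, while your argument needs the $\mathcal{O}(s_0)$ Lipschitz bound stated explicitly and should invoke \Cref{prop:cont} on the boundary instead of the rescaling sketch.
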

    We first show how Lemma \ref{lemma:outside-pi-bound} implies~\eqref{diss:shock}.
    Note that by Lemma \ref{lemma:filippov} and Assumption \ref{assum} \ref{assum:bounded-char-speed} we have the bound
    $$ |\dot h(t) - \sigma_{LR}| \leq C^* + 4L.$$ 
    Furthermore, the dissipation function $\tilde D_\cont$ bounds~\eqref{eq:diss-function} in this regime, due to the shift bound~\eqref{eq:case-4-h-bound} and $\tilde \eta(u) \geq 0$ for all $u \notin \Pi_{C,s_0}^*$.
    As a result, we find 
    $$ D_\cont(u) \leq \tilde D_\cont(u) < -Ms_0 \leq -\frac{M}{(C^* + 4L)^2}s_0|\dot h(t) - \sigma_{LR}|^2$$
    establishing~\eqref{diss:shock} in this final case. 

\begin{proof}[Proof of Lemma \ref{lemma:outside-pi-bound}]
    We note that for any state $v \in \partial \Pi_{C,s_0}$ we have $\tilde \eta(v) = 0$ and as a result $D_\cont(v) = \tilde D_\cont(v)$.
    Applying Proposition \ref{prop:shock} we have the bound
    \begin{equation} \label{eq:case-4-boundary-bound}
        \tilde D_\cont(v) \leq -Ks_0( |v - u_L|^2 + | v - u_R|^2 ) \quad \text{for all} \quad v \in \partial \Pi_{C,s_0}.
    \end{equation}
    We now define for $u \in \Nu \setminus \overline{\Pi_{C,s_0}^*}$ the state
    $$\overline u = \underset{v \in \partial \Pi_{C,s_0}}{\text{argmin}} |u-v|. $$
    Applying the fundamental theorem of calculus along the segment connecting $u,\overline u$ we have the expression 
    \begin{equation} \label{eq:case-4-dcontprime-bound}
        \tilde D_\cont(u) = \tilde D_\cont(\overline u) + \nabla \tilde D_\cont(\overline u)(u-\overline u) + \frac{1}{2} \int_{\overline u}^u \nabla^2 \tilde D_\cont(v)(v-\overline u)\,dv.
    \end{equation}
    We easily compute
    $$\nabla D_{\cont}(\overline u) = -\nabla \tilde \eta(\overline u)(f'(\overline u) + LI) - C^*\nabla \tilde \eta(\overline u).$$
    We note by definition of $\overline u$ that $u - \overline u \parallel \nu(\overline u)$ and, by Lemmas \ref{lem:gkv_eta}, \ref{lem:gkv_nu}, we find $\nabla \tilde \eta(\overline u)\cdot (u-\overline u) > 0$. 
    This and the fact that $f' + LI$ is positive definite allows us to conclude that $\nabla \tilde D_\cont(\overline u)(u-\overline u) \leq 0$. 
    Next, we restrict ourselves to considering only $u \in B_{2\epsilon_d}(d) \setminus\Pi_{C,s_0}^*$. 
    In this set $\eta$ is uniformly convex, satisfying $\nabla^2 \eta(u)v\cdot v \geq \Gamma|v|^2$ where $\Gamma > 0$ is defined by~\eqref{eq:Gamma-def}.
    Computing the hessian in the integrand term of~\eqref{eq:case-4-dcontprime-bound} we find
    \begin{align*} 
        \nabla^2  \tilde D_{\cont}(u) =& \nabla^2(-\tilde q(u) - (C^* + L)\tilde \eta(u)) \\
        =& -\nabla^2 \tilde \eta(u)( f'(u) + (C^* + L)I) - \nabla \tilde \eta(u)f''(u).
    \end{align*}
    We now recall that $f'(u) + LI$ is positive semidefinite, $|\nabla \tilde \eta(u)| \lesssim Cs_0(u-u_L) + s_0$ by Lemma \ref{lem:gkv_eta}, and by Lemma \ref{lem_hugoniot} $\norm{f}_{C^2(B_{2\epsilon_d}(d))}$ is bounded to find
    \begin{align*}
        \nabla^2 \tilde D_{\cont}(u)v\cdot v &\lesssim \left(-\Gamma C^* s_0 + (Cs_0|u-u_L| + s_0)\norm{f}_{C^2(B_{2\epsilon_d}(d))}\right)|v|^2\\
        &\lesssim s_0\left(-\Gamma C^* + \norm{f}_{C^2(B_{2\epsilon_d}(d))}\right)|v|^2 \leq 0
    \end{align*}
    where the last inequality follows due to $C^*$ to satisfying~\eqref{eq:c-star-bound}. 
    This shows that within $B_{2\epsilon_d}(d)$ we have the bound 
    $$ \tilde D_\cont(u) \leq \tilde D_\cont(\overline u)$$ for the projection $\overline u \in \partial \Pi_{C,s_0}$. 
    We note that if $\overline u \notin \partial \Pi_{C,s_0}^*$ then there exists $v \in \partial B_r(u^*)$ on the segment connecting $\overline u, u$. 
    At this $v$ we note that since $\tilde \eta (v) \geq 0$ we have the bound
    $$ \tilde D_\cont(v) = D_\cont(v) -(C^* + L + \lambda_1(v))\tilde \eta(v) \leq D_\cont(v).$$
    Repeating our preceding argument, 
    \begin{align*} 
        \tilde D_{\cont}(u) =& \tilde D_\cont(\overline u) + \nabla \tilde D_\cont(\overline u)(v-\overline u) + \nabla \tilde D_\cont(\overline u)(u-v) \\
        &+ \frac{1}{2} \int_{\overline u}^v \nabla^2 \tilde D_\cont(v)(v-\overline u)\,dv + \frac{1}{2} \int_{v}^{\overline u} \nabla^2 \tilde D_\cont(v)(v-\overline u)\,dv \\
        =& \tilde D_\cont(v) + \nabla \tilde D_\cont(\overline u)(u-v) + \frac{1}{2} \int_{v}^{\overline u} \nabla^2 \tilde D_\cont(v)(v-\overline u)\,dv \\
        \leq& \tilde D_\cont(v).
    \end{align*}
    From this and the bound $D_{\cont}(u) \leq -Ks_0|u-u^*|$ established in the proof of Proposition \ref{prop:cont} we have 
    $$ \sup_{u \in B_{2\epsilon_d}(d) \setminus \Pi_{C,s_0}^*} \tilde D_\cont(u) \leq -Ks_0r^2 \leq -KK_{ball}^2C_1^{-2}\frac{s_0}{4} $$
    by the selection $r = K_{ball}C \geq C_1/2$. 
    For states $u$ in $\Nu \setminus B_{2\epsilon_d}(d)$ we recall Lemma \ref{lemma:gkv_eta_control_q}, which establishes the bound
    $$ \tilde D_\cont(u) = -\tilde q(u) -(C^* + L)\tilde \eta(u) \leq -L\tilde \eta(u).$$
    Using convexity along the line connecting $u$ to it's projection $\overline u$  on $\partial \Pi_{C,s_0}$, Lemma \ref{lem:gkv_eta}, and our construction of $\Pi_{C,s_0}$ in Step 1 we have the bound
    $$ \tilde D_\cont(u) \leq -L(\tilde \eta(\overline u) + \nabla \eta(\overline u)(u-\overline u)) \lesssim -Ls_0|u-\overline u| \leq -Ls_0 d(\partial \Pi_{C,s_0}^*, \Nu\setminus B_{2\epsilon_d}(d)) \leq -L\epsilon_d s_0.$$
    Combining these two cases, we have shown there exists a universal constant $M > 0$ independent of $s_0$ such that 
    $$ \sup_{u \in \Nu \setminus \Pi_{C,s_0}^*} \tilde D_\cont(u) < -Ms_0. $$
\end{proof}
\vskip0.3cm
\noindent
\textbf{Step 4: Proof of~\eqref{control_one_shock1020}.}
In each of the four cases considered in Step 3 we found 
\begin{equation}
    \dot h(t) \leq \inf_{u \in \Pi_{C,s_0}^*} \lambda_1(u) 
\end{equation}
which establishes the upper bound of~\eqref{control_one_shock1020} for almost all $t > T_{start}$, due to the containment $\Pi_{C,s_0}^* \subset B_{2\epsilon_d}(d)$.
For the lower bound~\eqref{eq:Filippov-control} of Lemma \ref{lemma:filippov} gives us that $|\dot h(t)| \leq \norm{V}_{L^\infty} \leq C^* + 3L.$
Hence taking $\hat \lambda =2(C^* + 3L)$ we establish the lower bound of~\eqref{control_one_shock1020}

\subsection{Proof of Proposition \ref{prop:cont}} \label{sec:cont}
\subsubsection{Proof Sketch}
In this section we prove Proposition \ref{prop:cont}. 
We take $(u_L,u_R,\sigma_{LR})$ to be an entropic $1$-shock of~\eqref{system} with $u_R = S_{u_L}^1(s_0)$ and $u_L,\ u_R\in B_{\epsilon}(d) $ for $d,\ \epsilon$ satisfying equation~\eqref{eq:depsilon}.
We further assume that $u_L,u_R \in B_{\epsilon}(d)$ and that $C,s_0$ are sufficiently large to ensure $\Pi_{C,s_0}^* \subset B_{5\epsilon/3}(d)$ by Lemma \ref{lem:gkv_eta} and our choice of $r$.
We begin by recalling the bound on the continuous dissipation attained by the authors of \cite{MR4667839}. 
\begin{proposition}[Proposition 2.1 from \cite{MR4667839}] \label{prop:gkv-cont}
    Under the assumptions of Proposition \ref{prop:cont} there is a  universal constant $K$ depending only on the system and $d$, such that for any $C_1>0$ large enough, there exists  $\tilde{s}_0(C_1),\,\tilde s_0>0$,   such that for any $C>0$ with $C_1/2\leq C\leq 2C_1$, for any  1-shock $u_L,u_R$  with $|u_L-d|+|u_R-d|\leq \tilde s_0$, and $|u_L-u_R|=s_0$ with 
    $0 < s_0 \leq \tilde s_0$, there exists a unique local maximum $u^* \in \overline{\Pi_{C,s_0}}$ of $D_{\cont}$ and at this state we have the estimate
\begin{equation} \label{eq:gkv-cont-bound}
D_{\cont}(u^*) \le -Ks_0^3.
\end{equation}
\end{proposition}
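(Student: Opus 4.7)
The plan is a perturbative analysis of $D_\cont$ on the small set $\overline{\Pi_{C,s_0}}$: localize the set, find the critical points of $D_\cont$ via the implicit function theorem, and expand the critical value to order $s_0^3$. First, I would Taylor expand $\tilde\eta$ around $u_L$ in the variable $v := u - u_L$, using $u_R = u_L + s_0\, r_1(u_L) + O(s_0^2)$ from Lemma~\ref{lem_hugoniot}. With $A := \nabla^2 \eta(u_L)$ (positive definite by strict convexity of $\eta$) this yields
\[
\tilde\eta(u_L + v) = \tfrac12 Cs_0\, A(v,v) + s_0\, A(r_1(u_L), v) - \tfrac12 s_0^2\, A(r_1(u_L), r_1(u_L)) + O\bigl(|v|^3 + s_0^2 |v| + s_0^3\bigr).
\]
Completing the square in $v$ shows that $\{\tilde\eta \le 0\}$ lies in an $A$-ellipsoid of diameter $\lesssim 1/C_1$ centered near $u_L - r_1(u_L)/C$, so $\overline{\Pi_{C,s_0}}$ is compactly contained in $B_{K/C_1}(u_L) \subset \mathcal{V}$ once $C_1$ is large. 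The continuous $D_\cont$ therefore attains a global maximum on it.

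Next, I would derive the critical-point equation using $\nabla q(\,\cdot\,;v) = \nabla\eta(\,\cdot\,|v) f'(\cdot)$:
\[
\nabla D_\cont(u) = \nabla\tilde\eta(u)\bigl(\lambda_1(u)\, I - f'(u)\bigr) + \tilde\eta(u)\, \nabla\lambda_1(u).
\]
By strict hyperbolicity (Assumption~\ref{assum}\ref{assum:hyperbolic}) the matrix $\lambda_1(u) I - f'(u)$ has a one-dimensional left kernel spanned by the left $1$-eigenvector of $f'(u)$, so $\nabla D_\cont = 0$ forces $\nabla\tilde\eta$ to be parallel to that eigenvector modulo an $O(|\tilde\eta|)$ correction. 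Parameterizing the unknown critical point as $u = u_L + \tfrac12 s_0\, r_1(u_L) + s_0\, w$ with $w \perp r_1(u_L)$, this reduces to a smooth equation $F(w; s_0, 1/C) = 0$ with $\partial_w F(0;0,0)$ invertible (governed by $A$ restricted to $r_1(u_L)^{\perp}$, which is positive definite). The implicit function theorem then produces a unique smooth branch $u^*(s_0, C)$ with $u^* = u_L + \tfrac12 s_0\, r_1(u_L) + O(s_0^2 + s_0/C)$, and direct substitution yields $\tilde\eta(u^*)\le 0$.

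To pin down $u^*$ as the unique \emph{local} maximum, I would compute $\nabla^2 D_\cont(u^*)$: in the $(n-1)$ directions orthogonal to $r_1(u_L)$ it is $-Cs_0\, A|_{r_1(u_L)^{\perp}}$ at leading order (negative definite), and in the $r_1(u_L)$-direction the leading term is $-s_0\, A(r_1(u_L), r_1(u_L))$ (also negative). Non-degeneracy combined with the confinement of $\overline{\Pi_{C,s_0}}$ to the neighborhood where the implicit function theorem applies gives uniqueness; any putative boundary local max would, via Lagrange multipliers, reproduce the same critical-point equation and therefore coincide with $u^*$. Finally, substituting $u^*$ into $D_\cont = -\tilde q + \lambda_1\tilde\eta$ and Taylor expanding to third order in $s_0$, the quadratic-in-$s_0$ contributions cancel by midpoint symmetry of $\tilde q$ (i.e., $q(\,\cdot\,; u_L) - q(\,\cdot\,; u_R)$ is odd about the midpoint of $[u_L, u_R]$ to leading order), yielding
\[
D_\cont(u^*) = -K_0\, s_0^3 + O\bigl(C s_0^4 + s_0^4\bigr),
\]
with $K_0 > 0$ a universal constant depending only on $\nabla^2\eta(u_L)$, $f''(u_L)$, and $r_1(u_L)$. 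The positivity of $K_0$ is the entropy-dissipation positivity for a small genuinely nonlinear shock, which follows from Lemma~\ref{lemma:entropy-quant} combined with Assumption~\ref{assum}\ref{assum:rel-ent-strengthens}. Choosing $\tilde s_0(C_1)$ small enough so $C\tilde s_0 \le 2C_1 \tilde s_0 \ll 1$ absorbs the error into half the leading term.

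The hardest part will be this cubic computation: verifying that $K_0$ is strictly positive \emph{and} $C$-independent, so that the constant $K$ is genuinely universal over $C \in [C_1/2, 2 C_1]$. This requires showing that the $C$-dependent displacement of $u^*$ (of order $s_0/C$ in the transverse directions) enters $D_\cont(u^*)$ only at order $Cs_0^4$ or higher, a cancellation that rests on the structural identity $q' = \eta' f'$ and on the fact that $u^*$ sits at the midpoint of $[u_L, u_R]$ to leading order, where the first $v$-derivative of $\tilde q - \lambda_1\tilde\eta$ vanishes.
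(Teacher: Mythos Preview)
The paper does not prove this proposition (it is quoted from \cite{MR4667839}), but the reproduced Lemmas~\ref{lemma:gkv-max-char} and~\ref{lemma:gkv-dist-u0} indicate the approach taken there, and your setup departs from it in a way that leaves a genuine gap.

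Contract your gradient formula with $r_1(u)$: since $(\lambda_1(u)I - f'(u))r_1(u) = 0$,
\[
\nabla D_\cont(u)\cdot r_1(u) \;=\; \tilde\eta(u)\,\bigl(\nabla\lambda_1(u)\cdot r_1(u)\bigr),
\]
and genuine nonlinearity forces every critical point of $D_\cont$ to satisfy $\tilde\eta = 0$ \emph{exactly}. In particular there are no critical points in the open set $\Pi_{C,s_0}$, and the maximizer $u^*$ lies on $\partial\Pi_{C,s_0}$ (this is precisely Lemma~\ref{lemma:gkv-max-char}). Your parametrization $u = u_L + \tfrac12 s_0\,r_1(u_L) + s_0\,w$ with $w\perp r_1(u_L)$ freezes the $r_1$-displacement, leaving $n-1$ unknowns for the $n$ scalar equations $\nabla D_\cont = 0$; as written it cannot produce an exact critical point. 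Your remark ``direct substitution yields $\tilde\eta(u^*)\le 0$'' and your separate Lagrange-multiplier treatment of boundary maxima confirm you have not seen that these coincide. Quantitatively, at the midpoint $u_L + \tfrac12 s_0\, r_1(u_L)$ one has $\tilde\eta \approx \tfrac18 Cs_0^3\,A(r_1,r_1) > 0$, so your frozen point sits \emph{outside} $\Pi_{C,s_0}$ and the $\lambda_1\tilde\eta$ term in $D_\cont$ contributes at order $Cs_0^3$---larger than the target $s_0^3$ since $C$ is large. Your claimed remainder $O(Cs_0^4)$ is therefore unjustified without first adjusting the $r_1$-displacement (by an amount $\sim Cs_0^2$, which your $O(s_0^2 + s_0/C)$ does not accommodate) to land exactly on the boundary.

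The route in \cite{MR4667839} works directly on $\partial\Pi_{C,s_0}$: the constrained maximizer is characterized by $\nabla\tilde\eta(u^*)\parallel l_1(u^*)$ with $r_1(u^*)$ outward-pointing, and is located near the exit point $u_0$ of the shock curve $S_{u_L}^1$ from $\Pi_{C,s_0}$ (Lemma~\ref{lemma:gkv-dist-u0}). Since $\tilde\eta(u^*)=0$ exactly, one has $D_\cont(u^*) = -\tilde q(u^*)$ with no $\lambda_1\tilde\eta$ term to track, and the cubic expansion together with the $C$-independence of the leading coefficient follow directly. Your midpoint-symmetry heuristic is reaching for the same cancellation, but it only becomes rigorous once you impose the boundary constraint.
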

We remark that the bound~\eqref{eq:gkv-cont-bound} satisfies the desired dissipation estimate of Proposition \ref{prop:cont}.
This can be seen by the distance estimates given in Lemma \ref{lemma:gkv-dist-u0} which show $|u^* - u_L| \lesssim s_0$.
This allows us to use the bound at $u^*$ given by Proposition \ref{prop:gkv-cont} as a base to build our proof.
We prove Proposition \ref{prop:cont} in two steps:
\begin{description}
    \item[Step 1] We show that there exists a universal constant $K_{ball} > 0$ that allows us to find an intermediate upper bound $D_{\cont}(u) \lesssim -s_0^3 - s_0|u-u^*|^2$ for all $u \in B_{K_{ball}C^{-1}}(u^*)$, where the state $u^*$ is the maximizer found in Proposition \ref{prop:gkv-cont}. In this step, we select $r := B_{K_{ball}C^{-1}}$, finalizing our definition of $\Pi^*_{C,s_0}$. 
    \item[Step 2] Next, we extend this bound to the region $\Pi_{C,s_0}$ via a scaling argument. 
\end{description}
We comment that as we have restricted ourselves so that $\Pi_{C,s_0}^* \subset B_{2\epsilon}(d)$, we enjoy all the regularity properties of Lemma \ref{lem_hugoniot}. 
In particular, $\norm{f}_{C^4\left(\Pi_{C,s_0}^*\right)} \leq K$ depends only on $f$ and $d$ while the $C^3$ norm of $r_1,\ l_1,\ \lambda_1,\ \eta, q$ on $\Pi_{C,s_0}^*$ are uniformly bounded.
This allows us to use classical techniques from differential and integral calculus in the proof. 
\subsubsection{Step 1: Bound near $u^*$}

\begin{lemma}[Lemmas 5.4, 5.12 of \cite{MR4667839}] \label{lemma:taylor-bounds-d-cont}
    For $C$ sufficiently large and $s_0$ sufficiently small we have the bounds
    \begin{equation}
        \begin{aligned}
            \nabla D_{\cont}(u^*) &= 0, \\
            \nabla^2 D_{\cont(u^*)}v\cdot v &\lesssim -s_0 v_1^2 - Cs_0 \sum_{i = 2}^n v_i^2, \text{ and}\\
            |\nabla^3 D_{\cont}(u)| &\lesssim C s_0,
        \end{aligned}
    \end{equation}
    where $u \in B_{2\epsilon}(d)$ and $v \in \R^n$ is of the form $v = \sum_{i=1}^n r_i(u^*)v_i$.
\end{lemma}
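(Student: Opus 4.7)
The three bounds are Taylor-expansion ingredients for $D_{\cont}$ near its maximizer $u^*$; my approach is to exploit the algebraic structure of the relative quantities together with the shock-curve asymptotics in \Cref{lem_hugoniot}.

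\textbf{First bound.} From $q(u;b)=q(u)-q(b)-\nabla\eta(b)(f(u)-f(b))$ one has the identity $\nabla q(u;b) = \nabla\eta(u|b)\,f'(u)$, which linearly combines to $\nabla\tilde q(u) = \nabla\tilde\eta(u)\,f'(u)$. Substituting into $D_{\cont} = -\tilde q + \lambda_1\tilde\eta$ yields
\begin{equation}\label{eq:Dcont-grad-plan}
\nabla D_{\cont}(u) = -\nabla\tilde\eta(u)\bigl[f'(u)-\lambda_1(u)I\bigr] + \tilde\eta(u)\,\nabla\lambda_1(u).
\end{equation}
Since \Cref{prop:gkv-cont} produces $u^*$ as the unique local maximum of $D_{\cont}$ in $\overline{\Pi_{C,s_0}}$, I only need to rule out $u^*\in\partial\Pi_{C,s_0}$: if it were on the boundary, then $\tilde\eta(u^*)=0$ and~\eqref{eq:Dcont-grad-plan} reduces to $-\nabla\tilde\eta(u^*)[f'(u^*)-\lambda_1(u^*)I]$, so a first-variation test in the tangent-normal decomposition of $\partial\Pi_{C,s_0}$ would exhibit a direction increasing $D_{\cont}$, contradicting maximality. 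Hence $u^*$ is an interior critical point and $\nabla D_{\cont}(u^*)=0$.

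\textbf{Third bound.} Because $\eta(u|b) = \eta(u)-\eta(b)-\nabla\eta(b)(u-b)$ is affine in $u$ past first order, $\nabla_u^k\eta(u|b) = \nabla^k\eta(u)$ for $k\geq 2$ independently of $b$. Therefore
\[
\nabla^k\tilde\eta(u) = Cs_0\,\nabla^k\eta(u)\qquad (k\geq 2),
\]
and the same algebra on $\nabla\tilde q = \nabla\tilde\eta\,f'$ yields $|\nabla^k\tilde q(u)|\lesssim Cs_0$ for $k\geq 2$. Combined with the $C^3$-bound on $\lambda_1$, the elementary estimates $|\tilde\eta(u)|,|\nabla\tilde\eta(u)|\lesssim Cs_0$ on $B_{2\epsilon}(d)$, and Leibniz on $\lambda_1\tilde\eta$, one obtains $|\nabla^3 D_{\cont}|\lesssim Cs_0$.

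\textbf{Second bound.} Differentiating~\eqref{eq:Dcont-grad-plan} gives, schematically,
\begin{equation}\label{eq:Dcont-hess-plan}
\begin{aligned}
\nabla^2 D_{\cont}(u) ={}& -\nabla^2\tilde\eta(u)\bigl[f'(u)-\lambda_1(u)I\bigr] - \nabla\tilde\eta(u)\cdot f''(u) \\
&+ \nabla\tilde\eta(u)\otimes\nabla\lambda_1(u) + \nabla\lambda_1(u)\otimes\nabla\tilde\eta(u) + \tilde\eta(u)\,\nabla^2\lambda_1(u).
\end{aligned}
\end{equation}
My plan is to evaluate~\eqref{eq:Dcont-hess-plan} at $u_L$ (where the algebra collapses), then transfer to $u^*$ at cost $|u^*-u_L|\cdot\|\nabla^3 D_{\cont}\|_\infty = O(Cs_0^2)$, using the distance bound $|u^*-u_L|\lesssim s_0$ coming from the analysis underlying \Cref{prop:gkv-cont}. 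At $u=u_L$:
\begin{itemize}
\item $\nabla^2\tilde\eta(u_L) = Cs_0\,\nabla^2\eta(u_L)$, so the leading term of~\eqref{eq:Dcont-hess-plan} becomes $-Cs_0\,\nabla^2\eta(u_L)[f'(u_L)-\lambda_1(u_L)I]$. In the basis $(r_i(u_L))$, strict hyperbolicity \ref{assum:hyperbolic} together with the uniform convexity of $\eta$ give a contribution $\sim -Cs_0\,v_i^2$ for $i\geq 2$, and exactly $0$ along $r_1(u_L)$.
\item $\nabla\tilde\eta(u_L) = \nabla\eta(u_R)-\nabla\eta(u_L) = s_0\,\nabla^2\eta(u_L)r_1(u_L) + O(s_0^2)$ by \Cref{lem_hugoniot}. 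This vector pairs via the symmetric cross terms $\nabla\tilde\eta\otimes\nabla\lambda_1 + \nabla\lambda_1\otimes\nabla\tilde\eta$ with $\nabla\lambda_1(u_L)$; by genuine nonlinearity \ref{assum:gnl} the coefficient $\nabla\lambda_1(u_L)\cdot r_1(u_L)\neq 0$ is $O(1)$, producing the missing $-s_0\,v_1^2$ contribution (with the correct sign forced by the maximum condition).
\item The remaining pieces $-\nabla\tilde\eta\cdot f''$ and $\tilde\eta\,\nabla^2\lambda_1$ are $O(s_0)$ and $O(s_0^2)$ respectively, and only contribute to subleading corrections in the $r_1$ direction (and are absorbed in the $-Cs_0$ bound in the transverse directions).
\end{itemize}
Expanding $r_i(u^*) = r_i(u_L) + O(s_0)$ finally gives
\[
\nabla^2 D_{\cont}(u^*)v\cdot v \lesssim -s_0\,v_1^2 - Cs_0\sum_{i\geq 2}v_i^2.
\]

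\textbf{Main obstacle.} The delicate point is the $-s_0\,v_1^2$ coefficient in the $r_1$-direction: the dominant first term of~\eqref{eq:Dcont-hess-plan} is \emph{annihilated} along $r_1$ (it projects onto the zero eigenvalue of $f'-\lambda_1 I$), so the required quadratic deficit has to be extracted from the rank-one cross term built out of $\nabla\tilde\eta(u_L)\otimes\nabla\lambda_1(u_L)$. This simultaneously requires the shock-curve expansion $u_R-u_L = s_0\, r_1(u_L) + O(s_0^2)$ and genuine nonlinearity \ref{assum:gnl}, and the sign has to be pinned down by using that $u^*$ is a local maximum (which forces the critical-point identity $\nabla D_{\cont}(u^*)=0$ into~\eqref{eq:Dcont-hess-plan}). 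Tracking through the $1+Cs_0$ weighting carefully to confirm that the Hessian really is negative semidefinite---and not saddle-like---is the subtle book-keeping inherited from the GKV construction.
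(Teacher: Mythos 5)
The proposal for the first identity $\nabla D_{\cont}(u^*)=0$ is wrong. You argue that $u^*$ must be an \emph{interior} critical point, ruling out $u^*\in\partial\Pi_{C,s_0}$ by a first-variation test. But \Cref{lemma:gkv-max-char} says precisely the opposite: $D_{\cont}$ has \emph{no} critical points in the interior of $\Pi_{C,s_0}$, and $u^*$ \emph{is} a boundary point. A boundary maximum over a closed set does not force the full gradient to vanish — only the tangential component — so your appeal to "a direction increasing $D_{\cont}$" to exclude the boundary case is not a valid contradiction. The actual mechanism the paper uses is that the boundary maximizer characterization in \Cref{lemma:gkv-max-char} (namely $\tilde\eta(u^*)=0$ and $\nabla\tilde\eta(u^*)\parallel l_1(u^*)$) makes \emph{both} terms of $\nabla D_{\cont}(u^*)=\nabla\tilde\eta(u^*)\bigl(\lambda_1(u^*)I-f'(u^*)\bigr)+\tilde\eta(u^*)\nabla\lambda_1(u^*)$ vanish individually: the second because $u^*$ lies on $\partial\Pi$, the first because $l_1$ is a left eigenvector so $l_1(\lambda_1 I-f')=0$. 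The gradient is zero for this geometric reason, not because $u^*$ is interior.

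Your approach to the Hessian bound, evaluating at $u_L$ and transferring to $u^*$ at cost $O(Cs_0^2)$, is a genuinely different route from the paper, which works directly at $u^*$ using the boundary characterization. The idea is sound in principle (at $u_L$ one still has $\nabla\tilde\eta(u_L)=s_0\nabla^2\eta(u_L)r_1(u_L)+O(s_0^2)\parallel l_1(u_L)$ up to higher order, via the symmetrizer identity, and $\tilde\eta(u_L)=O(s_0^2)$), but two points need repair. First, the sign of the $-s_0 v_1^2$ coefficient is not "forced by the maximum condition" — at $u_L$ there is no maximum condition to invoke; it is determined by the shock-curve parametrization: the Lax admissibility $\lambda_1(u_R)<\lambda_1(u_L)$ combined with $u_R-u_L=s_0 r_1(u_L)+O(s_0^2)$ fixes the sign of $\nabla\lambda_1\cdot r_1$. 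Second, your claim that $-\nabla\tilde\eta\cdot f''$ "only contribute[s] to subleading corrections in the $r_1$ direction" is incorrect: by the classical identity $l_1\, f''\, r_1\cdot r_1=(l_1 r_1)(\nabla\lambda_1\cdot r_1)$, the $f''$ term contributes $-\,c\,s_0(\nabla\lambda_1\cdot r_1)v_1^2$, the \emph{same} order $\sim s_0$ as the symmetric cross terms, and in fact cancels half of them — it must be kept in the leading-order bookkeeping, exactly as the paper does.
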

\begin{proof}
    To begin, we recall that at $u^*$ we have $\nabla \tilde \eta(u^*) \parallel l_i(u^*)$ and $\tilde \eta(u^*) = 0$ by Lemma \ref{lemma:gkv-max-char}. As a result, 
    $$ \nabla D_{\cont}(u^*) = \nabla \tilde \eta(u^*) (\lambda_1(u^*) I - f'(u^*)) + \tilde \eta(u^*)\nabla \lambda_1(u^*) = 0.$$ 

    Next, we compute
    \begin{align*}
        \nabla^2 D_{\cont}(u^*) =& \nabla^2 \tilde \eta(u^*) (\lambda_1(u^*) I - f'(u^*)) + \nabla \tilde\eta(u^*)(\nabla \lambda_1(u^*) \otimes I + I\otimes \nabla \lambda_1(u^*) - f''(u^*)) + \tilde \eta(u^*)\nabla^2 \lambda_1(u^*)
    \end{align*}
    where we first note $\nabla^2 \tilde \eta(u^*) (\lambda_1(u^*) I - f'(u^*))v\cdot v \lesssim -Cs_0 \sum_{i=2}^n v_i^2$ due to $\lambda_1(u^*) I - f'(u^*)$ being negative definite on $\la v_i\ra_{i=2}^n$ by Assumption \ref{assum} \ref{assum:hyperbolic}.
    Further, $\tilde \eta(u^*)\nabla^2 \lambda_1(u^*) = 0$ due to $u^* \in \partial \Pi_{C,s_0}$ and since $ \nabla \tilde\eta(u^*) \parallel l_1(u^*), \, |\nabla \tilde \eta(u^*)|\gtrsim s_0$ we find by the well known identity
    $$ l_1 f''r_1\cdot r_1 = (l_1 r_1) ( \nabla \lambda_1 r_1 )$$ 
    that 
    $$ \nabla \tilde\eta(u^*)\left(\nabla \lambda_1(u^*) \otimes I + I\otimes \nabla \lambda_1(u^*) - f''(u^*)\right)v \cdot v \lesssim -s_0v_1^2 + s_0\sum_{i < j}c_{ij}|v_iv_j|, $$
    where the entries $c_{ij}$ are independent of $C$, $s_0$. 
    Finally, applying Young's inequality to the product terms we see 
    $$ \nabla \tilde\eta(u^*)(\nabla \lambda_1(u^*) \otimes I + I\otimes \nabla \lambda_1(u^*) - f''(u^*))v \cdot v \lesssim -s_0v_1^2 + s_0\sum_{i =2}c_{ij}v_i^2. $$
    Letting $C$ be sufficiently large, $s_0$ sufficiently small then yields our bound on $\nabla^2 D_{\cont}$. 

    Finally, we compute 
    \begin{equation}\label{eq:D3Dcont}\begin{aligned}
        \nabla^3 D_{\cont}(u) =& Cs_0\nabla^3 \eta(u) (\lambda_1(u) I - f'(u)) + 2Cs_0\nabla^2 \eta(u) (\nabla \lambda_1(u)\otimes I + I\otimes \nabla \lambda_1(u) I - f''(u)) \\
        &+\nabla \tilde \eta(u)  (\nabla^2 \lambda_1(u)\otimes I + I\otimes \nabla^2 \lambda_1(u) I - f'''(u)) + \tilde \eta(u) \nabla^3 \lambda_1(u).
    \end{aligned}\end{equation}
    We recall that Lemma \ref{lem_hugoniot} gives a uniform bounds on $|\nabla^k \eta|,\,|\nabla^k \lambda_1|,$ and $|f^{(k)}|$ within $B_{2\epsilon}(d)$ while the remaining terms satisfy
        $| \tilde \eta(u)| \leq s_0d(u,\partial \Pi_{C,s_0}) + Cs_0 |u-u_L| d(u,\partial \Pi_{C,s_0})$, $| \nabla \tilde \eta(u)| \lesssim s_0 + Cs_0|u - u_L|$ by Lemma \ref{lem:gkv_eta} and Lemma \ref{lem:gkv_dist}.
    Recalling that we are localized within $B_{2\epsilon}(d)$ and substituting the proceeding bounds into equation~\eqref{eq:D3Dcont} we find 
    $$ \nabla^2 D_\cont(u) \lesssim Cs_0 $$
    for $C$ sufficiently large and $s_0$ sufficiently small as desired. 
\end{proof}

\begin{lemma}\label{lemma:D-cont-exp}
    There exists a universal constant $K_{ball} > 0$ such that for all $C$ sufficiently large and $s_0$ sufficiently small we have at all $u \in B_{K_{ball}C^{-1}}(u^*)$ the dissipation bound 
    \begin{equation}\label{eq:intermediate-bound} D_{\cont}(u) \lesssim -s_0^3 - s_0 | u - u^*|^2.
    \end{equation}
\end{lemma}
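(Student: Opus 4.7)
The plan is a standard third-order Taylor expansion of $D_{\cont}$ around the maximizer $u^*$, exploiting the three ingredients already assembled: the base value bound $D_{\cont}(u^*) \lesssim -s_0^3$ from Proposition \ref{prop:gkv-cont}, the vanishing of the gradient and the (anisotropically) negative definite Hessian from Lemma \ref{lemma:taylor-bounds-d-cont}, and the uniform bound $|\nabla^3 D_{\cont}| \lesssim Cs_0$ to control the remainder. The delicate point is that the Hessian only provides dissipation of order $s_0$ in the direction $r_1(u^*)$ (while giving $Cs_0$ in the transverse directions), and the third-order error carries a factor of $C$, so naive absorption on an $O(1)$-sized neighborhood would fail; the scale $K_{ball} C^{-1}$ is chosen precisely to make the cubic error compatible with the weakest quadratic coefficient.

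First I would write, for $u \in B_{K_{ball}C^{-1}}(u^*) \subset B_{2\epsilon}(d)$, the Taylor expansion
\begin{equation*}
D_{\cont}(u) = D_{\cont}(u^*) + \nabla D_{\cont}(u^*)\cdot (u-u^*) + \tfrac{1}{2}\nabla^2 D_{\cont}(u^*)(u-u^*)\cdot(u-u^*) + R(u),
\end{equation*}
with integral remainder $R(u) = \tfrac{1}{2}\int_0^1 (1-t)^2 \nabla^3 D_{\cont}(u^*+t(u-u^*))(u-u^*)^{\otimes 3}\,dt$, so that $|R(u)| \leq K' Cs_0 |u-u^*|^3$ by Lemma \ref{lemma:taylor-bounds-d-cont}. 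By the same Lemma, $\nabla D_{\cont}(u^*)=0$, and by Proposition \ref{prop:gkv-cont}, $D_{\cont}(u^*)\leq -Ks_0^3$.

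Next I would decompose $u-u^* = \sum_{i=1}^n v_i\, r_i(u^*)$ and use the Hessian estimate to obtain
\begin{equation*}
\tfrac{1}{2}\nabla^2 D_{\cont}(u^*)(u-u^*)\cdot(u-u^*) \leq -\tilde{K}\bigl(s_0 v_1^2 + Cs_0 \textstyle\sum_{i=2}^n v_i^2\bigr) \leq -\tilde{K} s_0 |u-u^*|^2,
\end{equation*}
where the last inequality uses the equivalence of the $r_i(u^*)$-coordinate norm with $|\cdot|$ (uniform since $u^*$ varies in a compact set and $\{r_i\}$ is a bounded basis with bounded inverse) and $C\geq 1$.

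Finally, on $|u-u^*|\leq K_{ball} C^{-1}$ the remainder satisfies
\begin{equation*}
|R(u)| \leq K' C s_0 |u-u^*|\cdot |u-u^*|^2 \leq K' K_{ball}\, s_0 |u-u^*|^2,
\end{equation*}
so choosing $K_{ball}$ sufficiently small (depending only on the universal constants $\tilde{K}, K'$, hence on the system and $d$ alone) absorbs the remainder into half of the quadratic Hessian term. Combining these three bounds yields
\begin{equation*}
D_{\cont}(u) \leq -Ks_0^3 - \tfrac{\tilde{K}}{2} s_0 |u-u^*|^2,
\end{equation*}
which is the desired \eqref{eq:intermediate-bound}. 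The main obstacle—and the reason the ball radius must shrink like $C^{-1}$ rather than being fixed—is precisely the conflict between the $Cs_0$ growth of $\nabla^3 D_{\cont}$ and the $s_0$ (not $Cs_0$) coefficient of the Hessian in the $r_1(u^*)$-direction; no weaker shrinking of the neighborhood would control this weakest mode.
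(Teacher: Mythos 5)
Your proof is correct and follows essentially the same route as the paper: Taylor-expand $D_{\cont}$ to third order around $u^*$, feed in the base value from Proposition~\ref{prop:gkv-cont}, the vanishing gradient and anisotropic Hessian from Lemma~\ref{lemma:taylor-bounds-d-cont}, and absorb the cubic remainder (of size $\lesssim Cs_0|u-u^*|^3$) into the quadratic term by restricting to a ball of radius $K_{ball}C^{-1}$. Your remark on why the $C^{-1}$-scale is forced by the mismatch between the $r_1$-direction Hessian coefficient ($\sim s_0$) and the third-derivative bound ($\sim Cs_0$) is exactly the mechanism the paper is exploiting.
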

\begin{proof}
    Expanding $D_\cont$ around $u^*$ we find for $C$ sufficiently large and $s_0$ sufficiently small Proposition \ref{prop:gkv-cont} and Lemma \ref{lemma:taylor-bounds-d-cont} give that 
    \begin{align*}
        D_{\cont}(u) \lesssim& D_{\cont}(u^*) + \nabla D_{\cont}(u^*)(u-u^*) + \frac{1}{2} \nabla^2 D_{\cont}(u^*):(u-u^*)^{\otimes 2} \\
        &+ \frac{1}{6}\sup_{\substack{|u-u^*| \leq K_{ball}C^{-1},\\ u \in B_{2\epsilon}(d) } }  \left|\nabla^3 D_{\cont}(u)\right| |u-u^*|^3 \\
        \lesssim& -s_0^3 - s_0|u - u^*|^2 + s_0 K_{ball} | u - u^*|^2 \\
        =& -s_0^3 - s_0 |u - u^*|^2
    \end{align*}
    where we see our bound holds for all $u \in B_{K_{ball} C^{-1}}(u^*)\cap B_{2\epsilon}(d)$ given $K_{ball}$ is sufficiently small, independent of the parameters $C$ and $s_0$.
    Further, for $C$ sufficiently large we have $B_{K_{ball} C^{-1}}(u^*)\subset B_{2\epsilon}(d)$ completing the proof.
\end{proof}

\subsubsection{Step 2: Extension to all of $\Pi_{C,s_0}^*$}

\begin{figure}[t]
    \centering
    \begin{subfigure}[b]{0.40\textwidth}
        \includegraphics[width=\textwidth]{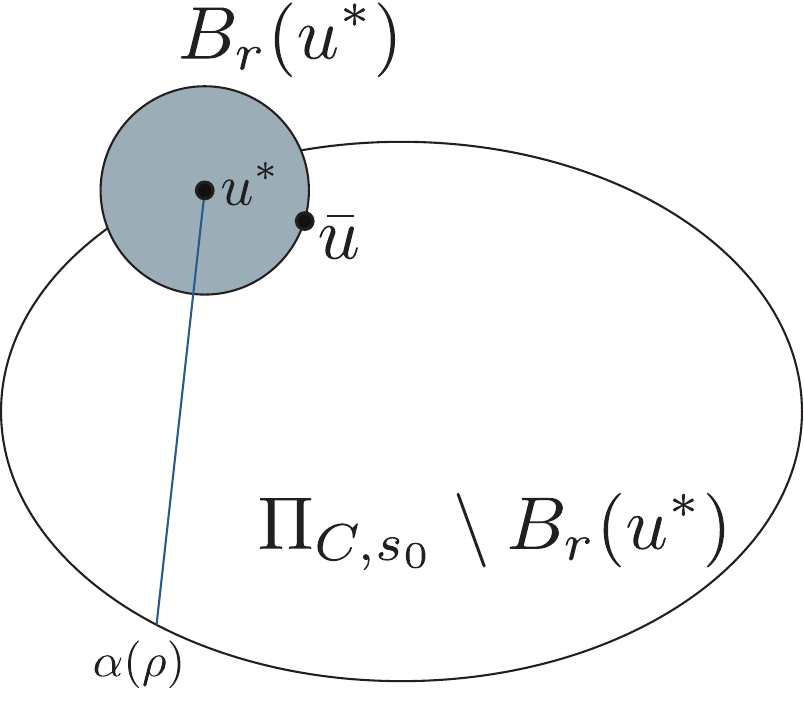}
        \caption{}\label{fig:first_panel}
    \end{subfigure}
    \begin{subfigure}[b]{0.59\textwidth}
        \includegraphics[width=\textwidth]{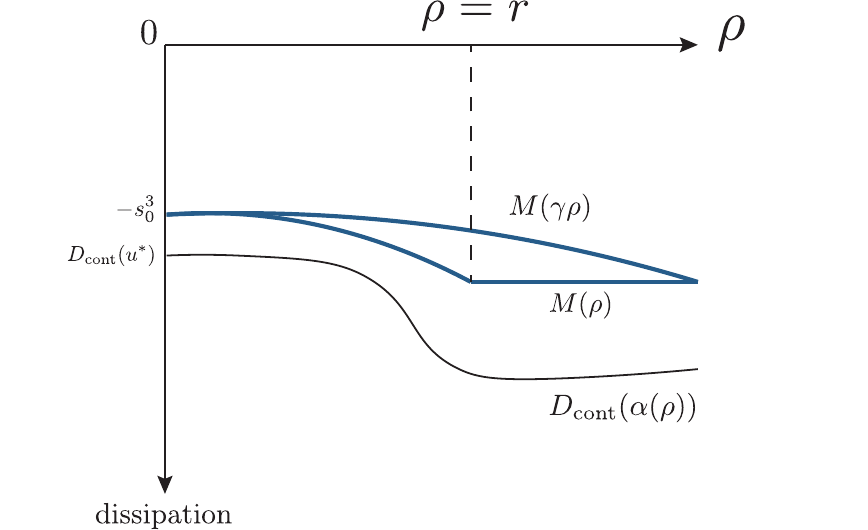}\vspace{.02in}
        \caption{}\label{fig:second_panel}
    \end{subfigure}
    \caption{This figure is a pictorial description of the scaling argument used in the Proof of \Cref{prop:cont}. In \Cref{fig:first_panel} $\alpha$ is a ray connecting $u^*$ to some state on $\partial \Pi_{C,s_0}$. In \Cref{fig:second_panel} we plot the dissipation of states along $\alpha$ with the associated upper bounds.}
    \label{fig:Dcont-extend}
\end{figure}



Henceforth, we set the radius of the ball in $\Pi_{C,s_0}^*$ to $r = K_{ball}C^{-1}$.
In Lemma \ref{lemma:D-cont-exp} we established an improved estimate within the ball of radius $r = K_{ball}C^{-1}$ around $u^*$.
Since the size of both sets is independent of $s_0$, we can now perform a scaling argument to extend the bound of Lemma \ref{lemma:D-cont-exp} to all of $\Pi_{C,s_0}^*$, then show that this bound is controlled by the desired bound of Proposition \ref{prop:cont}.
See \Cref{fig:Dcont-extend} for a pictorial representation of this scaling argument. 
\begin{proof}[Proof of Proposition \ref{prop:cont}]
To extend our bound to states $u \in \Pi_{C,s_0}^* \setminus \overline{B_r(u^*)}$ we recall the following characterization of critical points of $D_{\cont}$ from Lemma \ref{lemma:gkv-max-char}: 
    if $ O \subset \Pi_{C,s_0}$ is open then any local maximizers $u$ of $D_{\cont}$ in $\overline{O}$ necessarily satisfy $u \in \partial O$. 
This ensures that the $\overline u \in \Pi_{C,s_0}^* \setminus \overline{B_r(u^*)}$ maximizing $D_{\cont}$ satisfies 
$$\overline u \in [\partial \Pi_{C,s_0}\setminus \overline{B_r(u^*)}] \cup [\Pi_{C,s_0}\cap \partial B_r(u^*)]. $$
We can further deduce that $\overline u \in \Pi_{C,s_0}\cap \partial B_r(u^*)$, because by Proposition \ref{prop:gkv-cont} there exists a unique local maximizer of $D_\cont$ on $\partial \Pi_{C,s_0}$ and we know we cannot have $\overline u = u^*$ by construction of $\Pi_{C,s_0}^*$.

This shows that for states outside of $B_r(u^*)$ the dissipation is bounded by that of $\overline u$, which is bounded by Lemma \ref{lemma:D-cont-exp} as $|\overline u - u^*| =r$.
So, there exist universal constants $C_1,\ C_2 > 0$ such that 
$$D_{\cont}(u) \leq M(|u-u^*|)$$ where
$$ M(R) = \begin{cases}
    -C_1s_0^3 - C_2s_0R^2 & R \leq r \\
    -C_1s_0^3 - C_2s_0r^2 & R > r
\end{cases} $$
due to $\overline u$ being the maximum over $\Pi_{C,s_0} \setminus B_{r}(u^*)$ and $|\overline u - u^*| = r$. 
We note that $M$ is decreasing in $R$, so for any constant $0 \leq \gamma \leq 1$ we have 
$$ M(R) \leq M(\gamma R).$$
If we choose $\gamma = \min(1,r/\text{diam}(\Pi_{C,s_0})) \sim 1$ we find for all $u \in \Pi_{C,s_0}\setminus B_{r}(u^*)$
\begin{equation} D_{\cont}(u) \leq M(|u-u^*|) \leq M(\gamma |u-u^*|) \leq -C_1s_0^3 - C_2s_0 \gamma^2|u - u^*|^2 \label{eq:cont-int-simp-PROOF} \end{equation}
as $\gamma|u-u^*| < r|u-u^*|/\text{diam}(\Pi_{C,s_0}) \leq r$ for all $u \in \Pi_{C,s_0}$. 
This shows the bound~\eqref{eq:intermediate-bound} on all of $\Pi_{C,s_0}^*$. 

Finally, using estimates from Lemma \ref{lemma:gkv-dist-u0} and $|u_L - u_R| = s_0$ gives us
    $$ |u^* - u_L| + |u^* - u_R| \lesssim s_0.$$
This bound, equation~\eqref{eq:cont-int-simp-PROOF}, and the triangle inequality then yields 
$$ D_{\cont}(u) \lesssim -s_0^3 - s_0|u-u^*|^2 \lesssim -s_0( |u^* - u_L|^2 + |u^* - u_R|^2 + |u-u^*|^2 ) \lesssim -s_0( |u - u_L|^2 + |u - u_R|^2) $$
establishing Proposition \ref{prop:cont}.
\end{proof}

\subsection{Proof of Proposition \ref{prop:shock}}  \label{sec:shock-proof}
\subsubsection{Proof Sketch}
In this section we prove Proposition \ref{prop:shock}.
Our analysis begins by following Section 6 of \cite{MR4667839}, where the authors introduced the maximal dissipation function, 
\begin{equation}
    D_{\max}(u) := \max_{s \geq 0} D_{RH}(u, \S_u^1(s), \sigma_u^1(s)).
\end{equation}
The authors showed that this function is well defined when $C$ is sufficiently large and $s_0$ sufficiently small, with every state having exactly one maxima when the system~\eqref{system} satisfies Assumption \ref{assum}, where we take this maximum to occur at $s = s^*(u)$ and define $u^+(u) := \S_u^1(s^*(u))$. 
Further results concerning the maximal shock are reproduced in Section \ref{sec:app-u-plus}.
From here we can get global dissipation bounds on the entire shock curve $\S_u^1$ by just bounding the dissipation at the maximal shocks $(u,u^+(u), \sigma_\pm)$. 
The structure of this proof is divided into parts:
\begin{description}
    \item[Part 1] We establish the bound~\eqref{diss:shock} at every maximal shock $(u,u^+(u), \sigma_\pm)$. 
    This is exactly the content of the following proposition. 
\end{description}
\begin{proposition} \label{prop:dmax} 
    There exists a $K > 0$ such that for $C$ sufficiently large, $s_0$ sufficiently small, for all $u_- \in \Pi_{C,s_0}^*$,
    \begin{equation}
        D_{\max}(u) \leq - Ks_0(|u - u_L|^2 + |u^+(u) - u_R|^2). \label{eq:Dmax-bound}
    \end{equation}
\end{proposition}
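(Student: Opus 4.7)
My plan is to follow the template established by the proof of \Cref{prop:cont}: obtain a base bound at a distinguished critical state, Taylor expand to get a quadratic refinement in a ball whose radius is independent of $s_0$, then extend by a scaling/maximum argument to all of $\Pi_{C,s_0}^*$. Concretely, I would begin by invoking the analogue of \Cref{prop:gkv-cont} for the maximal dissipation function established in Section~6 of \cite{MR4667839}: there exists a unique local maximizer $u^{**}\in\overline{\Pi_{C,s_0}}$ of $D_{\max}$ with $D_{\max}(u^{**})\lesssim -s_0^3$, and $|u^{**}-u_L|\lesssim s_0$, $|u^+(u^{**})-u_R|\lesssim s_0$ (the latter via the shock curve asymptotics of \Cref{lem_hugoniot} together with $s^*(u^{**})\sim s_0$).

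The main technical step is to produce Taylor-type bounds on $D_{\max}$ near $u^{**}$ analogous to \Cref{lemma:taylor-bounds-d-cont}. Here the key observation is the envelope theorem: since $s^*(u)$ is the unique interior maximizer of $s\mapsto D_{RH}(u,\S_u^1(s),\sigma_u^1(s))$ and the partial derivative in $s$ vanishes there, one has
\begin{equation*}
\nabla D_{\max}(u) = \nabla_u D_{RH}\bigl(u,\S_u^1(s^*(u)),\sigma_u^1(s^*(u))\bigr),
\end{equation*}
so first derivatives of $D_{\max}$ see no contribution from varying $s^*$. For the second derivative one differentiates once more; the $\partial_s$-derivative of the first-order condition $\partial_s D_{RH}=0$ applied through the implicit function theorem gives $\nabla s^*(u)$, and the Hessian of $D_{\max}$ at $u^{**}$ decomposes into a $\nabla_u^2 D_{RH}$ contribution (which, on the shock curve near the classical shock $(u_L,u_R,\sigma_{LR})$, inherits the $-s_0 v_1^2 - Cs_0\sum_{i\ge 2}v_i^2$ structure exactly as in \Cref{lemma:taylor-bounds-d-cont}) minus a Schur-complement correction bounded by $\bigO(Cs_0)|\nabla s^*|^2$, which is lower order once one observes $|\nabla s^*(u)|\lesssim 1$ uniformly. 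The third derivative is bounded by $Cs_0$ using the uniform $C^4$ control on $f$, $\eta$, $\S_u^1$, $\sigma_u^1$ from \Cref{lem_hugoniot}. Combining yields, for $C$ large and $s_0$ small,
\begin{equation*}
D_{\max}(u) \lesssim -s_0^3 - s_0 |u-u^{**}|^2 \qquad \text{on } B_{K_{ball}C^{-1}}(u^{**}),
\end{equation*}
for a universal $K_{ball}$, exactly as in \Cref{lemma:D-cont-exp}.

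For the extension to $\Pi_{C,s_0}^*\setminus B_{K_{ball}C^{-1}}(u^{**})$, I would run the same scaling argument used at the end of the proof of \Cref{prop:cont}: uniqueness of the maximum forces any maximizer $\bar u$ of $D_{\max}$ over the complement to lie on $\partial B_{K_{ball}C^{-1}}(u^{**})\cap \Pi_{C,s_0}$, at which point the interior bound applies, and monotonicity in $R$ of the envelope $M(R)=-C_1 s_0^3 - C_2 s_0 \min(R,r)^2$ combined with a uniform lower bound $r/\operatorname{diam}(\Pi_{C,s_0}^*)\sim 1$ (using the estimates of \Cref{lem:gkv_eta}) yields $D_{\max}(u)\lesssim -s_0^3 - s_0|u-u^{**}|^2$ throughout $\Pi_{C,s_0}^*$.

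The final step is to convert this bound into the target form $-Ks_0(|u-u_L|^2+|u^+(u)-u_R|^2)$. For the left state this is immediate from $|u^{**}-u_L|\lesssim s_0$ and the triangle inequality, as in the closing lines of \Cref{prop:cont}. For the right state we need Lipschitz control of $u\mapsto u^+(u)$; the map $(u,s)\mapsto(\S_u^1(s),\sigma_u^1(s))$ is $C^3$ by \Cref{lem_hugoniot}, and differentiating the first-order condition $\partial_s D_{RH}(u,\S_u^1(s),\sigma_u^1(s))|_{s=s^*(u)}=0$ shows $|\nabla s^*(u)|\lesssim 1$, hence $|u^+(u)-u^+(u^{**})|\lesssim |u-u^{**}|$, and then $|u^+(u^{**})-u_R|\lesssim s_0$ from the base estimate closes the argument. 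The hardest technical point will be the Hessian computation in the middle step — specifically, verifying that the envelope correction stays negligible compared with the $-s_0$ term coming from $\nabla_u^2 D_{RH}$ and that the degeneracy of $\nabla^2 D_{\max}$ in the shock direction still matches the $-s_0 v_1^2$ scaling from \Cref{lemma:taylor-bounds-d-cont}; everything else is bookkeeping along the lines already executed for \Cref{prop:cont}.
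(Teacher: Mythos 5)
Your proposal breaks at the very first step. You assert the existence of a unique local maximizer $u^{**}\in\overline{\Pi_{C,s_0}}$ of $D_{\max}$ with $D_{\max}(u^{**})\lesssim -s_0^3$, but this contradicts an easy computation: $u_L\in\Pi_{C,s_0}$ (since $a_1\eta(u_L|u_L)-a_2\eta(u_L|u_R)=-a_2\eta(u_L|u_R)<0$), and from \Cref{gkv:lem_s-star} one finds $s^*(u_L)=s_0$ and $u^+(u_L)=u_R$, so $D_{\max}(u_L)=D_{\RH}(u_L,u_R,\sigma_{LR})=0$. Hence $\max_{\overline{\Pi_{C,s_0}}}D_{\max}\geq 0$, and no maximizer with value $\lesssim -s_0^3$ exists. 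Correspondingly, the target bound~\eqref{eq:Dmax-bound} is genuinely \emph{degenerate}: its right-hand side vanishes at $u=u_L$, unlike the bound of \Cref{prop:cont} which is bounded away from zero by $-Ks_0^3$ on all of $\Pi_{C,s_0}^*$. A single Taylor-expand-then-scale argument around an interior critical point cannot produce a bound that is $0$ at $u_L$ and order $-s_0^3$ away from $u_L$.

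This is precisely why the paper's proof cannot simply replay the $D_{\cont}$ template. Instead it partitions $\Pi_{C,s_0}^*$ into regimes and runs a different argument in each: far from $u_L$ with $u^+(u)\notin\Pi$ it uses the improved $D_{\cont}$ bound on $\Pi\setminus Q_{C,s_0}$ (\Cref{prop:Q-uplus-outside}); for states with $u^+(u)\in\Pi$ it evaluates $D_{\cont}$ at $u^+(u)$ via the identity of \Cref{lemma:Dmax-u+-inside}; near $\partial\Pi$ at scale $s_0$ it uses \Cref{prop:gkv-cont} directly; near $u_L$ (the region $R^0_{C,s_0}$) it proves the separate quadratic bound $-s_0|u-u_L|^2$, which is exactly what the degeneracy at $u_L$ demands; and the scaling argument is reserved for the intermediate region $R^+_{C,s_0}$ where \Cref{lemma:dmax-no-extrema} shows there are no critical points. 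Your envelope-theorem observation is a legitimate piece of this machinery (it underlies the identity~\eqref{eq:RH-CONT}), and you would also need to address the non-smoothness at $\partial\Pi_{C,s_0}$ where $s^*$ transitions from interior to boundary maximizer, but the immediate obstruction is that the uniform base bound $D_{\max}(u^{**})\lesssim -s_0^3$ you start from is false.
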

\begin{description}
    \item[Part 2] Equipped with the bound~\eqref{diss:shock} in the continuous case $s = 0$ and maximal case $s = s^*(u)$ we now estimate the dissipation along other points in $\S_u^1$.
    To do this we make vital use of the convexity of $s\mapsto \eta(u|\S_u^1(s))$ for $s$ sufficiently small and the monotonicity of this map from Assumption \ref{assum} \ref{assum:rel-ent-strengthens} for all $s$.
\end{description}
Part 1 follows by performing the same partitioning as \cite{MR4667839} on the set $\Pi_{C,s_0}$, where they divided $\Pi_{C,s_0}$ into two different length scales: $Q_{C,s_0}$ of order $C^{-1}$ of states sufficiently far from the shock $(u_L, u_R,\sigma_{LR})$ and states within a set $R_{C,s_0}$ of length scale $s_0$ near our fixed shock. 
See Figures \ref{fig:Q-image} and \ref{fig:R-image} for a pictorial description of the sets $Q_{C,s_0}$ and $R_{C,s_0}$, respectively. 
These sets are precisely defined in equations~\eqref{eq:Qdef} and~\eqref{eq:Rdef}, respectively.
In this paper, we redo the analysis at the large length scale, where we are now able to use our bound from Proposition \ref{prop:cont} to obtain stronger estimates. 
The steps for this are as follows:
\begin{figure}[t] 
    \centering 
    \includegraphics[width=.7\linewidth]{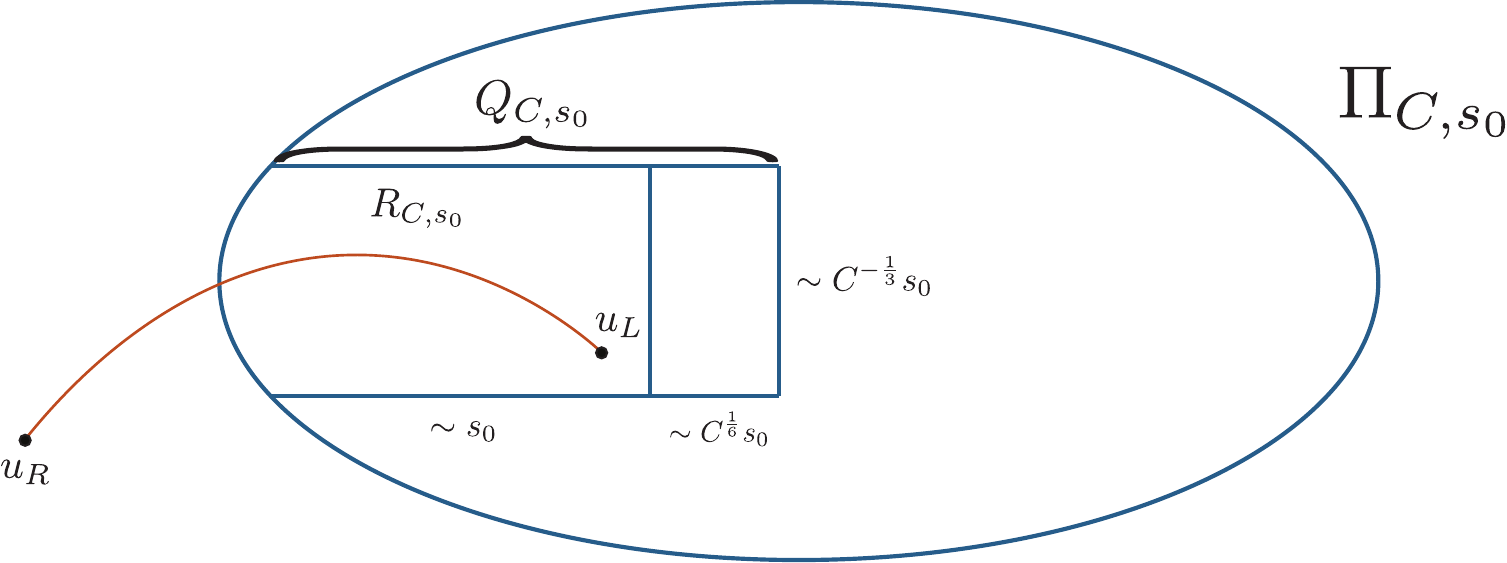}
    \caption{This figure depicts an idealized representation of the set $Q_{C,s_0}$. }
    \label{fig:Q-image}
\end{figure}
\begin{description}
    \item[Step 1.1] We prove the bound~\eqref{diss:shock} for maximal shocks $(u,u^+(u), \sigma)$ when $u^+(u) \not\in \Pi_{C,s_0}$ and $u \notin Q_{C,s_0}$. For a precise definition of $Q_{C,s_0}$ see equation~\eqref{eq:Qdef}.
        The first of these two assumptions allows us to conclude that $u$ is close to $\partial \Pi_{C,s_0}$ while the second places us far from $u_L,\, u_R$, giving an upper bound on the continuous dissipation from Proposition \ref{prop:cont}.

    \item[Step 1.2] We then show the bound for states satisfying $u^+(u) \in \Pi_{C,s_0}$. This proves the bound~\eqref{diss:shock} all for maximal shocks $(u,u^+(u), \sigma)$ with $u\notin R_{C,s_0}$ (see equation~\eqref{eq:Rdef} for a precise definition).
\end{description}
\begin{figure}[b] 
    \centering
    \includegraphics[width=.7\linewidth]{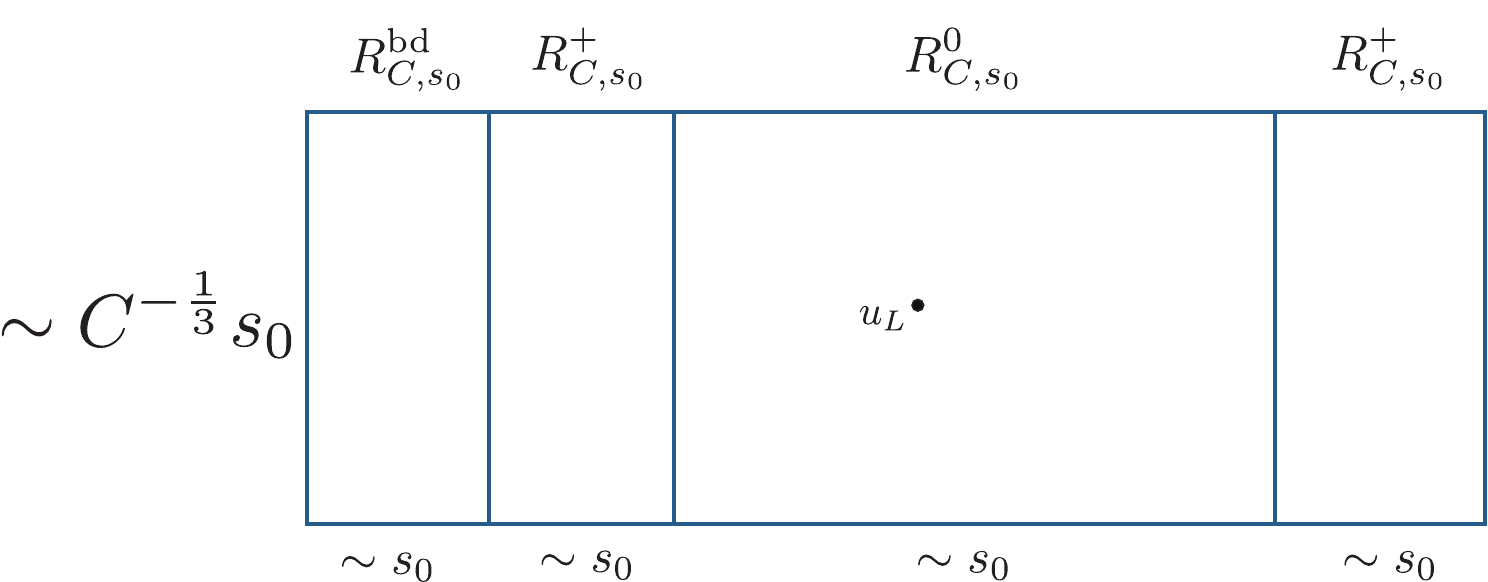}
    \caption{This figure depicts an idealized representation of the set $R_{C,s_0}$. We note that $R^+_{C,s_0}$ has two disconnected components.}
    \label{fig:R-image}
\end{figure}
\begin{description}
    \item[Step 1.3] In the small length scale $R_{C,s_0}$, the results of \cite{MR4667839} suffice to prove the bound~\eqref{diss:shock} for maximal shocks when $u$ is sufficiently close (of order $s_0$) to $u_L$ and close (of order $s_0$) to the boundary $(\partial \Pi_{C,s_0})\cap R_{C,s_0}$. 
        From here there remains an intermediate region $R^+_{C,s_0}$ within $R_{C,s_0}$ where we still have yet to prove~\eqref{diss:shock}.
        To do this, we use the fact that $D_{\max}$ has no critical values in this region and perform a scaling argument, similar to the final step of the proof of Proposition \ref{prop:cont}. 
\end{description}

Equipped with Proposition \ref{prop:dmax}, in Part 2 we use it to prove \Cref{prop:shock} in the following steps:
\begin{description}
    \item[Step 2.1]  First, extend the bound at the maximal shock $(u,u^+(u), \sigma_\pm)$ to small shocks along the shock curve $\S_u^1$ for states $u$ satisfying $s^*(u) \leq  K_1 s_0$ for some $K_1 \in (0,1)$. 
        This assumption should be understood as considering states where the maximal shock is small. 
        In doing this, we are considering a family of states sufficiently far from $u_L$ to where we will have negativity from the bound in Proposition \ref{prop:dmax}.
    \item[Step 2.2] Next, for states with $s^*(u) \geq K_1 s_0$ the bound follows after showing $$D_{RH}(u,\S_u^1(s)) \lesssim D_{\max}(u) - s^*(u) |s-s^*(u)|^2$$
        when $s$ is sufficiently small.
    \item[Step 2.3] Finally, we remove the smallness constraint on the argument $s$ imposed in Steps 2.1, 2.2. 
        This is done by exploiting the monotonicity of $s\mapsto \eta(u|\S_u^1(s))$ thanks to Assumption \ref{assum} \ref{assum:rel-ent-strengthens} along with the bound established in Steps 2.1 and 2.2. 
\end{description}

Before we begin with Step 1, we recall the following identity. 
\begin{lemma}[Lemma 6.1 and Proposition 6.2 of \cite{MR4667839}]
    For any $u \in \Pi_{C,s_0}^*$ and $s > 0$ we have
    \begin{equation} D_{\RH}(u, \S_u^1(s)) = D_{\cont}(u) + \int_0^s \dot \sigma(t)(\tilde \eta(u) + \eta(u|\S_u^1(t))) \,dt. \label{eq:RH-CONT}\end{equation}
\end{lemma}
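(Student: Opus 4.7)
The identity is essentially an algebraic consequence of Dafermos--Leger's entropy dissipation identity (\Cref{lemma:entropy-quant}) combined with the definitions of $\tilde\eta$ and $\tilde q$. The plan is to unfold the left-hand side, apply the Leger identity to the ``$u_R$-side'' of $D_{\RH}$, then collect terms so that the combinations $(1+Cs_0)\eta(u|u_L) - \eta(u|u_R)$ and $(1+Cs_0)q(u;u_L) - q(u;u_R)$ appear, which by definition are $\tilde\eta(u)$ and $\tilde q(u)$ respectively.

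More concretely, I would begin by writing out
\begin{align*}
D_{\RH}(u,\S_u^1(s),\sigma_u^1(s)) = \bigl[q(\S_u^1(s);u_R) - \sigma_u^1(s)\,\eta(\S_u^1(s)|u_R)\bigr] - (1+Cs_0)\bigl[q(u;u_L) - \sigma_u^1(s)\,\eta(u|u_L)\bigr].
\end{align*}
Applying \Cref{lemma:entropy-quant} with $i=1$ and $v=u_R$ rewrites the first bracket as
\begin{align*}
q(u;u_R) - \sigma_u^1(s)\,\eta(u|u_R) + \int_0^s \dot\sigma(t)\,\eta(u|\S_u^1(t))\,dt.
\end{align*}
Substituting back and grouping the $u_L$ and $u_R$ terms, everything involving $u$ rather than $\S_u^1(s)$ assembles as
\begin{align*}
\bigl[q(u;u_R) - (1+Cs_0)q(u;u_L)\bigr] - \sigma_u^1(s)\bigl[\eta(u|u_R) - (1+Cs_0)\eta(u|u_L)\bigr] = -\tilde q(u) + \sigma_u^1(s)\,\tilde\eta(u),
\end{align*}
by the definitions \eqref{eq:tilde-eta} and \eqref{eq:tilde-q} of $\tilde\eta$ and $\tilde q$.

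The final step is to split $\sigma_u^1(s)\,\tilde\eta(u)$ using the fact that $\sigma_u^1(0)=\lambda_1(u)$ (a direct consequence of the asymptotic expansion in \Cref{lem_hugoniot}) together with the fundamental theorem of calculus:
\begin{align*}
\sigma_u^1(s)\,\tilde\eta(u) = \lambda_1(u)\,\tilde\eta(u) + \tilde\eta(u)\int_0^s \dot\sigma(t)\,dt.
\end{align*}
Combining this with the previous line yields
\begin{align*}
D_{\RH}(u,\S_u^1(s),\sigma_u^1(s)) = \underbrace{\bigl(-\tilde q(u) + \lambda_1(u)\,\tilde\eta(u)\bigr)}_{=D_{\cont}(u)} + \int_0^s \dot\sigma(t)\bigl(\tilde\eta(u) + \eta(u|\S_u^1(t))\bigr)\,dt,
\end{align*}
which is exactly the claimed identity.

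There is essentially no obstacle here: the computation is a bookkeeping exercise once one has the Dafermos--Leger identity and recognizes that the coefficient $(1+Cs_0)$ in front of the $u_L$ block is precisely what is built into the definitions of $\tilde\eta$ and $\tilde q$. The only mildly subtle point is making sure that the base point of the integration of $\dot\sigma$ is $\lambda_1(u)$, which follows from $\S_u^1(0)=u$ and the expansion $\sigma_u^1(s)=\tfrac12(\lambda_1(u)+\lambda_1(\S_u^1(s)))+\bigO(s^2)$ from \Cref{lem_hugoniot}. No analytic estimate or smallness assumption is required, so the identity in fact holds for all admissible $s$ (subject only to $\S_u^1(s)$ being defined), irrespective of the specific properties of $\Pi_{C,s_0}^*$.
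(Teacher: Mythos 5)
Your proof is correct and is precisely the computation the paper has in mind: the paper's one-line remark that the identity ``can be deduced via the quantified entropy dissipation equality~\eqref{eq:entropy-quant} with $v = u_R$ and the fundamental theorem of calculus'' unpacks exactly into the three steps you carry out (apply the Leger identity to the $u_R$-bracket, recognize $\tilde\eta,\tilde q$ in the regrouped terms, and write $\sigma_u^1(s)=\lambda_1(u)+\int_0^s\dot\sigma$). Your closing observation that the identity is purely algebraic and requires no smallness of $s_0$ nor $u\in\Pi^*_{C,s_0}$ is also accurate.
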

This identity can be deduced via the quantified entropy dissipation equality~\eqref{eq:entropy-quant} with $v = u_R$ and the fundamental theorem of calculus.


\subsubsection{Part 1: The maximal shock} \label{sec:max}

As in \cite{MR4667839} this case will be handled by partitioning $\Pi_{C,s_0}^*$ into the smaller sets $Q_{C,s_0}^c, Q_{C,s_0}$, and $R_{C,s_0}$ and getting our bound in each piece separately. 
The bounds used in each regime are similar to what was done in \cite{MR4667839}, and the additional negativity in our bound comes almost entirely due the improvement from Proposition \ref{prop:cont}.
We note that the states $u \in \Pi^* \setminus \Pi$ are handled trivially, as $u^+(u) = u$ in this region giving $D_{\max}(u) = D_{\cont}(u)$ which satisfies our estimate from Proposition \ref{prop:cont}

\bigskip
\paragraph{\bf Step 1.1: States far from $u_L$ with $u^+$ outside $\Pi$}
To begin, we define the set $Q_{C,s_0}$. We will first show states in $\Pi \setminus Q_{C,s_0}$ satisfy~\eqref{eq:Dmax-bound}.
\begin{equation}\label{eq:Qdef}
    Q_{C,s_0} := \left\{p = u^* - \sum_{i = 1}^n b_i r_i(u^*) \ \biggr| \ p\in \Pi_{C,s_0},\ |b_1| \le C^{1/6}s_0, \text{ and }\left(\sum_{i = 2}^n |b_i|^2\right)^{1/2} \le C^{-1/3}s_0\right\}. 
\end{equation}

To begin, we recall the following bound on $D_\cont$ in $\Pi_{C,s_0} \setminus Q_{C,s_0}$.
\begin{lemma}[\cite{MR4667839} Proposition 6.2] \label{lemma:gkv-improved-cont}
    For $C$ sufficiently large and $s_0$ sufficiently small at any state $u \in \Pi_{C,s_0} \setminus Q_{C,s_0}$
    we have the continuous dissipation bound 
    \begin{equation}
        D_\cont(u) \lesssim -C^{1/3}s_0^3
    \end{equation}
\end{lemma}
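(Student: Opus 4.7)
The plan is to perform a Taylor expansion of $D_\cont$ around the boundary maximizer $u^*$ and analyze the resulting terms in the eigenbasis $\{r_i(u^*)\}_{i=1}^n$. Writing $u - u^* = -\sum_{i=1}^n b_i r_i(u^*)$ and using $\nabla D_\cont(u^*)=0$ from Lemma \ref{lemma:taylor-bounds-d-cont}, the expansion reads
\begin{equation*}
D_\cont(u) \leq D_\cont(u^*) + \tfrac12 \nabla^2 D_\cont(u^*)(u-u^*)^{\otimes 2} + K Cs_0\, |u-u^*|^3
\end{equation*}
for some absolute constant $K$. Substituting Proposition \ref{prop:gkv-cont} for the base value and the anisotropic Hessian bound from Lemma \ref{lemma:taylor-bounds-d-cont} gives
\begin{equation*}
D_\cont(u) \lesssim -s_0^3 - s_0 b_1^2 - C s_0 \sum_{i=2}^n b_i^2 + C s_0\, |u-u^*|^3.
\end{equation*}

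Next I would carry out the dichotomy dictated by the definition of $Q_{C,s_0}$. Since $u \notin Q_{C,s_0}$, either $|b_1| > C^{1/6}s_0$, in which case the longitudinal contribution is controlled by $-s_0 b_1^2 \lesssim -C^{1/3}s_0^3$; or $\bigl(\sum_{i\geq 2}b_i^2\bigr)^{1/2} > C^{-1/3}s_0$, in which case the transverse contribution is controlled by $-Cs_0 \sum_{i\geq 2}b_i^2 \lesssim -Cs_0\cdot C^{-2/3}s_0^2 = -C^{1/3}s_0^3$. The anisotropy of the Hessian and the carefully chosen exponents $C^{1/6}$ and $C^{-1/3}$ in $Q_{C,s_0}$ are precisely what make the two sides of the dichotomy yield a matching rate; this balance arises from equating $s_0 b_1^2$ and $C s_0 b_\perp^2$ at the threshold.

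The main obstacle is absorbing the cubic remainder $Cs_0\,|u-u^*|^3$. The key geometric observation is that $\Pi_{C,s_0}$ has diameter of order $C^{-1}$ — this follows by equating $(1+Cs_0)\eta(u|u_L)$ and $\eta(u|u_R)$ near $\partial\Pi_{C,s_0}$, which forces $Cs_0|u-u_L|^2 \lesssim s_0|u-u_L|+s_0^2$, i.e., $|u-u_L|\lesssim C^{-1}$. Consequently $|u-u^*|\lesssim C^{-1}$, and one decomposes
\begin{equation*}
Cs_0|u-u^*|^3 \lesssim C s_0 |b_1|^3 + C s_0 |b_\perp|^3.
\end{equation*}
The transverse cubic is automatically dominated by $C s_0 b_\perp^2$ with a factor $|b_\perp|\lesssim C^{-1}$, so it is harmless. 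The longitudinal cubic $Cs_0 |b_1|^3$ is only marginally smaller than the quadratic $s_0 b_1^2$ (the ratio is $C|b_1|\lesssim 1$), so a finer bookkeeping is required: for $|b_1|\leq 1/(4C)$ the remainder absorbs into half of the quadratic, while for $|b_1|\gtrsim 1/C$ one instead invokes directly the pointwise bound $D_\cont(u) \lesssim -s_0(|u-u_L|^2+|u-u_R|^2)$ of \Cref{prop:cont}, using that $|u-u_L|\gtrsim C^{-1}$ supplies $-s_0/C^2$, which beats $-C^{1/3}s_0^3$ once $s_0$ is small enough compared to $C^{-7/6}$. Overall the estimate holds for $C$ sufficiently large and $s_0$ sufficiently small (with $\tilde s_0$ depending on $C_1$), exactly as in the hypothesis.
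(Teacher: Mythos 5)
Your proposal is correct and follows the same route the paper indicates: the anisotropic intermediate Taylor bound $D_{\cont}(u)\lesssim -s_0^3 - s_0 b_1^2 - Cs_0|b_\perp|^2 + Cs_0|u-u^*|^3$ from the proof of Lemma \ref{lemma:D-cont-exp}, the $Q_{C,s_0}$ dichotomy trading off the longitudinal and transverse quadratic terms, absorption of the cubic remainder via $\operatorname{diam}(\Pi_{C,s_0})\sim C^{-1}$, and a fallback to Proposition \ref{prop:cont} together with $s_0 \lesssim C^{-7/6}$ when $|u-u^*|\gtrsim C^{-1}$. The only cosmetic slip is that the split ``$|b_1|\leq 1/(4C)$'' versus ``$|b_1|\gtrsim 1/C$'' leaves a nominal gap between the thresholds; it should be a single dichotomy at $|b_1|\leq c/C$ versus $|b_1|> c/C$ where the universal $c$ is chosen small relative to the implicit constants in Lemma \ref{lemma:taylor-bounds-d-cont} so that the longitudinal cubic absorbs into half the longitudinal quadratic.
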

The proof of this statement follows from the intermediate bound in the expansion of Lemma \ref{lemma:D-cont-exp}.
We note that this intermediate bound is vital, as it gives additional dissipation in the directions transverse to our shock direction $r_1$ allowing $Q_{C,s_0}$ to be narrower than tall. 

\begin{proposition}
    \label{prop:Q-uplus-outside}
    For any $C$ sufficiently large, $s_0$ sufficiently small (depending on $C$), and for any $u$ such that
    $$u \in \Pi_{C,s_0} \backslash Q_{C,s_0},$$
    also satisfies $u^+ = \S^1_u(s^*(u)) \notin \Pi_{C,s_0}$, the maximal entropy dissipation satisfies,
    \begin{equation}
    D_{\max}(u) \lesssim -s_0( |u - u_L|^2 + | u^+(u) - u_R|^2).
    \end{equation}
\end{proposition}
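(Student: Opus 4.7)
The natural starting point is the identity~\eqref{eq:RH-CONT} applied at $s = s^*(u)$, which gives the decomposition
\[
D_{\max}(u) = D_{\cont}(u) + \int_0^{s^*(u)} \dot\sigma(t)\bigl(\tilde\eta(u) + \eta(u|\S_u^1(t))\bigr)\,dt.
\]
My plan is to show that the strong continuous dissipation bound in $\Pi_{C,s_0}\setminus Q_{C,s_0}$ dominates: the first term is order $-C^{1/3} s_0^3$ (from Lemma~\ref{lemma:gkv-improved-cont}) while the integral is of strictly smaller order for $s_0$ small enough relative to $C$.

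To control the integral, I would first extract the first-order optimality condition at $s^*(u)$. Differentiating $s\mapsto D_{\RH}(u,\S_u^1(s),\sigma_u^1(s))$ via~\eqref{eq:RH-CONT} shows the derivative equals $\dot\sigma(s)(\tilde\eta(u) + \eta(u|\S_u^1(s)))$, and since $\dot\sigma\ne 0$ by Assumption~\ref{assum}\ref{assum:rel-ent-strengthens}, we obtain $\eta(u|u^+(u)) = -\tilde\eta(u)$. Combined with Lemma~\ref{lemma:leger-square} this yields $|u-u^+(u)|^2 \sim |\tilde\eta(u)|$ and hence $s^*(u) \sim \sqrt{|\tilde\eta(u)|}$. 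On $[0,s^*]$ the map $t\mapsto \tilde\eta(u)+\eta(u|\S_u^1(t))$ is monotone (Assumption~\ref{assum}\ref{assum:rel-ent-strengthens}), moving from $\tilde\eta(u)<0$ up to $0$, so the integrand has definite sign and is bounded in absolute value by $|\tilde\eta(u)|$. Thus the integral term is bounded by $\sup|\dot\sigma|\cdot s^* \cdot |\tilde\eta(u)| \lesssim |\tilde\eta(u)|^{3/2}$. Using Lemma~\ref{lem:gkv_eta} and the fact that $\mathrm{diam}(\Pi_{C,s_0})\lesssim 1/C$, we get $|\tilde\eta(u)| \lesssim s_0/C$, so the integral is $O((s_0/C)^{3/2})$. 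For $s_0$ sufficiently small in terms of $C$, this is dwarfed by $|D_\cont(u)|\gtrsim C^{1/3}s_0^3$, yielding $D_{\max}(u) \lesssim -\tfrac12 C^{1/3} s_0^3$.

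It remains to package this as a bound of the form $-s_0(|u-u_L|^2 + |u^+(u)-u_R|^2)$. Since $u\in\Pi_{C,s_0}$ we have $|u-u_L|\lesssim 1/C$, and the estimate $|u^+(u)-u| \lesssim s^* \lesssim \sqrt{s_0/C}$ combined with the location of $u$ inside $B_{5\epsilon/3}(d)$ gives a uniform bound $|u-u_L|^2 + |u^+(u)-u_R|^2 \lesssim 1$. The comparison $-\tfrac12 C^{1/3}s_0^3 \le -K s_0(|u-u_L|^2 + |u^+(u)-u_R|^2)$ is then the key remaining check: I expect the main obstacle to be verifying the sharper quantitative estimate $|u-u_L|^2 + |u^+(u)-u_R|^2 \lesssim C^{1/3} s_0^2$, which is needed to absorb the quadratic factor into the cubic $s_0^3$ gain from the continuous dissipation. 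This should follow by writing $u^+(u) = \S_u^1(s^*(u))$ and comparing with $u_R = \S_{u_L}^1(s_0)$ using the $C^3$ dependence of the shock curves from Lemma~\ref{lem_hugoniot}, together with the size estimates $s^* \lesssim \sqrt{s_0/C}$ and $|u-u_L|\lesssim 1/C$ from the geometry of $\Pi_{C,s_0}$.
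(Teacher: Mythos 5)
Your overall strategy — decompose $D_{\max}$ via~\eqref{eq:RH-CONT}, use Lemma~\ref{lemma:gkv-improved-cont} for the $-C^{1/3}s_0^3$ gain in $\Pi_{C,s_0}\setminus Q_{C,s_0}$, and absorb the integral — is the right one and matches the paper's. But there are two genuine gaps, both of which stem from not using the hypothesis $u^+(u)\notin\Pi_{C,s_0}$, which does not appear anywhere in your argument even though it is explicitly stated in the proposition.

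First, your bound on the integral is too weak, and the asymptotics you claim run in the wrong direction. You use only $|\tilde\eta(u)|\lesssim s_0/C$ (from $\operatorname{diam}(\Pi_{C,s_0})\lesssim C^{-1}$), giving $s^*\lesssim\sqrt{s_0/C}$ and an integral bound $\lesssim (s_0/C)^{3/2}$. But $(s_0/C)^{3/2}\sim s_0^{3/2}$ while $C^{1/3}s_0^3\sim s_0^3$: as $s_0\to 0$ the integral term becomes \emph{larger}, not smaller, relative to the continuous dissipation; the domination you want holds only for $s_0\gtrsim C^{-11/9}$, which contradicts the regime of the proposition. The missing ingredient is precisely the hypothesis: $u^+(u)\notin\Pi_{C,s_0}$ forces $d(u,\partial\Pi_{C,s_0})\le s^*$, and combining this with the circularity $(s^*)^2\sim|\tilde\eta(u)|\lesssim s_0\,d(u,\partial\Pi_{C,s_0})\le s_0 s^*$ (Lemmas~\ref{lemma:leger-square}, \ref{lem:gkv_dist}, \ref{gkv:lem_s-star}) yields $s^*\lesssim s_0$. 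Then the integral is $\lesssim s_0(s^*)^2\lesssim s_0^3$, which is absorbed by $-C^{1/3}s_0^3$ for $C$ large, uniformly in $s_0$.

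Second, the packaging step you flag as ``the main obstacle'' — proving $|u-u_L|^2+|u^+(u)-u_R|^2\lesssim C^{1/3}s_0^2$ — is in fact false: for $u\in\Pi_{C,s_0}\setminus Q_{C,s_0}$ near $\partial\Pi_{C,s_0}$ we may have $|u-u_L|\sim C^{-1}\gg C^{1/6}s_0$ when $s_0$ is small. The fix is not to estimate the right-hand side against $s_0^3$ but to use Proposition~\ref{prop:cont} a second time: write $D_\cont(u)\le\tfrac12 D_\cont(u)+\tfrac12 D_\cont(u)$ and apply Proposition~\ref{prop:cont} to one half (producing $-s_0(|u-u_L|^2+|u-u_R|^2)$ directly) and Lemma~\ref{lemma:gkv-improved-cont} to the other (producing $-C^{1/3}s_0^3$ to eat the $\lesssim s_0^3$ errors from the integral and from swapping $|u-u_R|$ for $|u^+(u)-u_R|$). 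With both corrections the proof closes as in the paper.
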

\begin{proof}
    From Proposition \ref{prop:cont}, Lemmas \ref{lemma:gkv-improved-cont}, \ref{lem:gkv_dist}, \ref{gkv:lem_s-star}, and $\dot \sigma \lesssim -1$ due to both Lemma \ref{lem_hugoniot} and Assumption \ref{assum} \ref{assum:rel-ent-strengthens} we get the following estimate on equation~\eqref{eq:RH-CONT}
    \begin{align*}
        D_{\max}(u) =& D_{\cont}(u) + \int_0^{s^*} \dot \sigma(t)(\tilde \eta(u) + \eta(u|\S_u^1(t))) \,dt \\
        \lesssim& \frac{1}{2}D_{\cont}(u) + \frac{1}{2}D_{\cont}(u) - \int_0^{s^*} \tilde \eta(u) + \eta(u|\S_u^1(t)) \,dt \\
        \lesssim& -s_0(|u-u_L|^2 + |u-u_R|^2) - C^{1/3}s_0^3 -(s^*)^3 + s_0s^*d(u, \partial \Pi) 
    \end{align*} 
    By our assumption that $u^+(u) \notin \Pi$ we have $d(u,\partial \Pi) \leq s^*.$ Applying Lemma \ref{lemma:leger-square}, Lemma \ref{lem:gkv_dist}, and Lemma \ref{gkv:lem_s-star} we find $ s^* \lesssim s_0$ hence taking any $u$ satisfying $ C^{1/3}s_0^2 \leq |u-u_L|^2 + |u-u_R|^2 $, we have 
    \begin{align*}
        D_{\max}(u) \lesssim& -s_0(|u-u_L|^2 + |u-u_R|^2) - C^{1/3}s_0^3 -(s^*)^3 + s_0s^*d(u, \partial \Pi) \\
        \leq& -s_0(|u-u_L|^2 + |u-u_R|^2) - C^{1/3}s_0^3 + s_0^3 \\
        \leq& -s_0(|u-u_L|^2 + |u-u_R|^2 + |u-u^+(u)|^2) + s_0|u-u^+(u)|^2 - C^{1/3}s_0^3 \\
        \lesssim& -s_0(|u-u_L|^2 + |u^+(u)-u_R|^2) + s_0(s^*)^2 - C^{1/3}s_0^3 \\
        \lesssim& -s_0(|u-u_L|^2 + |u^+(u)-u_R|^2)
    \end{align*}
    for $C$ sufficiently large and $s_0$ sufficiently small. 
\end{proof}

\bigskip
\paragraph{\bf Step 1.2: States with $u^+ \in \Pi$}

This leaves the states $u \in \Pi_{C,s_0}\setminus Q_{C,s_0}$ such that $u^+(u) \in \Pi_{C,s_0}$. 
We are able to show the bound for such $u$, as long as they do not get too close to $u_L$ (up to scale $\sim s_0$). 
In the following proposition we prove the bound for the all states $u$ where $u^+(u) \in \Pi_{C,s_0}$.
Furthermore, as was shown in \cite{MR4667839} this property is satisfied for states all states sufficiently far in $\Pi_{C,s_0}$, 
allowing us to prove Proposition \ref{prop:dmax} for states outside of a set $R_{C,s_0}(K_h)$, where
\begin{equation} \label{eq:Rdef}
    R_{C,s_0}(K_h) := \left\{ u = u^* - \sum_{i=1}^n b_i r_i(u^*) \ \biggr| \ u\in \Pi_{C,s_0}, \ |b_1| \leq K_hs_0, \ \left(\sum_{i=2}^n |b_i|^2\right)^{1/2} \leq C^{-1/3}s_0 \right\}.
\end{equation}
\begin{proposition} \label{prop:Dmax-kh}
    There exists a universal constant $K_h > 0$, depending only on the system and $d$, such that for any $C$ sufficiently large and $s_0$ sufficiently small any state $u \in \Pi \setminus R_{C,s_0}(K_h)$ satisfies
    \begin{equation}
        D_{\max}(u) \lesssim -s_0(|u - u_L|^2 + |u^+(u) - u_R|^2).
    \end{equation}
\end{proposition}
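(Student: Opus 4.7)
The statement decomposes naturally into two claims: (i) the bound $D_{\max}(u) \lesssim -s_0(|u-u_L|^2 + |u^+(u)-u_R|^2)$ holds for \emph{every} $u$ satisfying $u^+(u) \in \Pi_{C,s_0}$, and (ii) for $K_h$ sufficiently large we have $\Pi_{C,s_0} \setminus R_{C,s_0}(K_h) \subset \{u : u^+(u) \in \Pi_{C,s_0}\}$. Claim (ii) is a direct positional analysis of $u^+(u) = \S^1_u(s^*(u))$ and is inherited from \cite{MR4667839} (their analogue of Proposition 6.5): outside the thin tube $R_{C,s_0}(K_h)$ the critical point $s^*(u)$ sits strictly inside the admissible range and $u^+(u)$ stays strictly inside $\Pi_{C,s_0}$. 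I will take (ii) as given and focus the plan on (i).

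For (i), I would start from identity \eqref{eq:RH-CONT} specialized to $s=s^*(u)$ and read $\int_0^{s^*}\dot\sigma(t)\tilde\eta(u)\,dt = (\sigma_\pm - \lambda_1(u))\tilde\eta(u)$ to obtain
\begin{align*}
    D_{\max}(u) \;=\; D_\cont(u) \;+\; (\sigma_\pm - \lambda_1(u))\,\tilde\eta(u) \;+\; \int_0^{s^*}\dot\sigma(t)\,\eta(u|\S_u^1(t))\,dt.
\end{align*}
Because $s^*(u)$ is an interior critical point of $s \mapsto D_{RH}(u,\S_u^1(s),\sigma_u^1(s))$, differentiating directly and using the differentiated Rankine-Hugoniot relation $(f'(u^+) - \sigma_\pm I)\dot\S_u^1(s^*) = \dot\sigma(s^*)(u^+ - u)$ together with the scalar product identities for $\nabla\eta(u^+)(u^+-u)$ and $\nabla\eta(u_R)(u^+-u)$ yields the clean identity $\tilde\eta(u) = -\eta(u|u^+(u))$. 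Substituting and using $(\lambda_1(u) - \sigma_\pm) = -\int_0^{s^*}\dot\sigma(t)\,dt$ collapses the display into
\begin{align*}
    D_{\max}(u) \;=\; D_\cont(u) \;+\; \int_0^{s^*}\dot\sigma(t)\bigl[\eta(u|\S_u^1(t)) - \eta(u|u^+(u))\bigr]\,dt.
\end{align*}
By Assumption \ref{assum}\ref{assum:rel-ent-strengthens} the map $t \mapsto \eta(u|\S_u^1(t))$ is nondecreasing, so the bracket is nonpositive and, together with $\dot\sigma < 0$, the integral is nonnegative; its absolute value is bounded by $(\lambda_1(u)-\sigma_\pm)\,\eta(u|u^+)$, which by the asymptotics in Lemma \ref{lem_hugoniot} and Assumption \ref{assum}\ref{assum:gnl} is $\lesssim (s^*)^3 \lesssim s_0^3$ (using $s^* \lesssim s_0$ from Lemma \ref{gkv:lem_s-star}).

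To absorb this positive $\mathcal O(s_0^3)$ residual, I would invoke the sharper bound on $D_\cont$ that comes from the scaling argument in the proof of Proposition \ref{prop:cont}, namely $D_\cont(u) \lesssim -s_0^3 - s_0|u-u^*|^2$ on $\Pi_{C,s_0}^*$, combined with Lemma \ref{lemma:gkv-improved-cont} which gives $D_\cont(u) \lesssim -C^{1/3}s_0^3$ for $u \notin Q_{C,s_0}$. For $u \in \Pi_{C,s_0} \setminus R_{C,s_0}(K_h)$ exactly one of two things happens: either $|b_1| > K_h s_0$, in which case $|u-u^*|^2 \gtrsim K_h^2 s_0^2$ and $D_\cont(u) \lesssim -K_h^2 s_0^3 - s_0|u-u^*|^2/2$; or $|b_\perp| > C^{-1/3} s_0$, placing $u \notin Q_{C,s_0}$ and giving $D_\cont(u) \lesssim -C^{1/3} s_0^3$. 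In either case, choosing $K_h$ (and $C$) large enough that the coefficient of $-s_0^3$ strictly dominates the universal constant in the residual, we conclude
\begin{align*}
    D_{\max}(u) \;\lesssim\; -s_0^3 - s_0|u-u^*|^2.
\end{align*}

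The final step is purely geometric: the distance estimates $|u_L - u^*|,\,|u_R - u^*| \lesssim s_0$ of Lemma \ref{lemma:gkv-dist-u0} together with $|u^+ - u| \leq s^* \lesssim s_0$ give
\begin{align*}
    |u-u_L|^2 + |u^+(u) - u_R|^2 \;\lesssim\; s_0^2 + |u-u^*|^2,
\end{align*}
which is majorized by the right-hand side of the previous display (up to a universal constant), delivering the desired bound. The main obstacle is the delicate order matching: the positive residual produced by the critical-point identity is of \emph{exactly} the same order $s_0^3$ as the baseline dissipation provided by Proposition \ref{prop:cont}, so the argument cannot rely on the form of Proposition \ref{prop:cont} as stated; instead, it depends crucially on the strengthened coercivity in terms of $|u-u^*|$ available inside the proof of Proposition \ref{prop:cont}, together with the transverse-direction gain in Lemma \ref{lemma:gkv-improved-cont}, both of which require the exclusion of the thin tube $R_{C,s_0}(K_h)$ to be effective. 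This is what dictates the existence of the universal $K_h$ in the statement.
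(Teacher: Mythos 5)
Your plan takes a genuinely different decomposition than the paper and both parts of it run into trouble.

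\textbf{The decomposition is different, not just a reorganization of the paper's.} You expand $D_{\max}(u)$ around the \emph{left} state $u$, collapsing into $D_\cont(u)$ plus a \emph{nonnegative} residual $\int_0^{s^*}\dot\sigma(t)\bigl[\eta(u|\S_u^1(t))-\eta(u|u^+(u))\bigr]\,dt$ that then must be absorbed. The paper (Lemma \ref{lemma:Dmax-u+-inside}) instead applies \eqref{eq:entropy-quant} with $v=u_L$ to re-express everything at the \emph{right} state $u^+(u)$, yielding $D_{\max}(u)\leq D_\cont(u^+(u))+(1+Cs_0)\int_0^{s^*}\dot\sigma(t)\eta(u|\S_u^1(t))\,dt$, where both the extra boundary term $(\sigma_\pm-\lambda_1(u^+))\tilde\eta(u^+)$ (since $u^+\in\Pi_{C,s_0}$) and the integral are nonpositive. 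The paper's integral therefore \emph{helps}, and the remaining work is algebraic bookkeeping; your residual \emph{hurts}, and has to be beaten by the coercive part of the $D_\cont$ bound.

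\textbf{Claim (ii) is false.} You assert that for $K_h$ large, $\Pi_{C,s_0}\setminus R_{C,s_0}(K_h)\subset\{u:u^+(u)\in\Pi_{C,s_0}\}$. But $R_{C,s_0}(K_h)$ is a thin tube of transverse radius $C^{-1/3}s_0$ around $u^*$; states with $-\tilde\eta(u)<K^*s_0^2$ (i.e.\ within $\sim s_0$ of $\partial\Pi_{C,s_0}$) that are transversely far from $u^*$ lie in $\Pi\setminus R_{C,s_0}(K_h)$ yet satisfy $u^+(u)\notin\Pi_{C,s_0}$. This is exactly why the paper's Proposition \ref{prop:Q-uplus-outside} exists: it handles $u\in\Pi\setminus Q_{C,s_0}$ with $u^+(u)\notin\Pi_{C,s_0}$. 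Your reduction drops this whole case.

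\textbf{The residual bound and the absorption step are wrong.} You assert $s^*(u)\lesssim s_0$ ``from Lemma \ref{gkv:lem_s-star}.'' That lemma gives $|u-u^+(u)|^2\lesssim s_0\,d(u,\partial\Pi_{C,s_0})\lesssim s_0/C$, i.e.\ $s^*\lesssim\sqrt{s_0/C}$, which is \emph{larger} than $s_0$ precisely when $s_0\ll C^{-1}$ (the usual regime: $C$ fixed large, $s_0\to 0$). The bound $s^*\lesssim s_0$ holds only under the side condition $u^+(u)\notin\Pi_{C,s_0}$ used in Proposition \ref{prop:Q-uplus-outside} — the opposite of your setting. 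Consequently your residual is $\lesssim(s^*)^3\lesssim(s_0/C)^{3/2}$, not $\lesssim s_0^3$; the ratio $(s_0/C)^{3/2}/s_0^3=(s_0C)^{-3/2}\to\infty$ as $s_0\to 0$. The proposed absorption — ``choose $K_h$ and $C$ large so the coefficient of $-s_0^3$ dominates the residual's constant'' — cannot work on its face, since the residual is not of order $s_0^3$. Nor does the transverse gain $D_\cont(u)\lesssim -C^{1/3}s_0^3$ from Lemma \ref{lemma:gkv-improved-cont} rescue Case~2: $C^{1/3}s_0^3\big/(s_0/C)^{3/2}=C^{11/6}s_0^{3/2}\to 0$ as $s_0\to 0$ with $C$ fixed. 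An absorption is recoverable, but only by working with $(s_0\,d(u,\partial\Pi))^{3/2}$, Young's inequality, and the coercive term $-s_0|u-u^*|^2$ in the correct variable $d(u,\partial\Pi)$ — a materially more careful argument than what you wrote, and one that still leaves Claim (ii) unaddressed.
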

This proposition follows immediately from the following lemmas
\begin{lemma}
    \label{lemma:Dmax-u+-inside}
    Suppose the maximal shock $(u,u^+(u),\sigma_\pm)$ satisfies $u, u^+(u) \in \Pi_{C,s_0}$. 
    Then, 
    \begin{equation}
        D_{\max}(u) \leq D_{\cont}(u^+(u)) + \int_0^{s^*} \dot \sigma(t) \eta(u|\S_u^1(t))\,dt
    \end{equation}
    Moreover, for $C$ sufficiently large and $s_0$ sufficiently small we have 
    \begin{equation}
        D_{\max}(u) \lesssim -s_0(|u-u_L|^2 + |u^+(u) - u_R|^2).
    \end{equation}
\end{lemma}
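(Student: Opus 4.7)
The plan is to first derive the inequality in the Lemma and then pass to the quantitative estimate.

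For the first part, I would apply the quantified entropy dissipation identity (Lemma \ref{lemma:entropy-quant}) with $v = u_L$ to the shock $(u, u^+(u), \sigma_\pm)$, yielding
\[
q(u^+;u_L) - \sigma_\pm \eta(u^+|u_L) \;=\; q(u;u_L) - \sigma_\pm \eta(u|u_L) + \int_0^{s^*} \dot\sigma(t)\,\eta(u|\S_u^1(t))\,dt.
\]
Solving for $q(u;u_L) - \sigma_\pm \eta(u|u_L)$ and substituting into the definition of $D_{\RH}(u,u^+,\sigma_\pm)$, then reorganizing using the definition $D_{\cont}(v) = -\tilde q(v) + \lambda_1(v)\tilde\eta(v)$, I expect to obtain the exact identity
\[
D_{\max}(u) = D_{\cont}(u^+) + (\sigma_\pm - \lambda_1(u^+))\tilde\eta(u^+) + (1+Cs_0)\int_0^{s^*}\dot\sigma(t)\,\eta(u|\S_u^1(t))\,dt.
\]
The middle term is non-positive: the Lax condition from Assumption \ref{assum}\ref{assum:1-shock-lax-cond-greater} (cf.\ Lemma \ref{lem_hugoniot}) gives $\sigma_\pm > \lambda_1(u^+)$, while $u^+ \in \Pi_{C,s_0}$ gives $\tilde\eta(u^+)\le 0$ by definition of $\Pi_{C,s_0}$. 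The integral is non-positive since $\dot\sigma < 0$ (Assumption \ref{assum}\ref{assum:rel-ent-strengthens}) and $\eta(u|\cdot)\geq 0$; since $1+Cs_0 \ge 1$, replacing this coefficient by $1$ only increases the right-hand side. Dropping the middle term yields the claimed inequality.

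For the quantitative bound, I would apply Proposition \ref{prop:cont} (valid since $u^+ \in \Pi_{C,s_0} \subset \Pi_{C,s_0}^*$) to get $D_{\cont}(u^+) \lesssim -s_0(|u^+ - u_L|^2 + |u^+ - u_R|^2)$. For the integral, the shock curve parametrization from Lemma \ref{lem_hugoniot} gives $\S_u^1(t) = u + r_1(u)t + O(t^2)$, so by Lemma \ref{lemma:leger-square} we have $\eta(u|\S_u^1(t)) \gtrsim t^2$ for small $t$; combined with the uniform upper bound $\dot\sigma(t) \le -\delta_0 < 0$ (from the genuine nonlinearity Assumption \ref{assum}\ref{assum:gnl} together with the asymptotic expansion in Lemma \ref{lem_hugoniot}), this yields $\int_0^{s^*}\dot\sigma\,\eta(u|\S_u^1(t))\,dt \le -c(s^*)^3$. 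Combining:
\[
D_{\max}(u) \;\lesssim\; -s_0 |u^+ - u_L|^2 - s_0|u^+ - u_R|^2 - (s^*)^3.
\]

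To convert $|u^+ - u_L|^2$ into $|u-u_L|^2$, I would use $|u - u_L|^2 \le 2|u^+ - u_L|^2 + 2(s^*)^2$, producing an error term of the form $s_0 (s^*)^2$ that must be absorbed by the cubic $-(s^*)^3$ or by the $-s_0 |u^+ - u_R|^2$ term. The main obstacle is precisely this absorption: the inequality $s_0(s^*)^2 \le C(s^*)^3$ only holds when $s^* \gtrsim s_0$, which is not automatic. I expect one resolves this by a case analysis on the size of $s^*$ relative to $s_0$: when $s^* \gtrsim s_0$ the cubic term dominates directly; when $s^* \ll s_0$, one must invoke geometric relations between $|u^+ - u_R|$, $|u-u_L|$ and $s^*$ (derived from the parametrizations $u^+ = \S_u^1(s^*)$ and $u_R = \S_{u_L}^1(s_0)$), likely through Lemma \ref{lem:gkv_dist} and Lemma \ref{gkv:lem_s-star} which control these distances in $\Pi_{C,s_0}$. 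In particular, when $s^*$ is small, $u$ and $u^+$ are close, so $|u-u_L|\approx |u^+-u_L|$ and the bound follows from the leading Proposition \ref{prop:cont} estimate directly, with the cubic correction being superfluous. Making this dichotomy precise is the delicate step where the choice of constants from Proposition \ref{prop:cont} and the structure of $\Pi_{C,s_0}$ must be used carefully.
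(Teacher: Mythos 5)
Your derivation of the first inequality is correct and follows the paper's route exactly: apply the quantified dissipation identity with $v=u_L$ at $s=s^*$, substitute into $D_{\RH}$, and recenter the $\sigma_\pm$-terms around $\lambda_1(u^+(u))$ to produce $D_{\cont}(u^+(u))$. Your middle term $(\sigma_\pm-\lambda_1(u^+))\tilde\eta(u^+)$ is the correct one (the paper's display writes $\tilde\eta(u)$ and the parenthetical claim ``$\tilde\eta(u^+(u))\geq 0$'' should be ``$\leq 0$,'' both evidently typos; the estimate survives either way since $u,\, u^+\in\Pi_{C,s_0}$). Your sign analysis of the remaining terms and the replacement of $1+Cs_0$ by $1$ are all correct. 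For the quantitative bound, your strategy---apply Proposition \ref{prop:cont} at $u^+$, bound the integral by $-(s^*)^3$, convert $|u^+-u_L|^2$ to $|u-u_L|^2$ at cost $s_0(s^*)^2$, then case-split on $s^*$ vs.\ $s_0$---is precisely what the paper does, including the invocation of Lemma \ref{gkv:lem_s-star} and Lemma \ref{lem:gkv_dist}.

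The one place you leave genuinely underspecified is the small-$s^*$ branch. Your remark that ``$|u-u_L|\approx|u^+-u_L|$ so the bound follows directly'' does not say where the $s_0(s^*)^2$ error actually goes, and the paper needs a specific geometric input to absorb it: since $u^+\in\Pi_{C,s_0}$ while $u_R\notin\Pi_{C,s_0}$, one has $|u^+-u_R|\geq d(u_R,\Pi_{C,s_0})$, and Lemma \ref{lem:gkv_dist} combined with $\tilde\eta(u_R)\sim s_0^2$ and Lemma \ref{lemma:leger-square} forces $d(u_R,\Pi_{C,s_0})\gtrsim s_0$. That uniform lower bound is what lets you fix $K_1$ small enough that the error $s_0(s^*)^2 < K_1^2 s_0^3$ is swallowed by $-\tfrac{1}{2}s_0|u^+-u_R|^2$. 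So the closing input is the uniform separation of $u_R$ from $\Pi_{C,s_0}$---not a three-way relation among $|u^+-u_R|$, $|u-u_L|$ and $s^*$ as you suggest---and with that observation your proposal completes the argument.
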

Furthermore, we find that states $u$ far enough inside of $\Pi_{C,s_0}$ always have $u^+(u) \in \Pi_{C,s_0}$.
\begin{lemma}[Lemma 6.10 from \cite{MR4667839}]
    There is a universal constant $K^*$ such that for all $C$ sufficiently large and $s_0$ sufficiently small, for $u \in \Pi_{C,s_0}$ satisfying
    $$ - \tilde \eta(u) \geq K^* s_0^2$$ 
    we have $u^+(u) \in \Pi_{C,s_0}$ and, consequently, 
    $$ D_{\max}(u) \lesssim -s_0(|u-u_L|^2 + |u^+(u) - u_R|^2) $$
    for such $u$.  
\end{lemma}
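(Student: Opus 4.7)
The plan is to derive an exact identity relating $D_{\max}(u)$ to $D_{\cont}(u^+(u))$ plus two correction terms, verify that both corrections have the right sign and can be dropped, and then combine \Cref{prop:cont} applied at $u^+$ with a triangle-inequality argument to transfer $|u^+-u_L|^2$ into $|u-u_L|^2$.

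The identity is obtained by applying \Cref{lemma:entropy-quant} (equation \eqref{eq:entropy-quant}) along the shock curve $\S_u^1$ with $v=u_L$, $i=1$, and $s=s^*(u)$. This lets me rewrite $q(u;u_L) - \sigma_\pm \eta(u|u_L)$ as $q(u^+;u_L) - \sigma_\pm\eta(u^+|u_L) - \int_0^{s^*}\dot\sigma(t)\,\eta(u|\S_u^1(t))\,dt$; substituting this into the definition $D_{\max}(u) = D_{\RH}(u,u^+,\sigma_\pm)$ to remove $u$ on the $u_L$-side and then converting the resulting $\sigma_\pm$-terms at $u^+$ into $\lambda_1(u^+)$-terms yields
\begin{align*}
D_{\max}(u) \;=\; D_{\cont}(u^+) \;+\; (\sigma_\pm - \lambda_1(u^+))\,\tilde\eta(u^+) \;+\; (1+Cs_0)\int_0^{s^*}\dot\sigma(t)\,\eta(u|\S_u^1(t))\,dt.
\end{align*}
The Lax inequality from \Cref{lem_hugoniot} gives $\sigma_\pm > \lambda_1(u^+)$, and the hypothesis $u^+\in\Pi_{C,s_0}$ forces $\tilde\eta(u^+)<0$, so the middle term is non-positive and can be discarded. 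For the integral, $\dot\sigma < 0$ by Assumption \ref{assum} \ref{assum:rel-ent-strengthens} (together with \Cref{lem_hugoniot}) and $\eta(u|\cdot)\geq 0$, so the integrand is non-positive; multiplying by $(1+Cs_0)>1$ only makes it more negative, so replacing the factor $(1+Cs_0)$ by $1$ weakens the inequality in the correct direction and proves the first claim.

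For the moreover part, applying \Cref{prop:cont} at $u^+\in\Pi_{C,s_0}\subset\Pi_{C,s_0}^*$ yields $D_{\cont}(u^+)\lesssim -s_0(|u^+-u_L|^2 + |u^+-u_R|^2)$, while the arclength parameterization from \Cref{lem_hugoniot} combined with \Cref{lemma:leger-square} gives $\int_0^{s^*}\dot\sigma(t)\,\eta(u|\S_u^1(t))\,dt \lesssim -(s^*)^3$. To convert $|u^+-u_L|^2$ into $|u-u_L|^2$, the triangle inequality $|u-u_L|^2 \le 2(s^*)^2 + 2|u^+-u_L|^2$ introduces a cost of order $s_0(s^*)^2$, which I would absorb by a three-way case split. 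If $s^*\gtrsim s_0$ then $(s^*)^3\gtrsim s_0(s^*)^2$ and the integral absorbs the cost. If $|u-u_L| \gtrsim s^*$, the triangle inequality loses only a constant factor (since $|u^+-u_L| \geq |u-u_L| - s^* \gtrsim |u-u_L|$) and \Cref{prop:cont} applies essentially directly. Otherwise $s^*\ll s_0$ and $|u-u_L|\lesssim s_0$, so $s_0|u-u_L|^2 \lesssim s_0^3$, which is absorbed by $-s_0|u^+-u_R|^2$ or $-s_0|u^+-u_L|^2$ using the geometric constraint $|u^+-u_L|+|u^+-u_R|\geq|u_L-u_R|=s_0$ that forces at least one of these to be $\gtrsim s_0$.

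The main obstacle will be the third subcase: when both $s^*$ and $|u-u_L|$ are much smaller than $s_0$, neither the integral term nor the direct triangle inequality alone suffices to absorb $s_0|u-u_L|^2$, and one must exploit the geometric constraint coming from the fixed reference shock size $|u_L - u_R| = s_0$ to force a nontrivial lower bound on at least one of $|u^+-u_L|$ or $|u^+-u_R|$. Once this case is handled, the three regimes cover all possibilities and the desired bound follows.
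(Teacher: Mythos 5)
Your proposal addresses only the ``consequently'' half of the lemma. The genuinely new assertion here is the first claim: that the hypothesis $-\tilde\eta(u)\geq K^*s_0^2$ \emph{implies} $u^+(u)\in\Pi_{C,s_0}$. You refer to ``the hypothesis $u^+\in\Pi_{C,s_0}$'', but this is a conclusion of the lemma, not a hypothesis, and your proposal never establishes it. This matters: the entire point of the paper's organization is that \Cref{lemma:Dmax-u+-inside} handles the dissipation bound under the assumption $u,u^+(u)\in\Pi_{C,s_0}$, while the present lemma supplies the geometric fact (imported from \cite{MR4667839}) that this assumption is satisfied once $u$ lies deep enough inside $\Pi_{C,s_0}$. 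A complete blind proof would need to show, e.g.\ by expanding $\tilde\eta(u^+)=\tilde\eta(u)+\nabla\tilde\eta(u)\cdot(u^+-u)+\bigO(Cs_0|u^+-u|^2)$ and using \Cref{gkv:lem_s-star} ($\eta(u|u^+)=-\tilde\eta(u)$, hence $s^*\sim\sqrt{-\tilde\eta(u)}$) together with the gradient/Hessian bounds of \Cref{lem:gkv_eta}, that the negativity of $\tilde\eta(u)$ survives the step to $u^+$ when $K^*$ is chosen large; you do none of this.

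For the part you do address, the route is essentially the paper's own (\Cref{lemma:Dmax-u+-inside}): apply \eqref{eq:entropy-quant} with $v=u_L$ to push the $u_L$-side of $D_{\RH}$ onto $u^+$, shift $\sigma_\pm$ to $\lambda_1(u^+)$ to expose $D_{\cont}(u^+)$, discard the signed correction terms, and finish with \Cref{prop:cont} plus a case split on $s^*$. Two small remarks. First, your identity correctly has the middle term $(\sigma_\pm-\lambda_1(u^+))\tilde\eta(u^+)$ with $\tilde\eta$ evaluated at $u^+$ and with $\tilde\eta(u^+)<0$; the paper's display \eqref{eq:Dmax-Dcont-uplus} writes $\tilde\eta(u)$ and then says ``$\tilde\eta(u^+(u))\geq 0$ by assumption,'' both of which are typos (the correct sign is $\leq 0$ since $u^+\in\overline{\Pi_{C,s_0}}$), so your version is the right one. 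Second, your three-way case split is workable but looser than the paper's two-way split: the paper isolates the single geometric fact $d(u_R,\Pi_{C,s_0})\gtrsim s_0$ (via \Cref{lemma:leger-square} and \Cref{lem:gkv_dist}) and uses it to absorb the triangle-inequality cost for all $s^*<K_1s_0$ at once, whereas your Case~3 is phrased as ``$|u-u_L|\lesssim s_0$'', which is not small enough by itself --- what you actually need (and do have, being the negation of Case~2) is $|u-u_L|\ll s^*\ll s_0$, and you should say so explicitly to make the absorption of $s_0|u-u_L|^2$ against $-s_0|u^+-u_R|^2$ legitimate without constant-chasing surprises.
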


\begin{proof}[Proof of Lemma \ref{lemma:Dmax-u+-inside}]
    We begin by improving the bound of Lemma 6.9 of \cite{MR4667839} by using the quantified dissipation formula~\eqref{eq:entropy-quant} with $v = u_L$ and $s = s^*(u)$,
    $$ q(u;u_L) - \sigma_{\pm} \eta(u|u_L) = q(u^+(u);u_L) - \sigma_{\pm} \eta(u^+(u)|u_L) - \int_0^{s^*} \dot \sigma(t) \eta(u|\S_u^1(t))\,dt.$$
    Substituting this into $D_{\max}$ we arrive at the identity
    \begin{align*}
        D_{\max}(u) =& [q(u^+(u); u_R) - \sigma_\pm \eta(u^+(u) | u_R)] - (1 + Cs_0) [q(u; u_L) - \sigma_\pm \eta(u | u_L)] \\
        =&[q(u^+(u); u_R) - \sigma_\pm \eta(u^+(u) | u_R)] - (1 + Cs_0) [q(u^+(u); u_L) - \sigma_\pm \eta(u^+(u) | u_L)] \\
        &+ (1+Cs_0)\int_0^{s^*} \dot \sigma(t) \eta(u|\S_u^1(t))\,dt,
    \end{align*}
    where now the relative quantities are all evaluated at $u^+$. 
    Adding and subtracting $\lambda_1(u^+(u))\eta(u^+(u)|u_R)$ and $(1+Cs_0)\lambda_1(u^+(u))\eta(u^+(u)|u_L)$ we find
    \begin{equation} \label{eq:Dmax-Dcont-uplus}
    \begin{aligned}
        D_{\max}(u) &= D_{\cont}(u^+(u)) + (\sigma_\pm - \lambda_1(u^+(u))) \tilde \eta(u) + (1+Cs_0)\int_0^{s^*} \dot \sigma(t) \eta(u|\S_u^1(t))\,dt \\
        &\leq  D_{\cont}(u^+(u)) + \int_0^{s^*} \dot \sigma(t) \eta(u|\S_u^1(t))\,dt,
    \end{aligned}\end{equation}
    where we used that $\tilde \eta(u^+(u)) \geq 0$ by assumption, $\sigma >  \lambda_1(u^+(u))$ due to Assumption \ref{assum} \ref{assum:1-shock-lax-cond-greater}, and the integral term being non-positive.

    To use this bound, we first note by Lemmas \ref{lemma:leger-square} and \ref{lem:gkv_dist} that $s_0^2\sim \tilde \eta(u_R) \lesssim s_0d(u,\Pi_{c,s_0})$, hence there exists a universal constant $K_1 > 0$ such that
    $$2 K_1^2 s_0^2 < d(u_R, \Pi_{C,s_0})^2.$$
    In particular we note that this constant is independent of $C$, $s_0$.  
    For states $u$ with $s^*(u) < K_1s_0$ we elect to use solely our bound on $D_{\cont}(u^+(u))$. 
    We have by~\eqref{eq:Dmax-Dcont-uplus}, Proposition \ref{prop:cont}, and Lemma \ref{lem:gkv_dist} the bound
    \begin{align*}
        D_{\max}(u) \leq& D_{\cont}(u^+(u)) \\
        \lesssim& -s_0( |u^+(u) - u_L|^2 + |u^+(u) - u_R|^2) \\
        =& -s_0\left( \frac{1}{2}|u - u_L|^2 + |u^+(u) - u_R|^2\right) + \frac{s_0}{2}( |u-u_L|^2 - 2|u^+(u) - u_L|^2) \\
        \leq& -s_0\left( \frac{1}{2}|u - u_L|^2 + |u^+(u) - u_R|^2\right) + s_0(s^*)^2 \\
        \leq& -\frac{s_0}{2}( |u - u_L|^2 + |u^+(u) - u_R|^2) + \frac{1}{2}s_0( 2(s^*)^2 - |u^+(u) - u_R|^2) \\
        \leq& -\frac{s_0}{2}( |u - u_L|^2 + |u^+(u) - u_R|^2) +  \frac{1}{2}s_0( 2(s^*)^2 - d(u_R, \Pi_{C,s_0})^2 ) \\
        \leq& -\frac{s_0}{2}( |u - u_L|^2 + |u^+(u) - u_R|^2) +  \frac{1}{2}s_0( 2K_1^2s_0^2 - d(u_R, \Pi_{C,s_0})^2 )\\
        \leq& -\frac{s_0}{2}( |u - u_L|^2 + |u^+(u) - u_R|^2)
    \end{align*}
    where the last line follows by definition of $K_1$ and carefully tracking the constants. 

    For states $s^*(u) \geq K_1s_0$ our upper bound on $D_{\cont}(u^+(u))$ is insufficient. 
    However, for the integral term of equation~\eqref{eq:Dmax-Dcont-uplus} there exists a universal constant $K_2$ giving the upper bound 
    $$\int_0^{s^*} \dot \sigma(t) \eta(u|\S_u^1(t))\,dt \leq -2K_2\int_0^{s^*} (s^*)^2\,dt \leq -K_2(s^*)^3,$$
    due to $\dot \sigma \lesssim -1$ and Lemma \ref{lemma:leger-square}. 
    Further, we note that $K_1 \leq 1$, as $d(u_R,\Pi_{C,s_0}) \leq d(u_R,u_L) = s_0$. For $s^* \geq K_1 s_0$ we find
    \begin{align*}
        D_{\max}(u) \leq& D_{\cont}(u^+(u)) - K_2(s^*)^3 \\
        \leq& \min\left(1, \frac{K_1K_2}{K}\right)D_{\cont}(u^+(u)) - K_1K_2s_0(s^*)^2 \\
        \leq& -\min(K,K_1K_2)s_0( |u - u_L|^2 + |u^+(u) - u_R|^2) + \frac{1}{2}K_1K_2s_0(s^*)^2 - K_1K_2s_0(s^*)^2 \\
        \lesssim& -s_0( |u - u_L|^2 + |u^+(u) - u_R|^2),
    \end{align*}
    where $K$ is the universal constant granted by Proposition \ref{prop:cont}.
\end{proof}
The remainder of Proposition \ref{prop:Dmax-kh} follows identically to the corresponding proof in \cite[p.~584]{MR4667839}.
This establishes Proposition \ref{prop:dmax} outside of the small length scale $R_{C,s_0}(K_h)$, which is defined by equation~\eqref{eq:Rdef}.

\bigskip
\paragraph{\bf Step 1.3: Extending into the small scale $R_{C,s_0}$}

To begin extending our bound into the region $R_{C,s_0}(K_h)$ we decompose it into $R_{C,s_0}(K_h) = R_{C,s_0}^{bd}  \cup R_{C,s_0}^0 \cup R_{C,s_0}^+$, where these sets will be defined in forthcoming lemmas.
We first handle $u\in R_{C,s_0}^{bd}$, which are the states near $\partial \Pi_{C,s_0} \cap R_{C,s_0}$,
\begin{proposition}[Proposition 6.11 from \cite{MR4667839}]
    There is a universal constant $K_{bd}$ such that for all $C$ sufficiently large and $s_0$ sufficiently small, defining
    \begin{equation}
        R_{C,s_0}^{bd} := \{ u\in R_{C,s_0} | |u-u^*| \leq K_{bd}s_0 \}
    \end{equation}
    we have for all $u\in R_{C,s_0}^{bd}$ that $$D_{\max}(u) \lesssim -s_0(|u-u_L|^2 + |u^+(u) - u_R|^2).$$
\end{proposition}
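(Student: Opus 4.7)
My plan is to leverage the identity~\eqref{eq:RH-CONT}, combined with the improved continuous dissipation estimate from Proposition~\ref{prop:cont}, to upgrade the bound of \cite[Proposition 6.11]{MR4667839} from order $-s_0^3$ to the full form $-s_0(|u-u_L|^2+|u^+(u)-u_R|^2)$ demanded by Proposition~\ref{prop:dmax}.

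First, for any $u\in R^{bd}_{C,s_0}$ the constraint $|u-u^*|\leq K_{bd}s_0$ together with $s_0$ small enough (depending on $C$) forces $|u-u^*|\leq K_{ball}C^{-1}$, so that $u\in B_r(u^*)\subset \Pi^*_{C,s_0}$ with $r=K_{ball}C^{-1}$ chosen as in the definition of $\Pi^*_{C,s_0}$. Then Proposition~\ref{prop:cont} applies and yields
\begin{equation*}
D_{\cont}(u)\leq -K s_0\bigl(|u-u_L|^2+|u-u_R|^2\bigr).
\end{equation*}
Plugging this into the identity~\eqref{eq:RH-CONT} and discarding the non-positive integral term gives the preliminary estimate $D_{\max}(u)\leq D_{\cont}(u)\leq -Ks_0(|u-u_L|^2+|u-u_R|^2)$.

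The next step is to replace $|u-u_R|$ by $|u^+(u)-u_R|$, which is where the only real work happens. Using $|u^+(u)-u|=s^*(u)$, the triangle inequality yields
\begin{equation*}
|u^+(u)-u_R|^2 \leq 2|u-u_R|^2 + 2(s^*)^2,\quad\text{hence}\quad |u-u_R|^2\geq \tfrac{1}{2}|u^+(u)-u_R|^2 - (s^*)^2.
\end{equation*}
The resulting overshoot $+Ks_0(s^*)^2$ must be absorbed. I will retain the integral term of~\eqref{eq:RH-CONT} and use that $\dot\sigma\lesssim -1$ (Assumption~\ref{assum}~\ref{assum:rel-ent-strengthens} and Lemma~\ref{lem_hugoniot}), together with Lemma~\ref{lemma:leger-square} and the asymptotic expansion of $\S_u^1$ from Lemma~\ref{lem_hugoniot}, to obtain
\begin{equation*}
\int_0^{s^*}\dot\sigma(t)\bigl(\tilde\eta(u)+\eta(u\,|\,\S_u^1(t))\bigr)\,dt \;\lesssim\; -\int_0^{s^*} t^2\,dt \;\lesssim\; -(s^*)^3.
\end{equation*}
To close the argument I will control $s^*(u)$ on $R^{bd}_{C,s_0}$ by $s^*(u)\lesssim s_0$ (via Lemma~\ref{gkv:lem_s-star} and the fact that $u$ lies within $\mathcal{O}(s_0)$ of $\partial\Pi_{C,s_0}$), so that $(s^*)^3 \gtrsim s^{-1}_0\cdot s_0(s^*)^2$ only if $s^*\sim s_0$, which is insufficient on its own. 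Instead, I will split into two cases: when $s^*(u)\leq K_1 s_0$ for a small universal constant $K_1$, the error $s_0(s^*)^2\leq K_1^2 s_0^3$ is directly absorbed by the baseline negativity $D_{\cont}(u)\lesssim -s_0(|u-u_L|^2+|u-u_R|^2)\lesssim -s_0^3$ once we note that $|u-u_L|\gtrsim s_0$ on $R^{bd}_{C,s_0}$ (by Lemma~\ref{lemma:gkv-dist-u0} and the structure of $R_{C,s_0}$); when $s^*(u)\geq K_1 s_0$, the cubic gain $-(s^*)^3\lesssim -K_1 s_0(s^*)^2$ dominates the error. The main delicate point is verifying the lower bound $|u-u_L|\gtrsim s_0$ on $R^{bd}_{C,s_0}$, which follows from $u^*$ lying at distance $\sim s_0$ from $u_L$ (Lemma~\ref{lemma:gkv-dist-u0}) and choosing $K_{bd}$ strictly smaller than the universal constant governing $|u^*-u_L|/s_0$.

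Combining the cases, both yield the desired bound
\begin{equation*}
D_{\max}(u)\;\lesssim\; -s_0\bigl(|u-u_L|^2 + |u^+(u)-u_R|^2\bigr),
\end{equation*}
with the universal constant $K_{bd}$ chosen (independently of $C$ and $s_0$) small enough that both the containment in $\Pi^*_{C,s_0}$ and the lower bound $|u-u_L|\gtrsim s_0$ hold. The main obstacle in the execution will be the careful bookkeeping of constants in the case split, since the balance between the cubic integral gain $-(s^*)^3$ and the quadratic overshoot $s_0(s^*)^2$ only closes after using the refined estimate of Proposition~\ref{prop:cont} rather than the weaker $-s_0^3$ bound of \cite{MR4667839}.
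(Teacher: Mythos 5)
Your proposal shares the paper's overall strategy --- apply the identity~\eqref{eq:RH-CONT} together with the improved continuous bound of Proposition~\ref{prop:cont} --- but there is a sign error on the integral term that derails the argument. The integral $\int_0^{s^*}\dot\sigma(t)\bigl(\tilde\eta(u)+\eta(u|\S_u^1(t))\bigr)\,dt$ is non-negative, not non-positive. For $u$ in the interior of $\Pi_{C,s_0}$ we have $\tilde\eta(u)<0$; since $t\mapsto\eta(u|\S_u^1(t))$ is increasing (Assumption~\ref{assum}~\ref{assum:rel-ent-strengthens}) and equals $-\tilde\eta(u)$ at $t=s^*$ by Lemma~\ref{gkv:lem_s-star}, the second factor $\tilde\eta(u)+\eta(u|\S_u^1(t))$ is negative on $[0,s^*)$, and since $\dot\sigma(t)<0$ the integrand is positive. (Equivalently, $D_{\max}(u)=\max_{s\ge0}D_{\RH}(u,\S_u^1(s),\sigma_u^1(s))\ge D_{\RH}(u,\S_u^1(0),\sigma_u^1(0))=D_{\cont}(u)$ by definition of the max, so the correction must be $\ge0$.) Consequently your preliminary estimate $D_{\max}(u)\le D_{\cont}(u)$ is false, and the ``cubic gain'' $\int\lesssim-(s^*)^3$ you then invoke to absorb the overshoot $+Ks_0(s^*)^2$ does not exist; the bound $\int\lesssim-\int_0^{s^*}t^2\,dt$ would only hold if $\tilde\eta(u)=0$, i.e.\ for boundary states, and in general you are conflating the (negative) integral over $[s^*,s]$ used elsewhere in the paper with the (positive) integral over $[0,s^*]$ that appears here.

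The paper instead bounds this integral from \emph{above} as an error term: since $|\tilde\eta(u)+\eta(u|\S_u^1(t))|\le-\tilde\eta(u)\lesssim s_0\,d(u,\partial\Pi_{C,s_0})$ by Lemma~\ref{lem:gkv_dist} and $\dot\sigma$ is bounded, one finds $\int_0^{s^*}\lesssim s^*\,s_0\,d(u,\partial\Pi_{C,s_0})$. On $R^{bd}_{C,s_0}$ we have $d(u,\partial\Pi_{C,s_0})\lesssim K_{bd}s_0$ and, by Lemma~\ref{gkv:lem_s-star}, $(s^*)^2\lesssim s_0\,d(u,\partial\Pi_{C,s_0})\lesssim K_{bd}s_0^2$, so the integral error is $\lesssim K_{bd}^{3/2}s_0^3$ and the quadratic overshoot from replacing $|u-u_R|$ by $|u^+(u)-u_R|$ is $\lesssim s_0(s^*)^2\lesssim K_{bd}s_0^3$; both are absorbed, for $K_{bd}$ small, by the reserve negativity $D_{\cont}(u)\lesssim -s_0^3$ supplied by Proposition~\ref{prop:gkv-cont} (after splitting off half of the stronger Proposition~\ref{prop:cont} bound). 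A secondary observation: your case split is unnecessary, since the Lemma~\ref{gkv:lem_s-star} bound already forces $s^*\lesssim K_{bd}^{1/2}s_0$ everywhere on $R^{bd}_{C,s_0}$, so your regime $s^*\ge K_1 s_0$ is vacuous once $K_{bd}$ is chosen sufficiently small.
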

\begin{proof}
    By equation~\eqref{eq:RH-CONT} we have 
    \begin{align*}
        D_{\max}(u) =& D_{\cont}(u) +  \int_0^{s^*} \dot \sigma(t)(\tilde \eta(u) + \eta(u|\S_u^1(t))) \,dt \\
        \lesssim& \frac{1}{2}D_{\cont}(u)-\frac{s_0}{2}(|u-u_L|^2 + |u-u_R|^2 ) + s^*s_0 d(u,\partial \Pi_{C,s_0} ) \\
        \lesssim& D_{\cont}(u)-s_0(|u-u_L|^2 + |u^+(u)-u_R|^2) + s_0(s^*)^2+s^*s_0d(u,\partial \Pi), \\
        \shortintertext{supposing $u$ satisfies $d(u,\partial \Pi_{C,s_0}) < K_{bd} s_0$ we find }
        \lesssim& D_{\cont}(u)-s_0(|u-u_L|^2 + |u^+(u)-u_R|^2) + K_{bd}s_0^3 + K_{bd}^{3/2} s_0^3
    \end{align*}
    where we have applied Lemma \ref{lem:gkv_dist}, Lemma \ref{lemma:leger-square}, and Lemma \ref{gkv:lem_s-star}. 
    Recalling $D_{\cont}(u) \lesssim -s_0^3$ by Proposition \ref{prop:gkv-cont} we find for $K_{bd}$ sufficiently small we have the bound~\eqref{eq:Dmax-bound}.
\end{proof}

In the case of states sufficiently close to $u_L$ we have
\begin{proposition}[Proposition 6.12 of \cite{MR4667839}]
    There is a universal constant $K_0$ such that for any $C$ sufficiently large and $s_0$ sufficiently small (depending on $C$), defining
    \begin{equation} \label{eq:R0-def}
        R_{C,s_0}^0 := \{u \in R_{C,s_0} : | u - u_L| \leq K_0 s_0\}
    \end{equation}
    for any $u \in R_{C,s_0}^0$ we have the bound 
    \begin{equation} \label{eq:R0-bound}
        D_{\max}(u) \lesssim -s_0|u-u_L|^2 \lesssim -s_0( | u - u_L |^2 + |u^+(u) - u_R|^2).
    \end{equation}
\end{proposition}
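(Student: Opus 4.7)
My plan is a local Taylor expansion of $D_{\max}$ around the base state $u_L$ on the ball $B_{K_0 s_0}(u_L)$. The argument rests on three ingredients: $D_{\max}(u_L) = 0$, $u_L$ is a critical point of $D_{\max}$, and the Hessian of $D_{\max}$ at $u_L$ is uniformly negative-definite (in fact of order one, which is stronger than the $-s_0$ scaling we need).

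To establish $D_{\max}(u_L) = 0$, I apply the identity~\eqref{eq:RH-CONT} at $u = u_L$. The integrand $\dot\sigma(t)(\tilde\eta(u_L) + \eta(u_L|\S^1_{u_L}(t)))$ changes sign exactly at $t = s_0$, since $s \mapsto \eta(u_L|\S^1_{u_L}(s))$ is strictly increasing by Assumption~\ref{assum}~\ref{assum:rel-ent-strengthens}, $\dot\sigma < 0$, and $\eta(u_L|\S^1_{u_L}(s_0)) = \eta(u_L|u_R) = -\tilde\eta(u_L)$. Hence $s^*(u_L) = s_0$, $u^+(u_L) = u_R$, $\sigma^*(u_L) = \sigma_{LR}$, and all four relative quantities in $D_{\RH}(u_L,u_R,\sigma_{LR})$ vanish. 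For the gradient, the envelope theorem reduces $\nabla D_{\max}(u_L)$ to $\partial_u D_{\RH}|_{u_L, s_0} = -(1+Cs_0)[q'(u_L) - \eta'(u_L)f'(u_L)] = 0$ by the compatibility $q' = \eta' f'$ from Assumption~\ref{assum}~\ref{assum:entropy-entropyflux}. For the Hessian, the partial-in-$u$ piece is $-(1+Cs_0)\,\eta''(u_L)[f'(u_L)-\sigma_{LR}I]$, which is symmetric negative-definite with eigenvalues $-(1+Cs_0)(\lambda_i(u_L) - \sigma_{LR})$ bounded away from zero by symmetrizability (existence of a strictly convex entropy) and the Lax inequality of Assumption~\ref{assum}~\ref{assum:hyperbolic}, \ref{assum:1-shock-lax-cond-greater}. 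Chain-rule contributions through $s^*(u), \S^1_u, \sigma^1_u$ are $O(1)$ in $s_0$ by \Cref{lem_hugoniot} and the Lipschitz bounds on $s^*$ from \Cref{gkv:lem_s-star}, so they cannot overturn the negativity. A Taylor expansion then gives $D_{\max}(u) \leq -K|u-u_L|^2 + K|u-u_L|^3$; for $u \in R_{C,s_0}^0$ with $|u-u_L| \leq K_0 s_0$, choosing $K_0$ small absorbs the cubic term and yields the first inequality (with room to spare). The second inequality follows because $u^+(u_L) = u_R$ and $u \mapsto u^+(u) = \S^1_u(s^*(u))$ is Lipschitz near $u_L$, so $|u^+(u)-u_R| \lesssim |u-u_L|$.

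The main technical obstacle I anticipate is cleanly bounding the full Hessian of $D_{\max}$, including the chain-rule contributions through $s^*(u), \S^1_u, \sigma^1_u$, since these involve implicit-function estimates on the optimizer. I plan to handle this via the Schur-complement formula (exploiting $\partial_s D_{\RH}|_{s^*} = 0$) or, equivalently, by working directly with~\eqref{eq:RH-CONT} and expanding the integrand in $(u-u_L)$ with no need to differentiate $s^*$ explicitly: the integrand already vanishes at $(u,t) = (u_L, s_0)$, so expanding to quadratic order there keeps the analysis fully explicit. The considerable gap between the target bound $-s_0|u-u_L|^2$ and the sharp bound $-|u-u_L|^2$ that this analysis produces leaves ample slack to absorb auxiliary error terms.
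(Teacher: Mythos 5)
Your outline gets the base-case facts right: $s^*(u_L)=s_0$, $u^+(u_L)=u_R$, $\sigma^1_{u_L}(s_0)=\sigma_{LR}$, $D_{\max}(u_L)=0$, and (via the envelope theorem and $q'=\eta'f'$) $\nabla D_{\max}(u_L)=0$. Your derivation of $s^*(u_L)=s_0$ from Assumption~\ref{assum}~\ref{assum:rel-ent-strengthens} and~\eqref{eq:RH-CONT} is correct, and the Lipschitz estimate $|u^+(u)-u_R|\lesssim|u-u_L|$ for the second inequality matches the paper's citation of Lemma~\ref{gkv_uplus-ident-2}.

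The Hessian step, however, has a substantive error. You identify the ``partial-in-$u$ piece'' as $-(1+Cs_0)\,\nabla^2\eta(u_L)[f'(u_L)-\sigma_{LR}I]$ coming from $-(1+Cs_0)[q(u;u_L)-\sigma\eta(u|u_L)]$, and claim the chain-rule contributions through $\S_u^1,\sigma_u^1$ cannot overturn this $O(1)$ negativity. But the first bracket of $D_{\RH}$, namely $q(\S_u^1(s);u_R)-\sigma_u^1(s)\eta(\S_u^1(s)|u_R)$, also depends on $u$ through $\S_u^1$, and its $\partial_u^2$ at $(u_L,s_0)$ equals $(\partial_u\S_u^1)^T\nabla^2\eta(u_R)[f'(u_R)-\sigma_{LR}I](\partial_u\S_u^1) = \nabla^2\eta(u_R)[f'(u_R)-\sigma_{LR}I]+O(s_0)$. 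This almost exactly cancels your claimed term, leaving the partial Hessian $g_{uu}|_{u_L,s_0}=-Cs_0\,\nabla^2\eta(u_L)[f'(u_L)-\sigma_{LR}I]+O(s_0)$, which is $O(s_0)$, not $O(1)$. So the conclusion $D_{\max}\lesssim -|u-u_L|^2$ is false, and the ``ample slack'' you invoke to absorb auxiliary errors does not exist. (You can see this bound must fail from the known asymptotics: $u^*\in\partial\Pi_{C,s_0}$ has $|u^*-u_L|\sim s_0$ and $D_{\max}(u^*)=D_{\cont}(u^*)\sim -s_0^3$, which is consistent with $-s_0|u-u_L|^2$ but not with $-|u-u_L|^2\sim -s_0^2$.)

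There is a second issue even within your claimed formula: the eigenvalue $\lambda_1(u_L)-\sigma_{LR}$ of $f'(u_L)-\sigma_{LR}I$ in the $r_1$ direction is not ``bounded away from zero by the Lax inequality'' --- for a small $1$-shock it is $\sim s_0$. So even without the cancellation above, the $r_1$ direction is degenerate at $O(1)$. Recovering negativity of order $-s_0$ in the $r_1$ direction actually requires genuine nonlinearity (the difference $\lambda_1(u_R)-\lambda_1(u_L)=s_0\,\nabla\lambda_1\cdot r_1+O(s_0^2)<0$ survives the cancellation in $g_{uu}$) together with a careful accounting of the Schur-complement correction $-g_{us}\otimes g_{us}/g_{ss}$ (which is also $O(s_0)$ and has a sign that must be tracked). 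So the Taylor-expansion-at-$u_L$ skeleton can be made to work, but the Hessian is genuinely of order $s_0$ and the computation is much more delicate than your proposal acknowledges; there is no spare order of magnitude to hide behind, and the cubic remainder has to be absorbed by choosing $K_0$ small against a coefficient that is itself only $O(s_0)$.
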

The proof of the above statement follows almost identically to that of Proposition 6.12 from \cite{MR4667839} and the Lipshitz bound $|\nabla u^+(u)| \lesssim 1$ from Lemma \ref{gkv_uplus-ident-2}.

We have shown the bound~\eqref{diss:shock} on both $R_{C,s_0}^{bd}$ and $R_{C,s_0}^0$, which leaves the region $$R^+_{C,s_0} := R_{C,s_0} \setminus [R_{C,s_0}^{bd} \cup R^0_{C,s_0}].$$
By continuity we have established the bound~\eqref{eq:Dmax-bound} on $\partial R_{C,s_0}^+$. We can further conclude that 
$$\underset{\mathclap{u \in R_{C,s_0}^+}}{\text{argmax}}\, D_{\max}(u) \in \partial R_{C,s_0}^+$$ 
by the following lemma.
\begin{lemma}[Lemmas 6.19, 6.20 from \cite{MR4667839}] \label{lemma:dmax-no-extrema}
    For any $C$ sufficiently large and $s_0$ sufficiently small, the function $D_{\max}$ does not have a critical point within the set $ R_{C,s_0}^+$. 
\end{lemma}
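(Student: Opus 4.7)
The plan is to show that $\nabla D_{\max}(u) \neq 0$ on $R_{C,s_0}^+$ by producing a quantitative lower bound of order $s_0^2$ on $|\nabla D_{\max}(u)|$. Since $s^*(u)$ is the interior maximizer of $s \mapsto D_{\RH}(u,\S_u^1(s),\sigma_u^1(s))$, the envelope theorem gives
\begin{equation*}
\nabla D_{\max}(u) = \nabla_u\bigl[D_{\RH}(u,\S_u^1(s),\sigma_u^1(s))\bigr]_{s=s^*(u)},
\end{equation*}
where the $u$-gradient accounts both for the direct $u$-dependence of $D_{\RH}$ and for the implicit dependence through $\S_u^1$ and $\sigma_u^1$. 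On $R_{C,s_0}$ we are in the regime $u^+(u) \in \Pi_{C,s_0}$, so the identity \eqref{eq:Dmax-Dcont-uplus} applies and gives the useful decomposition
\begin{equation*}
D_{\max}(u) = D_{\cont}(u^+(u)) + (\sigma_\pm - \lambda_1(u^+(u)))\,\tilde\eta(u) + (1+Cs_0)\int_0^{s^*(u)} \dot\sigma_u^1(t)\,\eta(u|\S_u^1(t))\,dt,
\end{equation*}
in which the dominant piece is $D_{\cont}(u^+(u))$.

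Differentiating, the principal contribution to $\nabla D_{\max}(u)$ is $\nabla u^+(u)^\top \nabla D_{\cont}(u^+(u))$. By the Taylor expansion of Lemma \ref{lemma:taylor-bounds-d-cont},
\begin{equation*}
\nabla D_{\cont}(u^+(u)) = \nabla^2 D_{\cont}(u^*)\,(u^+(u) - u^*) + \bigO\bigl(Cs_0\,|u^+(u) - u^*|^2\bigr),
\end{equation*}
with Hessian eigenvalue $\sim -s_0$ along $r_1(u^*)$ and $\sim -Cs_0$ in the transverse directions. I would then verify that in $R_{C,s_0}^+$ the state $u^+(u)$ is separated from $u^*$ by at least $K s_0$ with a component along $r_1(u^*)$: this uses (i) the Lipschitz control on $u \mapsto u^+(u)$ with Jacobian of order one (Lemma \ref{gkv_uplus-ident-2}), (ii) the fact that $u^+(u_L) \approx u_R$ lies at distance $\sim s_0$ from $u^*$ along $r_1$, and (iii) the exclusion of the balls $R_{C,s_0}^0$ (where $u \approx u_L$) and $R_{C,s_0}^{bd}$ (where $u \approx u^*$) from $R_{C,s_0}^+$. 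Together these give $|\nabla D_{\cont}(u^+(u))| \gtrsim s_0^2$. The remaining corrections in $\nabla D_{\max}(u)$, coming from $\tilde\eta(u) = \bigO(s_0^2)$, $|\sigma_\pm - \lambda_1(u^+(u))| = \bigO(s_0)$, and the $\bigO(s_0^3)$ integral, are of order at most $s_0^2$ with prefactors arranged to be dominated by the principal term when $C$ is sufficiently large.

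The main obstacle is the strong anisotropy of $\nabla^2 D_{\cont}(u^*)$: the transverse dissipation is $\sim Cs_0$ but the transverse extent of $R_{C,s_0}$ is only $C^{-1/3}s_0$, so the transverse contribution to the Hessian pairing is at best $C^{2/3}s_0^2 \cdot C = C^{-2/3}s_0^2$, while the $r_1$-contribution is a clean $s_0 \cdot s_0 = s_0^2$. The rigorous lower bound therefore depends on quantitatively controlling the projection $\langle u^+(u) - u^*,\, r_1(u^*)\rangle$, which would come from an implicit function theorem analysis of the first-order condition $\partial_s D_{\RH}|_{s=s^*(u)} = 0$ together with the critical-point characterization from Lemma \ref{lemma:gkv-max-char}. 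Once this $r_1$-projection lower bound is established, the conclusion follows: $\nabla D_{\max}(u)$ cannot vanish on $R_{C,s_0}^+$, and hence any maximum of $D_{\max}$ over $\overline{R_{C,s_0}^+}$ must be attained on its boundary, as needed for the bootstrap to \Cref{prop:dmax}.
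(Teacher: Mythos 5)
The paper does not reprove this lemma: it is quoted as Lemmas~6.19 and~6.20 of \cite{MR4667839} and treated as a black box, so there is no in-paper argument to compare against. Your proposal is therefore a fresh attempt. The envelope-theorem reduction of $\nabla D_{\max}$, the use of identity~\eqref{eq:Dmax-Dcont-uplus} to isolate $\nabla D_{\cont}(u^+(u))$ as the leading term, and the Taylor expansion of $\nabla D_{\cont}$ about $u^*$ via Lemma~\ref{lemma:taylor-bounds-d-cont} are all sensible moves.

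However, you yourself flag the critical missing piece, and it is a genuine gap: the whole argument hinges on a quantitative lower bound on the projection $\langle u^+(u) - u^*, r_1(u^*)\rangle$ uniformly over $R_{C,s_0}^+$, and you do not supply one. Excluding $R^{bd}_{C,s_0}$ and $R^0_{C,s_0}$ only separates $u$ from $u^*$ and from $u_L$; it does not directly control where $u^+(u) = \S_u^1(s^*(u))$ lands relative to $u^*$ along $r_1$, because $s^*(u)$ degenerates as $u$ approaches $\partial \Pi_{C,s_0}$ (Lemma~\ref{gkv:lem_s-star} gives $(s^*)^2 \lesssim s_0\, d(u,\partial\Pi_{C,s_0})$), pushing $u^+(u)$ back toward $u$, while the Lipschitz estimate of Lemma~\ref{gkv_uplus-ident-2} controls only the operator norm of $\nabla u^+$ and says nothing about the sign or magnitude of the projection you need. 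For the same reason, the chain-rule factor $\nabla u^+(u)^\top$ in front of $\nabla D_{\cont}(u^+(u))$ must be shown not to annihilate the dominant direction, which the upper bound $|\nabla u^+| \lesssim 1$ cannot do. Finally, the bookkeeping in your anisotropy discussion contains an arithmetic slip (the stated ``$C^{2/3}s_0^2 \cdot C = C^{-2/3}s_0^2$'' is not an identity), so even the comparison of the $r_1$ and transverse contributions is not established as written. The ``implicit function theorem analysis'' you defer to is essentially the entire content of the lemma, so as written the proposal does not prove the statement.
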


Using this and that the bound~\eqref{eq:Dmax-bound} holds on $\partial R^+_{C,s_0}$, we can now extend it into $R_{C,s_0}^+$ by a scaling argument. 
\begin{lemma}\label{lemma:dmax-R+}
    For any $C$ sufficiently large and $s_0$ sufficiently small, we have the bound 
    \begin{equation}
        D_{\max}(u) \lesssim -s_0(|u-u_L|^2 + |u^+(u) - u_R|^2)
    \end{equation}
    for all $u \in R_{C,s_0}^+$. 
\end{lemma}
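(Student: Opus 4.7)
The plan is to combine the critical-point-free structure of $D_{\max}$ on $R_{C,s_0}^+$ (given by Lemma~\ref{lemma:dmax-no-extrema}) with the boundary bounds already established in Step 1.3 (and Steps 1.1--1.2 where the boundary of $R_{C,s_0}^+$ meets $\Pi_{C,s_0}\setminus R_{C,s_0}$), invoking a scaling argument in the spirit of the one used at the end of the proof of Proposition~\ref{prop:cont}. The key observation is that $R^+_{C,s_0}$ sits inside a region of length scale $s_0$ in all directions, so both the left-hand side $D_{\max}(u)$ and the right-hand side $-s_0(|u-u_L|^2+|u^+(u)-u_R|^2)$ are of order $s_0^3$, and it therefore suffices to produce a uniform bound of the form $D_{\max}(u)\leq -K s_0^3$.

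First I will use Lemma~\ref{lemma:dmax-no-extrema} together with continuity of $D_{\max}$ (which follows from the Lipschitz regularity of $u\mapsto u^+(u)$ granted by Lemma~\ref{gkv_uplus-ident-2}) and compactness of $\overline{R^+_{C,s_0}}$ to conclude that
\begin{equation*}
\sup_{u\in\overline{R^+_{C,s_0}}} D_{\max}(u)\;=\;\sup_{v\in\partial R^+_{C,s_0}} D_{\max}(v).
\end{equation*}
The boundary $\partial R^+_{C,s_0}$ is contained in $R_{C,s_0}^{bd}\cup R_{C,s_0}^{0}\cup(\Pi_{C,s_0}\setminus R_{C,s_0})$, and on each of these three pieces the desired bound $D_{\max}(v)\lesssim -s_0(|v-u_L|^2+|u^+(v)-u_R|^2)$ has already been proven. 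Moreover, every boundary point $v\in\partial R^+_{C,s_0}$ satisfies $|v-u_L|\geq K_0 s_0$: indeed, $R^+_{C,s_0}$ was defined to exclude $R^0_{C,s_0}=\{u\in R_{C,s_0}\colon |u-u_L|\leq K_0 s_0\}$, so the only boundary points where $|v-u_L|$ could approach $K_0 s_0$ are those on the interface with $R^0_{C,s_0}$, where the lower bound $K_0 s_0$ is saturated. Consequently
\begin{equation*}
\sup_{v\in\partial R^+_{C,s_0}} D_{\max}(v)\;\leq\; -K_1 s_0\cdot (K_0 s_0)^2\;=\;-K_1 K_0^2\, s_0^3
\end{equation*}
for some universal $K_1>0$, which gives the uniform bound $D_{\max}(u)\lesssim -s_0^3$ throughout $R^+_{C,s_0}$.

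To convert this into the claimed inequality, I will use that $R_{C,s_0}\subset R_{C,s_0}(K_h)$ has diameter $\lesssim s_0$ by its definition~\eqref{eq:Rdef}, so $|u-u_L|\lesssim s_0$ for every $u\in R^+_{C,s_0}$. Combining this with $s^*(u)\lesssim s_0$ (Lemma~\ref{gkv:lem_s-star}) and $|u^+(u)-u|=s^*(u)$ from the arclength parametrization yields $|u^+(u)-u_R|\lesssim s_0$ as well. Hence
\begin{equation*}
s_0\bigl(|u-u_L|^2+|u^+(u)-u_R|^2\bigr)\;\lesssim\; s_0^3,
\end{equation*}
so $D_{\max}(u)\lesssim -s_0^3\lesssim -s_0(|u-u_L|^2+|u^+(u)-u_R|^2)$ with an appropriate universal constant, completing the proof.

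The main obstacle I anticipate is bookkeeping: I need to ensure that the three pieces of $\partial R^+_{C,s_0}$ together with the corresponding propositions produce a \emph{uniform} lower bound $|v-u_L|\gtrsim s_0$ (and hence a uniform boundary dissipation $\lesssim -s_0^3$) independent of the small parameters $C^{-1}$ and $s_0$. In particular, the portion of $\partial R^+_{C,s_0}$ lying on $\partial R_{C,s_0}^{bd}\cap R_{C,s_0}$ (where $|v-u^*|=K_{bd}s_0$) must be checked to still satisfy $|v-u_L|\geq K_0 s_0$, which follows because such $v$ remain outside $R^0_{C,s_0}$ by construction. Once this uniform lower bound is in hand, the scaling step is purely algebraic.
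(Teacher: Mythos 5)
Your proof is correct and follows essentially the same approach as the paper: invoke Lemma~\ref{lemma:dmax-no-extrema} to push the supremum of $D_{\max}$ onto $\partial R^+_{C,s_0}$, use the already-established boundary bounds together with $|v-u_L|\gtrsim s_0$ on $\partial R^+_{C,s_0}$ to get $D_{\max}\lesssim -s_0^3$ uniformly, and then observe that the target right-hand side is also $O(s_0^3)$ because $\operatorname{diam}(R_{C,s_0})\lesssim s_0$ and the maximal-shock map is Lipschitz. One small point worth spelling out: you cite Lemma~\ref{gkv:lem_s-star} for $s^*(u)\lesssim s_0$, but that lemma only gives $s^*(u)\lesssim\sqrt{s_0/C}$ directly; to get $s^*(u)\lesssim s_0$ one also needs $d(u,\partial\Pi_{C,s_0})\lesssim s_0$ for $u\in R_{C,s_0}$, which holds because $u^*\in\partial\Pi_{C,s_0}$ and $R_{C,s_0}$ has diameter $\lesssim s_0$ (the paper instead uses the Lipschitz estimate on $u^+$ from Lemma~\ref{gkv_uplus-ident-2} for this step).
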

\begin{proof}
    Let 
    $$ \bar u = \underset{u \in R_{C,s_0}^+}{\text{argmax}}\, D_{\max}(u).$$
    By Lemma \ref{lemma:dmax-no-extrema} we know $\overline u \in \partial R_{C,s_0}^+$ and, due to continuity and the preceding propositions, $\overline u$ satisfies the bound~\eqref{eq:Dmax-bound}.
    Let 
    $$ L = \underset{\substack{u\in R_{C,s_0} \\ \setminus [R_{C,s_0}^{bd} \cup R^0_{C,s_0}]}}{\sup} |u-u_L|^2 + |u^+(u) - u_R|^2$$
    be the longest squared distance of a maximal shock $(u,u^+(u))$ to the shock states $(u_L, u_R)$ within $R^+_{C,s_0}$. 
    On this region we find the bound 
    $$D_{\max}(u) \leq D_{\max}(\overline u) \leq \frac{1}{L} D_{\max}(\overline u) (|u-u_L|^2 + |u^+(u) - u_R|^2).$$
    What remains is to show $\frac{1}{L} D_{\max}(\overline u) \lesssim -s_0$. 
    Since $\overline u$ lies on the boundary we have the quantitative estimate 
    $$ D_{\max}(\overline u) \lesssim -s_0 (|\overline u-u_L|^2 + |u^+(\overline u) - u_R|^2).$$
    We note that since $\overline u \notin R_{C,s_0}^0$ we have $|\overline u - u_L|^2 \gtrsim s_0^2$ by~\eqref{eq:R0-def} giving $D_{\max}(\overline{u}) \lesssim -s_0^3$.
    From the definition~\eqref{eq:Rdef} we know $\text{diam}(R_{C,s_0}) \lesssim C^{1/6}s_0$
    so for all $u$ in $R_{C,s_0}$ we have 
    $$ |u-u_L|^2 + |u^+(u) - u_R|^2 \lesssim C^{1/3}s_0^2 $$
    by the size of $R_{C,s_0}$ and the Lipschitz estimate on $u^+$ by Lemma \ref{gkv_uplus-ident-2}
    which shows $L \lesssim s_0^2$. 
    Equipped with these bounds on $L$ and $D_{\max}(\overline u)$ we now have
    \begin{equation}\frac{1}{L} D_{\max}(\overline u) \lesssim -\frac{s_0^3}{s_0^2} = -s_0, \end{equation}
    as desired. 
    The suffices to establish the quantitative estimate on every shock $(u,u^+(u), \sigma_{\pm})$ where $u \in \Pi^*$. 
\end{proof}


\subsubsection{Part 2: Extending the rest of the shock curve} \label{sec:shock-curve-ext}
In this section we use the fact that the map $t \mapsto \eta(u| \S_u^1(t))$ is convex for $t$ sufficiently small to show the bound~\eqref{diss:shock} for states $(u,\S_u^1(s))$ with $u \in \Pi_{C,s_0}^*$ and $s$ sufficiently small. 
Before we begin, we some establish bounds independent of our parameters $C$ and $s_0$.
\begin{lemma} \label{lemma:Drh-Dmax-bound}
    There exists $\overline t, L > 0$ such that
    we have for all $t < \overline t$ and $u\in \Pi_{C,s_0}^*$ that 
    \begin{equation} \frac{d^2}{dt^2} \eta(u|\S_u^1(t)) \geq \lambda := \frac{1}{2}\min_{u\in \Pi_{C,s_0}^*} \nabla^2 \eta(u)r_1(u)\cdot r_1(u) \label{eq:uni-conv-rel-ent}\end{equation}
    and giving a Lipschitz estimate on the shock curve for $s,t \in [0,\overline t], $
    \begin{equation}\label{eq:shock-lip}
        |\S_u^1(s) - \S_u^1(t)| \leq L|s-t|
    \end{equation}
\end{lemma}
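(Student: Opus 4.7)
The plan is to reduce both claims to a Taylor expansion at $t=0$ followed by a compactness/continuity argument. Throughout, I will use that $\Pi_{C,s_0}^* \subset B_{2\epsilon}(d)$, which is a compact set contained in the interior $\mathcal{V}$ and is fixed independently of the parameters $C$ and $s_0$; this is precisely where the uniformity of $\overline t$ and $L$ will come from. I will also use from Lemma \ref{lem_hugoniot} that $(u,s) \mapsto \S_u^1(s)$ is $C^3$, together with the assumed $C^3$ regularity of $\eta$ on $\mathcal V$.

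For the convexity bound, I first compute the first two derivatives of $t \mapsto \eta(u|\S_u^1(t))$. Using $\eta(a|b) = \eta(a) - \eta(b) - \nabla \eta(b)(a-b)$ and differentiating in $b = \S_u^1(t)$, one finds
\[
\frac{d}{dt}\eta(u|\S_u^1(t)) = -\,(u-\S_u^1(t))^{T}\,\nabla^2 \eta(\S_u^1(t))\,\dot\S_u^1(t),
\]
so in particular the first derivative vanishes at $t=0$. Differentiating once more, at $t=0$ the terms carrying a factor $(u-\S_u^1(t))$ drop out, and using $\dot\S_u^1(0)=r_1(u)$ from the asymptotic expansion in Lemma \ref{lem_hugoniot}, I obtain
\[
\left.\frac{d^2}{dt^2}\eta(u|\S_u^1(t))\right|_{t=0} = \nabla^2 \eta(u)\,r_1(u) \cdot r_1(u) \geq 2\lambda,
\]
by the definition of $\lambda$. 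The $C^3$ regularity of $\eta$ and $\S_u^1$ makes the map $(u,t) \mapsto \frac{d^2}{dt^2}\eta(u|\S_u^1(t))$ continuous jointly in $(u,t)$. Since the inequality is strict at $t=0$ for every $u \in \Pi_{C,s_0}^*$ and $\Pi_{C,s_0}^*$ is contained in the fixed compact set $B_{2\epsilon}(d)$, a standard uniform continuity argument produces a single $\overline t > 0$, depending only on the system, the entropy, and $d$ (and not on $C$ or $s_0$), such that
\[
\frac{d^2}{dt^2}\eta(u|\S_u^1(t)) \geq \lambda \qquad \text{for all } t \in [0, \overline t],\ u \in \Pi_{C,s_0}^*.
\]

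For the Lipschitz estimate, differentiating $|u - \S_u^1(s)|^2 = s^2$ twice and evaluating at $s=0$ gives $|\dot\S_u^1(0)| = 1$. By joint continuity of $(u,s) \mapsto \dot\S_u^1(s)$ on the compact set $\overline{B_{2\epsilon}(d)} \times [0,\overline t]$ (shrinking $\overline t$ if necessary), there exists a uniform bound $L>0$ such that $|\dot\S_u^1(s)| \leq L$ for all $u \in \Pi_{C,s_0}^*$ and $s \in [0, \overline t]$. Then for any $s,t \in [0,\overline t]$ the estimate
\[
|\S_u^1(s) - \S_u^1(t)| \leq \int_{\min(s,t)}^{\max(s,t)} |\dot\S_u^1(\tau)|\,d\tau \leq L|s-t|
\]
follows at once.

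The only genuine subtlety is to make sure $\overline t$ and $L$ do not secretly depend on the parameters $C$ or $s_0$; this is the step where one must verify that the containment $\Pi_{C,s_0}^* \subset B_{2\epsilon}(d)$ (established earlier during the selection of constants in Step~1 of the proof of \Cref{prop:small_shock_diss}) really does suffice to apply the compactness argument uniformly. Since the shock curve $\S_u^1$ and the entropy $\eta$ depend only on $u$ and the system, not on the parameters $C, s_0$, this uniformity is immediate once the containment is in place.
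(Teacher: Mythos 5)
Your proof is correct and follows essentially the same route as the paper's: compute the first two derivatives of $t\mapsto\eta(u|\S_u^1(t))$, observe the second derivative equals $\nabla^2\eta(u)\,r_1(u)\cdot r_1(u)$ at $t=0$, and then invoke joint continuity on the compact set $\overline{\Pi_{C,s_0}^*}\times[0,\overline t]$ to get both a uniform $\overline t$ and the Lipschitz constant $L$. Your version is a bit more explicit (you spell out why lower-order terms drop at $t=0$ and give the integral form of the Lipschitz bound), but the argument is the same.
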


\begin{proof}
    Using the expansion of the shock curve from Lemma \ref{lem_hugoniot} we have
    \begin{align}
        \frac{d}{dt}\eta(u|\S_u^1(t)) =& -\nabla^2 \eta(\S_u^1(t))  (u - \S_u^1(t)) \cdot (\partial_t \S_u^1(t)) \nonumber \\
        =& \nabla^2 \eta(\S_u^1(t))(r_1(u)t)\cdot r_1(u) + \bigO(t^2), \label{eq:rel-shock-first-der} \\
        \frac{d^2}{dt^2} \eta(u|\S_u^1(t)) =& \nabla^2 \eta(\S_u^1(t))(\partial_t \S_u^1(t))\cdot (\partial_t \S_u^1(t)) \nonumber\\
        &- \nabla^2\eta(\S_u^1(t)) (u- \S_u^1(t)) \cdot \partial_t^2 (\S_u^1(t)) \nonumber\\
        &- \nabla^3 \eta(\S_u^1(t)):(u-\S_u^1(t))\otimes (\partial_t \S_u^1(t))^{\otimes 2}, \nonumber
    \end{align}
    and we note that by continuity there exists $\overline t > 0$ such that for all $u\in \Pi_{C,s_0}^*$ the second derivative satisfies equation~\eqref{eq:uni-conv-rel-ent}
    for all $t < \overline t$. 

    We recall the map $(u,s) \mapsto \S_u^1(s)$ is $C^1$ from Lemma \ref{lem_hugoniot} and $\overline \Pi_{C,s_0}^* \times [0,\overline t]$ is compact, which suffices to establish equation~\eqref{eq:shock-lip}.

\end{proof}
\noindent
\textbf{Note:} Henceforth we will denote generic states on the shock curve $\S_u^1$ as $u_+$ to avoid confusion with the function $u^+(u)$. 

\bigskip
\paragraph{\bf Step 2.1: Small maximal shocks}

\begin{lemma} \label{lemma:Drh-up-to-tbar}
    There exists a universal constant $K_1$ such that, for $C$ sufficiently large and $s_0$ sufficiently small,
        when $s^*(u) \leq K_1s_0$, we find for all $u \in\Pi_{C,s_0}^*$ and $0 < s \leq \overline t $ that 
        \begin{equation}\label{eq:Drh-up-to-tbar}
            D_{\RH}(u,u_+) \lesssim -s_0(|u-u_L|^2 + |u_+-u_R|^2) 
        \end{equation}
        where $u_+ = \S_u^1(s)$.
\end{lemma}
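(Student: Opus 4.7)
The plan is to control $D_{\RH}(u,u_+)$ via the identity \eqref{eq:RH-CONT} together with Proposition \ref{prop:dmax}, while extracting extra negativity from the integral term when $s$ is far from $s^*(u)$. The central difficulty is that Proposition \ref{prop:dmax} bounds $s_0|u^+(u)-u_R|^2$, whereas the target bound requires $s_0|u_+-u_R|^2$; controlling this discrepancy for large $s-s^*(u)$ will be the main obstacle.

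First I would extract the extra negativity from \eqref{eq:RH-CONT}. By the definition of $s^*(u)$ as a critical point of $t\mapsto D_{\RH}(u,\S_u^1(t))$ and the fact $\dot\sigma<0$, the function $f(t):=\tilde\eta(u)+\eta(u|\S_u^1(t))$ vanishes at $t=s^*(u)$ (or, when $u\in\Pi_{C,s_0}^*\setminus\Pi_{C,s_0}$ so $s^*(u)=0$, one has $f(0)=\tilde\eta(u)\ge 0$ and $f'(0)=0$). The uniform convexity bound $\frac{d^2}{dt^2}\eta(u|\S_u^1(t))\ge\lambda$ from Lemma \ref{lemma:Drh-Dmax-bound}, together with convexity and the inequality $f(0)\le \max(0,\tilde\eta(u))$, yields $f(t)\ge \tfrac{\lambda}{2}(t-s^*(u))^2$ for $t\in[s^*(u),\overline t]$. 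Combined with $\dot\sigma\le -c<0$ (uniform on $\Pi_{C,s_0}^*$ by Assumption \ref{assum}~\ref{assum:rel-ent-strengthens} and Lemma \ref{lem_hugoniot}), \eqref{eq:RH-CONT} gives, for $s\in[s^*(u),\overline t]$,
\[
D_{\RH}(u,u_+)=D_{\max}(u)+\int_{s^*(u)}^s\dot\sigma(t)f(t)\,dt\le D_{\max}(u)-\frac{c\lambda}{6}(s-s^*(u))^3,
\]
while for $s\le s^*(u)$ the integrand in \eqref{eq:RH-CONT} has the opposite sign so only the trivial bound $D_{\RH}(u,u_+)\le D_{\max}(u)$ is available.

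Next I would fix $\kappa:=\max(2K_1,6KL^2/(c\lambda))$, with $K$ the constant from Proposition \ref{prop:dmax} and $L$ the Lipschitz constant from \eqref{eq:shock-lip}, and split into two cases. In Case A, $|s-s^*(u)|\le\kappa s_0$: the bound \eqref{eq:shock-lip} gives $|u^+(u)-u_+|\le L\kappa s_0$; combined with $|u-u^+(u)|\le Ls^*(u)\le LK_1 s_0$ and $|u_L-u_R|=s_0$, one deduces (choosing $K_1$ small so $LK_1<1/2$) that $|u-u_L|^2+|u^+(u)-u_R|^2\gtrsim s_0^2$, which dominates the $O(s_0^2)$ error incurred by a triangle inequality in passing from $|u^+(u)-u_R|^2$ to $|u_+-u_R|^2$. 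Thus $|u-u_L|^2+|u_+-u_R|^2\le M(|u-u_L|^2+|u^+(u)-u_R|^2)$ for a universal $M$, and Proposition \ref{prop:dmax} together with $D_{\RH}(u,u_+)\le D_{\max}(u)$ closes this case. In Case B, $s-s^*(u)>\kappa s_0$ (the possibility $s-s^*(u)<-\kappa s_0$ is excluded since $s\ge 0$ and $s^*(u)\le K_1 s_0<\kappa s_0$), I would use $|u^+(u)-u_R|^2\ge\tfrac12|u_+-u_R|^2-L^2(s-s^*(u))^2$; the resulting error $KL^2 s_0(s-s^*(u))^2$ is absorbed by $\tfrac{c\lambda}{6}(s-s^*(u))^3\ge\tfrac{c\lambda\kappa}{6}s_0(s-s^*(u))^2$ by the choice of $\kappa$.

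The main obstacle is Case B: the triangle-inequality loss $L^2(s-s^*(u))^2$ in replacing $|u^+(u)-u_R|$ by $|u_+-u_R|$ is not controlled by $D_{\max}(u)$ alone, and the cubic negativity arising from the convexity estimate of Lemma \ref{lemma:Drh-Dmax-bound} is precisely what is needed to absorb it uniformly for all $s$ up to $\overline t$.
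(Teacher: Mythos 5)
Your proposal is correct and follows the same essential route as the paper: the identity \eqref{eq:RH-CONT}, the bound from Proposition~\ref{prop:dmax} at the maximal shock, and the split into a near-$s^*$ regime (handled by the Lipschitz bound on the shock curve) and a far-$s^*$ regime (handled by the cubic negativity of the integral term). The one genuine, if minor, departure lies in how the near-$s^*$ regime is closed. The paper also starts from $D_{\RH}\le D_{\max}(u)\lesssim -s_0(|u-u_L|^2+|u^+(u)-u_R|^2)$, peels off $\tfrac14 s_0|u-u_L|^2$ to compensate the triangle-inequality loss $s_0 L^2|s-s^*|^2$, and then splits a second time on whether $|u-u_L|>2K_1Ls_0$ or not, disposing of the latter subcase by showing it forces $s^*(u)>K_1s_0$, contradicting the hypothesis. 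You instead observe directly that the hypothesis $s^*(u)\le K_1s_0$, combined with $|u-u^+(u)|\le Ls^*(u)$ and $|u_L-u_R|=s_0$, gives $|u-u_L|^2+|u^+(u)-u_R|^2\gtrsim s_0^2$ for $K_1<1/(2L)$; this global lower bound absorbs the $O(\kappa^2 s_0^2)$ triangle loss in one step and removes the secondary dichotomy and the contradiction argument. The trade-off is cosmetic rather than substantive (both proofs ultimately use the same negativity budget), but your organization is cleaner. One small caveat: your sentence invoking ``$f(0)\le\max(0,\tilde\eta(u))$'' to get $f(t)\ge\tfrac{\lambda}{2}(t-s^*)^2$ is a bit misleading as written — the operative facts are that for $u\in\Pi_{C,s_0}$ one has $f(s^*)=0$ and $f'(s^*)\ge 0$ (since $f(0)=\tilde\eta(u)<0$ and $f$ is convex), while for $u\in\Pi^*_{C,s_0}\setminus\Pi_{C,s_0}$ one has $s^*=0$, $f(0)\ge 0$, $f'(0)=0$; in either case Taylor's theorem with the lower bound on $f''$ yields the claimed inequality on $[s^*,\overline t]$. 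Worth tightening that sentence, but the conclusion you draw from it is correct.
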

\begin{proof}
    Let $K_1$ be a constant $0 < K_1 < 1$, to be fixed later. 
    We will prove Lemma \ref{lemma:Drh-up-to-tbar} in the cases $K_1s_0 < s - s^*$ and $K_1s_0 \geq s - s^*$. 
    The primary idea in this proof is that $D_{RH}(u,u_+)$ is close to $D_{\max}(u)$ in the first case, and since $s^* \leq K_1 s_0$ we can get a strictly negative upper bound on $D_{\max}(u)$. 
    
    First, we note we can take equation~\eqref{eq:RH-CONT} and split the interval of integration into $[0,s^*]$ and $[s^*, s]$ to find 
    \begin{equation} \label{eq:Drh-Dmax}
        D_{\RH}(u, \S_u^1(s)) = D_{\max}(u) + \int_{s^*}^s \dot \sigma(t)(\tilde \eta(u) + \eta(u|\S_u^1(t)))\,dt.
    \end{equation}
    We next bound the integral term by noting that $\dot \sigma(s) \lesssim -1$, the expansion of the shock curve, equation~\eqref{eq:rel-shock-first-der}, and the second derivative bound~\eqref{eq:uni-conv-rel-ent} to establish 
    \begin{align*}
        -\int_{s^*}^s \tilde \eta(u) + \eta(u|\S_u^1(t))\,dt \leq& -\int_{s^*}^s\partial_t [\eta(u|\S_u^1(t))]_{t = s^*} (t-s^*) + \frac{\lambda}{2} (t-s^*)^2\,dt\\
        \lesssim& -s^*|s-s^*|^2 - (s-s^*)^3,
    \end{align*}
    when $C$ sufficiently large and $s_0$ sufficiently small.
    When $K_1s_0 \leq s - s^*$ and $s < \overline t$ we immediately retrieve the desired upper bound thanks to the above bound, triangle inequality, equation~\eqref{eq:Drh-Dmax}, and Lipschitz bound~\eqref{eq:shock-lip},
    \begin{align*}
        D_{\RH}(u,\S_u^1(s)) \lesssim& D_{\max}(u) + \int_{s^*}^s \dot \sigma(t) ( \tilde \eta(u) + \eta(u| \S_u^1(t)))\,dt \\
        \lesssim& D_{\max}(u) - (s-s^*)(s-s^*)^2 \\
        \leq& D_{\max}(u) - K_1s_0(s-s^*)^2 \\
        \lesssim& -s_0(|u-u_L|^2 + |u^+(u) - u_R|^2) - s_0|u_+ - u^+(u)|^2 \\
        \lesssim& -s_0(|u-u_L|^2 + |u_+ - u_R|^2).
    \end{align*}
    We now assume $u_+ = \S_u^1(s)$ is close to $u$ of order $s_0$; specifically, we suppose $s$ satisfies $K_1s_0 \geq s - s^*$.
    For such a state $|s-s^*| \leq K_1s_0$, as we have already supposed $s^* \leq K_1s_0$. 
    Using this, the Lipschitz bound on the shock curve~\eqref{eq:shock-lip}, the relation~\eqref{eq:Drh-Dmax}, and Proposition \ref{prop:dmax}
    \begin{align}
        D_{\RH}(u,\S_u^1(s)) \leq& D_{\max}(u) \nonumber\\
        \lesssim& -s_0(|u-u_L|^2 + |u^+(u) - u_R|^2) \nonumber\\
        =& -s_0\left(|u-u_L|^2 + \frac{1}{2}|u_+ - u_R|^2\right) + \frac{s_0}{2}(|u_+ - u_R|^2 - 2|u^+(u) - u_R|^2) \nonumber\\
        \leq& -s_0\left(|u-u_L|^2 + \frac{1}{2}|u_+ - u_R|^2\right) + s_0|u_+ - u^+(u)|^2 \nonumber\\
        \leq& -s_0\left(|u-u_L|^2 + \frac{1}{2}|u_+ - u_R|^2\right) + s_0L^2|s-s^*|^2 \nonumber\\
        \leq& -\frac{s_0}{2}(|u-u_L|^2 + |u_+ - u_R|^2) + s_0\left(K_1^2L^2 s_0^2 -\frac{1}{4}|u-u_L|^2 \right)  \nonumber \\
        \leq & -\frac{s_0}{2} (|u-u_L|^2 + |u^+ - u_R|^2) \label{eq:close-close}
    \end{align}
    where the inequality~\eqref{eq:close-close} holds for states $u$ satisfying $|u-u_L| > 2 K_1L s_0$. 
    In this last step we are simply borrowing negativity from our bound in Proposition \ref{prop:dmax}, which is enough to establish our bound when $u$ is sufficiently far from the state $u_L$. 
    We note that for the states satisfying $|u-u_L| \leq 2K_1 Ls_0$ we have the bound 
    \begin{align*}
        | s^* - s_0| =& \left| |u - u^+(u)| - |u_L-u_R| \right|\\ 
        \leq & \left| u - u_L +  u^+(u)-u_R \right|\\
        \leq &|u - u_L| + |u^+(u) - u_R|  \\
        \leq& |u - u_L| \leq 2K_1Ls_0
    \end{align*}
    which immediately implies
    $$s^*(u) \geq s_0(1-2LK_1). $$ 
    If we now allow $K_1$ to be sufficiently small, we find these states satisfy 
    $$ s^*(u) > K_1 s_0.$$
    Hence, we have established~\eqref{eq:Drh-up-to-tbar} for all states with $s^*(u) \leq K_1 s_0$.
\end{proof}

\bigskip
\paragraph{\bf Step 2.2: Large maximal shocks}

The remaining case for $u$ with large maximal shocks ($\overline t > s^*(u) \geq K_1 s_0$) can then be handled easily. 
\begin{lemma} \label{lemma:shock-small}
    For $C$ sufficiently large and $s_0$ sufficiently small,
        when $s^*(u) \geq K_1 s_0$, we find for all $u \in\Pi_{C,s_0}^*$ and $0 < s \leq \overline t$ that 
        $$ D_{\RH}(u,u_+) \lesssim -s_0(|u-u_L|^2 + |u_+-u_R|^2) $$
        where $u_+ = \S_u^1(s)$.
\end{lemma}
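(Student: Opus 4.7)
The plan is to follow the same skeleton as the proof of Lemma \ref{lemma:Drh-up-to-tbar}, namely the identity \eqref{eq:Drh-Dmax}
\[
D_{\RH}(u,\S_u^1(s)) = D_{\max}(u) + \int_{s^*}^s \dot\sigma(t)\bigl(\tilde\eta(u) + \eta(u|\S_u^1(t))\bigr)\,dt,
\]
combined with Proposition \ref{prop:dmax} for $D_{\max}$, but now exploiting the lower bound $s^*(u)\ge K_1 s_0$ directly inside the integral rather than through a case split.

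First, I would reproduce the Taylor/convexity estimate already used in Lemma \ref{lemma:Drh-up-to-tbar}: since $s^*$ is the unique maximizer of $s\mapsto D_{\RH}(u,\S_u^1(s))$, the integrand vanishes at $t=s^*$, while \eqref{eq:rel-shock-first-der} gives $\partial_t\eta(u|\S_u^1(s^*)) \sim s^*$ (with universal implicit constant by the uniform convexity $\nabla^2\eta r_1\cdot r_1 > 0$ on $\Pi^*_{C,s_0}$), and the uniform convexity \eqref{eq:uni-conv-rel-ent} controls the quadratic remainder. Combined with $\dot\sigma\lesssim -1$ (Assumption \ref{assum} \ref{assum:rel-ent-strengthens} and Lemma \ref{lem_hugoniot}), this gives the same estimate
\[
\int_{s^*}^s \dot\sigma(t)\bigl(\tilde\eta(u) + \eta(u|\S_u^1(t))\bigr)\,dt \;\lesssim\; -s^*(s-s^*)^2 - (s-s^*)^3.
\]
Now the hypothesis $s^* \ge K_1 s_0$ is used precisely here to replace $s^*$ by $K_1 s_0$, yielding
\[
D_{\RH}(u,\S_u^1(s)) \;\le\; D_{\max}(u) - c\, K_1 s_0 (s-s^*)^2
\]
for a universal $c>0$.

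Inserting Proposition \ref{prop:dmax} into the above produces
\[
D_{\RH}(u,\S_u^1(s)) \;\lesssim\; -s_0\bigl(|u-u_L|^2 + |u^+(u)-u_R|^2\bigr) - K_1 s_0(s-s^*)^2.
\]
To pass from $|u^+(u)-u_R|^2 + (s-s^*)^2$ to $|u_+-u_R|^2$, I would apply the triangle inequality together with the Lipschitz bound \eqref{eq:shock-lip} from Lemma \ref{lemma:Drh-Dmax-bound}: $|u_+ - u_R|^2 \le 2L^2(s-s^*)^2 + 2|u^+(u)-u_R|^2$. Distributing a small fraction of the $-s_0|u^+(u)-u_R|^2$ and $-K_1 s_0 (s-s^*)^2$ terms is then enough to dominate $-K' s_0 |u_+-u_R|^2$, provided we pick $K' = \min(K/2,\, cK_1/(2L^2))$, leaving the leftover $-s_0|u-u_L|^2$ term intact. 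This yields the claimed bound.

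The main obstacle, and the reason this has to be a separate lemma rather than a single argument covering all $s^*$, is that the integral estimate only produces the decisive $s_0$ factor when $s^* \gtrsim s_0$; in the complementary regime handled by Lemma \ref{lemma:Drh-up-to-tbar}, the state $u$ might live arbitrarily close to $u_L$, and extra structural information on the shock was needed. Here the difficulty is purely bookkeeping: the constant $K_1$ is already fixed (small) by Lemma \ref{lemma:Drh-up-to-tbar}, and one must verify that the constants produced by Proposition \ref{prop:dmax}, the Taylor/convexity estimate, and the Lipschitz bound $L$ all combine without conflict, which they do since neither upper nor lower smallness of $K_1$ is required in the argument above.
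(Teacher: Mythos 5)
The skeleton of your argument matches the paper---the identity \eqref{eq:Drh-Dmax}, the use of \Cref{prop:dmax}, replacing $s^*$ by $K_1 s_0$ via the hypothesis, and the final triangle-inequality/Lipschitz step---but there is a genuine gap in the treatment of the integral term. Writing $g(t) := \tilde\eta(u)+\eta(u|\S_u^1(t))$, your claimed estimate
\[
\int_{s^*}^s \dot\sigma(t)\,g(t)\,dt \lesssim -s^*(s-s^*)^2 - (s-s^*)^3
\]
is justified by the pointwise \emph{lower} bound $g(t)\geq g'(s^*)(t-s^*)+\tfrac{\lambda}{2}(t-s^*)^2$ coming from the uniform convexity \eqref{eq:uni-conv-rel-ent}. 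A lower bound on the integrand yields an \emph{upper} bound on $-\int_{s^*}^s g\,dt$ only when $s>s^*$; when $s<s^*$ the orientation of the interval flips and the inequality reverses, so the Taylor/tangent argument gives you nothing. Since the lemma is stated for all $0<s\le\overline t$ and one only assumes $s^*(u)\ge K_1s_0$, states with $s<s^*(u)$ absolutely do occur and your proposal does not handle them.

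The paper's proof covers this regime with the \emph{opposite-sided} convexity estimate: the chord of the convex function $g$ through $(0,\tilde\eta(u))$ and $(s^*,0)$ gives $g(t)\le \frac{s^*-t}{s^*}\tilde\eta(u)$ for $t\in[0,s^*]$, hence
\[
-\int_{s^*}^s g\,dt \le \frac{(s-s^*)^2}{2}\cdot\frac{\tilde\eta(u)}{s^*},
\]
and then --- crucially --- the bound $(s^*)^2 \lesssim -\tilde\eta(u)$ from \eqref{eq:s-star-control} (itself a consequence of \Cref{gkv:lem_s-star}, \Cref{lemma:leger-square}, and \Cref{lem:gkv_dist}) converts $\tilde\eta(u)/s^*$ into $\lesssim -s^*$. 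Your proposal uses neither the chord inequality nor the control $(s^*)^2\lesssim -\tilde\eta(u)$, so the $s<s^*$ case is missing. The remainder of your argument (inserting \Cref{prop:dmax}, using $s^*\ge K_1s_0$ for $s>s^*$, and the conversion from $|u^+(u)-u_R|^2+(s-s^*)^2$ to $|u_+-u_R|^2$ via \eqref{eq:shock-lip}) is correct and coincides with the paper.
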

\begin{proof}
    To begin, we show the intermediate bound 
    \begin{equation}\label{eq:Drh-Dmax-around-s-star} 
        D_{\RH}(u,\S_u^1(s)) \lesssim D_{\max}(u) -s^*(u)(s-s^*(u))^2 
    \end{equation}
    which holds for $C$ sufficiently large, $s_0$ sufficiently small, and $s < \overline t$. 
    We first recall that by Lemmas \ref{lemma:leger-square}, \ref{lem:gkv_dist}, and \ref{gkv:lem_s-star} we have the estimate 
    \begin{equation} (s^*)^2 \lesssim \eta(u|u^+(u)) \lesssim -\tilde \eta(u) \lesssim s_0C^{-1}. \label{eq:s-star-control}\end{equation}
    We now take that $s_0$ is sufficiently small to ensure $s^* < \overline t$. 
    We now enjoy the following bound due to the convexity of $t \mapsto \eta(u|\S_u^1(t))$, 
    \begin{equation} \int_{s^*}^s \dot \sigma(t) (\tilde \eta(u) + \eta(u|\S_u^1(t)) )\,dt \lesssim -\int_{s^*}^s \tilde \eta(u) + \eta(u|\S_u^1(t))\,dt \leq \frac{(s-s^*)^2}{2} \begin{cases} 
            -\partial_t [\eta(u|\S_u^1(t))]_{t = s^*} & s \geq s^*\\
            \frac{\tilde \eta(u)}{s^*}   & s < s^*
        \end{cases} \label{eq:Drh-large-max}
    \end{equation}
    where $s \geq 0$ corresponds to the state $u_+ = \S_u^1(s)$ while $s^*$ is the argument corresponding to the maximal shock's right state, $u^+(u) = \S_u^1(s^*)$. 
    We note that in the case $s \geq s^*$ our coefficient satisfies
    $$-\partial_t [\eta(u|\S_u^1(t))]_{t = s^*} \lesssim -s^*$$ 
    for $s_0$ sufficiently small by equation~\eqref{eq:rel-shock-first-der} while in the second case $s < s^*$ we also find 
    $$\tilde \eta(u)/s^* \lesssim -s^*$$
    due to the bound~\eqref{eq:s-star-control} showing $(s^*)^2 \lesssim -\tilde\eta(u).$
    From the bounds~\eqref{eq:shock-lip} and~\eqref{eq:Drh-large-max} we arrive at the following,
    \begin{align*}
        D_{\RH}(u,\S_u^1(s)) =& D_{\RH}(u,u^+(u)) + \int_{s^*}^s \dot \sigma(t)(\tilde \eta(u) + \eta(u|\S_u^1(t))) \,dt \\
        \lesssim& D_{\RH}(u,u^+(u)) - \int_{s^*}^s \tilde \eta(u) + \eta(u|\S_u^1(t)) \,dt \\
        \lesssim& -s_0(|u-u_L|^2 + |u^+(u) - u_R|^2) - s^*|s^* - s|^2 \\
        \lesssim& -s_0(|u-u_L|^2 + |u^+(u) - u_R|^2) - s^*| u^+(u) - u_+ |^2
    \end{align*}

    Now equipped with~\eqref{eq:Drh-Dmax-around-s-star} the proof follows immediately,
    \begin{align*}
        D_{\RH}(u,\S_u^1(s)) \lesssim& -s_0(|u-u_L|^2 + |u^+(u) - u_R|^2) - s^*| u^+(u) - u^+ |^2 \\
        \leq& -K_1s_0(|u-u_L|^2 + |u^+(u) - u_R|^2) - K_1s_0| u^+(u) - u^+ |^2 \\
        \lesssim& -K_1 s_0(|u-u_L|^2 + |u^+ - u_R|^2) 
    \end{align*}
    for all $s < \overline t$. 
\end{proof}
We have now established Proposition \ref{prop:shock} for all shocks $(u, \S_u^1(s))$ with $u \in \Pi_{C,s_0}^*$ and $s < \overline t$.

\bigskip
\paragraph{\bf Step 2.3: Extending to the rest of the shock curve}

For states $u_+ = \S_u^1(s)$ with $s > \overline{t}$ we can no longer exploit the uniform convexity of the relative entropy $\eta(u|\S_u^1(s))$ from Lemma \ref{lemma:Drh-Dmax-bound}, but Assumption \ref{assum} \ref{assum:rel-ent-strengthens} ensures this quantity is increasing in $s$ and that $\dot \sigma < 0$. 
Hence, for all $u\in \Pi_{C,s_0}^*$ we retain an upper bound on the dissipation within this regime. 

\begin{lemma} \label{lemma:Drh-large-uni-bound}
    For $C$ sufficiently large and $s_0$ sufficiently small, for all states $u\in \Pi_{C,s_0}^*$ and 1-shocks $(u,u_+,\sigma)$ with $u_+ = \S_u^1(s) \in \Nu, s > \overline t$ we have
    $$ D_{\RH}(u,u_+) \leq D_\RH(u,\S_u^1(\overline t)) \lesssim -s_0 \overline t^2. $$
\end{lemma}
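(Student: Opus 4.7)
My plan is to combine two ingredients: monotonicity of $s \mapsto D_{\RH}(u, \S_u^1(s))$ on $[\overline t, \infty)$, and the dissipation bound from Lemmas~\ref{lemma:Drh-up-to-tbar}, \ref{lemma:shock-small} evaluated at the endpoint $s = \overline t$.

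First I would differentiate the identity~\eqref{eq:RH-CONT} with respect to $s$ to obtain
\begin{equation*}
    \frac{d}{ds} D_{\RH}(u, \S_u^1(s)) = \dot\sigma(s)\bigl(\tilde\eta(u) + \eta(u \mid \S_u^1(s))\bigr).
\end{equation*}
Since $\dot\sigma(s) < 0$ for $s > 0$ by Assumption~\ref{assum}~\ref{assum:rel-ent-strengthens}, the sign of the derivative is determined by the bracket. For the second term I would use that Lemma~\ref{lemma:Drh-Dmax-bound} gives uniform convexity of $t \mapsto \eta(u|\S_u^1(t))$ on $[0,\overline t]$ with $\eta(u|\S_u^1(0)) = 0$, so $\eta(u|\S_u^1(\overline t)) \gtrsim \overline t^{2}$; then Assumption~\ref{assum}~\ref{assum:rel-ent-strengthens} ensures $\eta(u|\S_u^1(s))$ is non-decreasing in $s$, so the same lower bound persists for $s \geq \overline t$. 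For the first term I would use that $\Pi^*_{C,s_0}$ is contained in a neighborhood of $\partial \Pi_{C,s_0} \cup B_r(u^*)$, where $\tilde\eta$ either vanishes or, in $B_r(u^*)$ with $r = K_{ball}/C$, satisfies $|\tilde\eta(u)| = |\tilde\eta(u)-\tilde\eta(u^*)| \lesssim s_0 \cdot r \lesssim s_0/C$ (using $|\nabla \tilde\eta| \lesssim s_0$ on $\Pi^*_{C,s_0}$ from Lemma~\ref{lem:gkv_eta}). Hence $|\tilde\eta(u)| \lesssim s_0$ on $\Pi^*_{C,s_0}$. Choosing $s_0$ small enough (with $\overline t$ and $C_1$ already fixed) makes $|\tilde\eta(u)| \leq \frac{1}{2}\eta(u|\S_u^1(\overline t))$, so the bracket is non-negative, the derivative is $\leq 0$, and integration from $\overline t$ to $s$ yields $D_{\RH}(u, \S_u^1(s)) \leq D_{\RH}(u, \S_u^1(\overline t))$ for all $s \geq \overline t$.

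Second, at $s = \overline t$ I would apply whichever of Lemma~\ref{lemma:Drh-up-to-tbar} or Lemma~\ref{lemma:shock-small} is relevant (depending on whether $s^*(u) \leq K_1 s_0$ or $s^*(u) \geq K_1 s_0$), both of which cover $s \leq \overline t$, to obtain
\begin{equation*}
    D_{\RH}(u, \S_u^1(\overline t)) \lesssim -s_0 \bigl(|u - u_L|^2 + |\S_u^1(\overline t) - u_R|^2\bigr).
\end{equation*}
To extract the universal quantity $-s_0 \overline t^{2}$ I would use the expansion $\S_u^1(\overline t) = u + r_1(u)\overline t + \bigO(\overline t^{2})$ from Lemma~\ref{lem_hugoniot} to see that $|\S_u^1(\overline t) - u| \geq \overline t/2$ (possibly after shrinking $\overline t$), together with the geometric smallness of $\Pi^*_{C,s_0}$ (diameter $\lesssim s_0 + C^{-1}$) and $|u_L - u_R| = s_0$ to get $|u - u_R| \lesssim s_0 + C^{-1}$. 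The triangle inequality then yields $|\S_u^1(\overline t) - u_R| \geq \overline t/4$ for $C_1$ large and $s_0$ small, which delivers $D_{\RH}(u,\S_u^1(\overline t)) \lesssim -s_0 \overline t^{2}$.

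The hard part is the first step: proving non-negativity of the bracket $\tilde\eta(u) + \eta(u|\S_u^1(s))$ on $\Pi^*_{C,s_0}$ for $s \geq \overline t$. This requires carefully balancing the size $|\tilde\eta(u)| \lesssim s_0$ (small in the shock parameter) against the fixed positive lower bound $\eta(u|\S_u^1(\overline t)) \gtrsim \overline t^{2}$ coming from the uniform convexity on $[0,\overline t]$, and then propagating the sign to all $s \geq \overline t$ via the strengthening condition in Assumption~\ref{assum}~\ref{assum:rel-ent-strengthens}. The remaining endpoint estimate is a routine combination of the preceding steps.
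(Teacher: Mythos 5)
Your proposal is correct and follows essentially the same route as the paper's proof: both use the identity~\eqref{eq:RH-CONT} (differentially vs.\ integrally, but these are equivalent), both argue that $\dot\sigma<0$ and that $\tilde\eta(u)+\eta(u|\S_u^1(t))>0$ for $t\ge\overline t$ by combining the smallness $|\tilde\eta(u)|\lesssim s_0/C$ with the uniform lower bound $\eta(u|\S_u^1(\overline t))\gtrsim \overline t^2$, and both then invoke the $s\le\overline t$ dissipation estimate at $s=\overline t$ together with the same triangle-inequality argument to extract $-s_0\overline t^2$. Your explicit mention of Lemma~\ref{lemma:shock-small} to cover the case $s^*(u)\ge K_1 s_0$ is slightly more careful than the paper's citation of Lemma~\ref{lemma:Drh-up-to-tbar} alone, but this is a matter of bookkeeping rather than a different idea.
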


\begin{proof}
    By the formula \ref{eq:RH-CONT} and linearity of integration we have the expression,
    $$ D_{RH}(u, u_+) = D_\RH(u,\S_u^1(\overline t)) + \int_{\overline t}^s \dot\sigma(s)(\tilde \eta(u) + \eta(u|\S_u^1(t)))\,dt.$$
    We first note that, by Lemma \ref{lemma:Drh-Dmax-bound}, we have $\dot \sigma(s) < 0$. 
    Furthermore, by Lemmas \ref{gkv:lem_s-star}, \ref{lem:gkv_dist}, \ref{lem:gkv_eta}, and \ref{lemma:leger-square} we have, for $C$ sufficiently large and $s_0$ sufficiently small,
    $$ \tilde \eta(u) + \eta(u|\S_u^1(\overline t)) \gtrsim -s_0 C^{-1} + \overline t^2 \geq \frac{1}{2}\overline t^2.$$
    Combining these, we see 
    $$ \int_{\overline t}^s \dot\sigma(s)(\tilde \eta(u) + \eta(u|\S_u^1(t)))\,dt < 0, $$
    which establishes 
    $$ D_{RH}(u, u_+) \leq D_\RH(u,\S_u^1(\overline t)).$$
    Now, by Lemma \ref{lemma:Drh-up-to-tbar} we can further bound the right hand side by
    \begin{align*}
        D_\RH(u,\S_u^1(\overline t)) \lesssim& -s_0(|u- u_L|^2 + |\S_u^1(\overline t) - u_R|^2) \\
        \lesssim& -s_0|\S_u^1(\overline t) - u|^2 + s_0|u-u_R|^2 \\
        \lesssim& -s_0\overline t^2 + s_0(C^{-2} + s_0^2) \\
        \lesssim& -s_0\overline t^2
    \end{align*}
    where we have used Lemma \ref{lem:gkv_eta} and taken $C$ sufficiently large and $s_0$ sufficiently small ensure $C^{-2} + s_0^2 \lesssim \overline t^2$. 
\end{proof}

At this point we are prepared to prove Proposition \ref{prop:shock}.
\begin{proof}[Proof of Proposition \ref{prop:shock}]
    The proposition has been proven for all 1-shocks $(u,u_+,\sigma_\pm)$ with $u \in \Pi_{C,s_0}^*$, $u_+ = \S_u^1(s)$, $s \leq \overline t$ by Lemma \ref{lemma:shock-small}.
    For states with $s > \overline t$ simply note that
    $$ |u - u_L|^2 + |u_+ - u_R|^2 \lesssim \text{diam}(\Nu)^2,$$
    so by Lemma \ref{lemma:Drh-large-uni-bound} we have 
    $$ D_{\RH}(u,u_+) \lesssim -s_0\overline t^2 \lesssim -s_0\frac{\overline t^2}{\text{diam}(\Nu)^2}(|u - u_L|^2 + |u_+ - u_R|^2 )$$
    for states $u_+ = \S_u^1(s)$ with $s > \overline t$.
\end{proof}

\appendix 

\section{}
\subsection{Lemmas from \cite{MR4667839}}
In \Cref{sec:proof_prop:small_shock_diss} several results from \cite{MR4667839} are used to establish the dissipation inequality for small shocks.
Here we reproduce these results and make minor modifications, where needed. 
\subsubsection{Geometric results on the set $\Pi$}
Section \ref{sec:cont} vitally relies on controlling geometric quantities of $\Pi_{C,s_0}$ by the parameters $C$ and $s_0$. 
The results concerning this control are reproduced here. 
\begin{lemma}[Lemma 3.5 of \cite{MR4667839}]\label{lem:gkv_eta}
    For any $C > 0$ and $s_0 > 0$, $(\tilde \eta,\ \tilde q)$ are an entropy, entropy-flux pair for the system~\eqref{system}
        satisfying
    \begin{equation}\label{eqn_lemeta_desired1}
        \nabla \tilde \eta(u) = Cs_0(\nabla \eta(u) - \nabla \eta(u_L)) - (\nabla \eta(u_L) - \nabla \eta(u_R))\quad\text{and}\quad \nabla^2\tilde \eta(u) = Cs_0\nabla^2 \eta(u).
    \end{equation}
    For $C$ sufficiently large and $s_0$ sufficiently small the set $\Pi_{C,s_0}$ is strictly convex with nonempty interior, compactly contained within $B_{3\epsilon/2}(d)$, and $\text{diam}(\Pi_{C,s_0}) \sim C^{-1}$. 
    As a result of this bound and~\eqref{eqn_lemeta_desired1} we have for all $u\in \Pi_{C,s_0}^*$ 
    \begin{equation}\label{eqn_lemeta_desired2}
        |\nabla \tilde \eta(u)| \lesssim s_0\quad\text{and}\quad |\nabla^2\tilde \eta(u)| \lesssim  Cs_0.
    \end{equation}
\end{lemma}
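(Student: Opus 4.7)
The plan is to derive the two differential identities by direct computation on the relative entropy, use the resulting uniform convexity of $\tilde\eta$ to read off the geometric properties of its sublevel set, and pin down the diameter via a quadratic expansion around the minimizer. The pointwise bounds on $\nabla\tilde\eta$ and $\nabla^2\tilde\eta$ will then fall out by substituting the diameter estimate into the identities.

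First I would note that $(\tilde\eta,\tilde q)$ is an entropy/entropy--flux pair because, for each fixed $v$, $(\eta(\cdot|v),q(\cdot;v))$ inherits the entropy--pair property from $(\eta,q)$ as in \eqref{ineq:relative}, and the class of entropy pairs is closed under real linear combinations. Differentiating $\eta(u|v) = \eta(u) - \eta(v) - \nabla\eta(v)\cdot(u-v)$ gives $\nabla_u\eta(u|v) = \nabla\eta(u)-\nabla\eta(v)$ and $\nabla_u^2\eta(u|v) = \nabla^2\eta(u)$; substituting into $\tilde\eta = (1+Cs_0)\eta(\cdot|u_L) - \eta(\cdot|u_R)$ the cancellation of the $\nabla\eta(u)$--terms produces the first identity in \eqref{eqn_lemeta_desired1}, and the second identity is immediate. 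Uniform convexity of $\eta$ on $\overline{B_{2\epsilon}(d)}$ combined with $\nabla^2\tilde\eta = Cs_0\nabla^2\eta$ shows $\tilde\eta$ is strictly convex there, so the sublevel set $\Pi_{C,s_0} = \{\tilde\eta<0\}$ is strictly convex. It has nonempty interior because $\tilde\eta(u_L) = -\eta(u_L|u_R) \lesssim -s_0^2 < 0$ by \Cref{lemma:leger-square}.

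The main work is the diameter estimate. I would locate the unique minimizer $u^{\#}$ of $\tilde\eta$ by solving $\nabla\tilde\eta(u^{\#}) = 0$, which rearranges to
\[
\nabla\eta(u^{\#}) - \nabla\eta(u_L) = \tfrac{1}{Cs_0}\bigl(\nabla\eta(u_L) - \nabla\eta(u_R)\bigr).
\]
Since the right-hand side has norm $\sim s_0/(Cs_0) = C^{-1}$ and $\nabla\eta$ is a $C^2$--diffeomorphism on $B_{2\epsilon}(d)$ by uniform convexity, this gives $|u^{\#}-u_L| \sim C^{-1}$ and in particular $u^{\#}\in B_{2\epsilon}(d)$ once $C$ is large. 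A second-order Taylor expansion of $\tilde\eta$ around $u^{\#}$ with Hessian $\sim Cs_0$, combined with $\tilde\eta(u_L)\sim -s_0^2$, yields $|\tilde\eta(u^{\#})| \sim s_0^2 + s_0/C$; the same expansion on the level set $\{\tilde\eta=0\}$ gives
\[
|u - u^{\#}|^2 \sim \frac{|\tilde\eta(u^{\#})|}{Cs_0} \sim \frac{1}{C^2} + \frac{s_0}{C}.
\]
In the regime $Cs_0 \ll 1$ the $C^{-2}$ term dominates, so $\operatorname{diam}(\Pi_{C,s_0}) \sim C^{-1}$. Compact containment in $B_{3\epsilon/2}(d)$ is then automatic from $u_L\in B_\epsilon(d)$ together with $C^{-1}\ll \epsilon/2$.

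Finally, for $u\in\Pi_{C,s_0}^{*}$ the bound $|u - u_L| \lesssim C^{-1} + s_0$ holds --- this uses the diameter estimate on the $\tilde\eta<0$ component, and uses $|u^{*}-u_L|\lesssim s_0$ from \Cref{lemma:gkv-dist-u0} together with $r = K_{ball}C^{-1}$ on the ball component --- so \eqref{eqn_lemeta_desired1} gives
\[
|\nabla\tilde\eta(u)| \le Cs_0\,|\nabla\eta(u)-\nabla\eta(u_L)| + |\nabla\eta(u_L)-\nabla\eta(u_R)| \lesssim Cs_0(C^{-1}+s_0) + s_0 \lesssim s_0
\]
once $Cs_0$ is small, and $|\nabla^2\tilde\eta(u)| \lesssim Cs_0$ is immediate from boundedness of $\nabla^2\eta$ on $\overline{B_{2\epsilon}(d)}$. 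The principal delicacy is the bookkeeping between the two scales $s_0^2$ and $s_0/C$ in estimating $\tilde\eta(u^{\#})$: the sharp diameter $\sim C^{-1}$ only emerges once $s_0$ is taken small relative to $C^{-1}$, which is precisely the source of the quantifier structure ``$C$ sufficiently large, then $s_0$ sufficiently small.''
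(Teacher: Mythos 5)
The paper does not reproduce a proof of this statement; it cites \cite{MR4667839} directly (while quietly strengthening the final conclusion from $\Pi_{C,s_0}$ to the enlarged set $\Pi_{C,s_0}^*$ introduced in this paper). Your argument is correct and supplies the natural self-contained proof: the differential identities by direct computation on the relative entropy, strict convexity and the diameter from the Hessian identity $\nabla^2\tilde\eta = Cs_0\nabla^2\eta$ together with a quadratic expansion around the minimizer $u^{\#}$, and then the gradient bound by plugging the diameter into \eqref{eqn_lemeta_desired1}. The only remark worth making is that your appeal to \Cref{lemma:gkv-dist-u0} for the bound $|u^*-u_L|\lesssim s_0$ on the ball component of $\Pi_{C,s_0}^*$ is unnecessary and risks a logical circularity, since in \cite{MR4667839} the results reproduced in \Cref{lemma:gkv-dist-u0} come later and depend on the present lemma; but the danger is harmless, because $u^*\in\overline{\Pi_{C,s_0}}$ and $u_L\in\Pi_{C,s_0}$ together with the already-proved $\operatorname{diam}(\Pi_{C,s_0})\sim C^{-1}$ give the cruder bound $|u^*-u_L|\lesssim C^{-1}$, which suffices: $Cs_0\cdot(C^{-1}+K_{ball}C^{-1})+s_0\lesssim s_0$ once $Cs_0\lesssim 1$. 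Replacing the citation by this observation makes the proof of \Cref{lem:gkv_eta} fully self-contained and ordering-safe.
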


\begin{lemma}[Lemma 3.6 of \cite{MR4667839}] \label{lem:gkv_nu}
    For $C$ sufficiently large, $s_0$ sufficiently small, and $\overline u \in \partial \Pi_{C,s_0}$ we have the lower bound 
    \begin{equation} \label{eqn_lemnu_desired}
        |\nabla \tilde \eta(\overline u)| \gtrsim s_0.
    \end{equation}
    In particular, this shows that the normal vector $\nu(\overline u) = \nabla \tilde \eta(\overline u)/|\nabla \tilde \eta(\overline u)|$ of $\Pi_{C,s_0}$ is well defined.
\end{lemma}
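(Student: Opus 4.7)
The plan is to prove the lower bound $|\nabla \tilde \eta(\overline u)| \gtrsim s_0$ by pairing $\nabla \tilde \eta(\overline u)$ with the direction $w := u_R - u_L$ (of length exactly $s_0$) and then invoking Cauchy--Schwarz. The delicate point is that both terms in the explicit formula
\[
\nabla \tilde \eta(\overline u) \;=\; C s_0\bigl(\nabla \eta(\overline u) - \nabla \eta(u_L)\bigr) - \bigl(\nabla \eta(u_L) - \nabla \eta(u_R)\bigr)
\]
from Lemma~\ref{lem:gkv_eta} are individually of size $\sim s_0$ (the first via the diameter bound $|\overline u - u_L| \lesssim C^{-1}$, the second via $|u_L - u_R| = s_0$ and uniform positive definiteness of $\nabla^2 \eta$), so a naive triangle inequality is insufficient and one must exploit the constraint $\tilde \eta(\overline u) = 0$ to rule out cancellation.

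The first step is to Taylor expand around $u_L$. Setting $A := \nabla^2 \eta(u_L)$ and $v := \overline u - u_L$, we obtain
\[
\nabla \tilde \eta(\overline u)\cdot w \;=\; C s_0\, A(v,w) + A(w,w) + (\text{controlled errors}).
\]
The second step is to Taylor expand the boundary relation $\tilde \eta(\overline u) = 0$ to second order. Using $\nabla^2 \eta(u_R) = A + O(s_0)$ it becomes
\[
(1 + C s_0)\, A(v,v) \;=\; A(v-w, v-w) + (\text{cubic errors}),
\]
which after expansion rearranges to the key identity
\[
2\, A(v,w) \;=\; A(w,w) - C s_0\, A(v,v) + (\text{cubic errors}).
\]

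Substituting this identity back into the first display eliminates $A(v,w)$ and yields
\[
\nabla \tilde \eta(\overline u) \cdot w \;=\; A(w,w)\left[1 + \tfrac{C s_0}{2}\right] - \tfrac{(C s_0)^2}{2}\, A(v,v) + (\text{errors}).
\]
Since $A$ is uniformly positive definite on $B_{2\epsilon}(d)$, one has $A(w,w) \gtrsim s_0^2$, while $|v| \lesssim C^{-1}$ gives $A(v,v) \lesssim C^{-2}$ and hence the subtracted term is at most $O(s_0^2)$. To confirm a definite sign I would decompose $v = \alpha w + v_\perp$ with $A(v_\perp, w) = 0$, reduce the constraint to a quadratic in $\alpha$, and verify that its two roots (corresponding to the two components of $\partial \Pi_{C,s_0}$ traversed along $w$) produce $1 + C s_0\, \alpha = \pm 1 + O(C s_0)$. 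In both cases $|\nabla \tilde \eta(\overline u)\cdot w| \gtrsim s_0^2$, and Cauchy--Schwarz with $|w| = s_0$ delivers $|\nabla \tilde \eta(\overline u)| \gtrsim s_0$.

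The main obstacle will be keeping all Taylor remainders strictly subleading compared with the $s_0^2$-scale main term, uniformly over $\overline u \in \partial \Pi_{C,s_0}$, and in particular on the far component where $|v| \sim C^{-1}$. The natural way to close this is to choose $C$ large first (to suppress higher-order contributions in $v$ via the $C^{-1}$ diameter estimate), and only afterwards $s_0$ small (to make $C s_0$ as small as required), matching the quantifier order ``for $C$ sufficiently large and $s_0$ sufficiently small'' in the statement of Lemma~\ref{lem:gkv_nu}.
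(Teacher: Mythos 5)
The paper does not supply a proof of this statement — it is quoted verbatim as Lemma~3.6 of \cite{MR4667839} in the Appendix with no argument given — so there is no ``paper's proof'' to compare against. Judging your proposal on its own, there is a genuine gap in Step~3 that would make the argument fail on a large portion of $\partial\Pi_{C,s_0}$.

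The problem is that $w = u_R - u_L$ is not a good test direction everywhere on the boundary. You reduce the constraint $\tilde\eta(\overline u)=0$ to the quadratic $Cs_0\alpha^2 + 2\alpha + Cs_0\beta - 1 = 0$ with $\beta := A(v_\perp,v_\perp)/A(w,w)$, whose roots satisfy $1 + Cs_0\alpha = \pm\sqrt{1 + Cs_0 - (Cs_0)^2\beta}$. Your claim that the two roots give $1 + Cs_0\alpha = \pm1 + O(Cs_0)$ is only correct when $\beta = O((Cs_0)^{-1})$, i.e.\ when $|v_\perp| \lesssim \sqrt{s_0/C}$. But $\partial\Pi_{C,s_0}$ has diameter $\sim C^{-1}$, and on the ``equator'' where $|v_\perp| \sim C^{-1}$ one has $\beta \sim (Cs_0)^{-2}$ and the two roots merge, with $1 + Cs_0\alpha \to 0$. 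There the pairing $\nabla\tilde\eta(\overline u)\cdot w = A(w,w)(1 + Cs_0\alpha) + \text{error}$ degenerates to lower order, and Cauchy--Schwarz applied to $w$ yields nothing. This is not a Taylor-remainder issue: in the exact model $\eta(u)=|u|^2/2$, $\Pi_{C,s_0}$ is a ball of radius $\sim C^{-1}$ centered at $u_L - w/(Cs_0)$, $\nabla\tilde\eta$ is exactly radial from that center, and on the great circle orthogonal to $w$ one has $\nabla\tilde\eta(\overline u)\cdot w = 0$ identically even though $|\nabla\tilde\eta(\overline u)|\sim s_0$.

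The clean repair is to test against $\overline u - u_{\min}$, where $u_{\min}$ is the unique minimizer of the uniformly convex function $\tilde\eta$ (its Hessian is $Cs_0\nabla^2\eta \gtrsim Cs_0 I$ on $\Pi^*_{C,s_0}$). First, one computes $u_{\min} - u_L \approx -w/(Cs_0)$ and then $\tilde\eta(u_{\min}) \approx -\tfrac{(1+Cs_0)s_0}{2C} \sim -s_0/C$ (for this it is important to use $u_{\min}$ rather than $u_L$, since $|\tilde\eta(u_L)|\sim s_0^2 \ll s_0/C$). Then for $\overline u\in\partial\Pi_{C,s_0}$, convexity gives $\tilde\eta(u_{\min}) \geq \tilde\eta(\overline u) + \nabla\tilde\eta(\overline u)\cdot(u_{\min}-\overline u) = \nabla\tilde\eta(\overline u)\cdot(u_{\min}-\overline u)$, so by Cauchy--Schwarz
$$|\nabla\tilde\eta(\overline u)| \;\geq\; \frac{-\tilde\eta(u_{\min})}{|u_{\min}-\overline u|} \;\gtrsim\; \frac{s_0/C}{\operatorname{diam}(\Pi_{C,s_0})} \;\gtrsim\; \frac{s_0/C}{C^{-1}} \;=\; s_0.$$
This circumvents the directional degeneracy entirely. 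Your pairing-with-$w$ computation is a nice way to see what happens near the line through $u_L$ and $u_R$, but it cannot give the uniform bound over all of $\partial\Pi_{C,s_0}$.
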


\begin{lemma}[Modification of Lemma 3.7 of \cite{MR4667839}]\label{lem:gkv_dist}
    For $C$ sufficiently large, $s_0$ sufficiently small, and $u\in \Pi^*_{C,s_0}$, we have
    \begin{equation}
    |\tilde\eta(u)| \lesssim s_0 d(u,\partial\Pi_{C,s_0}).
    \end{equation}
\end{lemma}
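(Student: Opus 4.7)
The plan is to reduce the estimate to a Taylor expansion of $\tilde\eta$ around a nearest boundary point and then invoke the size estimates on $\nabla\tilde\eta$ and $\nabla^2\tilde\eta$ from Lemma~\ref{lem:gkv_eta} together with the normal-direction lower bound from Lemma~\ref{lem:gkv_nu}. Fix $u\in\Pi^*_{C,s_0}$ and let $\bar u\in\partial\Pi_{C,s_0}$ be a point realizing $d(u,\partial\Pi_{C,s_0})$. Recall $\Pi^*_{C,s_0}=\Pi_{C,s_0}\cup B_r(u^*)$ with $r=K_{ball}C^{-1}$, and that $\mathrm{diam}(\Pi_{C,s_0})\sim C^{-1}$ by Lemma~\ref{lem:gkv_eta}; consequently in either case $|u-\bar u|\lesssim C^{-1}$.

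Write the second-order Taylor expansion
\begin{equation*}
\tilde\eta(u)=\tilde\eta(\bar u)+\nabla\tilde\eta(\bar u)\cdot(u-\bar u)+\tfrac12\int_0^1(1-t)\,\nabla^2\tilde\eta(\bar u+t(u-\bar u))(u-\bar u)\cdot(u-\bar u)\,dt.
\end{equation*}
The first term vanishes by the definition of $\partial\Pi_{C,s_0}$. By Lemma~\ref{lem:gkv_eta}, $|\nabla\tilde\eta(\bar u)|\lesssim s_0$, and $|\nabla^2\tilde\eta|\lesssim Cs_0$ on the relevant set. Substituting these bounds gives
\begin{equation*}
|\tilde\eta(u)|\leq Ks_0|u-\bar u|+KCs_0|u-\bar u|^2\leq Ks_0|u-\bar u|\bigl(1+K'C|u-\bar u|\bigr).
\end{equation*}
Because $|u-\bar u|\lesssim C^{-1}$ as noted above, the parenthetical factor is $O(1)$ uniformly in $C$ and $s_0$, so we obtain $|\tilde\eta(u)|\lesssim s_0|u-\bar u|=s_0\,d(u,\partial\Pi_{C,s_0})$, as desired.

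The only subtle point is ensuring that the nearest-boundary-point construction is well behaved for $u\in B_r(u^*)\setminus\Pi_{C,s_0}$ (i.e.\ the piece of $\Pi^*_{C,s_0}$ on which $\tilde\eta\geq 0$), but here convexity of $\Pi_{C,s_0}$ (Lemma~\ref{lem:gkv_eta}) guarantees a unique projection $\bar u$, and smallness of $r\sim C^{-1}$ guarantees $|u-\bar u|\leq|u-u^*|<r\lesssim C^{-1}$, which is exactly what is needed for the quadratic remainder to be absorbed. The main obstacle is simply checking this uniform control of $|u-\bar u|$ by $C^{-1}$ across both regions defining $\Pi^*_{C,s_0}$; once that is in hand, the Taylor argument and the already-established gradient and Hessian bounds on $\tilde\eta$ close the estimate.
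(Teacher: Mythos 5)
Your proof is correct, and it takes a mildly different route from the paper's. The paper handles $u\in\Pi_{C,s_0}$ by citing the original Lemma~3.7 of \cite{MR4667839} and treats only $u\in\Pi^*_{C,s_0}\setminus\Pi_{C,s_0}$ directly, using the mean value theorem along the segment from the nearest boundary point $\bar u$ to $u$; since $\tilde\eta(\bar u)=0$, one evaluates $\nabla\tilde\eta$ at a single intermediate point and needs only the first-order bound $|\nabla\tilde\eta|\lesssim s_0$ from Lemma~\ref{lem:gkv_eta}. You instead do a second-order Taylor expansion with integral remainder, which forces you to also invoke the Hessian bound $|\nabla^2\tilde\eta|\lesssim Cs_0$ and then absorb the quadratic term via $|u-\bar u|\lesssim C^{-1}$. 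Your absorption is sound — $\mathrm{diam}(\Pi_{C,s_0})\sim C^{-1}$ for the first region, and $u^*\in\partial\Pi_{C,s_0}$ gives $|u-\bar u|\leq|u-u^*|<r\sim C^{-1}$ for the ball — but the second-order expansion is not actually needed; a first-order Lagrange form already closes the estimate with fewer hypotheses invoked. The one thing your version buys is a unified treatment of both regions of $\Pi^*_{C,s_0}$ rather than referring the $\Pi_{C,s_0}$ case back to the cited lemma.
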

\begin{proof}
    The case of $u \in \Pi_{C,s_0}$ is proven in \cite{MR4667839}.
    For $u\in \Pi^*_{C,s_0} \setminus \Pi_{C,s_0}$ we define $\overline u = \overline u(u) \in \partial \Pi$ as the closest point in $\partial \Pi$ to $u$. 
    Considering the map $t \mapsto \tilde \eta(\overline u + t\nu(\overline u))$, where $\nu$ is the outward facing unit normal of $\Pi_{C,s_0}$, we have by the mean value theorem a $t \in (0,|u-\overline u|)$ such that
    \begin{align*}
        \tilde \eta(u) &=  \tilde \eta(u) - \tilde \eta(\overline u) \\
        &= \left[\left.\partial_s \tilde \eta(\overline u + s \nu (\overline u))\right|_{s = t}\right] |u-\overline u| \\
        &= \nabla \tilde \eta(\overline u + t\nu(\overline u)) \nu(\overline u)d(u,\partial \Pi_{C,s_0})  \\
        &\lesssim \sup_{u\in \Pi^*} |\nabla \tilde \eta(u)|d(u,\Pi) \lesssim s_0 d(u, \partial \Pi_{C,s_0})
    \end{align*}
    where the final line follows by equation~\eqref{eqn_lemeta_desired2}.
\end{proof}

\subsubsection{Characterization of the maximizer $u^*$ in Proposition \ref{prop:gkv-cont}}
\begin{lemma}[Lemmas 5.2 and 5.3 of \cite{MR4667839}] \label{lemma:gkv-max-char}
    Let $O \subset \Pi_{C,s_0}$ be open. 
    Then there are no critical points of $D_{cont}$ within $O$ and the extreme values of $D_{cont}$ are attained on $\partial O$. 
    Furthermore, if $\partial O$ is $C^1$ a local maximizer $\overline u \in \partial O$ is characterized by
    $$ \nabla \tilde \eta(\overline u) \parallel l^1(\overline u) \quad\text{ and }\quad r_1(\overline u)\text{ points outwards from $O$.}$$
\end{lemma}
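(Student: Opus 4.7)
The plan is to derive everything from a single gradient formula for $D_\cont$ and exploit genuine nonlinearity. First I would differentiate $D_\cont(u) = -\tilde q(u) + \lambda_1(u)\tilde\eta(u)$, using that $(\tilde\eta,\tilde q)$ is an entropy pair for \eqref{system} by \Cref{lem:gkv_eta} (so $\nabla\tilde q = \nabla\tilde\eta\, f'$), to obtain
\begin{equation*}
\nabla D_\cont(u) = \nabla\tilde\eta(u)\bigl(\lambda_1(u) I - f'(u)\bigr) + \tilde\eta(u)\,\nabla\lambda_1(u).
\end{equation*}
The essential identity then comes from testing against $r_1(u)$: since $f'(u)r_1(u) = \lambda_1(u)r_1(u)$, the first term collapses and
\begin{equation*}
\nabla D_\cont(u)\cdot r_1(u) = \tilde\eta(u)\,\nabla\lambda_1(u)\cdot r_1(u).
\end{equation*}

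From this the first half of the lemma is immediate. On $O \subset \Pi_{C,s_0}$ one has $\tilde\eta(u) < 0$ strictly, while genuine nonlinearity (Assumption~\ref{assum}\,\ref{assum:gnl}) yields $\nabla\lambda_1\cdot r_1 \ne 0$. Hence the right-hand side above is non-zero, so $\nabla D_\cont$ cannot vanish anywhere in $O$. Since $D_\cont$ is continuous on the compact set $\overline O$, its extrema are attained, and by what we just showed they cannot lie in $O$; so they must lie on $\partial O$.

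For the characterization of a local maximizer $\bar u \in \partial O$, I would apply the first-order optimality conditions for a constrained maximum: as $\partial O$ is $C^1$, the tangential component of $\nabla D_\cont(\bar u)$ vanishes and the inward derivative is non-positive. Equivalently, $\nabla D_\cont(\bar u) = c\,\nu_O(\bar u)$ with $c \ge 0$, where $\nu_O$ denotes the outward unit normal to $\partial O$. Combining this with the identity of the first paragraph yields
\begin{equation*}
c\,\nu_O(\bar u)\cdot r_1(\bar u) = \tilde\eta(\bar u)\,\nabla\lambda_1(\bar u)\cdot r_1(\bar u).
\end{equation*}
Using the standard orientation on the 1-shock curve (so that $\lambda_1$ strictly decreases along it, which fixes the sign of $\nabla\lambda_1\cdot r_1$) together with $\tilde\eta(\bar u) \le 0$, the right-hand side has a definite positive sign; hence $\nu_O(\bar u)\cdot r_1(\bar u) > 0$, which is exactly the statement that $r_1(\bar u)$ points outward from $O$.

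The parallelism $\nabla\tilde\eta(\bar u) \parallel l^1(\bar u)$ is the delicate part and what I expect to be the main obstacle. My plan is to expand $\nabla\tilde\eta(\bar u) = \sum_j \alpha_j\, l^j(\bar u)$ and, using $l^j(\lambda_1 I - f') = (\lambda_1-\lambda_j)\, l^j$, to rewrite
\begin{equation*}
\nabla D_\cont(\bar u) = \sum_{j\ne 1}\alpha_j(\lambda_1-\lambda_j)\, l^j(\bar u) + \tilde\eta(\bar u)\,\nabla\lambda_1(\bar u).
\end{equation*}
The Lagrange condition $\nabla D_\cont(\bar u)\parallel\nu_O(\bar u)$, together with the already established $\nu_O(\bar u)\cdot r_1(\bar u) > 0$ and the biorthogonality $l^j\cdot r_k = \delta_{jk}$, should force each $\alpha_j$ with $j\ne 1$ to vanish -- otherwise the transverse eigen-components of $\nabla D_\cont$ would be incompatible with $r_1$ having a positive projection on $\nu_O$. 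Pushing this bookkeeping through in the case $\tilde\eta(\bar u) < 0$, where the $\tilde\eta\,\nabla\lambda_1$ correction term is active and must be absorbed into the expansion using strict hyperbolicity and the non-degeneracy of $\nabla\tilde\eta$ from \Cref{lem:gkv_nu}, is the technical heart of the argument and the step requiring the most care.
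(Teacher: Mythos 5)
Your gradient formula
\begin{equation*}
\nabla D_\cont(u) = \nabla\tilde\eta(u)\bigl(\lambda_1(u) I - f'(u)\bigr) + \tilde\eta(u)\,\nabla\lambda_1(u),\qquad
\nabla D_\cont(u)\cdot r_1(u) = \tilde\eta(u)\,\nabla\lambda_1(u)\cdot r_1(u),
\end{equation*}
is exactly the right starting point, and the first half of the lemma (no critical points in $O$, extrema on $\partial O$) follows cleanly from it plus genuine nonlinearity and compactness of $\overline{O}$. Note that the paper does not supply its own proof of this statement; it cites Lemmas 5.2 and 5.3 of \cite{MR4667839}, so there is no in-paper argument to compare against. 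Your treatment of the first half is fine on its own merits.

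The second half does not go through as written, and the gap is not merely one of bookkeeping. Your argument for ``$r_1$ points outward'' works only when $\tilde\eta(\overline{u})<0$, since you need the right-hand side $\tilde\eta(\overline{u})\,\nabla\lambda_1\cdot r_1$ to be strictly positive and hence $c>0$. But the parallelism $\nabla\tilde\eta\parallel l^1$ \emph{cannot} hold for a generic open $O$ with $\tilde\eta(\overline{u})<0$: the Lagrange condition only ties $\nabla D_\cont(\overline{u})$ to $\nu_O(\overline{u})$, and for a generic $O$ (say, a small disk entirely inside $\Pi_{C,s_0}$) the outward normal at the maximizer has no reason to align with $l^1$, and your eigenexpansion $\nabla D_\cont=\sum_{j\neq1}\alpha_j(\lambda_1-\lambda_j)l^j+\tilde\eta\nabla\lambda_1$ has nonzero transverse coefficients $\alpha_j$. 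The heuristic that ``$\nu_O\cdot r_1>0$ plus biorthogonality should force $\alpha_j=0$'' has no force: a vector can have a positive $r_1$-component and arbitrary transverse components simultaneously. The characterization $\nabla\tilde\eta\parallel l^1$ is really about the maximizer $u^*$ on $\partial\Pi_{C,s_0}$, where $\tilde\eta(\overline{u})=0$ and $\nu_O=\nabla\tilde\eta/|\nabla\tilde\eta|$. There the gradient formula collapses to $\nabla D_\cont=\nabla\tilde\eta(\lambda_1 I-f')$, and the Lagrange condition $\nabla D_\cont=c\,\nabla\tilde\eta$ with $c\geq 0$ reads $\nabla\tilde\eta\,f'=(\lambda_1-c)\nabla\tilde\eta$, forcing $\lambda_1-c$ to be an eigenvalue of $f'$; since $c\geq0$ and $\lambda_1$ is the smallest eigenvalue, $c=0$ and $\nabla\tilde\eta\parallel l^1$. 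But then $c=0$, i.e., $\nabla D_\cont(\overline{u})=0$, directly contradicts the $c>0$ your outward-pointing argument assumed. The two halves of your second-part argument use incompatible hypotheses on $\tilde\eta(\overline{u})$, so neither is actually established. To finish, the outward-pointing condition must be extracted by a second-order test at the two candidate critical points on $\partial\Pi_{C,s_0}$ (i.e., determining the sign of the scalar $\alpha$ in $\nabla\tilde\eta=\alpha\,l^1$ that makes the point a maximum rather than a minimum), not from the Lagrange multiplier's sign.
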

 
For the following lemma, we define $u_l, u_0$ to be the first intersection points of $R_{u_L}^1(s)$ and $S_{u_L}^1(s)$ with $\partial \Pi_{C,s_0}$, respectively.
The state $u^*$ is as defined in Proposition \ref{prop:gkv-cont}. 
\begin{lemma}[Lemma 5.7 and Proposition 5.11 of \cite{MR4667839}] \label{lemma:gkv-dist-u0}
    For all $C$ sufficiently large and $s_0$ sufficiently small, 
    \begin{align*}
        |u_l - u_L| + |u_0 - u_L| &\sim s_0, \\
        |u_l - u_0| &\lesssim Cs_0^2, \text{ and} \\
        |u^* - u_0| &\lesssim s_0/C.
    \end{align*}
\end{lemma}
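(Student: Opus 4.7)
The plan is to follow Lemma~5.7 and Proposition~5.11 of \cite{MR4667839}, establishing the three estimates in sequence. The first two follow from routine Taylor expansions, while the third is the main obstacle and requires careful exploitation of the Lagrange characterization of $u^*$ from Lemma \ref{lemma:gkv-max-char} combined with a local normal-form analysis of $\partial \Pi_{C,s_0}$ near $u_L$.

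First I would prove $|u_l - u_L| + |u_0 - u_L| \sim s_0$. Writing $\tilde\eta(u) = (1+Cs_0)\eta(u|u_L) - \eta(u|u_R)$ and using the arclength expansion $\S^1_{u_L}(s) = u_L + s\, r_1(u_L) + O(s^2)$ from Lemma \ref{lem_hugoniot}, together with the quadratic behavior of the relative entropy near its base points, I get an expansion of the form $\tilde\eta(\S^1_{u_L}(s)) = \tfrac{\alpha s_0}{2}\bigl(2 s/s_0 - 1\bigr) + O(s_0^2(|s|+s_0))$ with $\alpha = \nabla^2\eta(u_L)\, r_1(u_L)\cdot r_1(u_L) > 0$. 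Since this is negative at $s=0$ and positive at $s=s_0$, the first zero $s^{sh}_\ast$ satisfies $s^{sh}_\ast = s_0/2 + O(s_0^2)$, which gives $|u_0 - u_L| \sim s_0$ via arclength parametrization; the rarefaction case is identical.

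For $|u_l - u_0| \lesssim C s_0^2$ I will use the classical fact that shock and rarefaction curves of the same family through $u_L$ agree to second order (immediate from Lemma \ref{lem_hugoniot} and the analogous expansion for the rarefaction curve), giving $|\S^1_{u_L}(s) - R^1_{u_L}(s)| = O(s^3)$ for $s \sim s_0$. Combined with $|s^{sh}_\ast - s^{rare}_\ast| = O(s_0^2)$ from the previous step and the triangle inequality, this yields $|u_l - u_0| \lesssim s_0^2 \lesssim C s_0^2$ (recall $C \sim 1$ in our regime).

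For $|u^* - u_0| \lesssim s_0/C$, I will decompose $u^* - u_L = \gamma\, r_1(u_L) + w$ with $w \perp r_1(u_L)$. Using the identity
\[
\nabla\tilde\eta(u) = C s_0\, \nabla^2\eta(u_L)(u-u_L) + \nabla^2\eta(u_L)(u_R - u_L) + O(s_0^2),
\]
together with the entropy-orthogonality $l_i(u_L) \parallel \nabla^2\eta(u_L)\, r_i(u_L)$ (valid for systems with a strictly convex entropy), projecting the Lagrange condition $\nabla\tilde\eta(u^*) \parallel l_1(u^*)$ onto $r_i(u_L)$ for $i \neq 1$ gives $C s_0\, w_i\, \|r_i\|^2_{\nabla^2\eta(u_L)} = O(s_0^2)$, hence $|w| \lesssim s_0/C$. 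The longitudinal component $\gamma$ is then pinned down by $u^* \in \partial \Pi_{C,s_0}$: using $\nabla^2\eta(u_L)$-orthogonality of $\{r_i\}$, one finds the normal form $\tilde\eta(u_L + \gamma r_1 + w) = \tfrac{\|r_1\|^2 s_0}{2}(C\gamma^2 + 2\gamma - s_0) + \tfrac{C s_0}{2}\|w\|^2_{\nabla^2\eta(u_L)} + \text{h.o.t.}$, and substituting $|w| = O(s_0/C)$ into $\tilde\eta(u^*)=0$ yields $|\gamma - s_0/2| = O(s_0^2/C)$. Since $u_0$'s own transverse component, coming from the second-order term $\tfrac{(s^{sh}_\ast)^2}{2}\nabla r_1\, r_1$ of the shock curve, is $O(s_0^2) \lesssim s_0/C$, the triangle inequality closes the estimate.

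The hard part will be this third step: one must simultaneously track three different scales ($s_0$, $C s_0$, and $s_0/C$), and carefully exploit how the transverse weight $C s_0$ appearing in $\nabla\tilde\eta$ tightens the Lagrange constraint in the transverse directions to yield a bound which is a factor of $s_0$ smaller than the diameter $\sim 1/C$ of $\Pi_{C,s_0}$ that one would get from a purely diameter-based comparison.
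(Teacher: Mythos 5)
The paper itself does not prove this lemma: it is quoted verbatim (with a minor modification to $\Pi^*_{C,s_0}$) from \cite{MR4667839}, and is used only as an input to the analysis of \Cref{sec:cont}. There is therefore no proof in this paper to compare against, and I assess your proposal on its own terms.

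Your overall strategy is the right one and matches the route taken in \cite{MR4667839}: Taylor-expand $\tilde\eta$ along the shock/rarefaction curve for the first two estimates, then decompose $u^* - u_L = \gamma\,r_1(u_L) + w$ and combine the Lagrange characterization of $u^*$ from \Cref{lemma:gkv-max-char} (projected transversally, using $\nabla^2\eta(u_L)$-orthogonality of the eigenframe) with the constraint $\tilde\eta(u^*)=0$. The transverse estimate $|w|\lesssim s_0/C$ is correctly extracted — this is the step that genuinely uses the $Cs_0$ weight, and your accounting of the competing scales there is sound.

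However, there is a real error in the longitudinal step. You claim that substituting $|w|=O(s_0/C)$ into $\tilde\eta(u^*)=0$ yields $|\gamma - s_0/2| = O(s_0^2/C)$. Solving your own normal form
\[
\frac{\alpha s_0}{2}\bigl(C\gamma^2 + 2\gamma - s_0\bigr) + \frac{Cs_0}{2}\|w\|_{\nabla^2\eta(u_L)}^2 + \text{h.o.t.} = 0
\]
gives $\gamma = \frac{-1+\sqrt{1+Cs_0+O(s_0^2)}}{C} = \frac{s_0}{2} - \frac{Cs_0^2}{8} + O(s_0^2)$, so $|\gamma - s_0/2| \sim Cs_0^2$, not $s_0^2/C$. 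As written, your triangle inequality then only closes under the stronger smallness $Cs_0^2 \lesssim s_0/C$, i.e.\ $s_0 \lesssim C^{-2}$, which is more restrictive than the $Cs_0\ll 1$ scaling used throughout. The fix is to compare the longitudinal coordinates of $u^*$ and $u_0$ \emph{directly}: the arclength $s^{sh}_*$ at which $S^1_{u_L}$ meets $\partial\Pi_{C,s_0}$ solves the same quadratic $Cs^2 + 2s - s_0 = O(s_0^2)$ (this is exactly your computation in the first part), so it carries the identical $-Cs_0^2/8$ offset. The leading corrections cancel, leaving $|\gamma - s^{sh}_*| \lesssim s_0^2 \lesssim s_0/C$, which together with $|w|\lesssim s_0/C$ and the $O(s_0^2)$ transverse component of $u_0$ closes the bound without any extra smallness. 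Separately, there is a dimensional typo early on: the leading term in your expansion should read $\tilde\eta(S^1_{u_L}(s)) = \frac{\alpha s_0}{2}(2s - s_0) + O(s_0^2(|s|+s_0))$, not $\frac{\alpha s_0}{2}(2s/s_0 - 1) + \cdots$; as written it is inconsistent with $\tilde\eta(u_L) = -\eta(u_L|u_R) \sim -s_0^2$, though the conclusion $s^{sh}_* = s_0/2 + O(Cs_0^2)$ is unaffected.
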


\subsubsection{Maximal shock results} \label{sec:app-u-plus}
These results can be found in Section 6 of \cite{MR4667839}. They are reproduced here for completeness. 
\begin{lemma}[Lemma 6.1 of \cite{MR4667839}] \label{gkv:lem_s-star}
    For any $u \in \Pi_{C,s_0}^*$ the map $s \mapsto D_{RH}(u,\S_u^1,\sigma_u^1(s))$ has a unique maximum $s^* = s^*(u) \in [0,\infty)$. 
    For $u \notin \Pi_{C,s_0}$ we have $s^* = 0$, while for $u \in \Pi_{C,s_0}$ we find $s^* > 0$ and satisfies 
    \begin{equation}
        \eta(u | \S^1_u(s^*)) = - \tilde\eta(u).
        \end{equation}
    Moreover, for $u^+(u) = \S_u^1(s^*(u))$ this gives us the bound $|u-u^+|^2 \lesssim s_0d(u,\partial \Pi_{C,s_0}) \lesssim s_0/C$. 
    So for $C$ sufficiently large and $s_0$ sufficiently small, we have $u^+(u) \in B_{2\epsilon}(d)$ for all $u \in \Pi^*_{C,s_0}$.  
\end{lemma}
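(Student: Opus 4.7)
\textbf{Proof proposal for Lemma \ref{gkv:lem_s-star}.}

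The plan is to exploit the identity \eqref{eq:RH-CONT} together with Assumption \ref{assum}\ref{assum:rel-ent-strengthens} to analyze the one-variable function $F(s) := D_{\RH}(u, \S_u^1(s), \sigma_u^1(s))$ directly. Differentiating \eqref{eq:RH-CONT} in $s$ gives the concise formula
\begin{equation*}
    F'(s) = \dot\sigma_u^1(s)\bigl(\tilde\eta(u) + \eta(u|\S_u^1(s))\bigr),
\end{equation*}
where by Assumption \ref{assum}\ref{assum:rel-ent-strengthens} we have $\dot\sigma_u^1(s) < 0$ for all $s > 0$, and the map $s \mapsto \eta(u|\S_u^1(s))$ is strictly increasing with $\eta(u|\S_u^1(0)) = 0$. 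Hence the sign of $F'(s)$ is determined by the sign of $-(\tilde\eta(u) + \eta(u|\S_u^1(s)))$, and the right factor is a strictly increasing function of $s$ starting at $\tilde\eta(u)$.

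First I would handle the case $u \notin \Pi_{C,s_0}$, where $\tilde\eta(u) \geq 0$ by the definition \eqref{eq:def-pi}. Then $\tilde\eta(u) + \eta(u|\S_u^1(s)) \geq 0$ for all $s \geq 0$, so $F'(s) \leq 0$; thus $F$ is non-increasing and the maximum is attained at $s^* = 0$. For $u \in \Pi_{C,s_0}$ we have $\tilde\eta(u) < 0$, so $F'(0^+) > 0$ and $F$ is initially increasing. Since $s \mapsto \eta(u|\S_u^1(s))$ is continuous and strictly monotone with $\eta(u|\S_u^1(s)) \to +\infty$ as $s$ grows (or at least surpasses any fixed positive value on the admissible interval), by the intermediate value theorem there is a unique $s^* > 0$ with $\eta(u|\S_u^1(s^*)) = -\tilde\eta(u)$; $F$ is strictly increasing on $[0,s^*)$ and strictly decreasing on $(s^*, s_u)$, so $s^*$ is the unique maximizer.

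Next, to obtain the quantitative bound, I would combine this defining identity with the two geometric lemmas already in hand. Writing $u^+ = \S_u^1(s^*)$, Lemma \ref{lemma:leger-square} applied locally (using $u^+ \in \mathcal{V}$ for now, to be justified afterwards) yields $|u - u^+|^2 \sim \eta(u|u^+) = -\tilde\eta(u)$. Lemma \ref{lem:gkv_dist} then gives $|\tilde\eta(u)| \lesssim s_0\, d(u,\partial\Pi_{C,s_0})$, and Lemma \ref{lem:gkv_eta} gives $\text{diam}(\Pi_{C,s_0}) \sim C^{-1}$, hence $d(u,\partial\Pi_{C,s_0}) \lesssim C^{-1}$. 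Chaining the three bounds produces $|u-u^+|^2 \lesssim s_0 d(u,\partial\Pi_{C,s_0}) \lesssim s_0/C$, as claimed.

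Finally, for the membership statement $u^+(u) \in B_{2\epsilon}(d)$: by the construction of $\Pi^*_{C,s_0}$ arranged earlier we have $\Pi^*_{C,s_0}\subset B_{5\epsilon/3}(d)$ for $C_1$ large enough (this is the choice made at the start of the proof of \Cref{prop:small_shock_diss}). The bound $|u-u^+| \lesssim \sqrt{s_0/C}$ can then be made arbitrarily small by taking $C$ large and $s_0$ small, which forces $u^+ \in B_{2\epsilon}(d)$ and in particular justifies the application of Lemma \ref{lemma:leger-square} above (one can run this as a bootstrap: continuity of $s\mapsto \S_u^1(s)$ starting at $u \in B_{5\epsilon/3}(d)$ keeps $\S_u^1(s)$ inside $B_{2\epsilon}(d)$ as long as $|u-\S_u^1(s)| < \epsilon/3$, and the identity $\eta(u|\S_u^1(s^*)) = -\tilde\eta(u) \lesssim s_0/C$ prevents ever leaving this region). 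The main subtlety, and essentially the only thing requiring care, is precisely this bootstrap: one must know that $s^*$ is achieved before the shock curve wanders out of the region where Lemma \ref{lemma:leger-square} is available, and this is handled by the smallness of $-\tilde\eta(u)$ (which forces $\eta(u|\S_u^1(s^*))$ to be small, hence $|u-u^+|$ small) for $C$ large and $s_0$ small.
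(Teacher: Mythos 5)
Your proposal is correct. Since the paper simply cites this as Lemma 6.1 of \cite{MR4667839} without reproducing a proof, there is no in-paper argument to compare against, but your derivation is the natural one and is complete: the formula $F'(s)=\dot\sigma_u^1(s)\bigl(\tilde\eta(u)+\eta(u|\S_u^1(s))\bigr)$ follows from \eqref{eq:RH-CONT} (which is itself derived from Leger's identity \eqref{eq:entropy-quant}, so there is no circularity), the sign analysis via Assumption \ref{assum}\ref{assum:rel-ent-strengthens} gives monotone increase then decrease of $F$, and the intermediate value theorem yields the defining equation $\eta(u|\S_u^1(s^*))=-\tilde\eta(u)$ for $u\in\Pi_{C,s_0}$ while $s^*=0$ is immediate for $\tilde\eta(u)\ge 0$. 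You also correctly identify the one delicate point — the circularity between ``$\eta(u|\S_u^1(s))\sim|u-\S_u^1(s)|^2$'' and ``$\S_u^1(s)$ stays in a compact region of $\Nu$'' — and resolve it with the continuity/bootstrap argument, which is tight: by the arclength parametrization $|u-\S_u^1(s)|=s$, the bound $\eta(u|\S_u^1(s))\le -\tilde\eta(u)\lesssim s_0/C$ confines $\S_u^1(s)$ within $B_{2\epsilon}(d)$ for all $s\le s^*$ once $C$ is large and $s_0$ small. Chaining Lemmas \ref{lemma:leger-square}, \ref{lem:gkv_dist}, and the diameter estimate from Lemma \ref{lem:gkv_eta} then gives the quantitative bound exactly as stated.
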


\begin{lemma}[Lemma 6.14 of \cite{MR4667839}] \label{gkv_uplus-ident-2}
    For $C$ sufficiently large and $s_0$ sufficiently small we have the Lipschitz estimate 
    \begin{equation}
        | \nabla u^+(u) | \lesssim 1
    \end{equation}
    for all $u \in \Pi_{C,s_0}$. 
\end{lemma}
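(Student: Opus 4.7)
\bigskip

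\noindent\textbf{Proof proposal for Lemma \ref{gkv_uplus-ident-2}.}

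The strategy is to apply the implicit function theorem to the defining equation for $s^*(u)$ and then combine with the chain rule. By Lemma \ref{gkv:lem_s-star}, for $u \in \Pi_{C,s_0}$ we have the characterization
\[
F(u, s^*(u)) = 0, \qquad \text{where} \qquad F(u, s) := \eta(u|\S_u^1(s)) + \tilde\eta(u).
\]
Since $(u, s) \mapsto \S_u^1(s)$ is $C^3$ by Lemma \ref{lem_hugoniot} and $\eta \in C^3(\Nu)$, the function $F$ is $C^2$ in both arguments on the relevant domain. Writing $u^+(u) = \S_u^1(s^*(u))$ and applying the chain rule one obtains
\[
\nabla u^+(u) = \nabla_u \S_u^1\bigr|_{s = s^*(u)} + \partial_s \S_u^1\bigr|_{s = s^*(u)} \otimes \nabla s^*(u),
\]
and the first term is uniformly bounded by Lemma \ref{lem_hugoniot}. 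It remains to control the second.

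The plan for estimating $\nabla s^*$ is to bound $\partial_s F$ from below and $\nabla_u F$ from above, and then apply the implicit function formula $\nabla s^*(u) = -(\partial_s F)^{-1} \nabla_u F$. Direct differentiation gives
\[
\partial_s F(u, s) = -\nabla^2\eta\bigl(\S_u^1(s)\bigr)\bigl[\partial_s \S_u^1(s),\; u - \S_u^1(s)\bigr].
\]
Substituting the asymptotic expansion $\S_u^1(s) = u + s\, r_1(u) + O(s^2)$ from Lemma \ref{lem_hugoniot} and invoking the uniform positivity $\nabla^2\eta[r_1, r_1] \geq 2\lambda > 0$ from Lemma \ref{lemma:Drh-Dmax-bound} yields
\[
\partial_s F(u, s)\bigr|_{s = s^*} = s^* \, \nabla^2 \eta(u)[r_1(u), r_1(u)] + O\bigl((s^*)^2\bigr),
\]
whose magnitude is comparable to $s^*$. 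For $\nabla_u F$, the contribution $\nabla_u \tilde \eta$ is controlled by Lemma \ref{lem:gkv_eta}, giving $|\nabla_u \tilde\eta(u)| \lesssim s_0$. For the remaining piece, a careful Taylor computation exploiting $\S_u^1(0) = u$, $\nabla_u \S_u^1(0) = I$, and the identity $\nabla_u \eta(u|v) = \nabla\eta(u) - \nabla\eta(v)$ produces cancellations forcing $\nabla_u \eta(u|\S_u^1(s)) = O(s^2)$ as $s \to 0$. Thus $|\nabla_u F| \lesssim s_0 + (s^*)^2$.

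The main obstacle will be to combine these ingredients into a \emph{uniform} bound $|\nabla u^+(u)| \lesssim 1$ on all of $\Pi_{C,s_0}$. Naively, the bounds above only give $|\nabla s^*| \lesssim s_0/s^*$, which is not uniform as $u$ approaches $\partial \Pi_{C,s_0}$ and $s^* \to 0$. The resolution is to avoid the separation of $\nabla u^+$ into two pieces and instead differentiate the relation $\eta(u|u^+(u)) = -\tilde\eta(u)$ directly. Differentiating both sides in $u$ and using that $\nabla_{u^+}\eta(u|u^+) = -\nabla^2\eta(u^+)(u - u^+)$, which after the expansion of Lemma \ref{lem_hugoniot} contracts into the $r_1(u)$ direction with coefficient $\sim s^*$, produces a linear system for $\nabla u^+$ whose coefficient matrix has the form $s^* \nabla^2 \eta(u^+) \circ P_{r_1} + R$ with remainder $R = O(s^*)$ transverse to $r_1$. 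Inverting this system, the apparent $1/s^*$ factor cancels against the $s^*$-prefactor of the right-hand side (coming from the identity $|u - u^+| \sim s^*$ together with $|\nabla \tilde \eta| \lesssim s_0$ and the geometric fact that, near $\partial \Pi_{C,s_0}$, $\nabla \tilde \eta$ is aligned with the direction in which the cancellation occurs, via Lemma \ref{lem:gkv_nu}). This gives the desired uniform bound, and the remainder of the argument is straightforward bookkeeping of constants depending on $C_1$ and $d$ but independent of $s_0$ and of the specific point $u$.
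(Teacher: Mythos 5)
The paper itself does not prove this lemma; it is cited verbatim from \cite{MR4667839} in the appendix, so there is no in-house proof to compare against. Your implicit-function setup is sound: the computation $\partial_s F(u,s^*) = s^*\nabla^2\eta(u)(r_1,r_1) + \bigO((s^*)^2)$ is correct, the cancellation giving $\nabla_u\eta(u|\S_u^1(s)) = \bigO(s^2)$ holds, and you correctly flag the central obstruction, namely that these bounds only give $|\nabla s^*| \lesssim s_0/s^* + s^*$, which degenerates as $u$ approaches $\partial\Pi_{C,s_0}$ where $s^*(u)\to 0$.

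The proposed resolution, however, does not close this gap. You claim that differentiating $\eta(u|u^+(u)) = -\tilde\eta(u)$ directly produces an $s^*$-prefactor on the right-hand side that cancels the apparent $1/s^*$. But the right-hand side after differentiation is simply $-\nabla\tilde\eta(u)$, whose magnitude is $\sim s_0$ \emph{uniformly} by Lemmas \ref{lem:gkv_eta} and \ref{lem:gkv_nu}; it carries no additional factor of $s^*$. The differentiated identity supplies only $n$ scalar constraints on the $n\times n$ matrix $\nabla u^+(u)$, so it does not determine $\nabla u^+$, and once one imposes the structural constraint $u^+ = \S_u^1(s^*(u))$ to pin it down, the singular rank-one term $\partial_s\S_u^1\bigr|_{s=s^*}\otimes\nabla s^*$ with operator norm $\sim s_0/s^*$ reappears. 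As a sanity check, in scalar Burgers with $u_L = 0$, $u_R = -s_0$ one finds $u^+(u) = u - \sqrt{s_0(2u + s_0 - Cu^2)}$, whose derivative equals $1 - s_0(1-Cu)/s^*(u)$ and diverges at both endpoints of $\Pi_{C,s_0}$; the uniform Lipschitz estimate cannot come out of the bare implicit-function computation. Whatever argument underlies Lemma 6.14 of \cite{MR4667839} must either restrict the estimate to a region where $s^*\gtrsim s_0$ (e.g.\ the regions $R^0_{C,s_0}$ and $Q_{C,s_0}$ where the lemma is actually invoked) or exploit a structural cancellation your proposal leaves unsupplied, and the single missing step is precisely the one you identify but do not fill.
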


\subsection{Useful estimate for shock-shock overtaking interaction for isothermal gas}\label{app_A3}
We add some useful interaction estimate for shock-shock overtaking interaction for isothermal gas dynamics \eqref{isothermal} on $(\rho,w)$ with $\rho=1/\tau$. We adapt the method in \cite{MR3040678} for isentropic gas dynamics.


Given a base point $(\bar \rho,\bar w)$, we consider
the curves of points $(\rho,w)$ such that the Riemann problem with left state $(\bar \rho,\bar w)$
and right state $(\rho,w)$ yields a single 1 or 2 wave (rarefaction or entropic 
shock). We use the Rankine-Hugoniot condition and the Riemann invariant is constant on the left and right states of rarefaction wave in the same family.

Letting $b$ and $f$ denote the $\rho$-ratios $\rho_\text{r}/\rho_\text{l}$ across 
backward and forward waves, respectively, the parametrizations are given by:
\begin{align}
	&\text{1 waves:}\qquad
	{\ba{V}}(b;\bar\rho,\bar w) = \left(\!\!\begin{array}{c}
		b\bar \rho \\
		\bar u -{\ba{\Omega}}(b)\bar \rho\\
	\end{array} \!\!\right)
	\qquad\left\{\begin{array}{ll}
	\ba R:\quad 0<b<1 \\
	\ba S:\quad b>1
	\end{array}\right.\label{bkwd_wave}\\
	&\text{2 waves:}\,\,\,\qquad
	{\fa{V}}(f;\bar \rho,\bar w) = \left(\!\!\begin{array}{c}
		f\bar \rho \\
		\bar u +{\fa{\Omega}}(f)\bar \rho\\
	\end{array} \!\!\right)
	\qquad\left\{\begin{array}{ll}
	\fa R:\quad f>1 \\
	\fa S:\quad 0<f<1,
	\end{array}\right.\label{frwd_wave}
\end{align}
where the auxiliary functions $\ba\Omega$ and $\fa\Omega$ are given by
\[{\ba{\Omega}}(x) = \left\{\begin{array}{ll}
	 (x-1) \quad & 0<x<1\\\\
	\sqrt{x}\quad & x>1
\end{array}\right\}\] 
\[{\fa{\Omega}}(x) = \left\{\begin{array}{ll}
	-\sqrt{x} \quad & 0<x<1\\\\
	 (x-1) \quad  & x>1.
\end{array}\right\}\] 
Note that the auxiliary functions $\ba\Omega$ and $\fa\Omega$ are both strictly increasing and satisfy
\beq\label{phi_reln}
	\fa \Omega(x)=-x{\ba{\Omega}}\big(\textstyle\frac{1}{x}\big)\,.
\eeq
Next, we consider the overtaking interaction between two 1-shocks.
Let the state on the far left be $(\bar\rho,\bar w)$ and let the $\rho$-ratios of the 
incoming  backward waves be $b$ (leftmost) and $\hat b$ (rightmost).
As above let $B$ and $F$ denote the $\rho$-ratios of the outgoing $1$ and $2$
waves, respectively. Traversing the waves before and after interaction yields
\begin{align}
	BF &= b \bar b\label{overtaking1}\\
	\ba\phi(B)-B\fa\phi(F) &=\ba\phi(b)+b\ba\phi(\bar b).\label{overtaking2}
\end{align}
Using \eq{phi_reln} we get that the $\rho$-ratio $B$ solves the equation
\beq\label{overtaking_B}
	\mathcal{G}(B;b,\bar b):=\ba\phi(B)+b{\bar b}\ba\phi\left(\frac{B}{b{\bar b}}\right)-\ba\phi(b)-b\ba\phi({\bar b})=0.
\eeq
It is easy to check that for any fixed $b$, $\bar b$ larger than $1$, $\mathcal{G}(B;b,{\bar b})$ is increasing on $B$, with $\mathcal{G}(1;b,{\bar b})<0$ and $\mathcal{G}(\infty;b,{\bar b})>0$. Hence there is one unique solution $B>1$ of \eqref{overtaking_B}, which will determine a unique $F$ by \eqref{overtaking1}. It is easy to show that $F>1,$ i.e. the outgoing 2-wave is a rarefaction wave, see \cite{MR1301779,MR3450063}.

Now we prove that $B>b$ and $B>{\bar b}$.

In fact, by \eqref{phi_reln},
\[
\mathcal{G}(b;b,{\bar b}):=\ba\phi(b)+b{\bar b}\ba\phi\left(\frac{1}{{\bar b}}\right)-\ba\phi(b)-b\ba\phi({\bar b})=-b\ba\phi(y{\bar b})-\ba\phi(b)<0
\]
and
\[
\mathcal{G}({\bar b};b,{\bar b}):=\ba\phi({\bar b})+b{\bar b}\ba\phi\left(\frac{1}{b}\right)-\ba\phi(b)-b\ba\phi({\bar b})=(1-b)\ba\phi({\bar b})-{\bar b}\ba\phi(b)-\ba\phi(b)<0,
\]
Hence by the montonicity of $\mathcal{G}(B;b,{\bar b})$ on $B$ and $\mathcal{G}(\infty;b,{\bar b})>0$, we know that 
\[
B>\max(b,{\bar b}).
\]

By \eqref{si_iso}, we know that, the strength of outgoing 1-shock (mensured by $|\sigma|$) is larger than the strength of each
incoming shock.  

Using a similar proof, we can show this result for the interaction between two 2-shocks.

\bibliographystyle{apalike}
\bibliography{references-2}

\begin{thebibliography}{}

\bibitem[Bianchini and Bressan, 2005]{MR2150387}
Bianchini, S. and Bressan, A. (2005).
\newblock Vanishing viscosity solutions of nonlinear hyperbolic systems.
\newblock {\em Ann. of Math. (2)}, 161(1):223--342.

\bibitem[Bianchini et~al., 2010]{MR2737438}
Bianchini, S., Colombo, R.~M., and Monti, F. (2010).
\newblock {$2\times 2$} systems of conservation laws with {$\bold L^\infty$}
  data.
\newblock {\em J. Differential Equations}, 249(12):3466--3488.

\bibitem[Bressan, 2000]{MR1816648}
Bressan, A. (2000).
\newblock {\em Hyperbolic systems of conservation laws}, volume~20 of {\em
  Oxford Lecture Series in Mathematics and its Applications}.
\newblock Oxford University Press, Oxford.
\newblock The one-dimensional Cauchy problem.

\bibitem[Bressan, 2024]{MR4855159}
Bressan, A. (2024).
\newblock One-dimensional hyperbolic conservation laws: past and future.
\newblock {\em J. Hyperbolic Differ. Equ.}, 21(3):523--561.

\bibitem[Bressan et~al., 2015]{MR3450063}
Bressan, A., Chen, G., Zhang, Q., and Zhu, S. (2015).
\newblock No {BV} bounds for approximate solutions to {$p$}-system with general
  pressure law.
\newblock {\em J. Hyperbolic Differ. Equ.}, 12(4):799--816.

\bibitem[Bressan et~al., 2021]{MR4271961}
Bressan, A., Chiri, M.~T., and Shen, W. (2021).
\newblock A posteriori error estimates for numerical solutions to hyperbolic
  conservation laws.
\newblock {\em Arch. Ration. Mech. Anal.}, 241(1):357--402.

\bibitem[Bressan and Colombo, 1995a]{BressanC}
Bressan, A. and Colombo, R.~M. (1995a).
\newblock The semigroup generated by {$2\times 2$} conservation laws.
\newblock {\em Arch. Rational Mech. Anal.}, 133(1):1--75.

\bibitem[Bressan and Colombo, 1995b]{MR1375345}
Bressan, A. and Colombo, R.~M. (1995b).
\newblock Unique solutions of {$2\times 2$} conservation laws with large data.
\newblock {\em Indiana Univ. Math. J.}, 44(3):677--725.

\bibitem[Bressan et~al., 2000]{MR1686652}
Bressan, A., Crasta, G., and Piccoli, B. (2000).
\newblock Well-posedness of the {C}auchy problem for {$n\times n$} systems of
  conservation laws.
\newblock {\em Mem. Amer. Math. Soc.}, 146(694):viii+134.

\bibitem[Bressan and De~Lellis, 2023]{MR4661213}
Bressan, A. and De~Lellis, C. (2023).
\newblock A remark on the uniqueness of solutions to hyperbolic conservation
  laws.
\newblock {\em Arch. Ration. Mech. Anal.}, 247(6):Paper No. 106, 12.

\bibitem[Bressan and Goatin, 1999]{MR1701818}
Bressan, A. and Goatin, P. (1999).
\newblock Oleinik type estimates and uniqueness for {$n\times n$} conservation
  laws.
\newblock {\em J. Differential Equations}, 156(1):26--49.

\bibitem[Bressan et~al., 2006]{MR2254446}
Bressan, A., Jenssen, H.~K., and Baiti, P. (2006).
\newblock An instability of the {G}odunov scheme.
\newblock {\em Comm. Pure Appl. Math.}, 59(11):1604--1638.

\bibitem[Bressan and LeFloch, 1997]{MR1489317}
Bressan, A. and LeFloch, P. (1997).
\newblock Uniqueness of weak solutions to systems of conservation laws.
\newblock {\em Arch. Rational Mech. Anal.}, 140(4):301--317.

\bibitem[Bressan and Lewicka, 2000]{MR1757395}
Bressan, A. and Lewicka, M. (2000).
\newblock A uniqueness condition for hyperbolic systems of conservation laws.
\newblock {\em Discrete Contin. Dynam. Systems}, 6(3):673--682.

\bibitem[{Bressan} et~al., 2025]{2025arXiv250500420B}
{Bressan}, A., {Marconi}, E., and {Vaidya}, G. (2025).
\newblock {Uniqueness Domains for ${\bf L}^\infty$ Solutions of $2 \times 2$
  Hyperbolic Conservation Laws}.
\newblock {\em arXiv e-prints}, page arXiv:2505.00420.

\bibitem[Chen and Jenssen, 2013]{MR3040678}
Chen, G. and Jenssen, H.~K. (2013).
\newblock No {TVD} fields for 1-{D} isentropic gas flow.
\newblock {\em Comm. Partial Differential Equations}, 38(4):629--657.

\bibitem[{Chen} et~al., 2024]{2024arXiv240109305C}
{Chen}, G., {Kang}, M.-J., and {Vasseur}, A.~F. (2024).
\newblock {From Navier-Stokes to BV solutions of the barotropic Euler
  equations}.
\newblock {\em arXiv e-prints}, page arXiv:2401.09305.

\bibitem[Chen et~al., 2022]{MR4487515}
Chen, G., Krupa, S.~G., and Vasseur, A.~F. (2022).
\newblock Uniqueness and weak-{BV} stability for {$2\times 2$} conservation
  laws.
\newblock {\em Arch. Ration. Mech. Anal.}, 246(1):299--332.

\bibitem[Chen and Vasseur, 2024]{CKV2}
Chen, G. and Vasseur, A.~F. (2024).
\newblock {{$L^2$} Stability and Weak-BV Uniqueness for Nonisentropic Euler
  Equations}.
\newblock {\em Commun. Math. Anal. Appl.}, 3(3):450--482.

\bibitem[Chen and Chen, 2007]{MR2303477}
Chen, G.-Q. and Chen, J. (2007).
\newblock Stability of rarefaction waves and vacuum states for the
  multidimensional {E}uler equations.
\newblock {\em J. Hyperbolic Differ. Equ.}, 4(1):105--122.

\bibitem[{Cheng}, 2025a]{2024arXiv241103578C}
{Cheng}, J. (2025a).
\newblock {$L^2$-stability \& Minimal Entropy Conditions for Scalar
  Conservation Laws with Concave-Convex Fluxes}.
\newblock {\em Quarterly of Applied Mathematics}.
\newblock To appear; originally posted as arXiv:2411.03578.

\bibitem[{Cheng}, 2025b]{Cheng_isothermal}
{Cheng}, J. (2025b).
\newblock {Uniqueness \& Weak-BV Stability in the Large for Isothermal Gas
  Dynamics}.
\newblock {\em arXiv e-prints}, page arXiv:2501.16244.

\bibitem[Colombo and Risebro, 1998]{ColomboRisebro}
Colombo, R.~M. and Risebro, N.~H. (1998).
\newblock Continuous dependence in the large for some equations of gas
  dynamics.
\newblock {\em Comm. Partial Differential Equations}, 23(9-10):1693--1718.

\bibitem[Contreras~Hip and Lamy, 2021]{MR4305935}
Contreras~Hip, A.~A. and Lamy, X. (2021).
\newblock On the {$L^2$} stability of shock waves for finite-entropy solutions
  of {B}urgers.
\newblock {\em J. Differential Equations}, 301:236--265.

\bibitem[Dafermos, 1979]{MR546634}
Dafermos, C.~M. (1979).
\newblock The second law of thermodynamics and stability.
\newblock {\em Arch. Rational Mech. Anal.}, 70(2):167--179.

\bibitem[Dafermos, 2016]{dafermos_big_book}
Dafermos, C.~M. (2016).
\newblock {\em Hyperbolic conservation laws in continuum physics}, volume 325
  of {\em Grundlehren der Mathematischen Wissenschaften [Fundamental Principles
  of Mathematical Sciences]}.
\newblock Springer-Verlag, Berlin, fourth edition.

\bibitem[Ding et~al., 1985]{MR922139}
Ding, X.~X., Chen, G.~Q., and Luo, P.~Z. (1985).
\newblock Convergence of the {L}ax-{F}riedrichs scheme for isentropic gas
  dynamics. {I}, {II}.
\newblock {\em Acta Math. Sci. (English Ed.)}, 5(4):415--432, 433--472.

\bibitem[DiPerna, 1977]{MR454375}
DiPerna, R.~J. (1977).
\newblock Decay of solutions of hyperbolic systems of conservation laws with a
  convex extension.
\newblock {\em Arch. Rational Mech. Anal.}, 64(1):1--46.

\bibitem[DiPerna, 1979]{MR523630}
DiPerna, R.~J. (1979).
\newblock Uniqueness of solutions to hyperbolic conservation laws.
\newblock {\em Indiana Univ. Math. J.}, 28(1):137--188.

\bibitem[Evans and Gariepy, 2015]{MR3409135}
Evans, L.~C. and Gariepy, R.~F. (2015).
\newblock {\em Measure theory and fine properties of functions}.
\newblock Textbooks in Mathematics. CRC Press, Boca Raton, FL, revised edition.

\bibitem[Feireisl et~al., 2015]{MR3357629}
Feireisl, E., Kreml, O., and Vasseur, A.~F. (2015).
\newblock Stability of the isentropic {R}iemann solutions of the full
  multidimensional {E}uler system.
\newblock {\em SIAM J. Math. Anal.}, 47(3):2416--2425.

\bibitem[Giesselmann and Krupa, 2025]{GiesselmannKrupa2025}
Giesselmann, J. and Krupa, S.~G. (2025).
\newblock Theory of shifts, shocks, and the intimate connections to
  {$L^2$}‑type a posteriori error analysis of numerical schemes for
  hyperbolic problems.
\newblock {\em Mathematics of Computation}.

\bibitem[Glimm and Lax, 1970]{MR0265767}
Glimm, J. and Lax, P.~D. (1970).
\newblock {\em Decay of solutions of systems of nonlinear hyperbolic
  conservation laws}.
\newblock Memoirs of the American Mathematical Society, No. 101. American
  Mathematical Society, Providence, R.I.

\bibitem[Glimm, 1965]{MR0194770}
Glimm, J.~G. (1965).
\newblock Solutions in the large for nonlinear hyperbolic systems of equations.
\newblock {\em Comm. Pure Appl. Math.}, 18:697--715.

\bibitem[Golding, 2023]{golding}
Golding, W. (2023).
\newblock Unconditional regularity and trace results for the isentropic {E}uler
  equations with {$\gamma=3$}.
\newblock {\em SIAM J. Math. Anal.}, 55(5):5751--5781.

\bibitem[Golding et~al., 2023]{MR4667839}
Golding, W.~M., Krupa, S.~G., and Vasseur, A.~F. (2023).
\newblock Sharp {$a$}-contraction estimates for small extremal shocks.
\newblock {\em J. Hyperbolic Differ. Equ.}, 20(3):541--602.

\bibitem[Holden and Risebro, 2015]{MR3443431}
Holden, H. and Risebro, N.~H. (2015).
\newblock {\em Front tracking for hyperbolic conservation laws}, volume 152 of
  {\em Applied Mathematical Sciences}.
\newblock Springer, Heidelberg, second edition.

\bibitem[Kang and Vasseur, 2016]{MR3519973}
Kang, M.-J. and Vasseur, A.~F. (2016).
\newblock Criteria on contractions for entropic discontinuities of systems of
  conservation laws.
\newblock {\em Arch. Ration. Mech. Anal.}, 222(1):343--391.

\bibitem[Krupa, 2020]{move_entire_solution_system}
Krupa, S.~G. (2020).
\newblock Criteria for the a-contraction and stability for the piecewise-smooth
  solutions to hyperbolic balance laws.
\newblock {\em Commun. Math. Sci.}, 18(6):1493--1537.

\bibitem[Krupa, 2021]{MR4184662}
Krupa, S.~G. (2021).
\newblock Finite time stability for the {R}iemann problem with extremal shocks
  for a large class of hyperbolic systems.
\newblock {\em J. Differential Equations}, 273:122--171.

\bibitem[{Krupa}, 2024]{2024arXiv240307784K}
{Krupa}, S.~G. (2024).
\newblock {Finite time BV blowup for Liu-admissible solutions to $p$-system via
  computer-assisted proof}.
\newblock {\em arXiv e-prints}, page arXiv:2403.07784.

\bibitem[Krupa and Vasseur, 2019]{2017arXiv170905610K}
Krupa, S.~G. and Vasseur, A.~F. (2019).
\newblock On uniqueness of solutions to conservation laws verifying a single
  entropy condition.
\newblock {\em J. Hyperbolic Differ. Equ.}, 16(1):157--191.

\bibitem[Krupa and Vasseur, 2020]{scalar_move_entire_solution}
Krupa, S.~G. and Vasseur, A.~F. (2020).
\newblock Stability and uniqueness for piecewise smooth solutions to a nonlocal
  scalar conservation law with applications to {B}urgers-{H}ilbert equation.
\newblock {\em SIAM J. Math. Anal.}, 52(3):2491--2530.

\bibitem[Lax, 1971]{MR0393870}
Lax, P.~D. (1971).
\newblock Shock waves and entropy.
\newblock In {\em Contributions to nonlinear functional analysis ({P}roc.
  {S}ympos., {M}ath. {R}es. {C}enter, {U}niv. {W}isconsin, {M}adison, {W}is.,
  1971)}, pages 603--634. Academic Press, New York.

\bibitem[Leger, 2011]{Leger2011_original}
Leger, N. (2011).
\newblock {$L^2$} stability estimates for shock solutions of scalar
  conservation laws using the relative entropy method.
\newblock {\em Archive for Rational Mechanics and Analysis}, 199(3):761--778.

\bibitem[Leger and Vasseur, 2011]{Leger2011}
Leger, N. and Vasseur, A.~F. (2011).
\newblock Relative entropy and the stability of shocks and contact
  discontinuities for systems of conservation laws with non-{BV} perturbations.
\newblock {\em Archive for Rational Mechanics and Analysis}, 201(1):271--302.

\bibitem[Lewicka, 2001]{MR1828320}
Lewicka, M. (2001).
\newblock Stability conditions for patterns of noninteracting large shock
  waves.
\newblock {\em SIAM J. Math. Anal.}, 32(5):1094--1116.

\bibitem[Lewicka, 2004]{MR2091511}
Lewicka, M. (2004).
\newblock Well-posedness for hyperbolic systems of conservation laws with large
  {BV} data.
\newblock {\em Arch. Ration. Mech. Anal.}, 173(3):415--445.

\bibitem[Lewicka and Trivisa, 2002]{MR1883740}
Lewicka, M. and Trivisa, K. (2002).
\newblock On the {$L^1$} well posedness of systems of conservation laws near
  solutions containing two large shocks.
\newblock {\em J. Differential Equations}, 179(1):133--177.

\bibitem[Lions et~al., 1996]{MR1383202}
Lions, P.-L., Perthame, B., and Souganidis, P.~E. (1996).
\newblock Existence and stability of entropy solutions for the hyperbolic
  systems of isentropic gas dynamics in {E}ulerian and {L}agrangian
  coordinates.
\newblock {\em Comm. Pure Appl. Math.}, 49(6):599--638.

\bibitem[Lions et~al., 1994]{MR1284790}
Lions, P.-L., Perthame, B., and Tadmor, E. (1994).
\newblock Kinetic formulation of the isentropic gas dynamics and {$p$}-systems.
\newblock {\em Comm. Math. Phys.}, 163(2):415--431.

\bibitem[Makridakis, 2007]{MR2404052}
Makridakis, C. (2007).
\newblock Space and time reconstructions in a posteriori analysis of evolution
  problems.
\newblock In {\em E{SAIM} {P}roceedings. {V}ol. 21 (2007) [{J}ourn\'ees
  d'{A}nalyse {F}onctionnelle et {N}um\'erique en l'honneur de {M}ichel
  {C}rouzeix]}, volume~21 of {\em ESAIM Proc.}, pages 31--44. EDP Sci., Les
  Ulis.

\bibitem[Murat, 1978]{MR506997}
Murat, F. (1978).
\newblock Compacit\'e{} par compensation.
\newblock {\em Ann. Scuola Norm. Sup. Pisa Cl. Sci. (4)}, 5(3):489--507.

\bibitem[Nishida, 1968]{MR236526}
Nishida, T. (1968).
\newblock Global solution for an initial boundary value problem of a
  quasilinear hyperbolic system.
\newblock {\em Proc. Japan Acad.}, 44:642--646.

\bibitem[Serre and Vasseur, 2014]{serre_vasseur}
Serre, D. and Vasseur, A.~F. (2014).
\newblock {$L^2$}-type contraction for systems of conservation laws.
\newblock {\em J. \'Ec. polytech. Math.}, 1:1--28.

\bibitem[Smoller, 1994]{MR1301779}
Smoller, J.~A. (1994).
\newblock {\em Shock waves and reaction-diffusion equations}, volume 258 of
  {\em Grundlehren der mathematischen Wissenschaften [Fundamental Principles of
  Mathematical Sciences]}.
\newblock Springer-Verlag, New York, second edition.

\bibitem[Tartar, 1979]{MR584398}
Tartar, L. (1979).
\newblock Compensated compactness and applications to partial differential
  equations.
\newblock In {\em Nonlinear analysis and mechanics: {H}eriot-{W}att
  {S}ymposium, {V}ol. {IV}}, volume~39 of {\em Res. Notes in Math.}, pages
  136--212. Pitman, Boston, Mass.-London.

\bibitem[Tartar, 1983]{MR725524}
Tartar, L. (1983).
\newblock The compensated compactness method applied to systems of conservation
  laws.
\newblock In {\em Systems of nonlinear partial differential equations
  ({O}xford, 1982)}, volume 111 of {\em NATO Adv. Sci. Inst. Ser. C: Math.
  Phys. Sci.}, pages 263--285. Reidel, Dordrecht.

\bibitem[Temple, 1983]{MR716850}
Temple, B. (1983).
\newblock Systems of conservation laws with invariant submanifolds.
\newblock {\em Trans. Amer. Math. Soc.}, 280(2):781--795.

\bibitem[Vasseur, 2008]{VASSEUR2008323}
Vasseur, A.~F. (2008).
\newblock Recent results on hydrodynamic limits.
\newblock {\em Handbook of Differential Equations: Evolutionary Equations},
  4:323 -- 376.

\bibitem[Wagner, 1987]{wagnergasdynamics}
Wagner, D.~H. (1987).
\newblock Equivalence of the {E}uler and {L}agrangian equations of gas dynamics
  for weak solutions.
\newblock {\em J. Differential Equations}, 68:118--136.

\end{thebibliography}

\begin{center} 
\includegraphics[width=.4\linewidth]{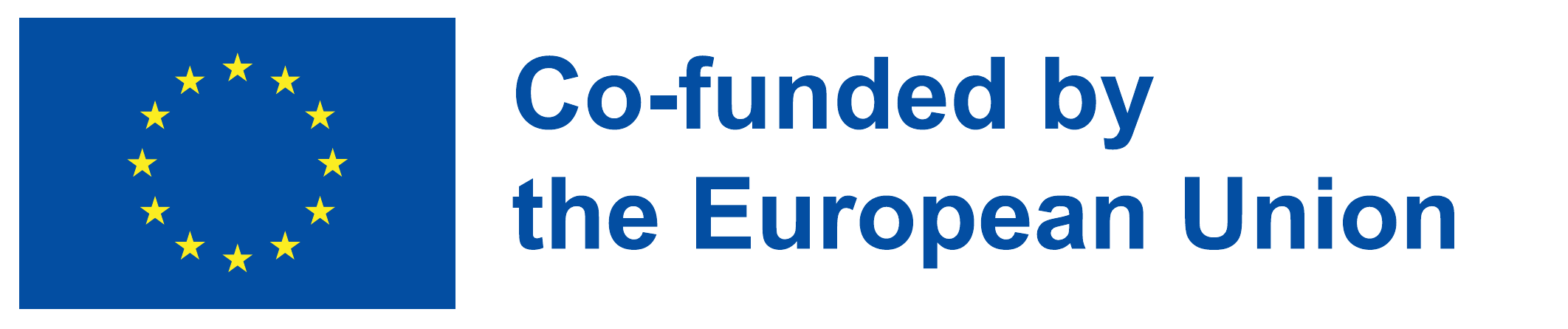}
\end{center}

\end{document}